\documentclass[12pt,a4paper,french,english,PSfrag,BCOR8mm, most, usenames, dvipsnames]{article}
\usepackage[T1]{fontenc}
\usepackage[latin9]{inputenc}
\usepackage{color}
\usepackage{babel}
\usepackage{float}
\usepackage{mathrsfs}
\usepackage{tcolorbox}
\usepackage{amsmath}
\usepackage{amsthm}
\usepackage{amssymb}
\usepackage{graphicx}
\usepackage[all]{xy}
\usepackage[pdfusetitle,
 bookmarks=true,bookmarksnumbered=false,bookmarksopen=false,
 breaklinks=false,pdfborder={0 0 1},backref=false,colorlinks=false]
 {hyperref}

\makeatletter

\pdfpageheight\paperheight
\pdfpagewidth\paperwidth

\newcommand{\noun}[1]{\textsc{#1}}

\numberwithin{equation}{section}
\numberwithin{figure}{section}
\numberwithin{table}{section}
\theoremstyle{plain}
\newtheorem{thm}{\protect\theoremname}[section]
\theoremstyle{remark}
\newtheorem{rem}[thm]{\protect\remarkname}
\theoremstyle{definition}
\newtheorem{defn}[thm]{\protect\definitionname}
\theoremstyle{plain}
\newtheorem{lem}[thm]{\protect\lemmaname}
\theoremstyle{plain}
\newtheorem{prop}[thm]{\protect\propositionname}
\theoremstyle{plain}
\newtheorem{cor}[thm]{\protect\corollaryname}

\usepackage[a4paper, total={17cm, 27cm}]{geometry}
\usepackage{color}
\usepackage{graphics}

\usepackage{stmaryrd}

\makeatother

\addto\captionsenglish{\renewcommand{\corollaryname}{Corollary}}
\addto\captionsenglish{\renewcommand{\definitionname}{Definition}}
\addto\captionsenglish{\renewcommand{\lemmaname}{Lemma}}
\addto\captionsenglish{\renewcommand{\propositionname}{Proposition}}
\addto\captionsenglish{\renewcommand{\remarkname}{Remark}}
\addto\captionsenglish{\renewcommand{\theoremname}{Theorem}}
\addto\captionsfrench{\renewcommand{\corollaryname}{Corollaire}}
\addto\captionsfrench{\renewcommand{\definitionname}{Définition}}
\addto\captionsfrench{\renewcommand{\lemmaname}{Lemme}}
\addto\captionsfrench{\renewcommand{\propositionname}{Proposition}}
\addto\captionsfrench{\renewcommand{\remarkname}{Remarque}}
\addto\captionsfrench{\renewcommand{\theoremname}{Théorème}}
\providecommand{\corollaryname}{Corollary}
\providecommand{\definitionname}{Definition}
\providecommand{\lemmaname}{Lemma}
\providecommand{\propositionname}{Proposition}
\providecommand{\remarkname}{Remark}
\providecommand{\theoremname}{Theorem}

\begin{document}
\selectlanguage{english}
\title{From geodesic flow to wave dynamics on hyperbolic surfaces}
\author{\href{https://www-fourier.univ-grenoble-alpes.fr/~faure/}{Frédéric Faure}{\small}\\
{\small Institut Fourier, UMR 5582, Laboratoire de Mathématiques}\\
{\small Université Grenoble Alpes, CS 40700, 38058 Grenoble cedex 9,
France}\\
{\small\href{mailto:frederic.faure@univ-grenoble-alpes.fr}{frederic.faure@univ-grenoble-alpes.fr}}}
\maketitle
\begin{abstract}
We study the geodesic flow on the unit cotangent bundle $M=S^{*}\mathcal{N}$
of a closed hyperbolic surface $\mathcal{N}$, using the representation
theory of $SL_{2}(\mathbb{R})$. We construct explicit $X$-adapted
Hilbert spaces, obtained by completing propagated dense domains of
$L^{2}(M)$, which are tailored to the spectral analysis of the geodesic
generator $X$. In these spaces, $X$ becomes a normal operator with
discrete spectrum, except at the threshold $\mu=1/4$, where Jordan
blocks of size two may occur.

In this Hilbert model, the propagator $e^{tX}$ factorizes into a
damped harmonic oscillator with eigenvalues $e^{-t(n+1/2)}$, $n\in\mathbb{N}$,
and a transverse part involving the shifted wave group $e^{\pm it\sqrt{\Delta-1/4}}$
on $\mathcal{N}$, together with the holomorphic and anti-holomorphic
discrete series.

The model clarifies two classical links between geodesic dynamics
and the Laplace spectrum. Comparing the spectral trace of the propagator
in the $X$-adapted Hilbert model with the Atiyah--Bott--Guillemin
flat trace gives a dynamical form of the Selberg trace formula: closed
geodesics arise from the flat trace, while the spectral side comes
from the explicit $SL_{2}(\mathbb{R})$-decomposition. The same factorization
also explains the large-time structure of spherical mean operators
on $\mathcal{N}$: after the natural $e^{t/2}$-renormalization and
the removal of a finite-rank low-energy part, the shifted wave equation
on $\mathcal{N}$ emerges as the leading effective dynamics. Thus
the construction provides an explicit Hilbert-space structure relating
classical geodesic dynamics, Ruelle resonances, and the spectral theory
of the surface. 
\end{abstract}
\newpage

\selectlanguage{french}%

\newtcolorbox{cBoxA}[1][]{ enhanced, breakable, frame style=purple!80, interior style=red!0, #1 }

\newtcolorbox{cBoxB}[2][]{ enhanced, breakable, frame style=teal!80, interior style=cyan!0, #1, #2 }

\selectlanguage{english}
\global\long\def\eq#1{\underset{(#1)}{=}}%
\global\long\def\ineq#1{\underset{(#1)}{\leq}}%
\global\long\def\eqq#1{\underset{(#1)}{\approx}}%

\tableofcontents{}

\noindent\textbf{2020 Mathematics Subject Classification.} Primary
37D40; Secondary 37D20, 37C30, 58J50, 11F72, 81Q50.

\medskip{}

\noindent\textbf{Keywords.} Geodesic flow; hyperbolic surfaces; Pollicott--Ruelle
resonances; Hilbert models; representation theory of $SL(2,\mathbb{R})$;
Selberg trace formula; spherical means; wave equation; Harish--Chandra
expansion.

\newpage{}

\section{Introduction}

\subsection{Main result}

Let $\mathcal{N}$ be a closed hyperbolic surface, and let $M=S^{*}\mathcal{N}$
be its unit cotangent bundle. We denote by $X$ the geodesic vector
field on $M$, viewed as a first-order differential operator on $C^{\infty}(M)$,
and by $\Delta$ the positive Laplace operator on $\mathcal{N}$.
The main result of this paper, Theorem~\ref{thm:main_result}, gives
an explicit Hilbert-space model for the pull-back operator $e^{tX}$.
In this model, $e^{tX}$ factorizes as the tensor product of a universal
damped harmonic oscillator $e^{-t(A+\frac{1}{2})}$, where $Ae_{n}=ne_{n}$
on $\ell^{2}(\mathbb{N})$, and a transverse part involving the shifted
wave propagators $e^{\pm it\sqrt{\Delta-\frac{1}{4}}}$ on $\mathcal{N}$,
together with the holomorphic and anti-holomorphic discrete series.

We introduce the notation used in the main theorem. We denote $K:=\mathrm{SO}_{2}\left(\mathbb{R}\right)$
be the maximal compact subgroup of $SL_{2}(\mathbb{R})$. Let $\mathcal{P}(M)$,
defined precisely in \eqref{eq:def_calP_M}, be the algebraic core
of $L^{2}(M)$ consisting of finite sums of $K$-finite vectors belonging
to finitely many irreducible components of the $SL_{2}(\mathbb{R})$-decomposition.
$\mathcal{P}(M)$ is dense in $L^{2}(M)$.

Let $\mathcal{H}_{\mathrm{hol}}$ be the multiplicity space of the
discrete series: 
\[
\mathcal{H}_{\mathrm{hol}}=\bigoplus_{q\ge1}\left(\mathcal{H}_{\mathrm{hol},+}(2q)\oplus\mathcal{H}_{\mathrm{hol},-}(-2q)\right).
\]
Here 
\[
\mathcal{H}_{\mathrm{hol},+}(2q)\simeq H^{0}(\mathcal{N},K_{\mathcal{N}}^{\otimes q})
\]
is the finite dimensional space of holomorphic $q$-differentials
on $\mathcal{N}$, $K_{\mathcal{N}}$ denotes the canonical line bundle,
and 
\[
\mathcal{H}_{\mathrm{hol},-}(-2q)\simeq\overline{\mathcal{H}_{\mathrm{hol},+}(2q)}
\]
is the corresponding anti-holomorphic space. We define the operator
$\Lambda_{\mathrm{d.s.}}$ on $\mathcal{H}_{\mathrm{hol}}$ by 
\[
\Lambda_{\mathrm{d.s.}}v=\left(q-\frac{1}{2}\right)v,\qquad v\in\mathcal{H}_{\mathrm{hol},+}(2q)\oplus\mathcal{H}_{\mathrm{hol},-}(-2q).
\]

We decompose 
\[
L^{2}(\mathcal{N})=L_{0}^{2}(\mathcal{N})\overset{\perp}{\oplus}\mathbb{C}\boldsymbol{1},
\]
where $L_{0}^{2}(\mathcal{N})$ is the space of zero-average functions.
We further split 
\begin{equation}
L_{0}^{2}(\mathcal{N})=E_{\neq1/4}\overset{\perp}{\oplus}E_{1/4},\label{eq:decomp_L0N}
\end{equation}
where 
\[
E_{1/4}:=\ker\left(\Delta-\frac{1}{4}\right)
\]
is the possible threshold eigenspace of the Laplacian $\Delta$ on
$\mathcal{N}$ that is finite-dimensional and may be trivial, and
\[
E_{\neq1/4}:=\mathbf{1}_{(0,\infty)\setminus\{1/4\}}(\Delta)L_{0}^{2}(\mathcal{N}).
\]

On $E_{\neq1/4}$, the operator $\sqrt{\Delta-\frac{1}{4}}$ is defined
by spectral calculus with the convention 
\[
\sqrt{\mu-\frac{1}{4}}=i\sqrt{\frac{1}{4}-\mu}\qquad\text{for }0<\mu<\frac{1}{4}.
\]

\begin{cBoxB}{}

\begin{thm}[Main result]
\label{thm:main_result} Define the Hilbert space 
\[
\mathcal{K}:=\left(\ell^{2}(\mathbb{N})\otimes\mathcal{B}\right)\overset{\perp}{\oplus}\mathbb{C}\boldsymbol{1},
\]
with 
\begin{equation}
\mathcal{B}:=\left(L_{0}^{2}(\mathcal{N})\otimes\mathbb{C}^{2}\right)\overset{\perp}{\oplus}\mathcal{H}_{\mathrm{hol}}.\label{eq:def_cal_B}
\end{equation}
For every $\tau>0$, the propagated domain 
\[
\mathcal{D}_{\tau}:=e^{\tau X}\mathcal{P}(M)
\]
is dense in $L^{2}\left(M\right)$ and can be completed for a Hilbert
norm into a Hilbert space $\mathcal{H}_{\tau}$ such that there exists
a unitary isomorphism 
\begin{equation}
\mathbb{T}:\mathcal{H}_{\tau}\xrightarrow{\sim}\mathcal{K},\label{eq:TT}
\end{equation}
and for every $t\ge0$, 
\begin{equation}
\mathbb{T}e^{tX}\mathbb{T}^{-1}=\left(e^{-t(A+\frac{1}{2})}\otimes e^{tX_{\mathcal{B}}}\right)\oplus\mathrm{Id}_{\mathbb{C}\boldsymbol{1}},\label{eq:result_X}
\end{equation}
where, with respect to the decompositions (\ref{eq:def_cal_B}) and
\eqref{eq:decomp_L0N}, 
\begin{equation}
e^{tX_{\mathcal{B}}}=\left(\begin{array}{cc}
e^{it\sqrt{\Delta-\frac{1}{4}}} & 0\\
0 & e^{-it\sqrt{\Delta-\frac{1}{4}}}
\end{array}\right)_{E_{\neq1/4}}\oplus\left(\begin{array}{cc}
1 & t\\
0 & 1
\end{array}\right)_{E_{1/4}}\oplus e^{-t\Lambda_{\mathrm{d.s.}}}.\label{eq:exp_t_XB}
\end{equation}

Consequently, on the non-threshold part $E_{\neq1/4}$ and on the
discrete series, the operator $X$ is normal in $\mathcal{H}_{\tau}$,
with discrete spectrum
\begin{align}
\mathrm{Spec}_{\mathcal{H}_{\tau}}(X) & =\left\{ -n-\frac{1}{2}\pm i\sqrt{\mu-\frac{1}{4}}\ ;\ n\in\mathbb{N},\ \mu\in\mathrm{Spec}(\Delta_{/L_{0}^{2}(\mathcal{N})}),\ \mu\neq\frac{1}{4}\right\} \label{eq:Spec_X}\\
 & \quad\cup\left\{ -n-q\ ;\ n\in\mathbb{N},\ q\in\mathbb{N}^{*}\right\} \cup\{0\}.\nonumber 
\end{align}
At the threshold $\mu=\frac{1}{4}$, the two spherical branches coalesce
into Jordan blocks of size $2$. Multiplicities of $\left(-n-q\right)$
are described in Theorem~\ref{thm:global_Hilbert_model_XUS}.
\end{thm}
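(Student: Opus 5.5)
The plan is to reduce everything to the irreducible components of $L^{2}(M)=L^{2}(\Gamma\backslash SL_{2}(\mathbb{R}))$ and then build the norm one irreducible at a time. By the $SL_{2}(\mathbb{R})$-decomposition,
\[
L^{2}(M)=\mathbb{C}\boldsymbol{1}\oplus\bigoplus_{\mu}\left(\mathcal{H}_{\mu}\otimes E_{\mu}\right)\oplus\bigoplus_{q\ge1}\left(\mathcal{D}_{2q}^{+}\otimes\mathcal{H}_{\mathrm{hol},+}(2q)\oplus\mathcal{D}_{2q}^{-}\otimes\mathcal{H}_{\mathrm{hol},-}(-2q)\right).
\]
Here $\mathcal{H}_{\mu}$ is the principal or complementary series with Casimir $\mu\in\mathrm{Spec}(\Delta)\setminus\{0\}$, and $E_{\mu}$ is the eigenspace of $\Delta$. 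The vector field $X$ is the action of $H=\mathrm{diag}(1/2,-1/2)\in\mathfrak{sl}_{2}$. So $e^{tX}$ acts as $\pi(e^{tH})\otimes\mathrm{Id}$ on each block, and $\mathcal{P}(M)$ is the algebraic sum of the $K$-finite vectors of finitely many blocks. It therefore suffices to construct, for each irreducible $\pi$ occurring, a Hilbert completion of $e^{\tau X}\pi_{K}$ together with a unitary map to $\ell^{2}(\mathbb{N})\otimes\mathbb{C}^{2}$ (resp. $\ell^{2}(\mathbb{N})$). These maps must intertwine $\pi(e^{tH})$ with the model operator, with constants uniform in $\mu$ so that the orthogonal sum makes sense. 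The trivial representation gives the summand $\mathrm{Id}_{\mathbb{C}\boldsymbol{1}}$.

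\textbf{Principal/complementary series.} I would realize $\mathcal{H}_{\mu}$ in the line model $L^{2}(\mathbb{R})$, with $\mu=\frac14+\nu^{2}$. There the geodesic flow acts by dilation, $(\pi(e^{tH})f)(x)=e^{-t(\frac12+i\nu)}f(e^{-t}x)$ up to conventions, and the $K$-finite vectors are $(1+x^{2})^{-\frac12-i\nu}\left(\frac{x-i}{x+i}\right)^{k}$. The Ruelle resonances come from Taylor expansions of $f$ at the two fixed points $x=0$ and $x=\infty$ (stable/unstable). The coefficients $f^{(n)}(0)/n!$ evolve by $e^{-t(n+\frac12\pm i\nu)}$, and similarly at $\infty$ with the opposite sign of $i\nu$. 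This gives the two branches $\pm i\sqrt{\mu-\frac14}$ and the oscillator factor $e^{-t(A+\frac12)}$.

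Define $\mathbb{T}f=\left((c_{n}f^{(n)}(0)/n!)_{n},\ (c_{n}'\,\text{Taylor coefficients at }\infty)_{n}\right)$. The weights $c_{n},c'_{n}$ are chosen so that $\mathbb{T}$ is injective with dense range on $e^{\tau X}\pi_{K}$. Propagating by $e^{\tau X}$ makes the functions analytic in a disc around $0$ and $\infty$ of radius $\sim e^{\tau}$, and this is what makes the coefficients square-summable with geometric weights. I would then simply define the norm on $\mathcal{D}_{\tau}$ as $\|\mathbb{T}\cdot\|$, so unitarity is automatic. The intertwining relation (\ref{eq:result_X}) is then the exact computation above, and $X$ is normal with the stated spectrum.

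\textbf{Threshold and discrete series.} At $\nu=0$ the two branches coincide, and the natural basis (Taylor coefficients vs. $\partial_{\nu}$ derivative) produces $\begin{pmatrix}1&t\\0&1\end{pmatrix}$ after the correct normalization. For $\mathcal{D}_{2q}^{\pm}$ I would use the upper half-plane model, where $\pi(e^{tH})$ is dilation with weight $q$ and the Taylor coefficients at the fixed boundary point decay as $e^{-t(n+q)}$. This gives $e^{-t(A+\frac12)}\otimes e^{-t(q-\frac12)}$.

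\textbf{Main obstacle.} The hard step is proving \emph{injectivity and density} of $\mathbb{T}$ on $\mathcal{D}_{\tau}$, i.e. that the Taylor data at the two fixed points determine $f$ and can realize a dense set of sequences. Equivalently, the two families of resonant states (distributions supported at $0$ and at $\infty$) must form a complete, non-degenerate system on the propagated domain. I would attack it by an explicit generating-function computation of $\mathbb{T}$ on the $K$-finite basis in terms of hypergeometric (Jacobi) polynomials. The $\mathbb{T}$-images of the $K$-types then form a triangular system, giving injectivity. Since $\mathcal{D}_{\tau}\supset e^{\tau X}\pi_{K}$ already contains vectors mapping onto all finitely supported sequences, the range is dense. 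Finally, $\mathcal{D}_{\tau}$ is dense in $L^{2}(M)$ simply because $e^{\tau X}$ is unitary on $L^{2}(M)$ and $\mathcal{P}(M)$ is dense.
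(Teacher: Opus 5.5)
\textbf{There is a genuine gap in the spherical part: your second branch uses the wrong data.} Whatever the convention, one of the two fixed points of the dilation is repelling. In your convention, $(\pi(e^{tH})f)(x)=e^{-t(\frac{1}{2}+i\nu)}f(e^{-t}x)$, it is $x=\infty$. Write $f(x)=|x|^{-1-2i\nu}\sum_{n}d_{n}x^{-n}$ near $\infty$. Then $d_{n}\mapsto e^{+t(n+\frac{1}{2}+i\nu)}d_{n}$, so Taylor data at the repelling point are \emph{expanded} by $e^{tX}$ for $t>0$. They realize the backward-time eigenvalues $n+\frac{1}{2}+i\nu$, not the branch $-n-\frac{1}{2}+i\nu$.

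This is consistent with the domains: $e^{\tau X}$ shrinks the region of analyticity at $\infty$ to $|x|>e^{\tau}$, which is a disc of radius $e^{-\tau}$ in $y=-1/x$. So on $\mathcal{D}_{\tau}$ these coefficients grow like $e^{\tau n}$. No diagonal weights $c'_{n}$ can repair this, because diagonal weights do not change the eigenvalues of the transported operator. The same problem breaks your threshold sketch: your two branches would sit at $-n-\frac{1}{2}$ and $n+\frac{1}{2}$ and never coalesce.

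The missing idea is that the second branch must be built from the \emph{dual} objects. Its resonant states are the $\delta^{(n)}$ at the repelling point. Its coefficient functionals are pairings with the homogeneous distributions $\operatorname{sgn}(x)^{n}|x|^{-n-1+2i\nu}$, which are singular at the attracting point. Equivalently, they are the regularized moments $\int y^{n}f_{\infty}(y)\,dy$, with $f_{\infty}(y)=|y|^{-1-2i\nu}f(-1/y)$ in the chart centred at $\infty$; this is the paper's $\mathcal{T}_{-}$. These functionals have several features that matter for the proof:
\begin{itemize}
\item they are defined only by meromorphic continuation of beta integrals;
\item they grow factorially, which is why the $\Gamma$-gauge $G_{\pm}$ is needed;
\item their relation to the Taylor branch is the Mellin-regularized biorthogonality $\langle\mathcal{S}_{n',-}\mid\mathcal{U}_{n,+}\rangle=0$, which needs $\lambda\neq0$;
\item they have a pole at $\lambda=0$, which must be renormalized by $\rho(\lambda)\sim-i\lambda$ before the coefficientwise coalescence with the Taylor branch yields the Jordan block.
\end{itemize}

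\textbf{Your density argument is also false as stated.} $\mathbb{T}(e^{\tau X}\pi_{K})$ contains no nonzero finitely supported sequence. Since $e^{\tau X}$ acts diagonally on coefficients, it suffices to look at $\pi_K$. The function $\sum_{k}a_{k}(1+x^{2})^{-\frac{1}{2}-i\nu}\bigl(\frac{x-i}{x+i}\bigr)^{k}$ is never a nonzero polynomial, because $\frac{1}{2}+i\nu\notin\mathbb{Z}$. Likewise, $\sum_{k}a_{k}(1-iw)^{k}(1+iw)^{-l-k}$ is never a nonzero polynomial in the discrete series. Hence the Taylor component of a nonzero vector is never finitely supported.

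Density therefore needs a real argument, such as the paper's:
\begin{itemize}
\item the closure of the image is invariant under the diagonal semigroup $e^{sN}$;
\item every coordinate functional is nonzero on the image, because $\mathcal{S}_{n,\pm}\neq0$;
\item $e_{n}^{\pm}$ is extracted inductively by subtracting lower modes and applying $\frac{1}{T}\int_{0}^{T}e^{-sz_{n,\sigma}}e^{sN}\,ds$.
\end{itemize}
This averaging separates the two branches of equal real part precisely because $\lambda>0$. The complementary series instead uses the ordering of the real eigenvalues, and the threshold uses $\frac{1}{s}e^{-sz_{n}}e^{s\mathbb{J}_{0}}$.

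Injectivity, by contrast, needs no triangular structure, and none is present since every $K$-type has nonzero value at the fixed point. It suffices that vanishing of the Taylor branch forces the real-analytic function to vanish on the whole chart.

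The rest of your plan agrees with the paper. Your discrete-series treatment, using Taylor coefficients at the attracting boundary fixed point, matches the paper's Cayley model. No uniformity in $\mu$ is needed, since the norm is defined blockwise on an algebraic direct sum.
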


\end{cBoxB}

Theorem~\ref{thm:main_result} is a corollary of the more detailed
global statement, Theorem~\ref{thm:global_Hilbert_model_XUS}. The
latter also gives the action of the other generators $U$ and $S$
of $\mathfrak{sl}_{2}(\mathbb{R})$. In the Hilbert model, these operators
become weighted creation and annihilation operators, coupled to the
transverse spectral parameters. Thus the construction does not merely
diagonalize $X$, but realizes the whole infinitesimal $\mathfrak{sl}_{2}$-action
in an explicit oscillator model.

Figure~\ref{fig:Pollicott=002013Ruelle-spectrum-} illustrates Theorem~\ref{thm:main_result}.

\begin{figure}[h]

\begin{centering}
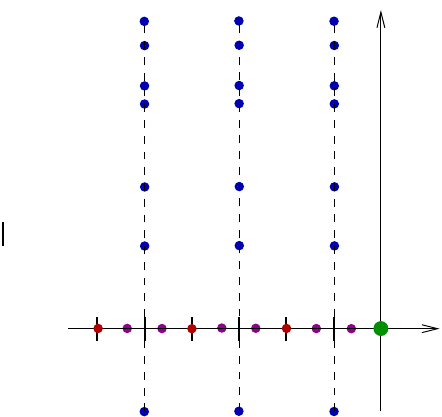
\par\end{centering}
\caption{\protect\label{fig:Pollicott=002013Ruelle-spectrum-}Pollicott--Ruelle
spectrum $\mathrm{Spec}_{\mathcal{H}_{\tau}}(X)$ of the geodesic
vector field $X$, as described in \eqref{eq:Spec_X}. The trivial
resonance $0$ is shown in green. The red points come from the discrete
series. The blue points correspond to the spherical principal series,
for $\mu\protect\geq1/4$, whereas the magenta points correspond to
the spherical complementary series, for $0<\mu<1/4$.}

\end{figure}

\begin{rem}
The three terms in \eqref{eq:exp_t_XB} correspond respectively to
the non-threshold spherical part, the threshold spherical part, and
the holomorphic/anti-holomorphic discrete series. The Jordan block
is present only when $1/4$ is an eigenvalue of $\Delta$. 

On the principal spectrum $\mu>1/4$, the first term in (\ref{eq:exp_t_XB})
is the shifted wave group. On the complementary spectrum $0<\mu<1/4$,
the same spectral-calculus formula gives the corresponding real exponential
branches, according to the convention for $\sqrt{\Delta-\frac{1}{4}}$
stated above.
\end{rem}

$\quad$
\begin{rem}
The only non-normal part of the model occurs at the threshold $\mu=\frac{1}{4}$.
The possible threshold Jordan blocks are consistent with the description
of Ruelle resonant states in Guillarmou--Hilgert--Weich~\cite[Theorem~3.3 and Proposition~1.3]{guillarmou_weich_resonances_16}.
We define 
\[
\operatorname{Spec}_{J}(X):=\begin{cases}
\left\{ -n-\frac{1}{2}\ ;\ n\in\mathbb{N}\right\} , & \text{if }E_{1/4}\neq\{0\},\\
\varnothing, & \text{if }E_{1/4}=\{0\}.
\end{cases}
\]
On the normal part of the model, the resolvent satisfies the usual
normal estimate. On each threshold Jordan block, one has 
\[
\left(z-\begin{pmatrix}z_{n} & 1\\
0 & z_{n}
\end{pmatrix}\right)^{-1}=(z-z_{n})^{-1}I+(z-z_{n})^{-2}\begin{pmatrix}0 & 1\\
0 & 0
\end{pmatrix},\qquad z_{n}=-n-\frac{1}{2}.
\]
Consequently, for $z\in\mathbb{C}\setminus\operatorname{Spec}_{\mathcal{H}_{\tau}}(X)$,
\[
\|(z-X)^{-1}\|_{\mathcal{H}_{\tau}}\le\frac{1}{\operatorname{dist}\!\left(z,\operatorname{Spec}_{\mathcal{H}_{\tau}}(X)\right)}+\frac{1}{\operatorname{dist}\!\left(z,\operatorname{Spec}_{J}(X)\right)^{2}}.
\]
If $E_{1/4}=\{0\}$, the second term is absent. Thus the only pseudospectral
effect is the elementary one produced by the explicit size-two Jordan
blocks at the threshold. 
\end{rem}

$\quad$
\begin{rem}
Theorem~\ref{thm:main_result} gives a Hilbert-space realization
of the geodesic generator $X$ adapted to dynamical questions. In
the usual $L^{2}(M)$ space, $X$ is skew-adjoint with essential spectrum
on $i\mathbb{R}$ and the Pollicott--Ruelle spectrum is not visible
as an ordinary Hilbert-space spectrum. In the spaces $\mathcal{H}_{\tau}$
constructed here, the same generator becomes a normal operator with
discrete spectrum (away from the threshold $\mu=1/4$), while the
threshold contribution is described by explicit Jordan blocks of size
two. Thus the pseudospectral behaviour is completely controlled by
the explicit size-two Jordan blocks at the threshold.

The discrete spectrum of $X$ given in (\ref{eq:Spec_X}) is the Pollicott--Ruelle
spectrum and has already been described in \cite{dyatlov_faure_guillarmou_2014,kuster2017quantum,guillarmou_weich_resonances_16,hilgert2021higher,bonthonneau2026spectrumanosovrepresentations}.
The possible Jordan blocks at the threshold are also consistent with
the description of invariant distributions by Flaminio--Forni~\cite{flaminio_forni_2003}
and with the analysis of Ruelle resonant states in Guillarmou--Hilgert--Weich~\cite[Theorem~3.3 and Proposition~1.3]{guillarmou_weich_resonances_16}.

This result also suggests a broader program: to construct analogous
Hilbert realizations for compact quotients of more general semisimple
groups, adapted to non-elliptic elements of the Lie algebra. Such
models could be useful in particular for questions related to quantum
chaos, where one seeks precise links between classical hyperbolic
dynamics, representation theory, and spectral theory.

Some manifestations of this structure are already present in classical
formulas, notably the Selberg trace formula and the Harish--Chandra
formula for spherical functions for which we give some results in
section \ref{subsec:Spherical-mean-operators}. In the last part of
the paper we revisit these formulas from the point of view of Theorem~\ref{thm:main_result}.
In particular, comparing the spectral trace in the Hilbert model with
the Atiyah--Bott--Guillemin flat trace of the geodesic flow recovers
the Selberg trace formula. 
\end{rem}

\subsection{Context and related works}

The initial motivation for this paper is the study of Pollicott--Ruelle
resonances, originating in the works of Ruelle~\cite{Ruelle_76}
and Pollicott~\cite{pollicott1986meromorphic}. A powerful functional-analytic
approach consists in realizing the generator on anisotropic spaces,
developed in various forms in \cite{liverani_02,baladi_05,fred-roy-sjostrand-07,fred_flow_09,dyatlov_Ruelle_resonances_2012,dyatlov2019mathematical,dang2020spectralI,dyatlov_guillarmou_2014}.

In the case of hyperbolic surfaces, and more generally homogeneous
manifolds, the discrete Pollicott--Ruelle spectrum has been studied
using algebraic Lie group techniques in \cite{dyatlov_faure_guillarmou_2014,kuster2017quantum,guillarmou_weich_resonances_16,hilgert2021higher,bonthonneau2026spectrumanosovrepresentations}.
The purpose of the present paper is different: we construct explicit
$X$-adapted Hilbert models in which this spectrum appears as an ordinary
Hilbert-space spectrum, with $X$ normal except for the explicit threshold
Jordan blocks. Thus we do not aim at rediscovering the Pollicott--Ruelle
spectrum itself, but rather at realizing it in explicit Hilbert spaces
where the generator $X$ and the full infinitesimal $\mathfrak{sl}_{2}(\mathbb{R})$-action
take an oscillator form.

Another motivation was to reinterpret, from the point of view of spectral
theory and microlocal analysis, the asymptotic results of Ratner~\cite{ratner_87}
and the invariant-distribution structure described by Flaminio--Forni~\cite{flaminio_forni_2003}.

A related Hilbert-space construction was carried out for a similar
model, namely the prequantum cat map~\cite{fred-PreQ-06}, also called
a contact extension of a hyperbolic automorphism of the torus. In
that model, the transfer operator has discrete spectrum and becomes
normal in the constructed Hilbert space. The key mechanism for the
construction is \cite[Lemma~14, p.~279]{fred-PreQ-06}, and it reappears
in the present paper in Theorem~\ref{thm:For-,-the}.

A closely related point of view was developed by Anantharaman--Zelditch~\cite{anantharaman2012intertwining},
who constructed an explicit intertwining operator between the geodesic
flow and the Schrödinger group on compact hyperbolic surfaces, acting
on suitable Hilbert spaces of symbols. Their construction is different
from the $X$-adapted Hilbert models constructed here, but it is very
close in spirit to the relation between geodesic dynamics and quantum
or wave propagation.

In the second part of the paper, in Section~\ref{sec:Atiyah=002013Bott-and-Selberg},
the derivation of the Selberg trace formula is close in spirit to
Guillemin's viewpoint in~\cite{guillemin1977lectures}: the formula
is obtained by comparing a dynamical Atiyah--Bott--Guillemin flat
trace formula with a spectral decomposition of the regular representation.
The difference is that we do not use the polarized complex appearing
in~\cite{guillemin1977lectures}; instead, we use the constructed
Hilbert spaces $\mathcal{H}_{\tau}$ adapted to the generator $X$.
On each irreducible component, the operator $e^{tX}$ is then trace
class for $t>0$ in the corresponding $X$-adapted Hilbert model,
whereas the geometric side remains the flat trace of the geodesic
flow. A related viewpoint is also discussed in Anantharaman's lectures
at the Collège de France \cite[around 38 min]{Anantharaman2023SpectresGraphesSurfaces4}.

\subsection{\protect\protect\label{subsec:Spherical-mean-operators} Spherical
mean operators and emergence of the wave equation}

In Section~\ref{sec:Spherical-averages}, we study a family of operators
$\left(\mathcal{L}_{t}\right)_{t\in\mathbb{R}}$ on $L^{2}(\mathcal{N})$
called spherical mean operators. Spherical means are classical objects
in the analysis of wave propagation and Huygens' principle, going
back to the works of Huygens~\cite{Huygens1690}, Poisson~\cite{Poisson1816},
and Hadamard~\cite{Hadamard1923}. In Euclidean space, their role
in the analysis of the wave equation is classical; see for instance
John~\cite{John1955}. On symmetric and hyperbolic spaces, the relation
between spherical analysis, shifted wave equations and Huygens' principle
has been studied by Helgason~\cite{Helgason1992}, Lax--Phillips~\cite{LaxPhillips1979},
{Ó}lafsson--Schlichtkrull~\cite{OlafssonSchlichtkrull1992}, and
Anker--Pierfelice--Vallarino~\cite{AnkerPierfeliceVallarino2012}.
For asymptotic expansions of large hyperbolic circles on compact hyperbolic
surfaces, see Corso--Ravotti~\cite{CorsoRavotti2025}.

The point emphasized here is that, in constant negative curvature,
the leading oscillatory part of the large-radius spherical mean operator
is governed, after the universal factor $e^{-t/2}$, by the shifted
wave group $e^{\pm it\sqrt{\Delta-1/4}}.$

Let 
\[
\pi:S^{*}\mathcal{N}\longrightarrow\mathcal{N}
\]
be the canonical projection. We denote by 
\[
\pi^{\circ}:L^{2}(\mathcal{N})\longrightarrow L^{2}(S^{*}\mathcal{N}),\qquad\pi^{\circ}u:=u\circ\pi,
\]
the pullback map. Its $L^{2}$-adjoint is the fiberwise average 
\[
((\pi^{\circ})^{\dagger}v)(q)=\int_{S_{q}^{*}\mathcal{N}}v(q,p)\,d\omega_{q}(p),
\]
where $d\omega_{q}$ is the normalized measure on the fiber $S_{q}^{*}\mathcal{N}$.

\begin{cBoxA}{}

\begin{defn}[Spherical mean operator]
For $t\in\mathbb{R}$, we define 
\begin{equation}
\mathcal{L}_{t}:=(\pi^{\circ})^{\dagger}e^{tX}\pi^{\circ}\qquad:L^{2}(\mathcal{N})\longrightarrow L^{2}(\mathcal{N}).\label{eq:def_L_t}
\end{equation}
\end{defn}

\end{cBoxA}

\begin{rem}
Equivalently, 
\[
(\mathcal{L}_{t}u)(q)=\int_{S_{q}^{*}\mathcal{N}}u\bigl(\pi(\varphi^{t}(q,p))\bigr)\,d\omega_{q}(p).
\]
Thus $\left(\mathcal{L}_{t}u\right)(q)$ is the angular average of
$u$ over the geodesic circle of center $q$ and radius $|t|$, counted
with the measure induced from the sphere of directions. See Figure
\ref{fig:spherical_mean}. In particular, 
\[
\mathcal{L}_{t}^{\dagger}=\mathcal{L}_{-t}=\mathcal{L}_{t},
\]
but the family $(\mathcal{L}_{t})_{t\in\mathbb{R}}$ is not a group.

\begin{figure}[h]
\begin{centering}
{\small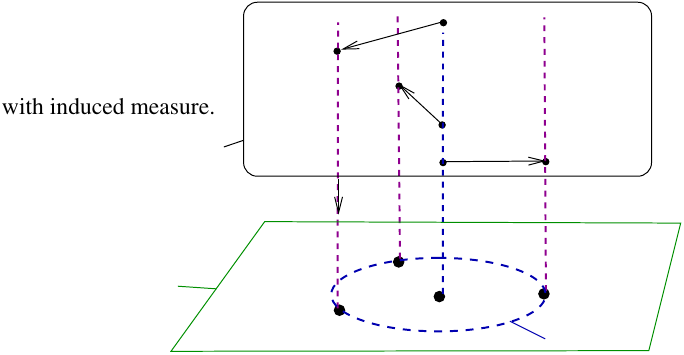}{\small\par}
\par\end{centering}
\caption{\protect\label{fig:spherical_mean} Spherical mean defined in (\ref{eq:def_L_t}).}
\end{figure}
\end{rem}

A theorem of Ratner \cite{ratner_87} implies that, as $t\to+\infty$,
\begin{equation}
\mathcal{L}_{t}=F_{t}^{\mathrm{Rat}}+O_{L^{2}\to L^{2}}(e^{-t/2}),\label{eq:decay_ratner}
\end{equation}
where $F_{t}^{\mathrm{Rat}}$ is a finite-rank operator with uniformly
bounded rank. For instance, if $\Delta$ has no eigenvalues in $(0,\frac{1}{4}]$,
then $F_{t}^{\mathrm{Rat}}$ reduces to the orthogonal projection
onto constant functions.

The following result refines the exponentially decaying remainder
by showing that, after the natural $e^{t/2}$-renormalization, its
leading part is governed by the shifted wave equation.

Fix $\eta>0$, and let $\Pi_{\mathrm{exc}}$ be the finite rank spectral
projector of $\Delta$ onto the finite-dimensional subspace generated
by the constants, the complementary spectrum $0<\mu<\frac{1}{4}$,
the possible threshold eigenspace $E_{1/4}$, and the principal eigenvalues
in the window $\frac{1}{4}<\mu<\frac{1}{4}+\eta^{2}.$ Define the
regular spectral subspace by 
\[
L_{\mathrm{reg}}^{2}(\mathcal{N}):=(1-\Pi_{\mathrm{exc}})L^{2}(\mathcal{N}).
\]
On this subspace, $\Delta-\frac{1}{4}\ge\eta^{2}$, so the operators
\[
\left(\Delta-\frac{1}{4}\right)^{1/2}\quad\text{and}\quad\left(\Delta-\frac{1}{4}\right)^{-1/4}
\]
are defined by spectral calculus.

\begin{cBoxB}{}

\begin{thm}[Emergence of the wave equation from spherical means]
For $t>0$, define 
\begin{equation}
E_{t}:=e^{t/2}\mathcal{L}_{t}(1-\Pi_{\mathrm{exc}}).\label{eq:def_E_t_renormalized_spherical_mean-1}
\end{equation}
Then, on the regular spectral subspace $L_{\mathrm{reg}}^{2}(\mathcal{N})$,
one has 
\begin{equation}
E_{t}=W_{+}e^{it\left(\Delta-\frac{1}{4}\right)^{1/2}}+W_{+}^{\dagger}e^{-it\left(\Delta-\frac{1}{4}\right)^{1/2}}+e^{-2t}Q_{t},\label{eq:E_t_wave_asymptotic-1}
\end{equation}
where, for every $s\in\mathbb{R}$, 
\[
\sup_{t\ge t_{0}}\|Q_{t}\|_{H^{s}(\mathcal{N})\to H^{s+1/2}(\mathcal{N})}<\infty.
\]
Moreover, 
\begin{equation}
W_{+}=\frac{1}{\sqrt{\pi}}e^{-i\pi/4}\left(\Delta-\frac{1}{4}\right)^{-1/4}+\Psi^{-3/2}.\label{eq:W_plus_principal_symbol-1}
\end{equation}
Consequently, $E_{t}$ satisfies the shifted wave equation modulo
an exponentially small remainder: 
\begin{equation}
\left(\frac{\partial^{2}}{\partial t^{2}}+\Delta-\frac{1}{4}\right)E_{t}=e^{-2t}P_{t},\label{eq:E_t_wave_equation_remainder-1}
\end{equation}
where 
\[
\sup_{t\ge t_{0}}\|P_{t}\|_{H^{s}(\mathcal{N})\to H^{s-1/2}(\mathcal{N})}<\infty.
\]
\end{thm}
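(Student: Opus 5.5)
The approach is to reduce $\mathcal{L}_t$ to a function of $\Delta$ and then use the Harish--Chandra expansion of the spherical function. Since $\mathcal{L}_t$ commutes with the $G$-action, it acts on each eigenspace $\ker(\Delta-\mu)$ as multiplication by the spherical function
\[
\varphi_{\nu}(t)=\langle \pi^{\circ}u_\mu, e^{tX}\pi^{\circ}u_\mu\rangle/\|u_\mu\|^2 ,\qquad \mu=\tfrac14+\nu^2 .
\]
One can see this directly: $\pi^{\circ}u_\mu$ is the $K$-invariant vector of the spherical principal series component, and $\mathcal{L}_t u_\mu=\varphi_\nu(t)u_\mu$ with $\varphi_\nu(t)=P_{-1/2+i\nu}(\cosh t)$, the Legendre function. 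In the Hilbert model of Theorem~\ref{thm:global_Hilbert_model_XUS}, the $K$-invariant vector is expanded on the oscillator basis. The factorization \eqref{eq:result_X} then gives
\[
\varphi_\nu(t)=\sum_{n\ge0}e^{-t(n+1/2)}\bigl(c_n(\nu)e^{it\nu}+c_n(-\nu)e^{-it\nu}\bigr)
\]
for $t>0$, which is exactly the Harish--Chandra expansion. The pairing coefficients $c_n(\pm\nu)$ are read off from the components of $\pi^{\circ}u$ and of the dual vector in $\mathcal{H}_\tau$.

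Step 1 is to identify $c_0(\nu)$ with the Harish--Chandra $c$-function,
\[
c(\nu)=\frac{\Gamma(i\nu)}{\sqrt{\pi}\,\Gamma(\tfrac12+i\nu)} ,
\]
and $c_n(\nu)$ with $c(\nu)\Gamma_n(\nu)$. Here $\Gamma_n$ are the rational functions coming from the recursion of the hypergeometric ODE $\varphi''+\coth t\,\varphi'+(\tfrac14+\nu^2)\varphi=0$, written in the variable $e^{-t}$. In the oscillator model this recursion is the action of $U,S$ as weighted creation and annihilation operators. Only even $n$ actually occur, because the fiber average only sees even $K$-types. Setting $W_+:=c(\sqrt{\Delta-1/4})$ on $L^2_{\mathrm{reg}}$, the $n=0$ terms produce the two wave terms in \eqref{eq:E_t_wave_asymptotic-1}. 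The second wave term has coefficient $W_+^\dagger$ because $c(-\nu)=\overline{c(\nu)}$ for real $\nu$. Stirling's formula gives
\[
\frac{\Gamma(i\nu)}{\Gamma(\tfrac12+i\nu)}=(i\nu)^{-1/2}\bigl(1+O(\nu^{-1})\bigr),
\]
with a full symbol expansion in $\nu^{-1}$. This yields \eqref{eq:W_plus_principal_symbol-1}: $(i\nu)^{-1/2}=e^{-i\pi/4}\nu^{-1/2}$, and a symbol in $\nu=(\Delta-1/4)^{1/2}$ of order $-1$ relative to the leading one is a function of $\Delta$ lying in $\Psi^{-3/2}$ by functional calculus. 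Such symbols are classical in $\mu$ since $\nu\ge\eta$ on the regular subspace.

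Step 2 is the remainder. After multiplying by $e^{t/2}$, the terms with $n\ge2$ give
\[
e^{-2t}Q_t,\qquad Q_t=\sum_{n\ge2\text{ even}}e^{-t(n-2)}\bigl(c(\nu)\Gamma_n(\nu)e^{it\nu}+\mathrm{c.c.}\bigr).
\]
The uniform $H^s\to H^{s+1/2}$ bound requires $|c(\nu)\Gamma_n(\nu)|\le C_n\langle\nu\rangle^{-1/2}$ with $\sum_n C_n e^{-t_0(n-2)}<\infty$, uniformly for real $\nu\ge\eta$. This is the main obstacle. The $\Gamma_n$ have poles at $i\nu\in-\mathbb{N}/2$ and grow factorially in a naive bound, so termwise estimates must be done carefully. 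I would first try the known estimate $|\Gamma_n(\nu)|\le C(1+n)^{K}$ uniformly for $\Im\nu=0$, $|\nu|\ge\eta$ (Gangolli's estimate), which gives convergence for $t\ge t_0>0$. Alternatively, I would avoid the series and bound the remainder directly through the integral (Mehler-type) representation of $P_{-1/2+i\nu}(\cosh t)$, applying stationary phase with uniform error.

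Step 3 is the wave equation \eqref{eq:E_t_wave_equation_remainder-1}. The wave terms are annihilated exactly by $\partial_t^2+\nu^2$. Applying $\partial_t^2+\nu^2$ to the $n$-th term of $e^{-2t}Q_t$ multiplies it by
\[
\bigl(-(n-\tfrac12)+ i\nu\bigr)^2+\nu^2+\ldots,
\]
which is of size $O(n^2+n\nu)$. The factor $\nu$ costs one derivative, so $P_t$ maps $H^s\to H^{s-1/2}$ uniformly, using the same summability as in Step 2. One can equivalently use the ODE $\varphi''+\coth t\,\varphi'+\mu\varphi=0$ directly: $\coth t-1=O(e^{-2t})$ gives the $e^{-2t}$ gain, once the $e^{t/2}$ conjugation is accounted for.
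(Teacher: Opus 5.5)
Your proposal is correct and follows essentially the paper's route: you reduce $\mathcal{L}_t$ on each $\ker(\Delta-\mu)$ to the spherical function $P_{-1/2+i\nu}(\cosh t)$, expand it through the Hilbert-model correlation formula into the Harish--Chandra series whose leading coefficient is the $c$-function (which gives $W_+$ and, by Stirling, its $\Psi^{-3/2}$ expansion), bound the higher coefficients uniformly for $\nu\ge\eta$, and apply $\partial_t^2+\Lambda^2$ termwise. Two small corrections: after the $e^{t/2}$ renormalization the $n$-th term behaves like $e^{t(-n\pm i\nu)}$, so the multiplier is exactly $n^2\mp 2in\nu$ with no $-\tfrac12$ shift; and your ``main obstacle'' is mild on the real axis, since $c(\nu)\Gamma_{2m}(\nu)=c(\nu)\frac{(1/2)_m}{m!}\prod_{j=1}^{m}\frac{i\nu-j+1/2}{i\nu-j}$ with $\frac{(1/2)_m}{m!}\le 1$ and each product factor of modulus $<1$, so that $|c(\nu)\Gamma_{2m}(\nu)|\le|c(\nu)|$ uniformly in $m$.
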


\end{cBoxB}

This theorem is proved in Proposition~\ref{prop:wave_asymptotics_spherical_means}.

\subsection{Outline of the paper}

In Section~\ref{sec:Standard-properties-of}, we recall standard
facts and fix the notation used throughout the paper. We describe
hyperbolic surfaces in the group-theoretic setting of $\mathrm{SL}_{2}(\mathbb{R})$,
with $M=\Gamma\backslash\mathrm{SL}_{2}(\mathbb{R})$ identified with
the unit tangent bundle of the hyperbolic surface $\mathcal{N}=\bar{\Gamma}\backslash\mathbb{H}$.
In Subsection~\ref{subsec:Decomposition-of-}, we recall the decomposition
of $L^{2}(M)$ into unitary irreducible representations, including
the spherical principal and complementary series, as well as the discrete
series. This is summarized in Theorem~\ref{thm:We-have-a}.

In Section~\ref{sec:Computation-with-the}, we explain how the hyperbolic
vector field $x\frac{\partial}{\partial x}$ on $\mathbb{R}$ is conjugated
to the quantum harmonic oscillator. The corresponding statement, given
in Corollary~\ref{cor:With-setting-}, is one of the main tools used
in the proof of the main theorem.

In Section~\ref{sec:Principal-series}, we study the operators $X,U,S$
restricted to the spherical representations. Starting from the standard
unitary representation on $L^{2}(S^{1})$, we use two stereographic
charts from $S^{1}$ to $\mathbb{R}$ in Subsection~\ref{subsec:Stereographic-projection-and}.
In these charts, the operator $X$ becomes the linear vector field
$x\frac{\partial}{\partial x}$ plus some constant. Using the results
of the previous section, we show in Lemma~\ref{lem:Tprime_pm} that
$X$ is conjugated to a harmonic oscillator in each chart, and we
derive the corresponding estimates in the following subsections. In
Subsection~\ref{subsec:Merging-the-two}, we merge the two projective
realizations. A key point is that the two realizations are defined
on disjoint dense domains; this leads, in Subsection~\ref{subsec:Hilbert-space-realization-from},
to a natural Hilbert completion of their direct sum. The main result
of this section is Theorem~\ref{thm:summary_spectral}. The complementary
series are treated in the same way in Subsection~\ref{subsec:Complementary-series}.
At the threshold $\mu=1/4$, discussed in Subsection~\ref{subsec:Threshold-case-mu=00003D1/4},
the two projective realizations coalesce and form a Jordan block of
size two.

In preparation for the trace formula, we prove in Subsection~\ref{subsec:Trace-of--1}
that the flat trace and the spectral trace coincide in the present
setting.

In Section~\ref{sec:Discrete-series}, we study the operators $X,U,S$
restricted to the discrete series. The analysis is parallel to the
spherical case, but simpler: no merging of two projective realizations
is needed, and there is no threshold phenomenon.

In Section~\ref{sec:Global-Hilbert-model}, we assemble the previous
results and state the global Hilbert model in Theorem~\ref{thm:global_Hilbert_model_XUS}.

We conclude the paper with two applications of the main theorem. First,
in Section~\ref{sec:Atiyah=002013Bott-and-Selberg}, taking the trace
of the operator identity gives the Selberg trace formula. Second,
in Section~\ref{sec:Spherical-averages}, using the spectral decomposition
of the propagator $e^{tX}$ provided by the main theorem, we show
that the large-time behavior of the spherical mean operator $\mathcal{L}_{t}$
is governed by the shifted wave group $e^{\pm it\sqrt{\Delta-\frac{1}{4}}}.$

\section*{Acknowledgements}

This work was partially supported by the ANR project ADYCT grant ANR-20-CE40-0017. 

\section{\protect\label{sec:Standard-properties-of}Standard properties of
$\mathrm{SL}_{2}(\mathbb{R})$ and $\Gamma\backslash\mathrm{SL}_{2}(\mathbb{R})$}

This section fixes the conventions and normalizations used throughout
the paper. The reader familiar with the representation theory of $SL_{2}(\mathbb{R})$
may skip directly to Theorem~\ref{thm:We-have-a}; however, the explicit
formulas for $X,U,S,\Omega,\Theta$ and $N_{\pm}$ will be used in
the construction of the Hilbert models below.

We let $\Gamma\subset\mathrm{SL}_{2}(\mathbb{R})$ be a discrete cocompact
subgroup. Its projection to $\mathrm{PSL}_{2}(\mathbb{R})$ will define
the closed hyperbolic surface $\mathcal{N}$ used below. Since the
left action of $\Gamma$ on $\mathrm{SL}_{2}(\mathbb{R})$ is free
and properly discontinuous, the quotient $\Gamma\backslash\mathrm{SL}_{2}(\mathbb{R})$
is a compact smooth $3$-manifold. Throughout the paper we assume
that $-\mathrm{Id}\in\Gamma$, so that only even $K$-types occur.

For general references on $\mathrm{SL}_{2}(\mathbb{R})$ and its representations,
see for instance \cite[p.~56, p.~124]{carter}, \cite[Ch.~2]{bump1998automorphic},
and also \cite{Knapp_representation_86,helgason}.

\subsection{Geometry of $\mathrm{SL}_{2}(\mathbb{R})$}

Recall that 
\[
\mathrm{SL}_{2}(\mathbb{R})=\{g\in M_{2}(\mathbb{R})\ ;\ \det g=1\}.
\]
By the Iwasawa decomposition, every $g\in\mathrm{SL}_{2}(\mathbb{R})$
can be written uniquely in the form 
\begin{equation}
g=\begin{pmatrix}1 & x\\
0 & 1
\end{pmatrix}\begin{pmatrix}y^{1/2} & 0\\
0 & y^{-1/2}
\end{pmatrix}\begin{pmatrix}\cos\theta & \sin\theta\\
-\sin\theta & \cos\theta
\end{pmatrix},\qquad x\in\mathbb{R},\quad y>0,\quad\theta\in\mathbb{R}/2\pi\mathbb{Z}.\label{eq:iwasawa}
\end{equation}
Thus 
\[
(x,y,\theta)\in\mathbb{R}\times(0,\infty)\times S^{1}
\]
give global product coordinates on $\mathrm{SL}_{2}(\mathbb{R})$.

Setting $z=x+iy\in\mathbb{H}$, and writing $k(\theta)\in SO(2)$
for the last factor in \eqref{eq:iwasawa}, the map 
\begin{equation}
\mathrm{SL}_{2}(\mathbb{R})\longrightarrow\mathbb{H}\times SO(2),\qquad g\longmapsto(z,k(\theta)),\label{eq:product_SL2R}
\end{equation}
is a smooth diffeomorphism. Equivalently, using the Cayley transform
\[
w=\frac{z-i}{z+i},
\]
one may identify the upper half-plane $\mathbb{H}$ with the unit
disk $\mathbb{D}$, and hence identify 
\[
\mathrm{SL}_{2}(\mathbb{R})\simeq\mathbb{D}\times SO(2).
\]

\subsection{\protect\label{subsec:Lie-algebra}Lie algebra $\mathfrak{sl}_{2}(\mathbb{R})$}

We briefly recall the conventions used for the Lie algebra of $\mathrm{SL}_{2}(\mathbb{R})$.

\begin{figure}[h]
\begin{centering}
{\small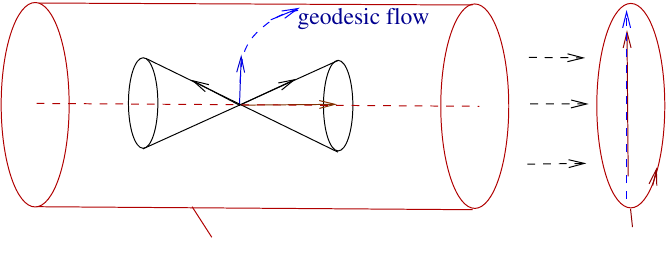}{\small\par}
\par\end{centering}
\caption{The geodesic flow on the Poincaré disk is generated by $X\in\mathfrak{sl}_{2}(\mathbb{R})$.}
\end{figure}

The Lie algebra of $\mathrm{SL}_{2}(\mathbb{R})$ is 
\[
\mathfrak{sl}_{2}(\mathbb{R}):=T_{\mathrm{Id}}\mathrm{SL}_{2}(\mathbb{R})=\left\{ V\in\mathrm{Mat}_{2\times2}(\mathbb{R})\ ;\ \mathrm{Tr}(V)=0\right\} .
\]
Indeed, for any $V\in\mathrm{Mat}_{2\times2}(\mathbb{R})$, one has
\[
\det(e^{V})=e^{\mathrm{Tr}(V)}.
\]
Thus $e^{V}\in\mathrm{SL}_{2}(\mathbb{R})$ if and only if $\mathrm{Tr}(V)=0$.

We use the following basis of $\mathfrak{sl}_{2}(\mathbb{R})$: 
\begin{equation}
X=\frac{1}{2}\begin{pmatrix}1 & 0\\
0 & -1
\end{pmatrix},\qquad U=\begin{pmatrix}0 & 0\\
1 & 0
\end{pmatrix},\qquad S=\begin{pmatrix}0 & 1\\
0 & 0
\end{pmatrix}.\label{eq:X,U,S}
\end{equation}
The corresponding Lie brackets are 
\begin{equation}
[X,U]\eq{\ref{eq:X,U,S}}-U,\qquad[X,S]\eq{\ref{eq:X,U,S}}S,\qquad[S,U]\eq{\ref{eq:X,U,S}}2X.\label{eq:commutt}
\end{equation}

Every $V\in\mathfrak{sl}_{2}(\mathbb{R})$ defines a left-invariant
vector field on $\mathrm{SL}_{2}(\mathbb{R})$, still denoted by $V$,
by 
\begin{equation}
V(g)=gV,\qquad g\in\mathrm{SL}_{2}(\mathbb{R}).\label{eq:V_g}
\end{equation}
Its flow is given by right multiplication: 
\begin{equation}
\phi_{V}^{t}(g):=g\,e^{tV}.\label{eq:def_Phi_t_V}
\end{equation}
Indeed, 
\[
\left.\frac{d}{dt}\right|_{t=0}\phi_{V}^{t}(g)\eq{\ref{eq:def_Phi_t_V}}gV\eq{\ref{eq:V_g}}V(g).
\]
Since these vector fields are left-invariant, they descend to the
left quotient $\Gamma\backslash\mathrm{SL}_{2}(\mathbb{R})$.

In the Iwasawa coordinates \eqref{eq:iwasawa}, one may write 
\[
g\eq{\ref{eq:iwasawa}}e^{xS}e^{(\ln y)X}e^{\theta J},
\]
where 
\begin{equation}
J:=\begin{pmatrix}0 & 1\\
-1 & 0
\end{pmatrix}\eq{\ref{eq:X,U,S}}S-U.\label{eq:def_J}
\end{equation}
The element $J$ generates the compact subgroup $\mathrm{SO}(2)\subset\mathrm{SL}_{2}(\mathbb{R})$.

\subsection{Casimir operator $\Omega$}

The Casimir operator is the second-order differential operator on
$\mathrm{SL}_{2}(\mathbb{R})$ defined by 
\begin{equation}
\Omega:=-X^{2}-\frac{1}{2}SU-\frac{1}{2}US.\label{eq:def_casimir}
\end{equation}
It belongs to the center of the universal enveloping algebra. Equivalently,
with our convention for left-invariant vector fields, it satisfies
\[
[\Omega,V]=0,\qquad\forall V\in\mathfrak{sl}_{2}(\mathbb{R}).
\]

Indeed, using \eqref{eq:commutt}, one may rewrite 
\[
\Omega\eq{\ref{eq:def_casimir},\ref{eq:commutt}}-X^{2}-X-US.
\]
Then 
\[
[\Omega,X]=-[US,X]=-\bigl(U[S,X]+[U,X]S\bigr)=-\bigl(-US+US\bigr)=0.
\]
Similarly, one checks that 
\[
[\Omega,U]=[\Omega,S]=0.
\]

\begin{cBoxB}{}
\begin{lem}
In the Iwasawa coordinates $(x,y,\theta)$ defined in \eqref{eq:iwasawa},
one has, see for instance \cite[Ch.~2]{bump1998automorphic}, 
\begin{equation}
\Omega=\Delta_{\mathbb{H}}+y\frac{\partial}{\partial\theta}\frac{\partial}{\partial x},\label{eq:C_coordinates}
\end{equation}
where 
\begin{equation}
\Delta_{\mathbb{H}}:=-y^{2}\left(\frac{\partial^{2}}{\partial x^{2}}+\frac{\partial^{2}}{\partial y^{2}}\right)\label{eq:def_Laplacian_H}
\end{equation}
is the positive hyperbolic Laplacian on $\mathbb{H}$. In particular,
on $K$-invariant functions, i.e. on functions independent of $\theta$,
the Casimir operator reduces to the positive Laplacian: 
\[
\Omega_{\mid K\text{-inv.}}=\Delta_{\mathbb{H}}.
\]
\end{lem}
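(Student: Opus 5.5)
The plan is a direct computation: express the three left-invariant vector fields $X,U,S$ as first-order differential operators in the coordinates $(x,y,\theta)$ of \eqref{eq:iwasawa}, then substitute them into the simplified form $\Omega=-X^{2}-X-US$ obtained above from \eqref{eq:def_casimir} and \eqref{eq:commutt}.

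\textbf{Step 1: a general formula for a left-invariant field.} By \eqref{eq:def_Phi_t_V}, for $V\in\mathfrak{sl}_{2}(\mathbb{R})$ the field is $\frac{d}{ds}\big|_{s=0}$ of $g\mapsto g\,e^{sV}$, where $g=e^{xS}e^{(\ln y)X}e^{\theta J}$. Write $k=e^{\theta J}$ and move $e^{sV}$ to the left of $k$:
\[
k\,e^{sV}=e^{sW}k,\qquad W:=kVk^{-1}.
\]
Decompose $W$ in the Iwasawa basis of $\mathfrak{sl}_{2}(\mathbb{R})$:
\[
W=a(\theta)X+b(\theta)S+c(\theta)J .
\]
Each piece is then pushed into the corresponding coordinate.
\begin{itemize}
\item The $J$-component just shifts $\theta$, so it contributes $c\,\partial_{\theta}$.
\item The $X$-component shifts $\ln y$, so it contributes $a\,y\partial_{y}$.
\item The $S$-component must pass through $e^{(\ln y)X}$. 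Since $e^{(\ln y)X}\,S\,e^{-(\ln y)X}=yS$ (because $[X,S]=S$), it shifts $x$ at rate $yb$ and contributes $b\,y\partial_{x}$.
\end{itemize}
Hence every left-invariant field has the form
\[
V=b(\theta)\,y\partial_{x}+a(\theta)\,y\partial_{y}+c(\theta)\,\partial_{\theta}.
\]

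\textbf{Step 2: the coefficients for $X$, $U$, $S$.} Computing $kXk^{-1}$, $kUk^{-1}$ and $kSk^{-1}$ explicitly gives trigonometric coefficients in $\cos2\theta$ and $\sin2\theta$. As a check, $J$ itself must give exactly $\partial_{\theta}$, and since $J=S-U$ by \eqref{eq:def_J}, the coefficients of $S$ and $U$ must differ by precisely $\partial_{\theta}$.

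\textbf{Step 3: expand the Casimir.} Substitute into $\Omega=-X^{2}-X-US$. The terms to track are:
\begin{itemize}
\item the second-order terms, which should combine via $\cos^{2}+\sin^{2}=1$ into $-y^{2}(\partial_{x}^{2}+\partial_{y}^{2})+y\,\partial_{\theta}\partial_{x}$;
\item the $\partial_{\theta}^{2}$ terms, which must cancel;
\item the first-order terms, which arise from differentiating the coefficients $y$ and the trigonometric functions of $\theta$; these must cancel, in particular against the extra $-X$.
\end{itemize}
The result is \eqref{eq:C_coordinates}.

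\textbf{Main obstacle and cross-check.} The only real difficulty is sign and normalization bookkeeping: the orientation of $k(\theta)$ and the factor $\tfrac12$ in $X$ both affect the trigonometric coefficients. To guard against errors, I would check the final formula independently. It must commute with the left-invariant field $\partial_{\theta}=J$, which it visibly does. Moreover, on functions independent of $\theta$ it must reduce to the $\mathrm{SL}_{2}(\mathbb{R})$-invariant second-order operator on $\mathbb{H}=G/K$ that annihilates constants. That operator is a multiple of $\Delta_{\mathbb{H}}$, and the multiple is fixed to be $1$ by testing on $y^{s}$, where $\Omega y^{s}=s(1-s)y^{s}$. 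The final claim of the lemma, $\Omega_{\mid K\text{-inv.}}=\Delta_{\mathbb{H}}$, then follows immediately by dropping the $\partial_{\theta}$ term.
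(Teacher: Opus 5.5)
Your proposal is correct and follows essentially the same route as the paper. The paper also writes the fields in $(x,y,\theta)$ as $X=-y\sin(2\theta)\partial_x+y\cos(2\theta)\partial_y+\frac12\sin(2\theta)\partial_\theta$, $S=y\cos(2\theta)\partial_x+y\sin(2\theta)\partial_y+\sin^2\theta\,\partial_\theta$ and $U=S-\partial_\theta$. This is exactly what your $\operatorname{Ad}_{k}$ decomposition $V=b\,y\partial_x+a\,y\partial_y+c\,\partial_\theta$ produces, and the paper then substitutes these into the Casimir. The cancellations you anticipate do occur: the $\partial_\theta^2$ terms vanish, and the first-order terms cancel once the $y\partial_y$ hidden in $(y\partial_y)^2$ is counted. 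Your invariance and $y^{s}$ cross-check is an extra safeguard not in the paper, but it only fixes the $\theta$-independent part, so the coefficient of $y\partial_\theta\partial_x$ still rests on the direct computation.
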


\end{cBoxB}

\begin{proof}
We use the Iwasawa coordinates 
\[
g=e^{xS}e^{(\ln y)X}e^{\theta J}.
\]
With our convention $V(g)=gV$, the vector field generated by $V\in\mathfrak{sl}_{2}(\mathbb{R})$
is obtained by differentiating $ge^{tV}$ at $t=0$. Comparing with
the derivatives with respect to $x,y,\theta$, one obtains 
\begin{align*}
X & =-y\sin(2\theta)\partial_{x}+y\cos(2\theta)\partial_{y}+\frac{1}{2}\sin(2\theta)\partial_{\theta},\\
S & =y\cos(2\theta)\partial_{x}+y\sin(2\theta)\partial_{y}+\sin^{2}\theta\,\partial_{\theta},\\
U & =y\cos(2\theta)\partial_{x}+y\sin(2\theta)\partial_{y}-\cos^{2}\theta\,\partial_{\theta}.
\end{align*}
Substituting these expressions into (\ref{eq:def_casimir}), we get
(\ref{eq:C_coordinates}).
\end{proof}

\subsection{\protect\label{subsec:Hyperbolic-dynamics-on}Hyperbolic dynamics
on $\mathrm{SL}_{2}(\mathbb{R})$}

Let $\mathbf{g}$ be any left-invariant Riemannian metric on $\mathrm{SL}_{2}(\mathbb{R})$.
We consider the flow generated by $X$, namely 
\[
\phi_{X}^{t}(g)=ge^{tX}.
\]
For $g\in\mathrm{SL}_{2}(\mathbb{R})$, the vector $U_{g}$ is $gU$.
Hence 
\[
d\phi_{X}^{t}(U_{g})=gUe^{tX}
\]
as a tangent vector at $ge^{tX}$. By left-invariance of the metric,
its norm is computed after left-translation back to the identity:
\begin{align*}
\|d\phi_{X}^{t}(U_{g})\|_{\mathbf{g}} & =\left\Vert (ge^{tX})^{-1}gUe^{tX}\right\Vert _{\mathbf{g}}\\
 & =\left\Vert e^{-tX}Ue^{tX}\right\Vert _{\mathbf{g}}=\left\Vert e^{-t\operatorname{ad}_{X}}U\right\Vert _{\mathbf{g}}\\
 & \eq{\ref{eq:commutt}}e^{t}\|U\|_{\mathbf{g}}.
\end{align*}
Thus $U$ spans the unstable direction 
\[
E_{u}(g):=\mathbb{R}U_{g}
\]
with Lyapunov exponent $+1$.

Similarly, using $[X,S]=S$, one obtains 
\[
\|d\phi_{X}^{t}(S_{g})\|_{\mathbf{g}}=e^{-t}\|S\|_{\mathbf{g}}.
\]
Thus $S$ spans the stable direction 
\[
E_{s}(g):=\mathbb{R}S_{g}.
\]
Altogether, 
\[
T\mathrm{SL}_{2}(\mathbb{R})=\mathbb{R}X\oplus E_{s}\oplus E_{u}.
\]
Since the vector fields are left-invariant, this splitting descends
to the compact quotient $\Gamma\backslash\mathrm{SL}_{2}(\mathbb{R})$;
there it is the Anosov splitting of the geodesic flow.
\begin{rem}
Equivalently, the hyperbolic behavior can be read directly at the
group level. For all $u,s,t\in\mathbb{R}$, 
\[
e^{uU}e^{tX}=e^{tX}e^{ue^{t}U},\qquad e^{sS}e^{tX}=e^{tX}e^{se^{-t}S}.
\]
\end{rem}

The Anosov one-form $\mathcal{A}$ is the left-invariant one-form
defined by 
\[
\mathcal{A}(X)=1,\qquad\mathcal{A}(E_{u}\oplus E_{s})=0.
\]
It descends to $\Gamma\backslash\mathrm{SL}_{2}(\mathbb{R})$. From
$[S,U]=2X$, we get 
\[
(d\mathcal{A})(U,S)=U(\mathcal{A}(S))-S(\mathcal{A}(U))-\mathcal{A}([U,S])\eq{\ref{eq:commutt}}2.
\]
Therefore $d\mathcal{A}$ is non-degenerate on $E_{u}\oplus E_{s}=\mathrm{Span}(U,S)$.
Hence $\mathcal{A}$ is a contact one-form.
\begin{rem}
In summary, the commutation relations 
\[
[U,X]=U,\qquad[S,X]=-S
\]
express the hyperbolicity of the flow, while 
\[
[S,U]=2X
\]
expresses the preserved contact structure. 
\end{rem}

\subsection{Hyperbolic dynamics on $M=\Gamma\backslash\mathrm{SL}_{2}(\mathbb{R})$}

Let $\Gamma<\mathrm{SL}_{2}(\mathbb{R})$ be a discrete cocompact
subgroup, and set 
\begin{equation}
M:=\Gamma\backslash\mathrm{SL}_{2}(\mathbb{R}).\label{eq:def_M}
\end{equation}
Then $M$ is a compact smooth $3$-manifold.

For simplicity, we assume from now on that 
\begin{equation}
-\mathrm{Id}\in\Gamma,\label{eq:minusId_in_Gamma}
\end{equation}
and that the image $\bar{\Gamma}\subset\mathrm{PSL}_{2}(\mathbb{R})$
of $\Gamma$ is torsion-free. We define the base surface by 
\begin{equation}
\mathcal{N}:=M/\mathrm{SO}(2)\eq{\ref{eq:def_M}}\Gamma\backslash\mathrm{SL}_{2}(\mathbb{R})/\mathrm{SO}(2)\eq{\ref{eq:product_SL2R}}\bar{\Gamma}\backslash\mathbb{H}^{2}.\label{eq:def_N}
\end{equation}
Thus $\mathcal{N}$ is a smooth compact hyperbolic surface, and $M$
identifies with its unit cotangent bundle $S^{*}\mathcal{N}$.
\begin{rem}
The assumptions above are made only to keep the geometric presentation
simple. The condition $-\mathrm{Id}\in\Gamma$ implies that only even
$K$-types occur on $M$. The torsion-free assumption on $\bar{\Gamma}$
ensures that $\mathcal{N}=\bar{\Gamma}\backslash\mathbb{H}^{2}$ is
a smooth surface rather than a hyperbolic orbifold with conical points. 
\end{rem}

For every $V\in\mathfrak{sl}_{2}(\mathbb{R})$, the flow on $\mathrm{SL}_{2}(\mathbb{R})$
defined by 
\[
\phi_{V}^{t}(g)\eq{\ref{eq:def_Phi_t_V}}ge^{tV}
\]
commutes with the left action of $\Gamma$. Hence it descends to a
well-defined flow on 
\[
M=\Gamma\backslash\mathrm{SL}_{2}(\mathbb{R}),
\]
still denoted by $\phi_{V}^{t}$, and generated by the vector field
$V$.

In particular, the flow $(\phi_{X}^{t})_{t\in\mathbb{R}}$ generated
by $X$ is a contact Anosov flow on $M$. Under the identification
$M\simeq S^{*}\mathcal{N}$, it is the geodesic flow of the hyperbolic
surface $\mathcal{N}$; see for instance \cite{katok_hasselblatt}.

\begin{figure}[H]
\begin{centering}
\scalebox{0.9}[0.9]{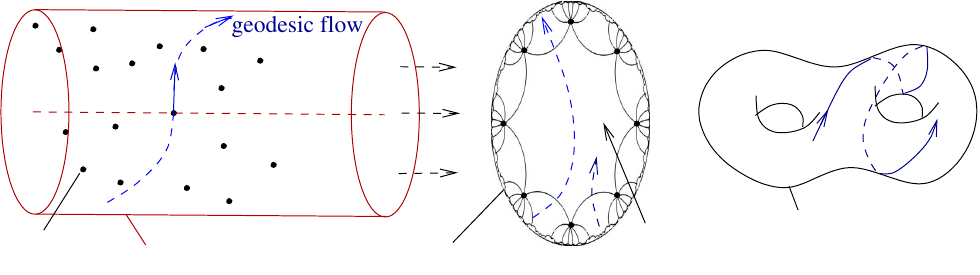}
\par\end{centering}
\caption{Geodesic flow on a surface $\mathcal{N}=\Gamma\backslash\mathrm{SL}_{2}(\mathbb{R})/\mathrm{SO}(2)$
with constant negative curvature.}
\end{figure}

\subsection{\protect\label{subsec:Decomposition-of-}Decomposition of $L^{2}(M)$
into unitary irreducible representations}

We have set 
\[
M\eq{\ref{eq:def_M}}\Gamma\backslash\mathrm{SL}_{2}(\mathbb{R}).
\]
We now recall the standard decomposition of $L^{2}(M)$ into unitary
irreducible representations of $\mathrm{SL}_{2}(\mathbb{R})$ for
the right regular action. The final result is stated in Theorem~\ref{thm:We-have-a};
the reader familiar with this decomposition may go directly to that
theorem. A classical reference is \cite{gelfand2014integral}.

\subsubsection{Definition of some operators}

For $V\in\mathfrak{sl}_{2}(\mathbb{R})$, the left-invariant vector
field $V$ on $M$ is viewed as a first-order differential operator
on $C^{\infty}(M)$. Since the corresponding flow is given by right
multiplication and preserves the invariant measure on $M$, these
operators are skew-adjoint on $L^{2}(M)$ up to the usual factor $i$.

We define 
\begin{equation}
\Theta:=-iJ\eq{\ref{eq:def_J}}i(U-S).\label{eq:def_Theta}
\end{equation}
In Iwasawa coordinates, $J=\partial_{\theta}$, hence 
\[
\Theta=-i\partial_{\theta}.
\]
Therefore $\Theta$ is essentially self-adjoint on $L^{2}(M)$. Since
$-\mathrm{Id}\in\Gamma$, functions on $M$ are invariant under $\theta\mapsto\theta+\pi$.
Consequently only even Fourier modes occur, and 
\[
\mathrm{Spec}(\Theta)=2\mathbb{Z}.
\]
Thus we obtain the orthogonal decomposition 
\begin{equation}
L^{2}(M)=\widehat{\bigoplus}_{l\in2\mathbb{Z}}L_{l}^{2}(M),\label{eq:decomp_L2M}
\end{equation}
where 
\begin{equation}
L_{l}^{2}(M):=\ker(\Theta-l\mathrm{Id})=\left\{ u\in L^{2}(M)\ ;\ \Theta u=lu\right\} .\label{eq:def_Ln}
\end{equation}

The Casimir operator $\Omega$ defined in \eqref{eq:def_casimir}
is essentially self-adjoint on $L^{2}(M)$. Since $\Omega$ commutes
with $\Theta$, each $K$-type 
\[
L_{l}^{2}(M),\qquad l\in2\mathbb{Z},
\]
is invariant under $\Omega$. We denote by $\Omega_{l}$ the restriction
of $\Omega$ to $L_{l}^{2}(M)$.

Using \eqref{eq:C_coordinates}, and the relation 
\[
\Theta=-i\partial_{\theta},
\]
we have, on $L_{l}^{2}(M)$, 
\[
\partial_{\theta}=il.
\]
Therefore 
\begin{equation}
\Omega_{l}\eq{\ref{eq:C_coordinates}}\Delta_{\mathbb{H}}+l\,y\left(i\frac{\partial}{\partial x}\right).\label{eq:C}
\end{equation}
Here $\Delta_{\mathbb{H}}$ is the positive hyperbolic Laplacian defined
in \eqref{eq:def_Laplacian_H}.

For fixed $l$, the space $L_{l}^{2}(M)$ may be viewed as the space
of $L^{2}$-sections of a complex line bundle over the compact surface
\[
\mathcal{N}=\bar{\Gamma}\backslash\mathbb{H}^{2}.
\]
In this interpretation, $\Omega_{l}$ is an elliptic self-adjoint
operator. Indeed, its principal symbol is 
\[
\mathrm{Symb}(\Omega_{l})=\mathrm{Symb}(\Delta_{\mathbb{H}})=\|\xi\|_{g_{\mathbb{H}}}^{2}=y^{2}(\xi_{x}^{2}+\xi_{y}^{2}).
\]
Hence, by elliptic spectral theory on compact manifolds, the spectrum
of $\Omega_{l}$ is real, discrete, and each eigenspace is finite
dimensional.

Thus we have the Hilbert orthogonal decomposition 
\begin{equation}
L_{l}^{2}(M)=\widehat{\bigoplus}_{\mu\in\mathrm{Spec}(\Omega_{l})}\mathcal{H}_{\mu,l},\label{eq:def_L2_n}
\end{equation}
where 
\begin{equation}
\mathcal{H}_{\mu,l}:=\ker(\Omega-\mu\mathrm{Id})\cap\ker(\Theta-l\mathrm{Id})=\left\{ u\in L^{2}(M)\ ;\ \Omega u=\mu u,\ \Theta u=lu\right\} .\label{eq:def_H_mu_n}
\end{equation}
The spaces $\mathcal{H}_{\mu,l}$ are finite-dimensional. In Figure~\ref{fig:irreps},
each such space is represented by a point of a lattice indexed by
the two quantum numbers $(\mu,l)$.

We now introduce the raising and lowering operators. Set 
\begin{equation}
Y:=\frac{1}{2}(U+S),\label{eq:def_Y}
\end{equation}
and 
\begin{equation}
N_{\pm}:=X\pm iY.\label{eq:def_N+-}
\end{equation}
Using \eqref{eq:def_Theta} and \eqref{eq:commutt}, one checks that
\[
[\Theta,N_{\pm}]=\pm2N_{\pm}.
\]
Thus, if $u\in L_{l}^{2}(M)$, namely $\Theta u=lu$, then 
\[
\Theta(N_{+}u)=([\Theta,N_{+}]+N_{+}\Theta)u=(l+2)N_{+}u,
\]
and similarly $\Theta(N_{-}u)=(l-2)N_{-}u$. Hence 
\begin{equation}
N_{\pm}:L_{l}^{2}(M)\longrightarrow L_{l\pm2}^{2}(M).\label{eq:N_pm_maps}
\end{equation}
These operators are the usual $K$-type raising and lowering operators.

Conversely, from \eqref{eq:def_Theta} and \eqref{eq:def_N+-}, we
recover 
\begin{equation}
X=\frac{1}{2}(N_{+}+N_{-}),\qquad U=\frac{i}{2}(N_{-}-N_{+}-\Theta),\qquad S=\frac{i}{2}(N_{-}-N_{+}+\Theta).\label{eq:X_from_R}
\end{equation}
In terms of $N_{+}$, $N_{-}$ and $\Theta$, the Casimir operator
becomes 
\begin{equation}
\Omega\eq{\ref{eq:def_casimir}}-\left(\frac{\Theta}{2}\right)^{2}-\frac{1}{2}N_{+}N_{-}-\frac{1}{2}N_{-}N_{+}=-\left(\frac{\Theta}{2}\right)^{2}-X^{2}-Y^{2}.\label{eq:Casimir_RL}
\end{equation}
Moreover, 
\begin{equation}
[N_{+},N_{-}]=\Theta.\label{eq:Com_RL}
\end{equation}
Combining \eqref{eq:Casimir_RL} and \eqref{eq:Com_RL}, we obtain
\begin{equation}
N_{+}N_{-}=-\Omega-\frac{1}{4}\Theta(\Theta-2),\label{eq:NpNm}
\end{equation}
and 
\begin{equation}
N_{-}N_{+}=-\Omega-\frac{1}{4}\Theta(\Theta+2).\label{eq:NmNp}
\end{equation}
It will be convenient to use the complex conjugation operator 
\begin{equation}
C:\begin{cases}
L^{2}(M) & \longrightarrow L^{2}(M),\\
v & \longmapsto\overline{v}.
\end{cases}\label{eq:def_Complex_conjugate_C}
\end{equation}
It is an anti-unitary involution. Since $C$ changes the sign of the
fiber Fourier mode and commutes with the Casimir operator, it induces
an anti-unitary isomorphism 
\[
C:\mathcal{H}_{\mu,l}\longrightarrow\mathcal{H}_{\mu,-l}.
\]
Moreover, 
\begin{equation}
C\Theta=-\Theta C,\qquad CN_{\pm}=N_{\mp}C,\qquad C\Omega=\Omega C.\label{eq:conjugation}
\end{equation}

\begin{rem}
For fixed $l\in2\mathbb{Z}$, the operator $\Omega_{l}$ on $L_{l}^{2}(M)$
may be interpreted as a magnetic Laplacian on the base surface $\mathcal{N}$.
In this interpretation, the operators $N_{\pm}$ in \eqref{eq:def_N+-}
play the role of covariant Cauchy--Riemann operators: they shift
the $K$-type by $\pm2$ and correspond to the holomorphic and anti-holomorphic
directions in $\mathrm{Span}(X,Y)^{\mathbb{C}}$. For a fixed $l$,
the spectrum of $\Omega_{l}$ is represented on a vertical line in
Figure~\ref{fig:irreps}. 
\end{rem}

\subsubsection{Construction of irreducible representations}

For $l\in2\mathbb{Z}$, the operators 
\[
N_{\pm}:L_{l}^{2}(M)\longrightarrow L_{l\pm2}^{2}(M)
\]
shift the $K$-type by $\pm2$. Viewed on the base surface $\mathcal{N}$,
these are first-order elliptic operators of Cauchy--Riemann type
between the corresponding line bundles. Hence their kernels are finite-dimensional.

For $l\in2\mathbb{N}^{*}$, we define the holomorphic lowest-weight
spaces by 
\begin{equation}
\mathcal{H}_{\mathrm{hol},+}(l):=\ker\left(N_{-}:L_{l}^{2}(M)\longrightarrow L_{l-2}^{2}(M)\right),\label{eq:H_hol_plus}
\end{equation}
and the anti-holomorphic highest-weight spaces by 
\begin{equation}
\mathcal{H}_{\mathrm{hol},-}(-l):=\ker\left(N_{+}:L_{-l}^{2}(M)\longrightarrow L_{-l+2}^{2}(M)\right).\label{eq:H_hol_minus}
\end{equation}

\begin{cBoxB}{}

\begin{prop}[Riemann--Roch multiplicities]
\label{prop:RR_discrete_multiplicities} Assume that $-\mathrm{Id}\in\Gamma$,
so that only even $K$-types occur. For $l\in2\mathbb{N}^{*}$, set
\begin{equation}
m_{l}:=\dim\mathcal{H}_{\mathrm{hol}^{+}}(l).\label{eq:ml}
\end{equation}
Then 
\begin{equation}
m_{l}=\dim\mathcal{H}_{\mathrm{hol}^{-}}(-l)=\begin{cases}
g, & l=2,\\[0.2em]
(l-1)(g-1), & l\ge4,
\end{cases}\label{eq:m_l}
\end{equation}
where $g$ is the genus of $\mathcal{N}$. Moreover, by Gauss--Bonnet,
\[
g-1=\frac{\operatorname{Area}(\mathcal{N})}{4\pi}.
\]
\end{prop}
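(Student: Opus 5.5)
The plan is to identify $\mathcal{H}_{\mathrm{hol},+}(l)$, $l=2q$, with the space of holomorphic sections of $K_{\mathcal{N}}^{\otimes q}$, and then compute its dimension by Riemann--Roch.

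\textbf{Step 1: from $\Gamma$-equivariant functions to forms.} A function $u\in L^{2}_{l}(M)$ with $\Theta u=lu$ lifts to $\mathrm{SL}_{2}(\mathbb{R})$ as $u(x,y,\theta)=e^{il\theta}F(z)$. Here $F$ is a function on $\mathbb{H}$ with an automorphy factor of weight $l$ under $\Gamma$. The factor depends on the sign convention for $\theta$, and in the end we choose the normalization that makes the following true: $f(z):=y^{-l/2}F(z)$ satisfies
\[
f(\gamma z)=(cz+d)^{l}f(z),
\]
so that $f(z)\,dz^{\otimes q}$ is a $\bar\Gamma$-invariant $q$-differential. (The sign $(cz+d)^{l}$ is harmless since $l$ is even, consistent with $-\mathrm{Id}\in\Gamma$.)

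\textbf{Step 2: $N_{-}$ as a Cauchy--Riemann operator.} Using the coordinate expressions of $X,S,U$ from the Lemma on $\Omega$, we write $N_{-}=X-iY$ explicitly. On $e^{il\theta}F$ it should become
\[
N_{-}\bigl(e^{il\theta}F\bigr)=c\,e^{i(l-2)\theta}\,y^{1+l/2}\,\partial_{\bar z}\bigl(y^{-l/2}F\bigr)
\]
for some nonzero constant $c$, which is the standard Maass lowering operator. This is a routine computation but sign-sensitive, and it is the step most prone to error. If the computation instead produces the raising operator, we exchange the roles of $\pm$ via the conjugation $C$ in \eqref{eq:conjugation}.

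It follows that $\ker N_{-}\cap L^{2}_{l}(M)$ corresponds exactly to holomorphic $f$ of weight $l$, i.e. to $H^{0}(\mathcal{N},K_{\mathcal{N}}^{\otimes q})$. Regularity is automatic, since $N_{-}$ is elliptic on the line bundle, so $L^{2}$ kernel elements are smooth. The anti-holomorphic statement follows by applying $C$: by \eqref{eq:conjugation}, $C$ maps $\ker N_{-}\cap L^{2}_{l}$ anti-unitarily onto $\ker N_{+}\cap L^{2}_{-l}$, so the two dimensions agree.

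\textbf{Step 3: Riemann--Roch.} For $\deg K_{\mathcal{N}}^{\otimes q}=q(2g-2)$, Riemann--Roch gives
\[
h^{0}(K^{q})-h^{0}(K^{1-q})=q(2g-2)+1-g.
\]
For $q=1$ this yields $h^{0}(K)=g$, since $h^{0}(\mathcal{O})=1$. For $q\ge2$ we have $\deg K^{1-q}<0$ (as $g\ge2$), so $h^{0}(K^{1-q})=0$ and
\[
h^{0}(K^{q})=(2q-1)(g-1)=(l-1)(g-1).
\]

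\textbf{Step 4: Gauss--Bonnet.} With curvature $-1$, Gauss--Bonnet gives $-\mathrm{Area}=2\pi\chi=2\pi(2-2g)$, hence $g-1=\mathrm{Area}/(4\pi)$.

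The main obstacle is Step 2: pinning down the conventions so that $N_{-}$, rather than $N_{+}$, annihilates the positive weight corresponding to holomorphic forms.
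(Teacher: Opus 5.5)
Your proposal is correct and follows the paper's route. You identify $\mathcal{H}_{\mathrm{hol},+}(2q)$ with $H^{0}(\mathcal{N},K_{\mathcal{N}}^{\otimes q})$, apply Riemann--Roch (using $h^{0}(K_{\mathcal{N}}^{\otimes(1-q)})=0$ for $q\ge2$), use $C$ for the anti-holomorphic side, and conclude with Gauss--Bonnet. The one difference is that the paper simply cites the identification (Cosentino, Flaminio--Forni), whereas you derive it through the Maass lowering operator. In the paper's conventions ($g=e^{xS}e^{(\ln y)X}e^{\theta J}$, $\Theta=-i\partial_{\theta}$), your Step~2 formula holds with constant $-2i$, and the automorphy factor of $f=y^{-l/2}F$ is indeed $(cz+d)^{l}$, so no exchange of $\pm$ is needed.
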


\end{cBoxB}

\begin{proof}
The identification of the discrete-series multiplicities with spaces
of holomorphic powers of the canonical line bundle is standard; see
for example \cite[Theorem~1.1]{cosentino2005holder}, following Flaminio--Forni
\cite{flaminio_forni_2003}. More precisely, if $K_{\mathcal{N}}$
denotes the canonical line bundle of the compact Riemann surface $\mathcal{N}$,
then for $l=2q$, $q\ge1$, one has 
\[
\mathcal{H}_{\mathrm{hol},+}(2q)\simeq H^{0}(\mathcal{N},K_{\mathcal{N}}^{\otimes q}).
\]
Thus 
\[
m_{2q}=\dim H^{0}(\mathcal{N},K_{\mathcal{N}}^{\otimes q}).
\]

For $q=1$, this is the space of holomorphic one-forms, hence 
\[
m_{2}=\dim H^{0}(\mathcal{N},K_{\mathcal{N}})=g.
\]

For $q\ge2$, Riemann--Roch applied to $K_{\mathcal{N}}^{\otimes q}$
gives, see for instance \cite[Chapter~16]{forster1981riemann}, 
\[
h^{0}(K_{\mathcal{N}}^{\otimes q})-h^{0}(K_{\mathcal{N}}^{\otimes(1-q)})=\deg(K_{\mathcal{N}}^{\otimes q})+1-g.
\]
Since $q\ge2$, the line bundle $K_{\mathcal{N}}^{\otimes(1-q)}$
has negative degree, and therefore 
\[
h^{0}(K_{\mathcal{N}}^{\otimes(1-q)})=0.
\]
Moreover 
\[
\deg(K_{\mathcal{N}}^{\otimes q})=q\,\deg(K_{\mathcal{N}})=q(2g-2).
\]
Hence 
\[
h^{0}(K_{\mathcal{N}}^{\otimes q})=q(2g-2)+1-g=(2q-1)(g-1).
\]
Since $l=2q$, this becomes 
\[
m_{l}=(l-1)(g-1),\qquad l\ge4.
\]

The equality 
\[
\dim\mathcal{H}_{\mathrm{hol},+}(l)=\dim\mathcal{H}_{\mathrm{hol},-}(-l)
\]
follows from the anti-unitary complex conjugation $C$, which exchanges
$\mathcal{H}_{\mathrm{hol},+}(l)$ and $\mathcal{H}_{\mathrm{hol},-}(-l)$.

Finally, since $\mathcal{N}$ has constant curvature $-1$, Gauss--Bonnet
gives 
\[
-\mathrm{Area}(\mathcal{N})=2\pi\chi(\mathcal{N})=2\pi(2-2g).
\]
Thus 
\[
\mathrm{Area}(\mathcal{N})=4\pi(g-1),
\]
which proves the last identity. 
\end{proof}
\begin{cBoxB}{}

\begin{lem}
\label{lem:norm_Npm} Let $k\in2\mathbb{Z}$, let $\mu\in\mathrm{Spec}(\Omega_{k})$,
and let $0\neq v_{k}\in\mathcal{H}_{\mu,k}$. Then 
\begin{equation}
\|N_{+}v_{k}\|_{L^{2}}^{2}=\|v_{k}\|_{L^{2}}^{2}\left(\mu+\frac{1}{4}k(k+2)\right),\label{eq:Rvn}
\end{equation}
and 
\begin{equation}
\|N_{-}v_{k}\|_{L^{2}}^{2}=\|v_{k}\|_{L^{2}}^{2}\left(\mu+\frac{1}{4}k(k-2)\right).\label{eq:Lvn}
\end{equation}
\end{lem}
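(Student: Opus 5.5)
We compute $\|N_{\pm}v_{k}\|^{2}=\langle N_{\pm}v_{k},N_{\pm}v_{k}\rangle$ by moving one factor to the other side of the inner product. This requires the $L^{2}(M)$-adjoint of $N_{\pm}$, after which the resulting quadratic operator is evaluated with \eqref{eq:NpNm}--\eqref{eq:NmNp}.

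\textbf{Step 1: the adjoint of $N_{\pm}$.} The vector fields $X$ and $Y=\frac{1}{2}(U+S)$ generate measure-preserving flows on $M$. They are therefore formally skew-adjoint on $C^{\infty}(M)$, i.e. $X^{\dagger}=-X$ and $Y^{\dagger}=-Y$. Consequently
\[
N_{+}^{\dagger}=(X+iY)^{\dagger}=-X+iY=-N_{-},\qquad N_{-}^{\dagger}=-N_{+}.
\]
To justify the integration by parts, note that $v_{k}$ is an eigenfunction of the elliptic operator $\Omega_{k}$ on a compact surface, hence smooth. Since $M$ is closed, there are no boundary terms.

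\textbf{Step 2: the $N_{+}$ identity.} Using Step 1,
\[
\|N_{+}v_{k}\|^{2}=\langle N_{+}v_{k},N_{+}v_{k}\rangle=-\langle v_{k},N_{-}N_{+}v_{k}\rangle .
\]
By \eqref{eq:NmNp}, $N_{-}N_{+}=-\Omega-\frac{1}{4}\Theta(\Theta+2)$. On $v_{k}$ we have $\Omega v_{k}=\mu v_{k}$ and $\Theta v_{k}=kv_{k}$, so
\[
-N_{-}N_{+}v_{k}=\left(\mu+\tfrac{1}{4}k(k+2)\right)v_{k}.
\]
This gives \eqref{eq:Rvn}.

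\textbf{Step 3: the $N_{-}$ identity.} In the same way, $\|N_{-}v_{k}\|^{2}=-\langle v_{k},N_{+}N_{-}v_{k}\rangle$. Applying \eqref{eq:NpNm} on the joint eigenspace gives
\[
-N_{+}N_{-}v_{k}=\left(\mu+\tfrac{1}{4}k(k-2)\right)v_{k},
\]
which is \eqref{eq:Lvn}.

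\textbf{Main point to check.} The argument is routine, and the only delicate step is the adjoint relation $N_{\pm}^{\dagger}=-N_{\mp}$. Its sign must be consistent with the conventions for $N_{\pm}$, and an error there would flip the signs in both formulas. As a consistency check, both right-hand sides are squared norms and so must be nonnegative. For instance, for $k=0$ and $\mu\geq0$ the formulas give $\|N_{\pm}v_{0}\|^{2}=\mu\|v_{0}\|^{2}$, matching $-X^{2}-Y^{2}=\Omega$ from \eqref{eq:Casimir_RL} on $K$-invariant vectors.
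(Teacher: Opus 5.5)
Your proof is correct and follows the paper's own argument step for step. Skew-adjointness of $X$ and $Y$ gives $N_{\pm}^{\dagger}=-N_{\mp}$, smoothness of $v_{k}$ justifies the integration by parts, and evaluating \eqref{eq:NmNp} and \eqref{eq:NpNm} on the joint eigenspace $\mathcal{H}_{\mu,k}$ yields the two identities.
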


\end{cBoxB}

\begin{proof}
By elliptic regularity, $v_{k}$ is smooth, so all computations below
are legitimate. We use the unitarity of the right regular representation.
Since the real vector fields $X,U,S$ preserve the invariant measure
on $M$, they are skew-adjoint on $L^{2}(M)$, on their natural domains.
Hence 
\[
Y\eq{\ref{eq:def_Y}}\frac{1}{2}(U+S)
\]
is also skew-adjoint. Therefore, on the algebraic core, 
\begin{equation}
N_{+}^{\dagger}\eq{\ref{eq:def_N+-}}-N_{-},\qquad N_{-}^{\dagger}=-N_{+}.\label{eq:adjoint_Npm}
\end{equation}
Using \eqref{eq:adjoint_Npm}, we get 
\begin{align*}
\|N_{+}v_{k}\|_{L^{2}}^{2} & =\langle N_{+}v_{k},N_{+}v_{k}\rangle_{L^{2}}=\langle v_{k},N_{+}^{\dagger}N_{+}v_{k}\rangle_{L^{2}}\\
 & \eq{\ref{eq:adjoint_Npm}}-\langle v_{k},N_{-}N_{+}v_{k}\rangle_{L^{2}}\\
 & \eq{\ref{eq:NmNp}}\left\langle v_{k},\left(\Omega+\frac{1}{4}\Theta(\Theta+2)\right)v_{k}\right\rangle _{L^{2}}.
\end{align*}
Since $v_{k}\in\mathcal{H}_{\mu,k}$, one has 
\[
\Omega v_{k}=\mu v_{k},\qquad\Theta v_{k}=kv_{k}.
\]
Therefore 
\[
\|N_{+}v_{k}\|_{L^{2}}^{2}=\|v_{k}\|_{L^{2}}^{2}\left(\mu+\frac{1}{4}k(k+2)\right),
\]
which proves \eqref{eq:Rvn}.

Similarly, 
\begin{align*}
\|N_{-}v_{k}\|_{L^{2}}^{2} & =\langle v_{k},N_{-}^{\dagger}N_{-}v_{k}\rangle_{L^{2}}\eq{\ref{eq:adjoint_Npm}}-\langle v_{k},N_{+}N_{-}v_{k}\rangle_{L^{2}}\\
 & \eq{\ref{eq:NpNm}}\left\langle v_{k},\left(\Omega+\frac{1}{4}\Theta(\Theta-2)\right)v_{k}\right\rangle _{L^{2}}\\
 & =\|v_{k}\|_{L^{2}}^{2}\left(\mu+\frac{1}{4}k(k-2)\right).
\end{align*}
This proves \eqref{eq:Lvn}. 
\end{proof}
The norm identities \eqref{eq:Rvn} and \eqref{eq:Lvn} describe the
possible ladders of $K$-types. Let $l\in2\mathbb{Z}$, $\mu\in\mathrm{Spec}(\Omega_{l})$,
and let $0\neq v_{l}\in\mathcal{H}_{\mu,l}$. By applying successively
$N_{+}$ and $N_{-}$, one obtains one of the following types of irreducible
ladders.
\begin{enumerate}
\item \textbf{Trivial representation.} If 
\[
N_{-}v_{l}=0\qquad\text{and}\qquad N_{+}v_{l}=0,
\]
then \eqref{eq:Rvn} and \eqref{eq:Lvn} give 
\[
\mu+\frac{1}{4}l(l+2)=0,\qquad\mu+\frac{1}{4}l(l-2)=0.
\]
Subtracting the two identities gives $l=0$, hence $\mu=0$. From
\eqref{eq:C}, this implies that $v_{l}$ is constant on $M$. Thus
\[
\mathcal{H}^{\mathrm{triv}}:=\mathrm{Span}\{\mathbf{1}\}
\]
is the one-dimensional trivial representation.
\item \textbf{Holomorphic discrete series.} Suppose that the ladder has
a lowest $K$-type. Thus, after replacing $v_{l}$ by a nonzero lowest-weight
vector in the same ladder, we may assume 
\[
N_{-}v_{l}=0,\qquad N_{+}v_{l}\neq0.
\]
Then 
\begin{equation}
\mu\eq{\ref{eq:Lvn}}-\frac{1}{4}l(l-2).\label{eq:N-0}
\end{equation}
Using \eqref{eq:Rvn}, we get 
\[
\|N_{+}v_{l}\|_{L^{2}}^{2}\eq{\ref{eq:N-0},\ref{eq:Rvn}}l\,\|v_{l}\|_{L^{2}}^{2}.
\]
Hence $l>0$, and since $l\in2\mathbb{Z}$, one has $l\ge2$. Moreover,
for every $k\in\mathbb{N}$, $N_{+}^{k}v_{l}\neq0$. We denote by
\begin{equation}
\mathcal{H}_{l}^{\mathrm{d.s.},+}(v_{l}):=\widehat{\mathrm{Span}}\{N_{+}^{k}v_{l}\ ;\ k\in\mathbb{N}\}\label{eq:def_ds_plus}
\end{equation}
the corresponding irreducible unitary representation. It is the holomorphic
discrete series.
\item \textbf{Anti-holomorphic discrete series.} Suppose that the ladder
has a highest $K$-type. Thus, after replacing $v_{l}$ by a nonzero
highest-weight vector in the same ladder, we may assume 
\[
N_{+}v_{l}=0,\qquad N_{-}v_{l}\neq0.
\]
Then 
\begin{equation}
\mu\eq{\ref{eq:Rvn}}-\frac{1}{4}l(l+2).\label{eq:N+0}
\end{equation}
Using \eqref{eq:Lvn}, we get 
\[
\|N_{-}v_{l}\|_{L^{2}}^{2}\eq{\ref{eq:N+0},\ref{eq:Lvn}}(-l)\,\|v_{l}\|_{L^{2}}^{2}.
\]
Hence $l<0$, and since $l\in2\mathbb{Z}$, one has $l\le-2$. Moreover,
for every $k\in\mathbb{N}$, $N_{-}^{k}v_{l}\neq0$. We denote by
\begin{equation}
\mathcal{H}_{l}^{\mathrm{d.s.},-}(v_{l}):=\widehat{\mathrm{Span}}\{N_{-}^{k}v_{l}\ ;\ k\in\mathbb{N}\}\label{eq:def_ds_minus}
\end{equation}
the corresponding irreducible unitary representation. It is the anti-holomorphic
discrete series.
\item \textbf{Spherical principal or complementary series.} Suppose that
the ladder is infinite in both directions, namely no nonzero vector
in the ladder is killed by $N_{+}$ or $N_{-}$. Then the weights
in the ladder cover all even $K$-types. Applying \eqref{eq:Rvn}
and \eqref{eq:Lvn} to all of them gives 
\[
\mu+\frac{1}{4}k(k+2)>0,\qquad\mu+\frac{1}{4}k(k-2)>0,\qquad\forall k\in2\mathbb{Z}.
\]
Taking $k=0$, for instance, gives $\mu>0$.

In this case the ladder contains a $K$-invariant vector. Thus, for
\[
\mu\in\mathrm{Spec}\left(\Delta_{/L^{2}(\mathcal{N})}\right)\cap(0,\infty)
\]
and for any nonzero $v\in\mathcal{H}_{\mu,0}$, we define 
\begin{equation}
\mathcal{H}_{\mu}^{\mathrm{sph}}(v):=\widehat{\mathrm{Span}}\left\{ N_{-}^{k}v,\ N_{+}^{k}v\ ;\ k\in\mathbb{N}\right\} .\label{eq:def_H_sph}
\end{equation}
This is an irreducible unitary representation. Since it contains a
$K$-type of weight $0$, it is called spherical. It belongs to the
complementary series if 
\[
0<\mu<\frac{1}{4},
\]
and to the principal series if 
\[
\mu\ge\frac{1}{4}.
\]

\end{enumerate}
These representations are the natural candidates for the irreducible
summands in the decomposition of $L^{2}(M)$; the precise statement
is given in Theorem~\ref{thm:We-have-a}. See Figure~\ref{fig:irreps}.

\begin{figure}[h]
\begin{centering}
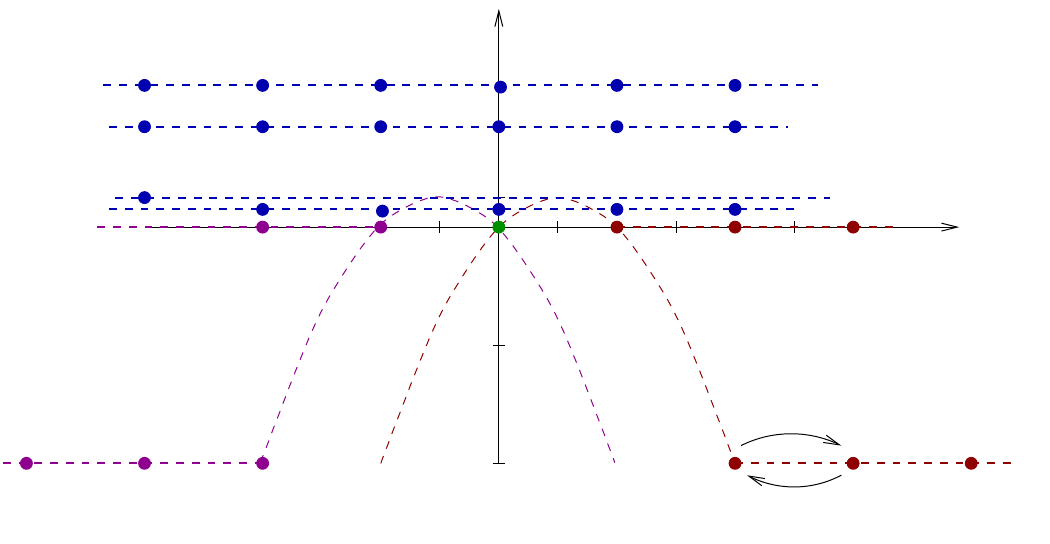
\par\end{centering}
\caption{\protect\label{fig:irreps} Each point at position $(l,\mu)$ represents
the finite-dimensional joint eigenspace $\mathcal{H}_{\mu,l}$ defined
in \eqref{eq:def_H_mu_n}. Since we assume $-\mathrm{Id}\in\Gamma$,
only even $K$-types $l\in2\mathbb{Z}$ occur. For fixed $l$, the
points on the vertical line of abscissa $l$ represent the spectral
decomposition of $L_{l}^{2}(M)$ in \eqref{eq:def_L2_n}. The points
connected by a full line or a semi-line generate an irreducible $\mathrm{SL}_{2}(\mathbb{R})$-representation
in $L^{2}(M)$: the trivial representation, a holomorphic or anti-holomorphic
discrete series, or a spherical principal/complementary series.}
\end{figure}

\subsubsection{Orthonormal basis for spherical principal series}

Let $\mu\geq\frac{1}{4}$, and define $\lambda\geq0$ by 
\begin{equation}
\mu=\lambda^{2}+\frac{1}{4}.\label{eq:def_lambda}
\end{equation}
For $l\in2\mathbb{Z}$, set 
\begin{equation}
c_{\pm,l}:=\frac{1}{2}(l\pm1)-i\lambda.\label{eq:def_cn_dn}
\end{equation}
Then 
\[
|c_{\pm,l}|^{2}=\lambda^{2}+\frac{1}{4}(l\pm1)^{2}=\mu+\frac{1}{4}l(l\pm2).
\]
Therefore, for every $v_{l}\in\mathcal{H}_{\mu,l}$, 
\begin{equation}
\|N_{\pm}v_{l}\|_{L^{2}}^{2}\eq{\ref{eq:Rvn},\ref{eq:Lvn}}|c_{\pm,l}|^{2}\|v_{l}\|_{L^{2}}^{2}.\label{eq:N+-vn}
\end{equation}

Fix one irreducible spherical representation 
\[
\mathcal{H}_{\mu}^{\mathrm{sph}}\subset L^{2}(M)
\]
as in \eqref{eq:def_H_sph}, and assume here that $\mu\ge\frac{1}{4}$.
Choose a nonzero $K$-invariant vector 
\[
v\in\mathcal{H}_{\mu,0}\cap\mathcal{H}_{\mu}^{\mathrm{sph}},
\]
and set 
\[
\varphi_{0}:=\frac{v}{\|v\|}.
\]

We define recursively vectors 
\[
\varphi_{k}\in\mathcal{H}_{\mu,2k},\qquad k\in\mathbb{Z},
\]
by 
\begin{equation}
\varphi_{k+1}:=\frac{1}{ic_{+,2k}}N_{+}\varphi_{k},\qquad k\ge0,\label{eq:def_phi_k_plus}
\end{equation}
and 
\begin{equation}
\varphi_{k-1}:=\frac{1}{i\overline{c_{-,2k}}}N_{-}\varphi_{k},\qquad k\le0.\label{eq:def_phi_k_minus}
\end{equation}
The denominators do not vanish. By \eqref{eq:N+-vn}, the recursion
preserves the norm, hence 
\[
\|\varphi_{k}\|_{L^{2}}=1,\qquad k\in\mathbb{Z}.
\]
Since $\varphi_{k}\in\mathcal{H}_{\mu,2k}$, and the spaces $\mathcal{H}_{\mu,2k}$
are mutually orthogonal for distinct $k$, we obtain an orthonormal
family. Since the representation is spherical and irreducible, this
family spans the whole representation: 
\begin{equation}
(\varphi_{k})_{k\in\mathbb{Z}}\text{ is an orthonormal basis of }\mathcal{H}_{\mu}^{\mathrm{sph}}.\label{eq:def_phi_k}
\end{equation}

By construction, and using $N_{+}^{\dagger}=-N_{-}$, the action of
$N_{\pm}$ on this basis is 
\begin{equation}
N_{+}\varphi_{k}=ic_{+,2k}\varphi_{k+1},\qquad N_{-}\varphi_{k}=i\overline{c_{-,2k}}\varphi_{k-1},\qquad k\in\mathbb{Z}.\label{eq:action_N_pm_phi_k}
\end{equation}

\subsubsection{\protect\label{subsec:Orthonormal-basis-for}Orthonormal basis for
discrete series}

For the discrete series, it is enough to describe the holomorphic
case. Indeed, for $l\in2\mathbb{N}^{*}$ and 
\[
0\neq v\in\mathcal{H}_{\mathrm{hol},+}(l)\qquad\text{that is,}\qquad N_{-}v=0,
\]
complex conjugation gives an anti-unitary isomorphism 
\[
C:\mathcal{H}_{l}^{\mathrm{d.s.},+}(v)\longrightarrow\mathcal{H}_{-l}^{\mathrm{d.s.},-}(Cv),
\]
which maps the holomorphic discrete series onto the anti-holomorphic
one.

We shall use the notation 
\[
\mathcal{H}_{l}^{\mathrm{d.s.},+}:=\widehat{\mathrm{Span}}\left\{ N_{+}^{k}v\ ;\ v\in\mathcal{H}_{\mathrm{hol},+}(l),\ k\in\mathbb{N}\right\} ,
\]
and 
\[
\mathcal{H}_{-l}^{\mathrm{d.s.},-}:=\widehat{\mathrm{Span}}\left\{ N_{-}^{k}v\ ;\ v\in\mathcal{H}_{\mathrm{hol},-}(-l),\ k\in\mathbb{N}\right\} .
\]
Then 
\[
\mathcal{H}_{-l}^{\mathrm{d.s.},-}=C\,\mathcal{H}_{l}^{\mathrm{d.s.},+}.
\]

Let us now fix $0\neq v\in\mathcal{H}_{\mathrm{hol},+}(l)$. Recall
that, for the holomorphic discrete series of lowest $K$-type $l$,
one has 
\[
\mu\eq{\ref{eq:N-0}}-\frac{1}{4}l(l-2)=\frac{l}{2}\left(1-\frac{l}{2}\right).
\]
For $k\in\mathbb{N}$, set 
\[
c_{l+2k}:=\sqrt{(l+k)(k+1)}.
\]
Then 
\[
c_{l+2k}^{2}=(l+k)(k+1)=\mu+\frac{1}{4}(l+2k)(l+2k+2).
\]

We normalize 
\[
\varphi_{0}:=\frac{v}{\|v\|},
\]
and define recursively 
\[
\varphi_{k+1}:=\frac{1}{c_{l+2k}}N_{+}\varphi_{k},\qquad k\in\mathbb{N}.
\]
Thus 
\[
\varphi_{k}\in\mathcal{H}_{\mu,l+2k}.
\]
Moreover, using \eqref{eq:Rvn}, 
\[
\|\varphi_{k+1}\|^{2}=\frac{1}{c_{l+2k}^{2}}\|N_{+}\varphi_{k}\|^{2}\eq{\ref{eq:Rvn}}\|\varphi_{k}\|^{2}.
\]
Hence, by induction, 
\[
\|\varphi_{k}\|=1,\qquad k\in\mathbb{N}.
\]
Since the $K$-types $\mathcal{H}_{\mu,l+2k}$ are pairwise orthogonal,
the family $(\varphi_{k})_{k\in\mathbb{N}}$ is an orthonormal basis
of $\mathcal{H}_{l}^{\mathrm{d.s.},+}(v)$.

By construction, 
\begin{equation}
N_{+}\varphi_{k}=c_{l+2k}\varphi_{k+1},\qquad k\in\mathbb{N}.\label{eq:action_Nplus_discrete_basis}
\end{equation}
Using $N_{+}^{\dagger}=-N_{-}$, we also get 
\begin{equation}
N_{-}\varphi_{0}=0,\qquad N_{-}\varphi_{k+1}=-c_{l+2k}\varphi_{k},\qquad k\in\mathbb{N}.\label{eq:action_Nminus_discrete_basis}
\end{equation}

\subsubsection{Final result}

We have obtained the following orthogonal decomposition of $L^{2}(M)$.

\begin{cBoxB}{}

\begin{thm}[Decomposition of $L^{2}(M)$]
\label{thm:We-have-a} \cite[Chap.~1, Sect.~2]{gelfand1969representation},
\cite[Thm.~2.7.1, p.~241]{bump1998automorphic}. The right regular
representation of $\mathrm{SL}_{2}(\mathbb{R})$ on 
\[
L^{2}(M),\qquad M=\Gamma\backslash\mathrm{SL}_{2}(\mathbb{R}),
\]
decomposes as the Hilbert orthogonal direct sum 
\begin{equation}
L^{2}(M)=\mathcal{H}^{\mathrm{triv}}\widehat{\oplus}\mathcal{H}^{\mathrm{sph}}\widehat{\oplus}\mathcal{H}^{\mathrm{d.s.}}.\label{eq:decomp_L2M-1}
\end{equation}
Here 
\[
\mathcal{H}^{\mathrm{triv}}=\mathbb{C}\boldsymbol{1}
\]
is the trivial representation.

The spherical part is 
\begin{equation}
\mathcal{H}^{\mathrm{sph}}:=\widehat{\bigoplus}_{\mu\in\mathrm{Spec}(\Delta_{/L_{0}^{2}(\mathcal{N})})}\mathcal{H}_{\mu}^{\mathrm{sph}},\label{eq:H_sph}
\end{equation}
where 
\begin{equation}
\mathcal{H}_{\mu}^{\mathrm{sph}}:=\widehat{\mathrm{Span}}\left\{ N_{-}^{k}v,\ N_{+}^{k}v\ ;\ v\in\mathcal{H}_{\mu,0},\ k\in\mathbb{N}\right\} .\label{eq:def_H_mu_sph_full}
\end{equation}
Thus $\mathcal{H}_{\mu}^{\mathrm{sph}}$ is the full spherical isotypic
component with multiplicity 
\[
\dim\mathcal{H}_{\mu,0}=\dim\ker(\Delta-\mu).
\]
Each nonzero $v\in\mathcal{H}_{\mu,0}$ generates one irreducible
spherical representation $\mathcal{H}_{\mu}^{\mathrm{sph}}(v)$. It
belongs to the complementary series if 
\[
0<\mu<\frac{1}{4},
\]
and to the principal series if 
\[
\mu\ge\frac{1}{4}.
\]

The discrete-series part is 
\begin{equation}
\mathcal{H}^{\mathrm{d.s.}}:=\widehat{\bigoplus}_{l\in2\mathbb{N}^{*}}\left(\mathcal{H}_{l}^{\mathrm{d.s.},+}\oplus\mathcal{H}_{-l}^{\mathrm{d.s.},-}\right),\label{eq:def_H_ds}
\end{equation}
where 
\begin{equation}
\mathcal{H}_{l}^{\mathrm{d.s.},+}:=\widehat{\mathrm{Span}}\left\{ N_{+}^{k}v\ ;\ v\in\mathcal{H}_{\mathrm{hol}^{+}}(l),\ k\in\mathbb{N}\right\} ,\label{eq:def_H_ds_plus_full}
\end{equation}
and 
\begin{equation}
\mathcal{H}_{-l}^{\mathrm{d.s.},-}:=\widehat{\mathrm{Span}}\left\{ N_{-}^{k}v\ ;\ v\in\mathcal{H}_{\mathrm{hol}^{-}}(-l),\ k\in\mathbb{N}\right\} .\label{eq:def_H_ds_minus_full}
\end{equation}
Each nonzero vector in $\mathcal{H}_{\mathrm{hol}^{+}}(l)$, respectively
in $\mathcal{H}_{\mathrm{hol}^{-}}(-l)$, generates one irreducible
holomorphic, respectively anti-holomorphic, discrete-series representation.
The multiplicity is 
\[
m_{l}=\dim\mathcal{H}_{\mathrm{hol}^{+}}(l)=\dim\mathcal{H}_{\mathrm{hol}^{-}}(-l).
\]
\end{thm}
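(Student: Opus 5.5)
We will assemble the decomposition from the joint spectral theory of the commuting essentially self-adjoint operators $\Theta$ and $\Omega$ together with the ladder analysis carried out just above. First, by \eqref{eq:decomp_L2M} and \eqref{eq:def_L2_n}, $L^{2}(M)$ is the Hilbert orthogonal sum of the finite-dimensional joint eigenspaces $\mathcal{H}_{\mu,l}$. The linear span of all these spaces is therefore dense. It consists of smooth functions by elliptic regularity, and it is stable under $N_{\pm}$ by \eqref{eq:N_pm_maps} together with $[\Omega,N_{\pm}]=0$. The plan is to partition the index set $\{(\mu,l)\}$ into ladders, and within each ladder to organize the vectors into the four types listed above.

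The key step is to fix $\mu$ and study the chain of finite-dimensional spaces $\cdots\to\mathcal{H}_{\mu,l-2}\to\mathcal{H}_{\mu,l}\to\mathcal{H}_{\mu,l+2}\to\cdots$ under $N_{+}$, and the reverse chain under $N_{-}$. By Lemma~\ref{lem:norm_Npm}, $N_{+}$ restricted to $\mathcal{H}_{\mu,l}$ is a scalar multiple of an isometry, with factor $(\mu+\frac14 l(l+2))^{1/2}$. Hence it is either injective or identically zero. Since $N_{-}N_{+}$ is a scalar on $\mathcal{H}_{\mu,l}$ by \eqref{eq:NmNp}, $N_{+}$ is an isomorphism $\mathcal{H}_{\mu,l}\to\mathcal{H}_{\mu,l+2}$ whenever that scalar is nonzero, with inverse proportional to $N_{-}$.

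We then split into cases according to the sign and vanishing of $\mu+\frac14 l(l\pm2)$ along $l\in2\mathbb{Z}$:
\begin{itemize}
\item If $\mu>0$ and $\mu$ is not of the form $-\frac14 l(l-2)$, then no scalar vanishes. All $\mathcal{H}_{\mu,l}$ are then isomorphic to $\mathcal{H}_{\mu,0}=\ker(\Delta-\mu)$, using the identification of $K$-invariant functions with functions on $\mathcal{N}$ and $\Omega_{0}=\Delta$. Choosing an orthonormal basis of $\mathcal{H}_{\mu,0}$ and applying the normalized ladder \eqref{eq:def_phi_k_plus}--\eqref{eq:def_phi_k_minus} gives orthogonal irreducible summands, because orthogonality is preserved by scalar isometries. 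This yields \eqref{eq:def_H_mu_sph_full} with the stated multiplicity. The complementary case $0<\mu<\frac14$ works the same way, with $c_{\pm,l}$ replaced by the real square roots.
\item If $\mu=0$, positivity forces $\mathcal{H}_{0,l}=0$ for $l\neq0$, and $\mathcal{H}_{0,0}=\mathbb{C}\mathbf{1}$.
\item If $\mu\le0$ with $\mu=-\frac14 l_{0}(l_{0}-2)$ and $l_{0}\geq2$, then the identities \eqref{eq:Rvn}--\eqref{eq:Lvn} force $\mathcal{H}_{\mu,l}=0$ for $|l|<l_{0}$, since the squared norms would be negative. The surviving $K$-types split as $l\geq l_{0}$ and $l\le-l_{0}$. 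Here $\mathcal{H}_{\mu,l_{0}}=\mathcal{H}_{\mathrm{hol},+}(l_{0})$ and $N_{+}$ is injective upward, which gives \eqref{eq:def_H_ds_plus_full}. The negative side is handled by applying the conjugation $C$ from \eqref{eq:conjugation}.
\end{itemize}

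It remains to exclude any other values of $\mu$. Suppose $\mu<0$ is not of the form $-\frac14 l(l-2)$. Starting from a nonzero vector and lowering with $N_{-}$, the norms stay positive until a factor $\mu+\frac14 k(k-2)$ becomes negative. This is a contradiction unless the factor hits zero exactly, which forces the discrete-series value. The same argument with $N_{+}$ handles the other direction. Irreducibility of each ladder follows because any closed invariant subspace is $\Theta$-stable, hence spanned by its $K$-type components, and each component is one-dimensional in a single ladder.

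The main obstacle is analytic rather than algebraic. We must justify that the abstract Hilbert sum of the ladders exhausts $L^{2}(M)$, and that the domains of $N_{\pm}$ are handled correctly on each finite-dimensional smooth space. Both follow from the density of $\bigoplus\mathcal{H}_{\mu,l}$ and from elliptic regularity of $\Omega_{l}$. The multiplicity statement $m_{l}$ is then taken directly from Proposition~\ref{prop:RR_discrete_multiplicities}.
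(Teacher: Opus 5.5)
\textbf{There is a genuine error in your case analysis at $\mu=0$.} Write $l=2q$. At $\mu=0$ the factors in \eqref{eq:Rvn}--\eqref{eq:Lvn} are $q(q+1)$ and $q(q-1)$, and both are $\ge 0$ for every $q\in\mathbb{Z}$. So positivity forces nothing at $\mu=0$, and your second bullet (``$\mathcal{H}_{0,l}=0$ for $l\neq0$'') is false.

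The factors vanish exactly for $N_{-}$ on $l\in\{0,2\}$ and for $N_{+}$ on $l\in\{0,-2\}$. The $\mu=0$ chain therefore splits into three ladders:
\begin{itemize}
\item the trivial one at $l=0$;
\item a holomorphic discrete series for $l\ge2$, starting at $\mathcal{H}_{0,2}=\ker(N_{-}|_{L^{2}_{2}(M)})=\mathcal{H}_{\mathrm{hol},+}(2)$;
\item its conjugate for $l\le-2$.
\end{itemize}
Since $m_{2}=g\ge2$ by Proposition~\ref{prop:RR_discrete_multiplicities}, your second bullet would delete the weight-$2$ discrete series (the holomorphic one-forms). The statement includes it, since $2\in2\mathbb{N}^{*}$.

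Conversely, your third bullet admits $\mu=0$, $l_{0}=2$, and then asserts $\mathcal{H}_{0,l}=0$ for $|l|<2$. That means $\mathcal{H}_{0,0}=0$, which removes the constants. So your two bullets contradict each other at $\mu=0$. The negativity argument of the third bullet is only valid for $l_{0}\ge4$, i.e. for $\mu<0$.

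\textbf{Fix and comparison with the paper.} Treat $\mu=0$ on its own as the trivial representation plus the weight-$\pm2$ discrete series, using $\ker\Delta=\mathbb{C}\mathbf{1}$ on the connected surface $\mathcal{N}$. Then restrict the third bullet to $\mu<0$.

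The paper's ladder classification does not hit this problem because it is organized by which of $N_{\pm}$ annihilates an extremal vector, not by the value of $\mu$. A vector with $N_{-}v=0\neq N_{+}v$ at $l=2$ falls automatically into the holomorphic case with $\mu=-\frac14 l(l-2)=0$. For completeness of the decomposition, the paper then relies on the cited references.

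Apart from the $\mu=0$ slip, your per-$\mu$ organization is a sound and somewhat more explicit route. Its key observation is that $N_{\pm}$ restricted to $\mathcal{H}_{\mu,l}$ is a scalar multiple of an isometry into the adjacent $K$-type. This directly gives orthogonality of the irreducible summands generated by an orthonormal basis of the bottom space, and hence the multiplicity $\dim\mathcal{H}_{\mu,0}$. The paper leaves this bookkeeping implicit.
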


\end{cBoxB}

\section{\protect\label{sec:Computation-with-the}Intertwining the hyperbolic
and elliptic generators}

\subsection{\protect\label{subsec:Spectrum-of-the}The quantum harmonic oscillator}

In Section~\ref{sec:Principal-series}, we will use standard facts
about the harmonic oscillator and the metaplectic representation.
We recall here the notation and normalizations.

On $L^{2}(\mathbb{R}_{x})$, we first define the operators on $\mathcal{S}(\mathbb{R})$
\begin{equation}
\hat{x}:=\mathcal{M}_{x},\qquad\hat{\xi}:=-i\frac{\partial}{\partial x}.\label{eq:def_p_hat}
\end{equation}
Thus 
\begin{equation}
[\hat{x},\hat{\xi}]=i\mathrm{Id}.\label{eq:commut_x_xi}
\end{equation}
We define the annihilation and creation operators by 
\begin{equation}
\tilde{a}^{-}:=\frac{1}{\sqrt{2}}\left(\hat{x}+i\hat{\xi}\right),\qquad\tilde{a}^{+}:=\frac{1}{\sqrt{2}}\left(\hat{x}-i\hat{\xi}\right),\label{eq:def_a_pm_tilde}
\end{equation}
and set 
\begin{equation}
\tilde{A}:=\tilde{a}^{+}\tilde{a}^{-},\qquad\hat{H}:=\frac{1}{2}\left(\hat{x}^{2}+\hat{\xi}^{2}\right)=\tilde{A}+\frac{1}{2}.\label{eq:def_A_H}
\end{equation}
The operator $\hat{H}$ is the quantum harmonic oscillator. The operators
$\tilde{a}^{-}$ and $\tilde{a}^{+}$ satisfy 
\begin{equation}
[\tilde{a}^{-},\tilde{a}^{+}]=\mathrm{Id},\label{eq:commut_a_minus_a_plus}
\end{equation}
and $\tilde{a}^{+}$ is the adjoint of $\tilde{a}^{-}$.

The operator $\tilde{A}$ is self-adjoint, positive, and has simple
discrete spectrum 
\[
\mathrm{Spec}(\tilde{A})=\mathbb{N}.
\]
We denote by $(\vartheta_{n})_{n\in\mathbb{N}}$ the normalized Hermite
basis of eigenvectors: 
\begin{equation}
\tilde{A}\vartheta_{n}=n\vartheta_{n},\qquad n\in\mathbb{N}.\label{eq:def_A}
\end{equation}
The normalization is chosen so that 
\begin{equation}
\tilde{a}^{+}\vartheta_{n}=\sqrt{n+1}\,\vartheta_{n+1},\qquad\tilde{a}^{-}\vartheta_{n}=\sqrt{n}\,\vartheta_{n-1},\label{eq:def_a+_a-}
\end{equation}
with the convention $\tilde{a}^{-}\vartheta_{0}=0$.

Finally, we shall use repeatedly the commutation relations 
\begin{equation}
[\tilde{A},\tilde{a}^{+}]\eq{\ref{eq:commut_a_minus_a_plus}}\tilde{a}^{+},\qquad[\tilde{A},\tilde{a}^{-}]=-\tilde{a}^{-}.\label{eq:rel}
\end{equation}

\subsection{Intertwining}

On $L^{2}(\mathbb{R})$, we introduce the quadratic operators 
\begin{equation}
\hat{P}:=\frac{i}{2}\left((\tilde{a}^{+})^{2}-(\tilde{a}^{-})^{2}\right)\eq{\ref{eq:def_a_pm_tilde}}\frac{1}{2}(\hat{x}\hat{\xi}+\hat{\xi}\hat{x})\eq{\ref{eq:def_p_hat}}-i\left(x\frac{\partial}{\partial x}+\frac{1}{2}\right),\label{eq:def_P}
\end{equation}
and 
\begin{equation}
\hat{Q}:=\frac{1}{2}\left((\tilde{a}^{-})^{2}+(\tilde{a}^{+})^{2}\right)=\frac{1}{2}(\hat{x}^{2}-\hat{\xi}^{2}).\label{eq:def_Q}
\end{equation}
These are the quadratic generators which will intertwine the elliptic
harmonic oscillator with the hyperbolic generator.

The operator $\hat{Q}$ is essentially self-adjoint on $\mathcal{S}(\mathbb{R})$.
Hence, by spectral calculus, $e^{\pm\alpha\hat{Q}}$ is a closed densely
defined operator for every $\alpha\in\mathbb{R}$. Since $\hat{Q}$
is unbounded above and below, these exponentials are not bounded in
general; the following statement specifies an explicit dense domain
on which the conjugation formulas are meaningful.

The abstract exponentiation of quadratic Hamiltonians is standard
in the metaplectic representation; see for instance Howe~\cite{Howe1988},
Folland~\cite[Chap.~5]{folland-88}, and Nelson~\cite{nelson1959}.
The explicit one-dimensional formulation below, using Gaussian test
functions with complex width, is taken from \cite[Lemma~14, p.~279]{fred-PreQ-06}.

\begin{cBoxB}{}

\begin{thm}[Complex oscillator intertwining]
\label{thm:For-,-the} \cite[Lemma~14, p.~279]{fred-PreQ-06}. Let
$\beta\in\mathbb{C}$ satisfy 
\[
0<\Re\beta<\frac{\pi}{2},
\]
and define 
\[
D_{\beta}:=\operatorname{span}\left\{ P(x)\exp\left(-\frac{1}{\tan\beta}\frac{x^{2}}{2}\right)\ ;\ P\text{ polynomial}\right\} \subset L^{2}(\mathbb{R}).
\]
Then $D_{\beta}$ is dense in $L^{2}(\mathbb{R})$. Moreover, if $\alpha\in(0,\frac{\pi}{2})$
and 
\[
0<\Re\beta<\frac{\pi}{2}-\alpha,
\]
then 
\[
e^{\alpha\hat{Q}}:D_{\beta}\longrightarrow D_{\beta+\alpha}
\]
is a bijection, with inverse $e^{-\alpha\hat{Q}}$. On $D_{\beta}$,
one has 
\begin{equation}
e^{-\alpha\hat{Q}}\hat{x}e^{\alpha\hat{Q}}=(\cos\alpha)\hat{x}-i(\sin\alpha)\hat{\xi},\label{eq:x_x}
\end{equation}
\begin{equation}
e^{-\alpha\hat{Q}}\hat{\xi}e^{\alpha\hat{Q}}=(\cos\alpha)\hat{\xi}-i(\sin\alpha)\hat{x},\label{eq:xi_xi}
\end{equation}
\begin{equation}
e^{-\alpha\hat{Q}}\hat{H}e^{\alpha\hat{Q}}=\cos(2\alpha)\hat{H}-i\sin(2\alpha)\hat{P},\label{eq:H_P}
\end{equation}
and 
\begin{equation}
e^{-\alpha\hat{Q}}\hat{P}e^{\alpha\hat{Q}}=\cos(2\alpha)\hat{P}-i\sin(2\alpha)\hat{H}.\label{eq:P_H}
\end{equation}
\end{thm}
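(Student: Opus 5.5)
The plan is to construct $e^{s\hat{Q}}$ explicitly on $D_{\beta}$ by a Gaussian ansatz, and then identify this explicit evolution with the spectral-calculus exponential through a weak uniqueness argument.

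\textbf{Density.} Write $\beta=a+ib$ with $0<a<\frac{\pi}{2}$. Then
\[
\Re\cot\beta=\frac{\sin 2a}{\cosh 2b-\cos 2a}>0,
\]
so $g_{\beta}(x):=\exp\bigl(-\cot\beta\,\tfrac{x^{2}}{2}\bigr)$ lies in $\mathcal{S}(\mathbb{R})$ and $D_{\beta}\subset L^{2}(\mathbb{R})$. Suppose $h\in L^{2}$ is orthogonal to every $x^{k}g_{\beta}$. The function $F(\zeta):=\int h(x)\overline{g_{\beta}(x)}e^{-i\zeta x}dx$ is entire, because $|g_{\beta}|$ has Gaussian decay. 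All its derivatives vanish at $\zeta=0$, so $F\equiv0$. Hence $h\overline{g_{\beta}}=0$, which gives $h=0$.

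\textbf{Explicit evolution on Gaussians.} On $\mathcal{S}(\mathbb{R})$ we have $\hat{Q}=\frac{1}{2}(x^{2}+\partial_{x}^{2})$. Take the ansatz $u(s)=c(s)\exp(-a(s)x^{2}/2)$ and impose $\partial_{s}u=\hat{Q}u$. Since $\partial_{x}^{2}e^{-ax^{2}/2}=(a^{2}x^{2}-a)e^{-ax^{2}/2}$, this reduces to two ODEs:
\[
a'=-(1+a^{2}),\qquad c'/c=-a/2.
\]
Their solutions are
\[
a(s)=\cot(\beta+s),\qquad c(s)=\Bigl(\frac{\sin\beta}{\sin(\beta+s)}\Bigr)^{1/2},
\]
which are well defined and analytic as long as $0<\Re(\beta+s)<\frac{\pi}{2}$.

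\textbf{Polynomial prefactors.} For these I use the ladder structure. $D_{\beta}$ is spanned by $B_{\beta}^{k}g_{\beta}$, where $B_{\beta}$ is the first-order operator $\hat{x}\cos\beta-i\hat{\xi}\sin\beta$, suitably chosen so that $B_{\beta}$ commutes past the flow. Guided by the Heisenberg equations $\frac{d}{ds}\bigl(e^{-s\hat{Q}}Ve^{s\hat{Q}}\bigr)=[V,\hat{Q}]$ (computed on the algebraic level: $[\hat{x},\hat{Q}]=-i\hat{\xi}$ and $[\hat{\xi},\hat{Q}]=-i\hat{x}$), I define
\[
u(s):=\bigl((\cos s)\hat{x}+i(\sin s)\hat{\xi}\bigr)\text{-transported polynomial}\times c(s)g_{\beta+s}.
\]
Concretely, $u(s)=L_{s}(P)\,c(s)g_{\beta+s}$ with $L_{s}$ linear on polynomials of fixed degree. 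I then check directly that $\partial_{s}u=\hat{Q}u$ and $u(s)\in D_{\beta+s}$. This defines a linear map $T_{\alpha}:D_{\beta}\to D_{\beta+\alpha}$. Reversing time, i.e.\ the same construction with $-\alpha$ started from $D_{\beta+\alpha}$, gives its two-sided inverse, so $T_{\alpha}$ is a bijection.

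\textbf{Identification with $e^{\alpha\hat{Q}}$ (the main obstacle).} Here $e^{\alpha\hat{Q}}$ is an unbounded spectral-calculus operator, and I must show $T_{\alpha}f=e^{\alpha\hat{Q}}f$. Let $E_{N}$ be the spectral projector of $\hat{Q}$ onto $[-N,N]$, and fix $w\in L^{2}$. The map $s\mapsto\langle e^{-s\hat{Q}}E_{N}w,u(s)\rangle$ is differentiable: $e^{-s\hat{Q}}E_{N}$ is bounded with derivative $-\hat{Q}e^{-s\hat{Q}}E_{N}$, and $u(s)\in\mathcal{S}\subset\mathrm{Dom}(\hat{Q})$ since $\hat{Q}$ is essentially self-adjoint on $\mathcal{S}$. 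Its derivative is zero, because $\hat{Q}$ is symmetric and $\partial_{s}u=\hat{Q}u$ in $L^{2}$; the latter needs uniform Gaussian bounds for $s\in[0,\alpha]$, available since $\Re(\beta+s)$ stays in a compact subinterval of $(0,\frac{\pi}{2})$. Therefore
\[
E_{N}e^{-\alpha\hat{Q}}u(\alpha)=E_{N}f\qquad\text{for all }N.
\]
Letting $N\to\infty$ gives $u(\alpha)\in\mathrm{Dom}(e^{-\alpha\hat{Q}})$ and $e^{-\alpha\hat{Q}}u(\alpha)=f$. Applying spectral calculus, i.e.\ the inverse relation $e^{\alpha\hat{Q}}e^{-\alpha\hat{Q}}=\mathrm{Id}$ on $\mathrm{Dom}(e^{-\alpha\hat{Q}})$, yields $u(\alpha)=e^{\alpha\hat{Q}}f$.

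\textbf{Conjugation formulas.} Once the identification holds, \eqref{eq:x_x} and \eqref{eq:xi_xi} follow on $D_{\beta}$. Set $V(s):=e^{-s\hat{Q}}\hat{x}e^{s\hat{Q}}f$. All intermediate vectors lie in the spaces $D_{\beta+s}$, which are stable under $\hat{x}$ and $\hat{\xi}$, so differentiation is legitimate. The pair $(V,W)$, with $W$ the analogous quantity for $\hat{\xi}$, satisfies $V'=-iW$, $W'=-iV$ with $V(0)=\hat{x}f$ and $W(0)=\hat{\xi}f$. Solving this linear system gives the cosine/sine formulas. Finally, \eqref{eq:H_P} and \eqref{eq:P_H} follow by substituting into $\hat{H}=\frac{1}{2}(\hat{x}^{2}+\hat{\xi}^{2})$ and $\hat{P}=\frac{1}{2}(\hat{x}\hat{\xi}+\hat{\xi}\hat{x})$ and using the double-angle identities.
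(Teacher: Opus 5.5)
Your argument is correct. It follows the paper's skeleton: a complex Gaussian whose width evolves as $\cot(\beta+s)$, polynomial prefactors handled in a finite-dimensional way, and the Heisenberg system $V'=-iW$, $W'=-iV$ for the conjugation identities. It differs from the paper's proof in three places.

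\emph{Density.} The paper writes $D_{\beta}=M_bS_a(D_{\pi/4})$, using a unitary chirp and a unitary dilation, and then invokes completeness of the Hermite functions. You instead run the Fourier/entire-function argument directly on $h\overline{g_{\beta}}$. Your version is self-contained; the paper's is shorter once Hermite completeness is granted.

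\emph{Polynomial prefactors.} The paper derives the explicit linear ODE $\partial_tP_t=\frac12\bigl(P_t''-2\cot(\beta+t)\,xP_t'\bigr)$ on polynomials of bounded degree. You instead transport the position operator, but this step is only sketched in your write-up. The clean version is $u(s)=P(X_s)\,c(s)g_{\beta+s}$ with $X_s=(\cos s)\hat{x}+i(\sin s)\hat{\xi}=\cos s\,x+\sin s\,\partial_x$. Then $\partial_s u=\hat{Q}u$ follows from $\partial_sX_s=[\hat{Q},X_s]$ together with the invariance of $D_{\beta+s}$ under $x$ and $\partial_x$. Your remark that $B_{\beta}$ ``commutes past the flow'' is inaccurate, since formally $e^{s\hat{Q}}B_{\beta}e^{-s\hat{Q}}=B_{\beta-s}$; in any case it is not needed.

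\emph{Identification with the spectral exponential.} This is the most substantive difference. The paper passes from $\partial_{\beta}\phi_{\beta}=\hat{Q}\phi_{\beta}$ straight to $e^{\alpha\hat{Q}}\phi_{\beta}=\phi_{\beta+\alpha}$. That step is not automatic for an operator unbounded above and below. Your spectral-cutoff argument supplies exactly the missing uniqueness step:
\begin{itemize}
\item the pairing $\langle e^{-s\hat{Q}}E_Nw,u(s)\rangle$ is constant in $s$;
\item hence $e^{-\alpha\hat{Q}}E_Nu(\alpha)=E_Nf$;
\item monotone convergence of the spectral integrals then gives $u(\alpha)\in\mathrm{Dom}(e^{-\alpha\hat{Q}})$.
\end{itemize}
Applied in both time directions, the same argument also makes the bijectivity and the inverse $e^{-\alpha\hat{Q}}$ rigorous.
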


\end{cBoxB}

\begin{proof}
Set 
\[
\mathscr{S}:=\left\{ \beta\in\mathbb{C}\ ;\ 0<\Re\beta<\frac{\pi}{2}\right\} .
\]
Since 
\[
\hat{Q}\eq{\ref{eq:def_Q}}\frac{1}{2}\left(x^{2}+\frac{\partial^{2}}{\partial x^{2}}\right),
\]
we can compute explicitly its action on Gaussian functions. Choose
a holomorphic branch of $(\sin\beta)^{-1/2}$ on $\mathscr{S}$, and
define 
\[
\phi_{\beta}(x):=(\sin\beta)^{-1/2}\exp\left(-\frac{\cot\beta}{2}x^{2}\right).
\]
For $\beta\in\mathscr{S}$, one has $\Re(\cot\beta)>0$, hence $\phi_{\beta}\in L^{2}(\mathbb{R})$.

A direct computation gives 
\[
\hat{Q}\phi_{\beta}=\frac{1}{2}\left(-\cot\beta+\frac{x^{2}}{\sin^{2}\beta}\right)\phi_{\beta}.
\]
On the other hand, 
\[
\partial_{\beta}\phi_{\beta}=\frac{1}{2}\left(-\cot\beta+\frac{x^{2}}{\sin^{2}\beta}\right)\phi_{\beta}.
\]
Thus 
\[
\partial_{\beta}\phi_{\beta}=\hat{Q}\phi_{\beta}.
\]
Consequently, if $\beta,\beta+\alpha\in\mathscr{S}$, then 
\[
e^{\alpha\hat{Q}}\phi_{\beta}=\phi_{\beta+\alpha}.
\]

We now show that the polynomial factor is preserved. Let 
\[
u(t,x)=P_{t}(x)\phi_{\beta+t}(x),
\]
where $P_{t}$ is a polynomial to be determined. Write 
\[
q_{t}:=\cot(\beta+t).
\]
Using 
\[
\partial_{x}(P\phi_{\beta+t})=(P'-q_{t}xP)\phi_{\beta+t},
\]
one obtains 
\[
\hat{Q}(P\phi_{\beta+t})=P\,\hat{Q}\phi_{\beta+t}+\frac{1}{2}\left(P''-2q_{t}xP'\right)\phi_{\beta+t}.
\]
Since $\partial_{t}\phi_{\beta+t}=\hat{Q}\phi_{\beta+t}$, the equation
\[
\partial_{t}u=\hat{Q}u
\]
is equivalent to 
\[
\partial_{t}P_{t}=\frac{1}{2}\left(P_{t}''-2\cot(\beta+t)\,xP_{t}'\right).
\]
This is a linear differential equation on the finite-dimensional space
of polynomials of degree at most $\deg P_{0}$. Hence its solution
remains a polynomial for all $t$ such that $\beta+t\in\mathscr{S}$.
Therefore 
\[
e^{\alpha\hat{Q}}D_{\beta}\subset D_{\beta+\alpha}.
\]
Applying the same argument to $-\alpha$ gives the reverse inclusion,
and hence 
\[
e^{\alpha\hat{Q}}:D_{\beta}\longrightarrow D_{\beta+\alpha}
\]
is a bijection, with inverse $e^{-\alpha\hat{Q}}$.

It remains to prove that $D_{\beta}$ is dense. Write 
\[
\cot\beta=a+ib,\qquad a>0.
\]
Then 
\[
D_{\beta}=M_{b}S_{a}(D_{\pi/4}),
\]
where 
\[
(M_{b}u)(x)=e^{-i\frac{b}{2}x^{2}}u(x),\qquad(S_{a}u)(x)=a^{1/4}u(\sqrt{a}\,x).
\]
The operators $M_{b}$ and $S_{a}$ are unitary on $L^{2}(\mathbb{R})$.
Thus it is enough to prove that $D_{\pi/4}$ is dense. But 
\[
D_{\pi/4}=\operatorname{span}\{P(x)e^{-x^{2}/2}\ ;\ P\text{ polynomial}\}=\operatorname{span}\{\vartheta_{n}\ ;\ n\in\mathbb{N}\},
\]
because the Hermite polynomials form a basis of the space of polynomials.
Since $(\vartheta_{n})_{n\in\mathbb{N}}$ is an orthonormal basis
of $L^{2}(\mathbb{R})$, $D_{\pi/4}$ is dense, and hence so is $D_{\beta}$.

We now prove the conjugation identities. Fix $\alpha\in(0,\pi/2)$,
and consider the common invariant core 
\[
D_{*}:=\operatorname{span}\left\{ D_{\beta}\ ;\ 0<\Re\beta<\frac{\pi}{2}-\alpha\right\} .
\]
For any operator $B$ preserving $D_{*}$, one has on $D_{*}$ 
\[
\frac{d}{ds}\left(e^{-s\hat{Q}}Be^{s\hat{Q}}\right)=-e^{-s\hat{Q}}[\hat{Q},B]e^{s\hat{Q}}.
\]
Using $[\hat{x},\hat{\xi}]=i\,\mathrm{Id}$, we get 
\[
[\hat{Q},\hat{x}]=i\hat{\xi},\qquad[\hat{Q},\hat{\xi}]=i\hat{x}.
\]
Set 
\[
\hat{x}(s):=e^{-s\hat{Q}}\hat{x}e^{s\hat{Q}},\qquad\hat{\xi}(s):=e^{-s\hat{Q}}\hat{\xi}e^{s\hat{Q}}.
\]
Then 
\[
\hat{x}'(s)=-i\hat{\xi}(s),\qquad\hat{\xi}'(s)=-i\hat{x}(s).
\]
Thus 
\[
\hat{x}''(s)=-\hat{x}(s),\qquad\hat{x}(0)=\hat{x},\qquad\hat{x}'(0)=-i\hat{\xi},
\]
and therefore 
\[
\hat{x}(s)=\cos s\,\hat{x}-i\sin s\,\hat{\xi}.
\]
Similarly, 
\[
\hat{\xi}(s)=\cos s\,\hat{\xi}-i\sin s\,\hat{x}.
\]
Taking $s=\alpha$ proves \eqref{eq:x_x} and \eqref{eq:xi_xi}.

Finally, since 
\[
\hat{H}=\frac{1}{2}(\hat{x}^{2}+\hat{\xi}^{2}),\qquad\hat{P}=\frac{1}{2}(\hat{x}\hat{\xi}+\hat{\xi}\hat{x}),
\]
we obtain 
\[
e^{-\alpha\hat{Q}}\hat{H}e^{\alpha\hat{Q}}=\frac{1}{2}\left(\hat{x}(\alpha)^{2}+\hat{\xi}(\alpha)^{2}\right)=\cos(2\alpha)\hat{H}-i\sin(2\alpha)\hat{P},
\]
and similarly 
\[
e^{-\alpha\hat{Q}}\hat{P}e^{\alpha\hat{Q}}=\cos(2\alpha)\hat{P}-i\sin(2\alpha)\hat{H}.
\]
This concludes the proof. 
\end{proof}

\subsection{\protect\label{sec:Useful-corollary}Useful corollary}

For later use, we express the operators $\tilde{a}^{+},\tilde{a}^{-}$
and $\tilde{A}=\tilde{a}^{+}\tilde{a}^{-}$ in the Hermite basis $(\vartheta_{n})_{n\in\mathbb{N}}$,
see \eqref{eq:def_A} and \eqref{eq:def_a+_a-}. Let $(e_{n})_{n\in\mathbb{N}}$
be the canonical basis of $\ell^{2}(\mathbb{N})$, 
\begin{equation}
e_{n}:=(0,\ldots,\underbrace{1}_{n},0,\ldots),\label{eq:def_e_k}
\end{equation}
and define the unitary isomorphism 
\begin{equation}
F:L^{2}(\mathbb{R}_{x})\longrightarrow\ell^{2}(\mathbb{N}),\qquad Fu:=\sum_{n\in\mathbb{N}}e_{n}\,\langle\vartheta_{n}\mid u\rangle_{L^{2}(\mathbb{R})}.\label{eq:def_F}
\end{equation}
We set 
\begin{equation}
A:=F\tilde{A}F^{-1},\qquad a^{-}:=F\tilde{a}^{-}F^{-1},\qquad a^{+}:=F\tilde{a}^{+}F^{-1}.\label{eq:def_A_a+-}
\end{equation}
Thus the operators $A,a^{-},a^{+}$ on $\ell^{2}(\mathbb{N})$ are
characterized by 
\begin{equation}
Ae_{n}\eq{\ref{eq:def_A}}ne_{n},\qquad a^{-}e_{n}\eq{\ref{eq:def_a+_a-}}\sqrt{n}\,e_{n-1},\qquad a^{+}e_{n}\eq{\ref{eq:def_a+_a-}}\sqrt{n+1}\,e_{n+1},\label{eq:A_in_l2}
\end{equation}
with the convention $a^{-}e_{0}=0$.

In the next corollary of Theorem~\ref{thm:For-,-the}, $x^{n}$ denotes
the monomial function on $\mathbb{R}$, and $\delta^{(n)}$ denotes
the $n$-th derivative of the Dirac measure. We use the notation $\langle\cdot\mid\cdot\rangle_{L^{2}(\mathbb{R})}$
also for the natural extension of the $L^{2}(dx)$-pairing between
distributions and test functions, whenever this pairing is well-defined.
Thus, for every $\varphi\in\mathcal{S}(\mathbb{R})$, 
\[
\langle x^{n}\mid\varphi\rangle_{L^{2}(\mathbb{R})}=\int_{\mathbb{R}}x^{n}\varphi(x)\,dx,\qquad\langle\delta^{(n)}\mid\varphi\rangle_{L^{2}(\mathbb{R})}=(-1)^{n}\varphi^{(n)}(0).
\]

\begin{cBoxB}{}

\begin{cor}
\label{cor:With-setting-} Applying Theorem~\ref{thm:For-,-the}
with $\alpha=\frac{\pi}{4}$, and using the inverse map for the opposite
direction, define 
\[
D_{+}:=\operatorname{span}\{D_{\beta}\ ;\ 0<\Re\beta<\tfrac{\pi}{4}\},\qquad D_{-}:=\operatorname{span}\{D_{\beta}\ ;\ \tfrac{\pi}{4}<\Re\beta<\tfrac{\pi}{2}\}.
\]
Set 
\begin{align}
T_{+} & :=(2\pi)^{-1/4}F\circ e^{-\frac{\pi}{4}\hat{Q}}:D_{-}\longrightarrow F(D_{+})\subset\ell^{2}(\mathbb{N}),\label{eq:def_Tplus_norm}\\
T_{-} & :=(2\pi)^{1/4}F\circ e^{\frac{\pi}{4}\hat{Q}}:D_{+}\longrightarrow F(D_{-})\subset\ell^{2}(\mathbb{N}).\label{eq:def_Tminus_norm}
\end{align}
Then $T_{\pm}$ are bijections onto their images. On their natural
domains, they satisfy 
\begin{equation}
T_{\pm}\left(\mp\left(x\frac{d}{dx}+\frac{1}{2}\right)\right)T_{\pm}^{-1}=-\left(A+\frac{1}{2}\right),\label{eq:H_conj}
\end{equation}
and 
\begin{equation}
T_{\pm}\hat{x}T_{\pm}^{-1}=a^{\pm},\qquad T_{\pm}\frac{d}{dx}T_{\pm}^{-1}=\pm a^{\mp}.\label{eq:x_conj}
\end{equation}
Moreover, the following weak expansions hold: 
\begin{align}
T_{+} & =\left(T_{-}^{-1}\right)^{\dagger}=\sum_{n\in\mathbb{N}}e_{n}\left\langle \frac{(-1)^{n}}{\sqrt{n!}}\delta^{(n)}\middle|\,\cdot\,\right\rangle _{L^{2}(\mathbb{R})},\label{eq:T_+_T_-}\\
T_{-} & =\left(T_{+}^{-1}\right)^{\dagger}=\sum_{n\in\mathbb{N}}e_{n}\left\langle \frac{1}{\sqrt{n!}}x^{n}\middle|\,\cdot\,\right\rangle _{L^{2}(\mathbb{R})}.\label{eq:T_-_T_+}
\end{align}
Here $\delta^{(n)}$ and $x^{n}$ are understood through the extended
$L^{2}(dx)$-pairing with test functions. More precisely, \eqref{eq:T_+_T_-}
is used on $D_{-}$, and \eqref{eq:T_-_T_+} is used on $D_{+}$. 
\end{cor}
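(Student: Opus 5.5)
The plan is to derive everything from Theorem~\ref{thm:For-,-the} with $\alpha=\frac{\pi}{4}$, and then pin down the matrix coefficients with the ladder relations \eqref{eq:def_a+_a-}.

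\textbf{Domains and bijectivity.} If $\frac{\pi}{4}<\Re\beta<\frac{\pi}{2}$, then $\beta-\frac{\pi}{4}$ satisfies $0<\Re(\beta-\frac{\pi}{4})<\frac{\pi}{4}$. Applying Theorem~\ref{thm:For-,-the} to $\beta':=\beta-\frac{\pi}{4}$ gives a bijection $e^{\frac{\pi}{4}\hat{Q}}:D_{\beta'}\to D_{\beta}$ with inverse $e^{-\frac{\pi}{4}\hat{Q}}$. Taking spans, $e^{-\frac{\pi}{4}\hat{Q}}:D_{-}\to D_{+}$ and $e^{\frac{\pi}{4}\hat{Q}}:D_{+}\to D_{-}$ are mutually inverse bijections. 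Since $F$ is unitary, $T_{\pm}$ are bijections onto their images.

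\textbf{Conjugation formulas.} I will read these off from \eqref{eq:x_x}, \eqref{eq:xi_xi} and \eqref{eq:H_P}, with $\alpha=\frac{\pi}{4}$ for $T_{+}$ and $\alpha=-\frac{\pi}{4}$ for $T_{-}$. The latter sign is legitimate because the identities are obtained by integrating an ODE along $s$ on the common core, and this works for either sign.
\begin{itemize}
\item For $T_{+}$: $e^{-\frac{\pi}{4}\hat{Q}}\hat{x}e^{\frac{\pi}{4}\hat{Q}}=\frac{1}{\sqrt{2}}(\hat{x}-i\hat{\xi})=\tilde{a}^{+}$. Also $\frac{d}{dx}=i\hat{\xi}$ is sent to $\frac{i}{\sqrt2}(\hat{\xi}-i\hat{x})=\tilde{a}^{-}$.
\item For $T_{-}$ the same computation gives $\hat{x}\mapsto\tilde{a}^{-}$ and $\frac{d}{dx}\mapsto-\tilde{a}^{+}$.
\item For the generator: $\hat{H}=\tilde{A}+\frac12$ is conjugated to $\mp i\hat{P}$, and $-i\hat{P}=-(x\frac{d}{dx}+\frac12)$ by \eqref{eq:def_P}. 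This yields \eqref{eq:H_conj}.
\end{itemize}
Conjugating by $F$ turns $\tilde{a}^{\pm},\tilde{A}$ into $a^{\pm},A$. The scalar prefactors $(2\pi)^{\mp1/4}$ cancel in all conjugations.

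\textbf{Weak expansions.} For $u\in D_{-}$, the $n$-th coefficient of $T_{+}u$ follows from $a^{-}e_{n}=\sqrt{n}e_{n-1}$ and the ladder relations:
\[
\langle e_{n}\mid T_{+}u\rangle=\tfrac{1}{\sqrt{n!}}\langle e_{0}\mid(a^{-})^{n}T_{+}u\rangle=\tfrac{1}{\sqrt{n!}}\langle e_{0}\mid T_{+}u^{(n)}\rangle .
\]
It therefore suffices to identify the functional $\ell_{+}(v):=\langle e_{0}\mid T_{+}v\rangle$ on $D_{-}$. Because $T_{+}\hat{x}=a^{+}T_{+}$, we get $\ell_{+}(xv)=\langle(a^{+})^{\dagger}e_{0}\mid T_{+}v\rangle=\langle a^{-}e_{0}\mid T_{+}v\rangle=0$. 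A linear functional on $\operatorname{span}\{P\phi_{\beta}\}$ that kills $xD_{-}$ only depends on the value at $0$, so $\ell_{+}=c_{+}\,\delta$.

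In the same way, for $u\in D_{+}$ one has $\langle e_{n}\mid T_{-}u\rangle=\frac{1}{\sqrt{n!}}\ell_{-}(x^{n}u)$ with $\ell_{-}(v):=\langle e_{0}\mid T_{-}v\rangle$. Since $T_{-}\frac{d}{dx}=-a^{+}T_{-}$, the functional $\ell_{-}$ kills derivatives. On each Gaussian-times-polynomial space, functionals killing all derivatives are multiples of $\int\cdot\,dx$, so $\ell_{-}=c_{-}\int$.

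\textbf{Main obstacle: the normalizations $c_{\pm}=1$.} I would fix these by testing on a single Gaussian. Use $e^{\mp\frac{\pi}{4}\hat{Q}}\phi_{\beta}=\phi_{\beta\mp\pi/4}$ and compute $\langle\vartheta_{0}\mid\phi_{\beta\mp\pi/4}\rangle$ explicitly with $\vartheta_{0}=\pi^{-1/4}e^{-x^{2}/2}$. Compare this with $\phi_{\beta}(0)=(\sin\beta)^{-1/2}$, respectively with $\int\phi_{\beta}\,dx=\sqrt{2\pi}\,(\cos\beta)^{-1/2}$, keeping track of the branch of the square roots. This is a routine but delicate Gaussian integral, and it is exactly what justifies the constants $(2\pi)^{\mp1/4}$.

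\textbf{Adjoint identities.} The relations $T_{+}=(T_{-}^{-1})^{\dagger}$ and $T_{-}=(T_{+}^{-1})^{\dagger}$ follow from the biorthogonality
\[
\Bigl\langle\tfrac{x^{m}}{\sqrt{m!}},\tfrac{(-1)^{n}\delta^{(n)}}{\sqrt{n!}}\Bigr\rangle=\delta_{mn},
\]
together with the two expansions just obtained. Concretely, these give $\langle T_{+}u\mid T_{-}v\rangle_{\ell^{2}}=\langle u\mid v\rangle_{L^{2}}$ for $u\in D_{-}$ and $v\in D_{+}$, and this identity is the weak form of both adjoint relations.
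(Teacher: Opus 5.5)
Your overall route is the paper's. You get bijectivity and the conjugation formulas from Theorem~\ref{thm:For-,-the} at $\alpha=\frac{\pi}{4}$ and its inverse. You then identify the weak expansions from the $x$ and $\frac{d}{dx}$ intertwining relations plus a Gaussian normalization. Reducing the $n$-th coefficient to the zeroth-coefficient functionals $\ell_{\pm}$ via $(a^{-})^{n}$ is a slightly tidier version of the paper's comparison of $T_{\pm}$ with the explicit maps $S_{\pm}$ on $g_{\beta}$. Bijectivity and the $\hat{x}$, $\frac{d}{dx}$ conjugations are correct.

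Two smaller points need repair. First, the generator step is read in the wrong direction. The identity you quote, $e^{-\frac{\pi}{4}\hat{Q}}\hat{H}e^{\frac{\pi}{4}\hat{Q}}=-i\hat{P}$, conjugates $\hat{H}$, whereas $T_{+}(\cdot)T_{+}^{-1}$ must be applied to $\hat{P}$. Read as a correspondence under $T_{+}$, it gives the sign opposite to \eqref{eq:H_conj}. You need \eqref{eq:P_H}, or more simply \eqref{eq:x_conj}: $T_{+}(x\frac{d}{dx}+\frac{1}{2})T_{+}^{-1}=a^{+}a^{-}+\frac{1}{2}=A+\frac{1}{2}$ and $T_{-}(x\frac{d}{dx}+\frac{1}{2})T_{-}^{-1}=-a^{-}a^{+}+\frac{1}{2}=-(A+\frac{1}{2})$.

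Second, $D_{\pm}$ is the algebraic direct sum of the $D_{\beta}$, since Gaussians with distinct $\cot\beta$ are independent over polynomials. So killing $xD_{-}$, resp.\ all derivatives, only gives $\ell_{+}=c_{+}(\beta)\,\delta$, resp.\ $\ell_{-}=c_{-}(\beta)\int\cdot\,dx$, separately on each $D_{\beta}$. The Gaussian test must therefore be run for every $\beta$, not on a single Gaussian. It does succeed: $\langle\vartheta_{0}\mid\phi_{\gamma}\rangle=(2\pi)^{1/4}\bigl(\sin(\gamma+\frac{\pi}{4})\bigr)^{-1/2}$, and taking $\gamma=\beta\mp\frac{\pi}{4}$ gives $c_{\pm}(\beta)=1$ for all $\beta$.

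The genuine gap is in the last step: the adjoint relations do not follow from biorthogonality as you claim. Inserting your two expansions gives $\langle T_{+}u\mid T_{-}v\rangle_{\ell^{2}}=\sum_{n}\frac{\overline{u^{(n)}(0)}}{n!}\int x^{n}v\,dx$. Identifying this with $\int\overline{u}\,v\,dx$ means integrating the Taylor series of $\overline{u}$ term by term against $v$, and $u$ is not a polynomial. For $u=\phi_{\beta}$, $v=\phi_{\beta'}$ this exchange is absolutely convergent only if $|\cot\beta|<\Re\cot\beta'$. That fails for admissible parameters, e.g.\ $\beta=\frac{\pi}{3}$, $\beta'=\frac{\pi}{8}+i$. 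You would need an extra argument, such as analytic continuation from real $\beta,\beta'$ where Tonelli applies. The paper's argument avoids this entirely: $\langle T_{+}u\mid T_{-}v\rangle_{\ell^{2}}=\langle e^{-\frac{\pi}{4}\hat{Q}}u\mid e^{\frac{\pi}{4}\hat{Q}}v\rangle_{L^{2}}=\langle u\mid v\rangle_{L^{2}}$. This uses the unitarity of $F$, the cancellation of the factors $(2\pi)^{\mp1/4}$, and the self-adjointness of $e^{\frac{\pi}{4}\hat{Q}}$.
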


\end{cBoxB}

\begin{proof}
The definition of $T_{\pm}$, and their bijectivity onto their images,
follow from Theorem~\ref{thm:For-,-the}, applied with $\alpha=\pi/4$,
together with the inverse map for the opposite direction.

Let 
\[
B:=x\frac{d}{dx}+\frac{1}{2}.
\]
Since $\hat{P}=-iB$, the identities \eqref{eq:P_H} at $\alpha=\pi/4$,
transported by $F$, give 
\[
T_{+}\hat{P}T_{+}^{-1}=-i\left(A+\frac{1}{2}\right),\qquad T_{-}\hat{P}T_{-}^{-1}=i\left(A+\frac{1}{2}\right).
\]
Equivalently, 
\[
T_{\pm}(\mp B)T_{\pm}^{-1}=-\left(A+\frac{1}{2}\right),
\]
which proves \eqref{eq:H_conj}.

Similarly, using \eqref{eq:x_x} and \eqref{eq:xi_xi} at $\alpha=\pi/4$,
we get 
\[
e^{-\frac{\pi}{4}\hat{Q}}\hat{x}e^{\frac{\pi}{4}\hat{Q}}=\frac{1}{\sqrt{2}}(\hat{x}-i\hat{\xi})=\tilde{a}^{+},
\]
and 
\[
e^{-\frac{\pi}{4}\hat{Q}}\hat{\xi}e^{\frac{\pi}{4}\hat{Q}}=\frac{1}{\sqrt{2}}(\hat{\xi}-i\hat{x})=-i\tilde{a}^{-}.
\]
After applying $F$, this gives 
\[
T_{+}\hat{x}T_{+}^{-1}=a^{+},\qquad T_{+}\frac{d}{dx}T_{+}^{-1}=a^{-},
\]
because $d/dx=i\hat{\xi}$. The inverse direction gives 
\[
T_{-}\hat{x}T_{-}^{-1}=a^{-},\qquad T_{-}\frac{d}{dx}T_{-}^{-1}=-a^{+}.
\]
This proves \eqref{eq:x_conj}.

It remains to identify the weak expansions. Define, on $D_{-}$, 
\[
(S_{+}u)_{n}:=\left\langle \frac{(-1)^{n}}{\sqrt{n!}}\delta^{(n)}\middle|u\right\rangle _{L^{2}(\mathbb{R})}=\frac{u^{(n)}(0)}{\sqrt{n!}},
\]
and, on $D_{+}$, 
\[
(S_{-}u)_{n}:=\left\langle \frac{x^{n}}{\sqrt{n!}}\middle|u\right\rangle _{L^{2}(\mathbb{R})}=\frac{1}{\sqrt{n!}}\int_{\mathbb{R}}x^{n}u(x)\,dx.
\]
Using $x\delta^{(n)}=-n\delta^{(n-1)}$ and integration by parts,
one checks that 
\[
S_{+}x=a^{+}S_{+},\qquad S_{+}\frac{d}{dx}=a^{-}S_{+},
\]
and 
\[
S_{-}x=a^{-}S_{-},\qquad S_{-}\frac{d}{dx}=-a^{+}S_{-}.
\]
Thus $S_{\pm}$ satisfy the same intertwining relations as $T_{\pm}$.

We compare the two maps on one Gaussian. Set 
\[
g_{\beta}(x):=\exp\left(-\frac{\cot\beta}{2}x^{2}\right).
\]
From the proof of Theorem~\ref{thm:For-,-the}, 
\[
e^{-\frac{\pi}{4}\hat{Q}}g_{\beta}=\left(\frac{\sin\beta}{\sin(\beta-\frac{\pi}{4})}\right)^{1/2}g_{\beta-\frac{\pi}{4}}.
\]
Taking the scalar product with 
\[
\vartheta_{0}(x)=\pi^{-1/4}e^{-x^{2}/2}
\]
gives 
\[
\left\langle \vartheta_{0}\middle|e^{-\frac{\pi}{4}\hat{Q}}g_{\beta}\right\rangle _{L^{2}(\mathbb{R})}=(2\pi)^{1/4}.
\]
Since $T_{+}=(2\pi)^{-1/4}Fe^{-\frac{\pi}{4}\hat{Q}}$, the zeroth
coefficient of $T_{+}g_{\beta}$ is $1$. This is also the zeroth
coefficient of $S_{+}g_{\beta}$, because $g_{\beta}(0)=1$. Since
both $T_{+}$ and $S_{+}$ intertwine multiplication by $x$ with
$a^{+}$, the equality extends from $g_{\beta}$ to $P(x)g_{\beta}$.
Hence 
\[
T_{+}=S_{+}\qquad\text{on }D_{-}.
\]

The same argument, using the normalization $T_{-}=(2\pi)^{1/4}Fe^{\frac{\pi}{4}\hat{Q}}$,
gives 
\[
T_{-}=S_{-}\qquad\text{on }D_{+}.
\]
Therefore 
\[
T_{+}=\sum_{n\in\mathbb{N}}e_{n}\left\langle \frac{(-1)^{n}}{\sqrt{n!}}\delta^{(n)}\middle|\,\cdot\,\right\rangle _{L^{2}(\mathbb{R})},
\]
and 
\[
T_{-}=\sum_{n\in\mathbb{N}}e_{n}\left\langle \frac{x^{n}}{\sqrt{n!}}\middle|\,\cdot\,\right\rangle _{L^{2}(\mathbb{R})}.
\]

Finally, since $e^{\pm\frac{\pi}{4}\hat{Q}}$ is self-adjoint and
$F$ is unitary, the chosen normalization gives 
\[
T_{+}=\left(T_{-}^{-1}\right)^{\dagger},\qquad T_{-}=\left(T_{+}^{-1}\right)^{\dagger}.
\]
This proves \eqref{eq:T_+_T_-} and \eqref{eq:T_-_T_+}. 
\end{proof}

\section{\protect\label{sec:Principal-series}Spherical principal series}

In this section, we fix one irreducible spherical representation 
\[
\mathcal{H}_{\mu}^{\mathrm{sph}}\subset L^{2}(M),\qquad\mu\ge\frac{1}{4},
\]
generated by a nonzero $K$-invariant vector. We use the orthonormal
basis $(\varphi_{k})_{k\in\mathbb{Z}}$ defined in \eqref{eq:def_phi_k},
with 
\[
\varphi_{k}\in\mathcal{H}_{\mu,2k},\qquad\varphi_{0}\in\mathcal{H}_{\mu,0}.
\]
Thus $\mathcal{H}_{\mu}^{\mathrm{sph}}$ is identified with $\ell^{2}(\mathbb{Z})$.

The aim of this section is to conjugate the hyperbolic generator $X$
on this representation to a harmonic-oscillator model. We proceed
in several steps. First, we use a unitary map 
\[
\mathcal{U}_{1}:\mathcal{H}_{\mu}^{\mathrm{sph}}\longrightarrow L^{2}(S_{\theta}^{1})
\]
to realize the representation in the compact picture. Then we pass
to two real-line models by unitary maps $\mathcal{U}_{\pm}$, and
finally apply the non-unitary intertwiners $T_{\pm}$ and diagonal
rescalings $G_{\pm}$. The resulting diagram is 
\begin{equation}
\xymatrix{\mathcal{H}_{\mu}^{\mathrm{sph}}\ar[r]_{\eqref{eq:def_U}}^{\mathcal{U}_{1}} & L^{2}(S_{\theta}^{1})\ar[r]_{\eqref{eq:def_Upm}}^{\mathcal{U}_{+}}\ar[rd]_{\eqref{eq:def_Upm}}^{\mathcal{U}_{-}} & L_{+}^{2}(\mathbb{R})\ar[r]_{\eqref{eq:def_Tplus_norm}}^{T_{+}} & \ell_{+}^{2}(\mathbb{N})\ar[r]_{\eqref{eq:def_G+-}}^{G_{+}} & \ell_{+}^{2}(\mathbb{N})\\
 &  & L_{-}^{2}(\mathbb{R})\ar[r]_{\eqref{eq:def_Tminus_norm}}^{T_{-}} & \ell_{-}^{2}(\mathbb{N})\ar[r]_{\eqref{eq:def_G+-}}^{G_{-}} & \ell_{-}^{2}(\mathbb{N}).
}
\label{eq:def_P_V_k}
\end{equation}
We will write 
\[
T'_{\pm}\eq{\eqref{eq:def_T3}}T_{\pm}\mathcal{U}_{\pm},\qquad\mathcal{T}_{\pm}\eq{\eqref{eq:def_cal_T_+-}}G_{\pm}T_{\pm}\mathcal{U}_{\pm},\quad\mathcal{T}\eq{\eqref{eq:def_cal_T}}\begin{pmatrix}\mathcal{T}_{+}\\
\mathcal{T}_{-}
\end{pmatrix},
\]
and finally 
\[
\mathbb{T}_{\mathrm{sph}}\eq{\eqref{eq:def_TT}}\mathcal{T}\circ\mathcal{U}_{1},
\]
that is the spherical part of \noun{$\mathbb{T}$} introduced in (\ref{eq:TT})
of the main theorem.

We use the parametrization 
\[
\mu=\lambda^{2}+\frac{1}{4},\qquad\lambda\ge0,
\]
and set 
\[
b_{\pm}:=-\frac{1}{2}\pm i\lambda.
\]
Thus 
\[
b_{+}b_{-}=\mu.
\]
For short, we write 
\begin{equation}
b:=b_{+}=-\frac{1}{2}+i\lambda.\label{eq:def_b}
\end{equation}

\subsection{\protect\label{subsec:Differential-operators-in}Differential operators
in $L^{2}(S^{1})$}

The next lemma realizes the spherical principal series in the compact
picture. Using the inverse Fourier transform $\mathcal{U}_{1}$, the
operators $X,U,S$ become first-order differential operators on $L^{2}(S_{\theta}^{1})$,
where $S_{\theta}^{1}=\mathbb{R}_{\theta}/2\pi\mathbb{Z}$.
\begin{rem}
The variable $\theta$ in this compact model is normalized so that
the geometric $K$-type $2k$ is represented by $e^{ik\theta}$. Thus
$\widetilde{\Theta}=-2i\partial_{\theta}$ in this model.
\end{rem}

\begin{cBoxB}{}

\begin{lem}
\label{lem:operators_compact_picture} Let 
\begin{equation}
\mathcal{U}_{1}:\begin{cases}
\mathcal{H}_{\mu}^{\mathrm{sph}} & \longrightarrow L^{2}(S_{\theta}^{1};d\theta),\\
\varphi_{k} & \longmapsto\psi_{k}(\theta):=\dfrac{1}{\sqrt{2\pi}}e^{ik\theta}.
\end{cases}\label{eq:def_U}
\end{equation}
Then 
\begin{equation}
\tilde{X}:=\mathcal{U}_{1}X\mathcal{U}_{1}^{-1}=\cos\theta\,\frac{d}{d\theta}+b\sin\theta,\label{eq:expression_X_sl2R}
\end{equation}
\begin{equation}
\tilde{U}:=\mathcal{U}_{1}U\mathcal{U}_{1}^{-1}=(\sin\theta-1)\frac{d}{d\theta}-b\cos\theta,\label{eq:expression_U_sl2R}
\end{equation}
and 
\begin{equation}
\tilde{S}:=\mathcal{U}_{1}S\mathcal{U}_{1}^{-1}=(\sin\theta+1)\frac{d}{d\theta}-b\cos\theta.\label{eq:expression_S_sl2R}
\end{equation}
\end{lem}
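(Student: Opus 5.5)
The plan is to check the three formulas directly on the orthonormal basis. Since $\mathcal{U}_{1}$ is unitary and maps the basis $(\varphi_{k})_{k\in\mathbb{Z}}$ of \eqref{eq:def_phi_k} onto the Fourier basis $(\psi_{k})_{k\in\mathbb{Z}}$, it suffices to compute how each operator acts on the $\psi_{k}$. Both sides are defined on the algebraic span of the $\psi_{k}$, i.e.\ on trigonometric polynomials, which is a core. The right-hand sides are first-order differential operators with trigonometric coefficients, so they preserve this span.

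\textbf{Step 1: transport $N_{\pm}$ and $\Theta$.} By \eqref{eq:action_N_pm_phi_k}, and with $\mathcal{U}_{1}$ relabelling indices,
\[
\tilde N_{+}\psi_{k}=i\,c_{+,2k}\,\psi_{k+1},\qquad \tilde N_{-}\psi_{k}=i\,\overline{c_{-,2k}}\,\psi_{k-1}.
\]
Here $c_{+,2k}=k+\tfrac12-i\lambda=k-b$ and $\overline{c_{-,2k}}=k-\tfrac12+i\lambda=k+b$, using \eqref{eq:def_b}. Also $\tilde\Theta\psi_{k}=2k\psi_{k}$, so $\tilde\Theta=-2i\,d/d\theta$, as in the preceding remark.

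\textbf{Step 2: identify $\tilde N_{\pm}$ as differential operators.} I will show that
\[
\tilde N_{\pm}=e^{\pm i\theta}\Bigl(\frac{d}{d\theta}\mp ib\Bigr).
\]
Indeed, $e^{i\theta}(d/d\theta-ib)\,e^{ik\theta}=i(k-b)e^{i(k+1)\theta}$, which matches $i c_{+,2k}\psi_{k+1}$. Similarly, $e^{-i\theta}(d/d\theta+ib)\,e^{ik\theta}=i(k+b)e^{i(k-1)\theta}$, which matches $i\overline{c_{-,2k}}\psi_{k-1}$.

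\textbf{Step 3: combine via \eqref{eq:X_from_R}.} First,
\[
\tilde X=\tfrac12(\tilde N_{+}+\tilde N_{-})=\cos\theta\,\frac{d}{d\theta}+\tfrac{b}{2}\,(-ie^{i\theta}+ie^{-i\theta})=\cos\theta\,\frac{d}{d\theta}+b\sin\theta,
\]
which is \eqref{eq:expression_X_sl2R}. Next, $\tilde N_{-}-\tilde N_{+}=-2i\sin\theta\,\frac{d}{d\theta}+2ib\cos\theta$. Hence
\[
\tilde U=\tfrac{i}{2}\bigl(\tilde N_{-}-\tilde N_{+}-\tilde\Theta\bigr)=\sin\theta\,\frac{d}{d\theta}-b\cos\theta-\frac{d}{d\theta},
\]
which gives \eqref{eq:expression_U_sl2R}. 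Changing the sign of the $\tilde\Theta$ term gives $+\,d/d\theta$ instead, which is \eqref{eq:expression_S_sl2R}.

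There is no real obstacle; the only delicate point is bookkeeping. One must use the conjugate $\overline{c_{-,2k}}$ correctly in \eqref{eq:action_N_pm_phi_k} and keep track of the normalization: the geometric $K$-type $2k$ corresponds to $e^{ik\theta}$, which is why $\tilde\Theta=-2i\,d/d\theta$ carries a factor $2$. Getting either sign wrong would swap $b$ and $\bar b$, i.e.\ send $b$ to $-1-b$, so I would double-check Step 2 on $k=0$ as a sanity check.
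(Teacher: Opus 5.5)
Your proof is correct and follows essentially the same route as the paper: you transport the ladder action \eqref{eq:action_N_pm_phi_k} to the Fourier basis, identify $\tilde N_{\pm}=e^{\pm i\theta}\bigl(\frac{d}{d\theta}\mp ib\bigr)$ and $\tilde\Theta=-2i\frac{d}{d\theta}$, and recombine them via \eqref{eq:X_from_R}. The only difference is the order of the steps: the paper first checks $\tilde X$ by comparing matrix elements and then writes down $\tilde N_{\pm}$, whereas you verify $\tilde N_{\pm}$ directly from their own matrix elements, which is slightly cleaner since $\tilde N_{\pm}$ is not determined by $\tilde X$ alone.
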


\end{cBoxB}

\begin{proof}
We first compute the matrix elements of $X$ in the basis $(\varphi_{k})_{k\in\mathbb{Z}}$.
Since 
\[
X\eq{\ref{eq:X_from_R}}\frac{1}{2}(N_{+}+N_{-}),
\]
and using \eqref{eq:action_N_pm_phi_k}, the only nonzero matrix elements
are 
\begin{align*}
\langle\varphi_{k+1}\mid X\varphi_{k}\rangle & =\frac{1}{2}\langle\varphi_{k+1}\mid N_{+}\varphi_{k}\rangle=\frac{i}{2}c_{+,2k}\eq{\ref{eq:def_cn_dn}}\frac{i}{2}\left(k+\frac{1}{2}-i\lambda\right),\\
\langle\varphi_{k-1}\mid X\varphi_{k}\rangle & =\frac{1}{2}\langle\varphi_{k-1}\mid N_{-}\varphi_{k}\rangle=\frac{i}{2}\overline{c_{-,2k}}\eq{\ref{eq:def_cn_dn}}\frac{i}{2}\left(k-\frac{1}{2}+i\lambda\right).
\end{align*}

On the other hand, the operator 
\[
\cos\theta\,\frac{d}{d\theta}+b\sin\theta=\frac{1}{2}(e^{i\theta}+e^{-i\theta})\frac{d}{d\theta}-\frac{ib}{2}(e^{i\theta}-e^{-i\theta})
\]
has, in the Fourier basis $(\psi_{k})_{k\in\mathbb{Z}}$, the nonzero
matrix elements 
\[
\langle\psi_{k+1}\mid\tilde{X}\psi_{k}\rangle=\frac{i}{2}(k-b)\eq{\ref{eq:def_b}}\frac{i}{2}\left(k+\frac{1}{2}-i\lambda\right),
\]
and 
\[
\langle\psi_{k-1}\mid\tilde{X}\psi_{k}\rangle=\frac{i}{2}(k+b)\eq{\ref{eq:def_b}}\frac{i}{2}\left(k-\frac{1}{2}+i\lambda\right).
\]
Thus $\tilde{X}$ has the same matrix elements as $X$, which proves
\eqref{eq:expression_X_sl2R}.

From \eqref{eq:expression_X_sl2R}, one obtains 
\[
\tilde{N}_{+}=e^{i\theta}\frac{d}{d\theta}-ib\,e^{i\theta},\qquad\tilde{N}_{-}=e^{-i\theta}\frac{d}{d\theta}+ib\,e^{-i\theta}.
\]
Moreover, 
\[
\tilde{\Theta}:=\mathcal{U}_{1}\Theta\mathcal{U}_{1}^{-1}=-2i\frac{d}{d\theta},
\]
because $\Theta\varphi_{k}=2k\varphi_{k}$. Therefore, using \eqref{eq:X_from_R},
\begin{align*}
\tilde{U} & =\frac{i}{2}\left(\tilde{N}_{-}-\tilde{N}_{+}-\tilde{\Theta}\right)\\
 & =\frac{i}{2}\left(e^{-i\theta}\frac{d}{d\theta}+ibe^{-i\theta}-e^{i\theta}\frac{d}{d\theta}+ibe^{i\theta}+2i\frac{d}{d\theta}\right)\\
 & =(\sin\theta-1)\frac{d}{d\theta}-b\cos\theta.
\end{align*}
This proves \eqref{eq:expression_U_sl2R}. Similarly, 
\begin{align*}
\tilde{S} & =\frac{i}{2}\left(\tilde{N}_{-}-\tilde{N}_{+}+\tilde{\Theta}\right)\\
 & =(\sin\theta+1)\frac{d}{d\theta}-b\cos\theta,
\end{align*}
which proves \eqref{eq:expression_S_sl2R}. 
\end{proof}

\subsubsection{\protect\label{subsec:Stereographic-projection-and}Projective charts
and normal form for the spherical series}

The vector field on $S^{1}$
\[
\cos\theta\,\frac{d}{d\theta}
\]
appearing in \eqref{eq:expression_X_sl2R} has two fixed points, 
\[
\theta=\frac{\pi}{2},\qquad\theta=-\frac{\pi}{2}.
\]
We introduce two affine charts on $S^{1}$, obtained by removing one
of these two fixed points: 
\begin{equation}
I_{+}:=\left(-\frac{\pi}{2},\frac{3\pi}{2}\right),\qquad I_{-}:=\left(-\frac{3\pi}{2},\frac{\pi}{2}\right).\label{eq:def_I_pm}
\end{equation}
Thus $I_{+}$ contains the attracting fixed point $\theta=\pi/2$
and excludes the repelling fixed point $\theta=-\pi/2$, whereas $I_{-}$
contains the repelling fixed point and excludes the attracting one.

The next lemma shows that, after a projective change of variables
on each chart $I_{\pm}$, the operator $\tilde{X}$ in \eqref{eq:expression_X_sl2R}
is conjugated to a simple normal form. See Figure~\ref{fig:projective}.

\begin{figure}[h]
\begin{centering}
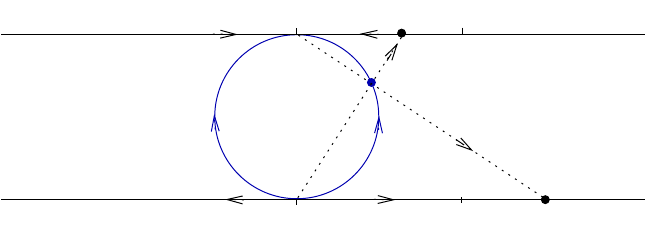
\par\end{centering}
\caption{\protect\label{fig:projective} The projective charts $h_{\pm}:I_{\pm}\to\mathbb{R}$
defined in \eqref{eq:def_h+-}. The vector field $\cos\theta\,\frac{d}{d\theta}$
on $S^{1}$ is transformed into a linear contracting vector field
in the $h_{+}$-chart, and into a linear expanding vector field in
the $h_{-}$-chart.}
\end{figure}

\begin{cBoxB}{}

\begin{lem}
\label{lem:For-each-sign} For each sign $\pm$, define the diffeomorphism
\begin{equation}
h_{\pm}:\begin{cases}
I_{\pm} & \longrightarrow\mathbb{R},\\
\theta & \longmapsto x=\tan\left(\frac{1}{2}\left(\theta\mp\frac{\pi}{2}\right)\right),
\end{cases}\label{eq:def_h+-}
\end{equation}
and the operator 
\begin{equation}
\mathcal{U}_{\pm}:=\sqrt{2}\,(1+x^{2})^{b}h_{\pm}^{-\circ}:L^{2}(S_{\theta}^{1};d\theta)\longrightarrow L^{2}(\mathbb{R};dx),\label{eq:def_Upm}
\end{equation}
where $h_{\pm}^{-\circ}u:=u\circ h_{\pm}^{-1}$. Then $\mathcal{U}_{\pm}$
is unitary and 
\begin{equation}
\mathcal{U}_{\pm}\tilde{X}\mathcal{U}_{\pm}^{-1}=\mp\left(x\frac{\partial}{\partial x}-b\right).\label{eq:def_Xtilde}
\end{equation}
Moreover, 
\begin{equation}
\mathcal{U}_{\pm}\tilde{U}\mathcal{U}_{\pm}^{-1}=\begin{cases}
-x^{2}\dfrac{\partial}{\partial x}+2bx, & \text{for }+,\\[0.5em]
-\dfrac{\partial}{\partial x}, & \text{for }-,
\end{cases}\label{eq:U_after_conj}
\end{equation}
and 
\begin{equation}
\mathcal{U}_{\pm}\tilde{S}\mathcal{U}_{\pm}^{-1}=\begin{cases}
\dfrac{\partial}{\partial x}, & \text{for }+,\\[0.5em]
x^{2}\dfrac{\partial}{\partial x}-2bx, & \text{for }-.
\end{cases}\label{eq:S_after_conj}
\end{equation}
\end{lem}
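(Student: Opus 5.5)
The plan is to verify everything by a direct change of variables, treating the sign $+$ first and deducing $-$ by the symmetry $\theta\mapsto\theta+\pi$ (or by redoing the same computation).

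\textbf{Step 1: Jacobian and unitarity.} With $x=\tan\bigl(\tfrac12(\theta\mp\tfrac{\pi}{2})\bigr)$ one has
\[
\frac{dx}{d\theta}=\tfrac12(1+x^{2}),\qquad d\theta=\frac{2\,dx}{1+x^{2}}.
\]
Hence for $v=u\circ h_{\pm}^{-1}$,
\[
\|u\|^{2}_{L^{2}(d\theta)}=\int|v|^{2}\frac{2\,dx}{1+x^{2}}.
\]
Since $\Re b=-\tfrac12$, we get $|\sqrt2(1+x^{2})^{b}|^{2}=2(1+x^{2})^{-1}$, so $\mathcal{U}_{\pm}$ is isometric. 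It is onto because $h_{\pm}$ is a diffeomorphism and $I_{\pm}$ has full measure in $S^{1}$.

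\textbf{Step 2: trigonometric identities in the chart.} For the sign $+$, set $\phi=\tfrac12(\theta-\tfrac{\pi}{2})$, so that $x=\tan\phi$. Then
\[
\sin\theta=-\cos2\phi=\frac{x^{2}-1}{1+x^{2}},\qquad \cos\theta=-\sin2\phi\cdot(-1)=\dots
\]
More carefully: $\theta=2\phi+\tfrac{\pi}{2}$ gives $\cos\theta=-\sin2\phi=-\frac{2x}{1+x^{2}}$ and $\sin\theta=\cos2\phi=\frac{1-x^{2}}{1+x^{2}}$. Together with $\frac{d}{d\theta}=\tfrac12(1+x^{2})\frac{d}{dx}$, this gives:
\[
\cos\theta\,\tfrac{d}{d\theta}=-x\tfrac{d}{dx},\qquad (\sin\theta-1)\tfrac{d}{d\theta}=-x^{2}\tfrac{d}{dx},\qquad (\sin\theta+1)\tfrac{d}{d\theta}=\tfrac{d}{dx}.
\]
For the sign $-$, $\theta=2\phi-\tfrac{\pi}{2}$ flips the signs of both $\sin\theta$ and $\cos\theta$, which exchanges the roles of $U$ and $S$ and reverses $X$.

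\textbf{Step 3: conjugation by the weight.} For a vector field $V=f(x)\tfrac{d}{dx}$ and weight $w=(1+x^{2})^{b}$,
\[
wVw^{-1}=V-f\,\frac{w'}{w}=V-\frac{2bx f}{1+x^{2}}.
\]
Add the zeroth-order terms $b\sin\theta$ or $-b\cos\theta$, expressed in $x$, and simplify. For $X$ (sign $+$):
\[
-x\tfrac{d}{dx}+\frac{2bx^{2}}{1+x^{2}}+b\,\frac{1-x^{2}}{1+x^{2}}=-x\tfrac{d}{dx}+b,
\]
which is $-(x\partial_x-b)$, as claimed.

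For $U$ (sign $+$):
\[
-x^{2}\tfrac{d}{dx}+\frac{2bx^{3}}{1+x^{2}}+\frac{2bx}{1+x^{2}}=-x^{2}\tfrac{d}{dx}+2bx.
\]

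For $S$ (sign $+$):
\[
\tfrac{d}{dx}-\frac{2bx}{1+x^{2}}+\frac{2bx}{1+x^{2}}=\tfrac{d}{dx}.
\]

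The sign $-$ cases follow identically, with the sign changes from Step 2.

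\textbf{Main obstacle.} There is no conceptual difficulty. The only care needed is consistent bookkeeping of signs of $\sin\theta$ and $\cos\theta$ in each chart, and checking that the formulas hold on a suitable dense domain: for example, functions smooth on $S^{1}$, whose images are smooth with prescribed decay $\sim|x|^{2b}$ at infinity. As a cross-check, I would verify the commutation relations $[X,U]=-U$ and $[S,U]=2X$ for the resulting operators on $\mathbb{R}$.
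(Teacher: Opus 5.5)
Your proposal is correct and follows the paper's proof essentially step for step. The ingredients are the same: the half-angle formulas in each chart with $dx/d\theta=\tfrac12(1+x^2)$, the isometry check using $\Re b=-\tfrac12$, and the conjugation rule $\Upsilon^{b}(a\partial_x+c)\Upsilon^{-b}=a\partial_x+c-ba\Upsilon'/\Upsilon$ with $\Upsilon=1+x^2$; the paper simply carries both signs at once rather than invoking the symmetry $\theta\mapsto\theta+\pi$. Two cosmetic fixes: drop the first, incorrect line of Step 2 ($\sin\theta=-\cos2\phi$), and note that this symmetry sends $\tilde X\mapsto-\tilde X$, $\tilde U\mapsto-\tilde S$, $\tilde S\mapsto-\tilde U$, so the roles of $U$ and $S$ are exchanged only up to a sign, which is exactly what yields $-\partial_x$ and $x^2\partial_x-2bx$ in the $-$ chart.
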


\end{cBoxB}

\begin{proof}
Set 
\[
\Upsilon(x):=1+x^{2}.
\]
Let 
\[
\theta'=\theta\mp\frac{\pi}{2},\qquad x=\tan\frac{\theta'}{2}.
\]
By the tangent half-angle formulas, 
\[
\sin\theta'=\frac{2x}{1+x^{2}}=2x\Upsilon^{-1},\qquad\cos\theta'=\frac{1-x^{2}}{1+x^{2}}=1-2x^{2}\Upsilon^{-1},
\]
and 
\[
\frac{d\theta'}{dx}=2\Upsilon^{-1}.
\]
Equivalently, 
\begin{equation}
\mp\cos\theta=2x\Upsilon^{-1},\qquad\pm\sin\theta=1-2x^{2}\Upsilon^{-1},\qquad\frac{dx}{d\theta}=\frac{\Upsilon}{2}.\label{eq:half_angle_pm}
\end{equation}

We first check unitarity. Since $b=-\frac{1}{2}+i\lambda$, one has
\[
\left|(1+x^{2})^{b}\right|^{2}=(1+x^{2})^{-1}=\Upsilon^{-1}.
\]
Also 
\[
d\theta=2\Upsilon^{-1}\,dx.
\]
Therefore, for $u\in L^{2}(S^{1};d\theta)$, 
\[
\|\mathcal{U}_{\pm}u\|_{L^{2}(\mathbb{R};dx)}^{2}=\int_{\mathbb{R}}2\Upsilon^{-1}|u(h_{\pm}^{-1}(x))|^{2}\,dx=\int_{S^{1}}|u(\theta)|^{2}\,d\theta.
\]
Thus $\mathcal{U}_{\pm}$ is unitary.

We now compute the conjugated vector fields. Before multiplication
by $\sqrt{2}\,\Upsilon^{b}$, the constant factor $\sqrt{2}$ being
irrelevant for conjugation, \eqref{eq:expression_X_sl2R} and \eqref{eq:half_angle_pm}
give 
\begin{align}
h_{\pm}^{-\circ}\tilde{X}h_{\pm}^{\circ} & =\cos\theta\,\frac{dx}{d\theta}\frac{\partial}{\partial x}+b\sin\theta\nonumber \\
 & =\mp x\frac{\partial}{\partial x}\pm b\left(1-2x^{2}\Upsilon^{-1}\right).\label{eq:X_chart_before_weight}
\end{align}
For any first-order operator $a(x)\partial_{x}+c(x)$, conjugation
by multiplication with $\Upsilon^{b}$ gives 
\[
\Upsilon^{b}(a\partial_{x}+c)\Upsilon^{-b}=a\partial_{x}+c-b\,a\,\frac{\Upsilon'}{\Upsilon}.
\]
Applying this to \eqref{eq:X_chart_before_weight}, and using $\Upsilon'=2x$,
we obtain 
\[
\mathcal{U}_{\pm}\tilde{X}\mathcal{U}_{\pm}^{-1}=\mp x\frac{\partial}{\partial x}\pm b=\mp\left(x\frac{\partial}{\partial x}-b\right),
\]
which proves \eqref{eq:def_Xtilde}.

The same computation gives, before multiplication by $\Upsilon^{b}$,
\[
h_{+}^{-\circ}\tilde{U}h_{+}^{\circ}=-x^{2}\frac{\partial}{\partial x}+2bx\Upsilon^{-1},\qquad h_{-}^{-\circ}\tilde{U}h_{-}^{\circ}=-\frac{\partial}{\partial x}-2bx\Upsilon^{-1},
\]
and 
\[
h_{+}^{-\circ}\tilde{S}h_{+}^{\circ}=\frac{\partial}{\partial x}+2bx\Upsilon^{-1},\qquad h_{-}^{-\circ}\tilde{S}h_{-}^{\circ}=x^{2}\frac{\partial}{\partial x}-2bx\Upsilon^{-1}.
\]
Conjugating again by $\Upsilon^{b}$ yields 
\[
\mathcal{U}_{+}\tilde{U}\mathcal{U}_{+}^{-1}=-x^{2}\frac{\partial}{\partial x}+2bx,\qquad\mathcal{U}_{-}\tilde{U}\mathcal{U}_{-}^{-1}=-\frac{\partial}{\partial x},
\]
and 
\[
\mathcal{U}_{+}\tilde{S}\mathcal{U}_{+}^{-1}=\frac{\partial}{\partial x},\qquad\mathcal{U}_{-}\tilde{S}\mathcal{U}_{-}^{-1}=x^{2}\frac{\partial}{\partial x}-2bx.
\]
This proves \eqref{eq:U_after_conj} and \eqref{eq:S_after_conj}. 
\end{proof}

\subsection{\protect\label{subsec:Conjugation-to-the}Conjugation to the quantum
harmonic oscillator}

We distinguish two copies of $\ell^{2}(\mathbb{N})$, denoted by $\ell_{+}^{2}(\mathbb{N})$
and $\ell_{-}^{2}(\mathbb{N})$. As in \eqref{eq:def_e_k}, we write
$e_{n}^{\pm}\in\ell_{\pm}^{2}(\mathbb{N})$ for the canonical basis
vectors, so that 
\begin{equation}
\mathrm{Id}_{\ell_{\pm}^{2}(\mathbb{N})}=\sum_{n\in\mathbb{N}}e_{n}^{\pm}\langle e_{n}^{\pm}\mid\,\cdot\,\rangle_{\ell_{\pm}^{2}(\mathbb{N})}.\label{eq:Id_l2}
\end{equation}

Recall that $T_{+}$ and $T_{-}$ were defined in \eqref{eq:def_Tplus_norm}
and \eqref{eq:def_Tminus_norm}. Thus 
\[
T_{\pm}:D_{\mp}\subset L^{2}(\mathbb{R})\longrightarrow F(D_{\pm})\subset\ell_{\pm}^{2}(\mathbb{N})
\]
are bijections onto their images. We also use the unitary maps $\mathcal{U}_{\pm}$
defined in \eqref{eq:def_Upm}.

\begin{cBoxB}{}

\begin{lem}
\label{lem:Tprime_pm} The operators 
\begin{equation}
T'_{\pm}:=T_{\pm}\mathcal{U}_{\pm}\qquad:\mathcal{U}_{\pm}^{-1}D_{\mp}\subset L^{2}(S_{\theta}^{1};d\theta)\longrightarrow F(D_{\pm})\subset\ell_{\pm}^{2}(\mathbb{N})\label{eq:def_T3}
\end{equation}
are injective, densely defined operators with dense range. They satisfy
\begin{equation}
T'_{\pm}\tilde{X}(T'_{\pm})^{-1}=-A+b_{\pm},\label{eq:X_conj}
\end{equation}
\begin{equation}
T'_{\pm}\tilde{U}(T'_{\pm})^{-1}=\begin{cases}
-a^{+}(A-2b_{+}), & \text{for }+,\\[0.4em]
a^{+}, & \text{for }-,
\end{cases}\label{eq:T3_U}
\end{equation}
and 
\begin{equation}
T'_{\pm}\tilde{S}(T'_{\pm})^{-1}=\begin{cases}
a^{-}, & \text{for }+,\\[0.4em]
-(A-2b_{-})a^{-}, & \text{for }-.
\end{cases}\label{eq:T3_S}
\end{equation}
\end{lem}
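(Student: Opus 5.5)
The plan is to compose the two conjugations already established: first Lemma~\ref{lem:For-each-sign}, which takes the compact picture to the real-line charts, then Corollary~\ref{cor:With-setting-}, which takes the real line to the oscillator model. The domain statements come first. Since $\mathcal{U}_{\pm}$ is unitary, $\mathcal{U}_{\pm}^{-1}D_{\mp}$ is dense in $L^{2}(S^{1}_{\theta})$, because $D_{\mp}$ contains some $D_{\beta}$ and is therefore dense by Theorem~\ref{thm:For-,-the}. Injectivity of $T'_{\pm}$ follows from the bijectivity of $T_{\pm}$ in Corollary~\ref{cor:With-setting-}. Density of the range $F(D_{\pm})$ again follows from the density of $D_{\beta}$ and the unitarity of $F$. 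For the conjugation identities to make sense on these domains, I will check that the operators $x\partial_{x}$, $\partial_{x}$, $x$ and $x^{2}\partial_{x}$ preserve each $D_{\beta}$. They do, since each sends a polynomial times $e^{-\cot\beta\,x^{2}/2}$ to another polynomial times the same Gaussian. Hence all compositions below are taken on $D_{\mp}$ and its images, where Corollary~\ref{cor:With-setting-} applies.

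For $X$, I rewrite \eqref{eq:def_Xtilde} as
\[
\mathcal{U}_{\pm}\tilde{X}\mathcal{U}_{\pm}^{-1}=\mp\Bigl(x\tfrac{\partial}{\partial x}+\tfrac12\Bigr)\pm\Bigl(b+\tfrac12\Bigr).
\]
Applying \eqref{eq:H_conj} then gives $-A-\frac12\pm(b+\frac12)$. For the sign $+$ this equals $-A+b=-A+b_{+}$. For the sign $-$ it equals $-A-1-b=-A-\frac12-i\lambda=-A+b_{-}$.

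For $U$ and $S$, I substitute \eqref{eq:x_conj} into \eqref{eq:U_after_conj} and \eqref{eq:S_after_conj}, and then normal-order using $[a^{-},a^{+}]=1$ and $A=a^{+}a^{-}$. The two first-order cases are immediate: $T_{-}(-\partial_{x})T_{-}^{-1}=a^{+}$ and $T_{+}\partial_{x}T_{+}^{-1}=a^{-}$. The two quadratic cases go as follows.
\begin{itemize}
\item In the $+$ chart, $-x^{2}\partial_{x}+2bx$ becomes $-(a^{+})^{2}a^{-}+2b\,a^{+}=-a^{+}(A-2b_{+})$.
\item In the $-$ chart, $x^{2}\partial_{x}-2bx$ becomes $-(a^{-})^{2}a^{+}-2b\,a^{-}$. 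Using $a^{-}a^{+}=A+1$ and $a^{-}A=(A+1)a^{-}$, one gets $(a^{-})^{2}a^{+}=(A+2)a^{-}$. The result is therefore $-(A+2+2b)a^{-}$. Since $2+2b=1+2i\lambda=-2b_{-}$, this is $-(A-2b_{-})a^{-}$.
\end{itemize}

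The only delicate point is the domain bookkeeping in the first paragraph: making sure that every operator appearing in the chain preserves the relevant $D_{\beta}$, so that the formal conjugations are genuine identities on the stated dense domains. The algebra itself is routine; the thing to watch is keeping the signs straight between the two charts.
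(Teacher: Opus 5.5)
Your proposal is correct and follows essentially the paper's proof: compose the chart conjugations of Lemma~\ref{lem:For-each-sign} with \eqref{eq:H_conj}--\eqref{eq:x_conj}. The only difference is cosmetic: the paper writes the quadratic terms as $x\,(\mp x\partial_x \pm 2b)$, uses \eqref{eq:H_conj} for $x\partial_x$, and then commutes $a^-$ past a function of $A$, whereas you normal-order directly using $[a^-,a^+]=1$. Your explicit check that $x$, $\partial_x$, $x\partial_x$ and $x^2\partial_x$ preserve each $D_\beta$ is a useful addition that the paper leaves implicit.
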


\end{cBoxB}

\begin{proof}
Since $\mathcal{U}_{\pm}$ is unitary and $T_{\pm}$ is injective
with dense domain and dense range by Corollary~\ref{cor:With-setting-},
the same holds for $T'_{\pm}=T_{\pm}\mathcal{U}_{\pm}$ with domain
$\mathcal{U}_{\pm}^{-1}D_{\mp}$.

For $X$, using \eqref{eq:def_Xtilde} and \eqref{eq:H_conj}, we
get 
\[
T'_{\pm}\tilde{X}(T'_{\pm})^{-1}=T_{\pm}\left(\mp\left(x\frac{\partial}{\partial x}-b\right)\right)T_{\pm}^{-1}=-A+b_{\pm}.
\]

In the case $(+)$, using \eqref{eq:U_after_conj}, \eqref{eq:x_conj}
and \eqref{eq:H_conj}, 
\[
T'_{+}\tilde{U}(T'_{+})^{-1}=T_{+}x\left(-x\frac{\partial}{\partial x}+2b\right)T_{+}^{-1}=a^{+}(-A+2b_{+})=-a^{+}(A-2b_{+}).
\]
Also, 
\[
T'_{+}\tilde{S}(T'_{+})^{-1}=T_{+}\frac{\partial}{\partial x}T_{+}^{-1}=a^{-}.
\]

In the case $(-)$, we have 
\[
T'_{-}\tilde{U}(T'_{-})^{-1}=-T_{-}\frac{\partial}{\partial x}T_{-}^{-1}=a^{+}.
\]
Finally, 
\[
T'_{-}\tilde{S}(T'_{-})^{-1}=T_{-}x\left(x\frac{\partial}{\partial x}-2b\right)T_{-}^{-1}=a^{-}(-A-1-2b).
\]
Since $b=b_{+}$, one has 
\[
-1-2b_{+}=1+2b_{-},
\]
hence 
\[
a^{-}(-A-1-2b)=a^{-}(-A+2b_{-}+1).
\]
Using $[A,a^{-}]=-a^{-}$, equivalently $a^{-}f(A)=f(A+1)a^{-}$,
we obtain 
\[
a^{-}(-A+2b_{-}+1)=-(A-2b_{-})a^{-}.
\]
This proves \eqref{eq:T3_S}. 
\end{proof}
We shall need two elementary estimates on the weak coefficients of
$T'_{\pm}\psi_{k}$, where $\psi_{k}$ was defined in \eqref{eq:def_U}
and $T'_{\pm}$ in \eqref{eq:def_T3}. The first estimate concerns
the limit $n\to+\infty$ with $k$ fixed; the second concerns the
limit $|k|\to+\infty$ with $n$ fixed. The coefficients are understood
through the weak expansions \eqref{eq:T_+_T_-} and \eqref{eq:T_-_T_+}.

\begin{cBoxB}{}

\begin{lem}
\label{lem:estimate_Tprime_pm} For fixed $k\in\mathbb{Z}$, as $n\to+\infty$,
\begin{equation}
\left|\langle e_{n}^{\pm}\mid T'_{\pm}\psi_{k}\rangle\right|=O\left(n^{|k|-\frac{1}{2}}(n!)^{\pm\frac{1}{2}}\right).\label{eq:estimate_decay}
\end{equation}
Moreover, 
\begin{equation}
\left|\langle e_{n}^{\pm}\mid(T_{\pm}'{}^{-1})^{\dagger}\psi_{k}\rangle\right|=O\left(n^{|k|-\frac{1}{2}}(n!)^{\mp\frac{1}{2}}\right).\label{eq:estimate_Tinvdagger_decay}
\end{equation}
\end{lem}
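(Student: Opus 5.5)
The plan is to compute the base case $k=0$ explicitly in both charts, and then reach every $k\in\mathbb{Z}$ by climbing the $K$-ladder with $\tilde N_{\pm}$, transported into the oscillator model via Lemma~\ref{lem:Tprime_pm}.

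\emph{Base case $k=0$.} In both charts $\mathcal{U}_{\pm}\psi_0=\pi^{-1/2}(1+x^2)^b$, since $h_\pm^{-\circ}\psi_0$ is constant. In the $(+)$ chart, \eqref{eq:T_+_T_-} expresses the coefficients through Taylor coefficients at $0$:
\[
\langle e^+_n\mid T'_+\psi_0\rangle=\sqrt{n!}\,[x^n](1+x^2)^b .
\]
These vanish for odd $n$. For $n=2m$ they equal $\sqrt{(2m)!}\binom{b}{m}$. Since $|\binom{b}{m}|\sim m^{-\Re b-1}/|\Gamma(-b)|=O(m^{-1/2})$, this gives $O(n^{-1/2}(n!)^{1/2})$.

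In the $(-)$ chart the moments $\int x^{2m}(1+x^2)^b\,dx$ diverge. I would interpret them, consistently with the weak pairing and the duality $T_-=(T_+^{-1})^\dagger$, as the meromorphic continuation in $b$ of the Beta integral:
\[
B\!\left(m+\tfrac12,-b-m-\tfrac12\right)=\frac{\Gamma(m+\frac12)\,\Gamma(-b-m-\frac12)}{\Gamma(-b)} .
\]
By the reflection formula and Stirling this is $O(m^{-1/2})$ in modulus, since $|\sin\pi(b+m+\frac12)|$ stays bounded below for $b=-\frac12+i\lambda$. Dividing by $\sqrt{(2m)!}$ then gives $O(n^{-1/2}(n!)^{-1/2})$. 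Justifying this regularisation, i.e. checking that the continued moments really are the coefficients produced by $T_-=(2\pi)^{1/4}Fe^{\frac{\pi}{4}\hat Q}$, is the step I expect to be the main obstacle.

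\emph{Induction in $k$.} By \eqref{eq:action_N_pm_phi_k}, $\psi_{k\pm1}$ is a nonzero multiple of $\tilde N_\pm\psi_k$, where $\tilde N_\pm=\tilde X\pm\frac i2(\tilde U+\tilde S)$. By \eqref{eq:X_conj}--\eqref{eq:T3_S}, $T'_\pm\tilde N_\pm(T'_\pm)^{-1}$ is a finite combination of $A$, $a^-$, $a^+$, $a^+A$ and $Aa^-$, with coefficients bounded in terms of $b$.

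Suppose $|v_n|\le C n^{p}(n!)^{\pm1/2}$. Then $(a^+v)_n=\sqrt n\,v_{n-1}$ and $(a^-v)_n=\sqrt{n+1}\,v_{n+1}$. In the $(+)$ chart $a^+$ preserves the bound exactly, $a^-$ multiplies it by $O(n)$, and an extra factor $A$ multiplies it by $O(n)$. So each ladder step raises the polynomial exponent by at most one, and the $(-)$ chart is symmetric with the roles of $a^\pm$ exchanged. Starting from exponent $-\frac12$ at $k=0$, after $|k|$ steps one obtains $n^{|k|-1/2}(n!)^{\pm1/2}$, which is \eqref{eq:estimate_decay}. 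The passage from the identity to the coefficient bound is legitimate because all vectors lie in the domains $\mathcal{U}_\pm^{-1}D_\mp$, where the coefficients are the weak expansions.

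\emph{The estimate \eqref{eq:estimate_Tinvdagger_decay}.} I would repeat the same scheme. For $k=0$, Corollary~\ref{cor:With-setting-} identifies $(T_\pm^{-1})^\dagger=T_\mp$, so $(T'^{-1}_\pm)^\dagger\psi_0=T_\mp\mathcal{U}_\pm\psi_0$, where $\mathcal U_\pm$ is unitary and $(\mathcal U_\pm^{-1})^\dagger=\mathcal U_\pm$. This is again the function $\pi^{-1/2}(1+x^2)^b$, now paired through the opposite expansion. The Taylor and moment computations above therefore apply with the exponents of $n!$ swapped, giving $O(n^{-1/2}(n!)^{\mp1/2})$.

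For the ladder, the $L^2$ skew-adjointness of $X,U,S$ gives $\tilde N_\pm^\dagger=-\tilde N_\mp$. Hence
\[
(T'^{-1}_\pm)^\dagger\,\tilde N_\pm=-\big(T'_\pm\tilde N_\mp T'^{-1}_\pm\big)^\dagger(T'^{-1}_\pm)^\dagger ,
\]
which is again a polynomial of degree at most two in $a^\pm,A$, with at most one factor $A$. The same growth count then gives \eqref{eq:estimate_Tinvdagger_decay}.
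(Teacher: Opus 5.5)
Your argument is correct in substance, but it handles the $k$-dependence by a different route from the paper's. The paper treats every $k$ at once, starting from $\mathcal{U}_\pm\psi_k=\pi^{-1/2}e^{\pm ik\pi/2}(1+ix)^{b+k}(1-ix)^{b-k}$:
\begin{itemize}
\item for the Taylor coefficients it applies Darboux's method to the algebraic singularities at $x=\pm i$, with exponents $b\pm k$, which yields $n^{|k|-1/2}$;
\item for the moments it expands into analytically continued Beta integrals $\int x^{m}(1+x^{2})^{b-|k|}dx$ and uses reflection and Stirling.
\end{itemize}
You compute only $k=0$ and then climb the $K$-ladder in the oscillator picture. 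Both base-case computations are right, and your Beta value is the paper's $I_{2r}$ at $k=0$. Your power counting is also correct, and the nonzero factors in $\tilde N_+\psi_k=i(k-b)\psi_{k+1}$ and $\tilde N_-\psi_k=i(k+b)\psi_{k-1}$ let the induction run. Your route replaces the singularity analysis by an elementary count and explains the exponent $|k|$ as one power of $n$ per $K$-step. The price is that you must justify the intertwining on weak coefficients, which the direct computation never needs. The regularization you flag as the main obstacle is not an extra difficulty: it is exactly how the paper defines the $(-)$ coefficients.

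The justification you give for that intertwining is false, however. It is not true that all vectors lie in $\mathcal{U}_\pm^{-1}D_\mp$. Up to a constant, $\mathcal{U}_\pm\psi_k$ equals $(1+x^2)^b\left(\frac{1+ix}{1-ix}\right)^k$, which is not a polynomial times a Gaussian. That is precisely why the lemma speaks of weak coefficients, and Lemma~\ref{lem:Tprime_pm} cannot be invoked as stated. Instead, argue directly on the coefficient functionals.
\begin{itemize}
\item On the $(+)$ side the coefficients are $u^{(n)}(0)/\sqrt{n!}$. These intertwine $x\mapsto a^+$ and $\frac{d}{dx}\mapsto a^-$ for every $u$ analytic at $0$, as checked for $S_+$ in the proof of Corollary~\ref{cor:With-setting-}. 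Since $\mathcal{U}_+\tilde N_\pm\mathcal{U}_+^{-1}$ is a differential operator with polynomial coefficients (Lemma~\ref{lem:For-each-sign}), the transported ladder relation holds coefficientwise.
\item On the $(-)$ side the regularized moments satisfy $M_n[xu]=M_{n+1}[u]$ and $M_n[u']=-nM_{n-1}[u]$. To see this, integrate by parts for $\Re b$ very negative, where everything converges and boundary terms vanish, then continue analytically in $b$. This is the same continuation as in your base case, and the ladder identities for $\psi_k$ hold for all complex $b$, so your consistency worry is settled too.
\end{itemize}

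Finally, your dual ladder is unnecessary. Since $\mathcal{U}_+\psi_k=(-1)^k\mathcal{U}_-\psi_k$ and $(T_\pm^{-1})^\dagger=T_\mp$, one gets $(T'^{-1}_\pm)^\dagger\psi_k=(-1)^kT'_\mp\psi_k$. Hence \eqref{eq:estimate_Tinvdagger_decay} is just \eqref{eq:estimate_decay} with the charts exchanged. This is the paper's one-line argument, and it spares you a second weak-level check for the adjoint relations.
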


\end{cBoxB}

\begin{proof}
We first compute $\mathcal{U}_{\pm}\psi_{k}$. From \eqref{eq:def_h+-},
\[
h_{\pm}^{-1}(x)=2\arctan x\pm\frac{\pi}{2}.
\]
Since 
\[
\frac{1+ix}{1-ix}=e^{2i\arctan x},
\]
and since $\mathcal{U}_{\pm}$ contains the factor $\sqrt{2}$, we
get 
\begin{equation}
(\mathcal{U}_{\pm}\psi_{k})(x)=\frac{e^{\pm ik\pi/2}}{\sqrt{\pi}}(1+x^{2})^{b}\left(\frac{1+ix}{1-ix}\right)^{k}.\label{eq:Upsi_pm_common}
\end{equation}

For the $+$ branch, using the weak expansion \eqref{eq:T_+_T_-},
\[
\langle e_{n}^{+}\mid T'_{+}\psi_{k}\rangle=\frac{1}{\sqrt{n!}}\left(\frac{d^{n}}{dx^{n}}(\mathcal{U}_{+}\psi_{k})(x)\right)_{x=0}.
\]
By \eqref{eq:Upsi_pm_common}, 
\begin{equation}
(\mathcal{U}_{+}\psi_{k})(x)=\frac{e^{ik\pi/2}}{\sqrt{\pi}}(1+ix)^{b+k}(1-ix)^{b-k}.\label{eq:U+_Psi_k}
\end{equation}
Set 
\[
f_{k}(x):=(1+ix)^{b+k}(1-ix)^{b-k}.
\]
The function $f_{k}$ is holomorphic for $|x|<1$, and its only singularities
on $|x|=1$ are at $x=\pm i$. Near these points, 
\[
f_{k}(x)=C_{k,+}(x-i)^{b+k}(1+O(x-i)),
\]
and 
\[
f_{k}(x)=C_{k,-}(x+i)^{b-k}(1+O(x+i)).
\]
Since 
\[
\Re(b+k)=k-\frac{1}{2},\qquad\Re(b-k)=-k-\frac{1}{2},
\]
the Darboux estimate for Taylor coefficients of functions with algebraic
singularities gives, for fixed $k$, 
\[
[x^{n}]f_{k}(x)=O\left(n^{|k|-\frac{1}{2}}\right).
\]
Therefore 
\[
\left|\left(\frac{d^{n}}{dx^{n}}(\mathcal{U}_{+}\psi_{k})\right)_{x=0}\right|=O\left(n!\,n^{|k|-\frac{1}{2}}\right),
\]
and hence 
\[
\left|\langle e_{n}^{+}\mid T'_{+}\psi_{k}\rangle\right|=O\left(n^{|k|-\frac{1}{2}}(n!)^{1/2}\right).
\]

For the $-$ branch, using \eqref{eq:T_-_T_+}, 
\[
\langle e_{n}^{-}\mid T'_{-}\psi_{k}\rangle=\frac{1}{\sqrt{n!}}\int_{\mathbb{R}}x^{n}(\mathcal{U}_{-}\psi_{k})(x)\,dx,
\]
where the integral is understood by analytic continuation of the corresponding
beta integrals. From \eqref{eq:Upsi_pm_common}, if $k\ge0$, 
\[
(\mathcal{U}_{-}\psi_{k})(x)=C_{k}(1+x^{2})^{b-k}(1+ix)^{2k},
\]
whereas if $k\le0$, 
\[
(\mathcal{U}_{-}\psi_{k})(x)=C_{k}(1+x^{2})^{b-|k|}(1-ix)^{2|k|}.
\]
Thus in both cases, 
\[
(\mathcal{U}_{-}\psi_{k})(x)=\sum_{j=0}^{2|k|}c_{j,k}x^{j}(1+x^{2})^{b-|k|}.
\]
Hence 
\[
\langle e_{n}^{-}\mid T'_{-}\psi_{k}\rangle=\frac{1}{\sqrt{n!}}\sum_{j=0}^{2|k|}c_{j,k}I_{n+j},
\]
with 
\[
I_{m}:=\int_{\mathbb{R}}x^{m}(1+x^{2})^{b-|k|}\,dx,
\]
again defined by analytic continuation.

For $m=2r$, the beta integral gives 
\[
I_{2r}=B\left(r+\frac{1}{2},-b+|k|-r-\frac{1}{2}\right).
\]
Since $b=-\frac{1}{2}+i\lambda$, 
\[
I_{2r}=\frac{\Gamma\left(r+\frac{1}{2}\right)\Gamma\left(|k|-i\lambda-r\right)}{\Gamma\left(\frac{1}{2}-i\lambda+|k|\right)}.
\]
Using the reflection formula on the second Gamma factor and Stirling's
formula, we obtain 
\[
I_{2r}=O\left(r^{|k|-\frac{1}{2}}\right).
\]
Odd indices vanish by parity in the present expression for $I_{m}$.
Thus the same bound holds trivially for them. Therefore, uniformly
for $0\le j\le2|k|$, 
\[
I_{n+j}=O\left(n^{|k|-\frac{1}{2}}\right),
\]
and consequently 
\[
\left|\langle e_{n}^{-}\mid T'_{-}\psi_{k}\rangle\right|=O\left(n^{|k|-\frac{1}{2}}(n!)^{-1/2}\right).
\]
This proves \eqref{eq:estimate_decay}.

Finally, 
\[
(T'_{+}{}^{-1})^{\dagger}=(T_{+}^{-1})^{\dagger}\mathcal{U}_{+}=T_{-}\mathcal{U}_{+},
\]
and 
\[
(T'_{-}{}^{-1})^{\dagger}=(T_{-}^{-1})^{\dagger}\mathcal{U}_{-}=T_{+}\mathcal{U}_{-}.
\]
Thus the same estimates as above apply with $T_{+}$ and $T_{-}$
exchanged. This reverses the factorial exponent and gives \eqref{eq:estimate_Tinvdagger_decay}. 
\end{proof}
\begin{cBoxB}{}

\begin{lem}
\label{lem:estimate_Tprime_pm_absk} For fixed $n\in\mathbb{N}$,
as $|k|\to+\infty$, 
\begin{equation}
\left|\langle e_{n}^{\pm}\mid T'_{\pm}\psi_{k}\rangle\right|=O\left(|k|^{n}\right).\label{eq:estimate_absk_infty}
\end{equation}
Moreover, 
\begin{equation}
\left|\left\langle e_{n}^{\pm}\middle|(T'_{\pm}{}^{-1})^{\dagger}\psi_{k}\right\rangle \right|=O\left(|k|^{n}\right).\label{eq:estimate_Tinvdagger_absk}
\end{equation}
\end{lem}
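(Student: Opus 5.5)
We proceed exactly as in the proof of Lemma~\ref{lem:estimate_Tprime_pm}, starting from the explicit formula \eqref{eq:Upsi_pm_common} for $\mathcal{U}_{\pm}\psi_{k}$ and the weak expansions \eqref{eq:T_+_T_-}, \eqref{eq:T_-_T_+}. The difference is that now $n$ is fixed and $k$ varies, so we need explicit polynomial dependence in $k$ rather than a Darboux-type asymptotic in $n$.

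\textbf{The $+$ branch.} We have
\[
\langle e_{n}^{+}\mid T'_{+}\psi_{k}\rangle=\frac{1}{\sqrt{n!}}\,\frac{e^{ik\pi/2}}{\sqrt{\pi}}\,f_{k}^{(n)}(0),\qquad f_{k}(x)=(1+ix)^{b}(1-ix)^{b}\left(\frac{1+ix}{1-ix}\right)^{k}.
\]
Write $\left(\frac{1+ix}{1-ix}\right)^{k}=e^{2ik\arctan x}$. The Taylor coefficients of $e^{2ik\arctan x}$ at order $j$ are polynomials in $k$ of degree at most $j$, with coefficients independent of $k$; this follows from Faà di Bruno, or simply by expanding the exponential of $2ik(x-x^{3}/3+\dots)$. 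The factor $(1+x^{2})^{b}$ has $k$-independent Taylor coefficients. Taking the Cauchy product up to order $n$ therefore gives $f_{k}^{(n)}(0)=O(|k|^{n})$, which proves the $+$ case of \eqref{eq:estimate_absk_infty}.

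\textbf{The $-$ branch.} We have
\[
\langle e_{n}^{-}\mid T'_{-}\psi_{k}\rangle=\frac{1}{\sqrt{n!}}\int x^{n}(\mathcal{U}_{-}\psi_{k})\,dx,
\]
with $\mathcal{U}_{-}\psi_{k}=C(1+x^{2})^{b}e^{2ik\arctan x}$, up to a unimodular constant. Rather than using beta integrals, which depend on $k$ in a complicated way, we would avoid them: the integral is regularized by analytic continuation because $x^{n}(1+x^{2})^{-1/2}$ is not integrable for $n\ge0$. The cleanest route is to pull back to the circle. Via $x=h_{-}(\theta)$, the pairing becomes
\[
\int_{S^{1}}\overline{g_{n}(\theta)}\,e^{ik\theta}\,d\theta,
\]
where $g_{n}=\mathcal{U}_{-}^{-1}x^{n}$ is a distribution on $S^{1}$. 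It is smooth away from the excluded point $\theta=\pi/2$, where it has a conormal singularity behaving like $|\theta-\pi/2|^{-n-\frac{1}{2}\pm i\lambda}$ up to smooth factors. Its Fourier coefficients are the Fourier coefficients of a homogeneous singularity of degree $-n-\tfrac12$, which are $O(|k|^{n-\frac{1}{2}})=O(|k|^{n})$, plus rapidly decaying terms.

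Alternatively, one can use the intertwining relations of Lemma~\ref{lem:Tprime_pm}:
\[
\langle e_{n}^{-}\mid T'_{-}\psi_{k}\rangle=\frac{1}{\sqrt{n!}}\langle e_{0}^{-}\mid (a^{-})^{n}T'_{-}\psi_{k}\rangle .
\]
By \eqref{eq:T3_S}, $a^{-}$ corresponds to $-(A-2b_{-}+1)^{-1}\tilde S$ on the image. Since $\tilde{S}\psi_{k}$ is a combination of $\psi_{k},\psi_{k\pm1}$ with coefficients $O(|k|)$, by \eqref{eq:expression_S_sl2R}, induction on $n$ gives the bound, provided the $n=0$ coefficient $\langle e_{0}^{-}\mid T'_{-}\psi_{k}\rangle$ is $O(1)$. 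This last fact follows from the explicit beta-integral computation $I_{0}$. It is cleaner to use $\tilde U$: $a^{+}$ corresponds to $\tilde{U}$ on the $-$ side, so we would rather work with its adjoint. The same ladder argument works for the $+$ side using $a^{-}=T'_{+}\tilde S T_{+}'^{-1}$, which gives a uniform proof. I would adopt this ladder argument as the main route, with the Taylor-expansion argument as a check.

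\textbf{Main obstacle.} The delicate point is the base case $n=0$ on the $-$ side, i.e.\ the regularized integral $\int(1+x^{2})^{b}e^{2ik\arctan x}dx$. Here we need a uniform $O(1)$ bound in $k$. We would compute it on the circle as $\int_{S^{1}}|\cos\tfrac{\theta'}{2}|^{-1-2b}e^{ik\theta}d\theta$, suitably continued, which is a classical Gamma-ratio
\[
\frac{\Gamma(\cdot)}{\Gamma(\tfrac12-i\lambda+k)\,\Gamma(\tfrac12-i\lambda-k)}\text{-type expression}
\]
that is bounded in $k$ by Stirling.

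\textbf{The adjoint estimates.} Finally, \eqref{eq:estimate_Tinvdagger_absk} follows by the same argument with $T_{+}$ and $T_{-}$ exchanged, using
\[
(T_{\pm}'^{-1})^{\dagger}=T_{\mp}\mathcal{U}_{\pm},
\]
exactly as at the end of the proof of Lemma~\ref{lem:estimate_Tprime_pm}.
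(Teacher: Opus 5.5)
Your $+$-branch argument (the Taylor coefficients of $e^{2ik\arctan x}$ are polynomials of degree $\le n$ in $k$) is the paper's. Your reduction of the adjoint bounds via $(T'_{\pm}{}^{-1})^{\dagger}=T_{\mp}\mathcal{U}_{\pm}$ is also the paper's.

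For the $-$ branch, your main (ladder) route is genuinely different and correct. The paper expands $x^{n}=(2i)^{-n}\bigl((1+ix)-(1-ix)\bigr)^{n}$ and evaluates every regularized beta integral as a Gamma ratio $\Gamma(k+j+\frac12+i\lambda)/\Gamma(k-n+j+\frac12-i\lambda)=O(k^{n})$, treating $k\ge0$ and $k\le0$ separately. You instead write $\langle e_{n}\mid v\rangle=(n!)^{-1/2}\langle e_{0}\mid(a^{-})^{n}v\rangle$ and use $a^{-}T'_{-}=-(A-2b_{-})^{-1}T'_{-}\tilde S$. The factor is $(A-2b_{-})^{-1}$ on the left, not $(A-2b_{-}+1)^{-1}$, but this is harmless since $|m-2b_{-}|\ge1$.

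In your route the growth $|k|^{n}$ comes from $\tilde S^{n}\psi_{k}$, and only the $n=0$ integral is needed. That base case is in fact of constant modulus in $k$: by reflection, $|\Gamma(\frac12-i\lambda+k)\Gamma(\frac12-i\lambda-k)|=\pi/\cosh(\pi\lambda)$.

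Your route is uniform and structural, but it applies Lemma~\ref{lem:Tprime_pm} to $\psi_{k}$, which lies outside the Gaussian domain on which $T'_{\pm}$ was defined. You should say why the intertwining survives the weak extension. Moments trivially intertwine $x$ with $a^{-}$. The integration by parts needed for $x^{2}\partial_{x}-2bx$ holds for $\Re b\ll0$ and persists by analytic continuation in the exponent, which is the same regularization the paper uses. The paper's explicit computation sidesteps this point at the cost of the binomial bookkeeping.

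Two slips in your secondary routes need correcting.
\begin{enumerate}
\item Pulled back to $S^{1}$, the singularity at $\theta=\pi/2$ has real degree $-n-1$, not $-n-\frac12$. Its Fourier coefficients are therefore $O(|k|^{n})$, and the intermediate bound $O(|k|^{n-1/2})$ you wrote is false: already for $n=0$ the coefficient has constant modulus.
\item The exponent in your base-case circle integral should be $-2-2b$, not $-1-2b$. Your Gamma denominators do correspond to the correct exponent.
\end{enumerate}

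Finally, since $\mathcal{U}_{+}\psi_{k}=(-1)^{k}\mathcal{U}_{-}\psi_{k}$, one has $(T'_{\pm}{}^{-1})^{\dagger}\psi_{k}=(-1)^{k}T'_{\mp}\psi_{k}$. So the second estimate is literally the first, and no separate ladder argument is needed for it.
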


\end{cBoxB}

\begin{proof}
We use the expression obtained in \eqref{eq:Upsi_pm_common}: 
\[
(\mathcal{U}_{\pm}\psi_{k})(x)=\frac{e^{\pm ik\pi/2}}{\sqrt{\pi}}(1+x^{2})^{b}\left(\frac{1+ix}{1-ix}\right)^{k}.
\]

We first consider the $+$ branch. By the weak expansion \eqref{eq:T_+_T_-},
\[
\langle e_{n}^{+}\mid T'_{+}\psi_{k}\rangle=\frac{1}{\sqrt{n!}}\left(\frac{d^{n}}{dx^{n}}(\mathcal{U}_{+}\psi_{k})(x)\right)_{x=0}.
\]
Since 
\[
(\mathcal{U}_{+}\psi_{k})(x)=\frac{e^{ik\pi/2}}{\sqrt{\pi}}(1+x^{2})^{b}e^{2ik\arctan x},
\]
and $n$ is fixed, repeated differentiation gives 
\[
\frac{d^{n}}{dx^{n}}e^{2ik\arctan x}=P_{n}(x,k)e^{2ik\arctan x},
\]
where $P_{n}(x,k)$ is polynomial in $k$ of degree at most $n$,
with coefficients smooth in $x$. Therefore 
\[
\left(\frac{d^{n}}{dx^{n}}(\mathcal{U}_{+}\psi_{k})(x)\right)_{x=0}=O(|k|^{n}),
\]
and hence 
\[
\left|\langle e_{n}^{+}\mid T'_{+}\psi_{k}\rangle\right|=O(|k|^{n}).
\]

We now consider the $-$ branch. By \eqref{eq:T_-_T_+}, 
\[
\langle e_{n}^{-}\mid T'_{-}\psi_{k}\rangle=\frac{1}{\sqrt{n!}}\int_{\mathbb{R}}x^{n}(\mathcal{U}_{-}\psi_{k})(x)\,dx,
\]
where the integral is understood by analytic continuation of the corresponding
beta integral. Using \eqref{eq:Upsi_pm_common}, 
\[
(\mathcal{U}_{-}\psi_{k})(x)=\frac{e^{-ik\pi/2}}{\sqrt{\pi}}(1+x^{2})^{b}\left(\frac{1+ix}{1-ix}\right)^{k}.
\]

Assume first that $k\ge0$. Expanding 
\[
x^{n}=(2i)^{-n}\bigl((1+ix)-(1-ix)\bigr)^{n},
\]
we obtain 
\[
\langle e_{n}^{-}\mid T'_{-}\psi_{k}\rangle=C_{n}e^{-ik\pi/2}\sum_{j=0}^{n}(-1)^{n-j}\binom{n}{j}I_{j,k},
\]
where $C_{n}=(\sqrt{\pi n!})^{-1}(2i)^{-n}$ and 
\[
I_{j,k}:=\int_{\mathbb{R}}(1+ix)^{b+k+j}(1-ix)^{b-k+n-j}\,dx.
\]
By analytic continuation of the beta integral, 
\[
I_{j,k}=C_{n,\lambda,j}\,\frac{\Gamma\left(k+j+\frac{1}{2}+i\lambda\right)}{\Gamma\left(k-n+j+\frac{1}{2}-i\lambda\right)}.
\]
Since $n$ and $j$ are fixed, Stirling's formula gives 
\[
I_{j,k}=O(k^{n}),\qquad k\to+\infty.
\]
Thus 
\[
\left|\langle e_{n}^{-}\mid T'_{-}\psi_{k}\rangle\right|=O(k^{n}),\qquad k\to+\infty.
\]

If $k\le0$, set $\kappa:=|k|=-k$. Then 
\[
(\mathcal{U}_{-}\psi_{k})(x)=\frac{e^{i\kappa\pi/2}}{\sqrt{\pi}}(1+x^{2})^{b}\left(\frac{1-ix}{1+ix}\right)^{\kappa}.
\]
The same expansion of $x^{n}$ gives 
\[
\langle e_{n}^{-}\mid T'_{-}\psi_{k}\rangle=C_{n}e^{i\kappa\pi/2}\sum_{j=0}^{n}(-1)^{n-j}\binom{n}{j}J_{j,\kappa},
\]
where 
\[
J_{j,\kappa}:=\int_{\mathbb{R}}(1+ix)^{b-\kappa+j}(1-ix)^{b+\kappa+n-j}\,dx.
\]
Again, by analytic continuation, 
\[
J_{j,\kappa}=C'_{n,\lambda,j}\,\frac{\Gamma\left(\kappa+n-j+\frac{1}{2}+i\lambda\right)}{\Gamma\left(\kappa-j+\frac{1}{2}-i\lambda\right)}.
\]
Stirling's formula gives 
\[
J_{j,\kappa}=O(\kappa^{n}),\qquad\kappa\to+\infty.
\]
Hence 
\[
\left|\langle e_{n}^{-}\mid T'_{-}\psi_{k}\rangle\right|=O(|k|^{n}),\qquad k\to-\infty.
\]
Combining the two cases proves \eqref{eq:estimate_absk_infty}.

Finally, \eqref{eq:estimate_Tinvdagger_absk} follows from the weak
relations 
\[
T_{+}=(T_{-}^{-1})^{\dagger},\qquad T_{-}=(T_{+}^{-1})^{\dagger},
\]
together with the same argument applied with $T_{+}$ and $T_{-}$
exchanged. 
\end{proof}
\begin{cBoxB}{}

\begin{cor}
\label{cor:weak_extensions_sobolev} For each $n\in\mathbb{N}$ and
every $\epsilon>0$, the weak extensions 
\[
(T'_{\pm})^{\dagger}e_{n}^{\pm},\qquad(T'_{\pm})^{-1}e_{n}^{\pm}
\]
belong to 
\[
H^{-n-\frac{1}{2}-\epsilon}(S^{1}).
\]
More precisely, they are first defined on trigonometric polynomials
by 
\[
\left\langle (T'_{\pm})^{\dagger}e_{n}^{\pm}\mid\psi_{k}\right\rangle :=\left\langle e_{n}^{\pm}\mid T'_{\pm}\psi_{k}\right\rangle ,
\]
and 
\[
\left\langle (T'_{\pm})^{-1}e_{n}^{\pm}\mid\psi_{k}\right\rangle :=\left\langle e_{n}^{\pm}\mid\left((T'_{\pm})^{-1}\right)^{\dagger}\psi_{k}\right\rangle .
\]
\end{cor}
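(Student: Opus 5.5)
The plan is to use the characterization of Sobolev spaces on $S^{1}$ by Fourier coefficients. Since $(\psi_{k})_{k\in\mathbb{Z}}$ is an orthonormal basis of $L^{2}(S_{\theta}^{1})$, a linear functional $w$ defined on trigonometric polynomials by its coefficients $w_{k}:=\langle w\mid\psi_{k}\rangle$ extends to an element of $H^{s}(S^{1})$ if and only if
\[
\sum_{k\in\mathbb{Z}}\langle k\rangle^{2s}|w_{k}|^{2}<\infty,\qquad\langle k\rangle:=(1+k^{2})^{1/2}.
\]
Here $H^{s}$ is normalized so that $e^{ik\theta}$ has weight $\langle k\rangle^{s}$; the rescaling $\widetilde{\Theta}=-2i\partial_{\theta}$ only changes constants. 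So it suffices to bound the coefficients of the two weak extensions, and these are exactly the quantities estimated in Lemma~\ref{lem:estimate_Tprime_pm_absk}.

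First, for $(T'_{\pm})^{\dagger}e_{n}^{\pm}$ the defining formula gives $w_{k}=\langle e_{n}^{\pm}\mid T'_{\pm}\psi_{k}\rangle$ (up to complex conjugation, which does not affect absolute values). Estimate \eqref{eq:estimate_absk_infty} gives $|w_{k}|\le C_{n}\langle k\rangle^{n}$ for all $k$: for large $|k|$ this is the lemma, and for the finitely many remaining $k$ the coefficients are finite numbers (Taylor coefficients at $0$ of an analytic function, respectively analytically continued beta integrals). Second, for $(T'_{\pm})^{-1}e_{n}^{\pm}$ the coefficients are $\langle e_{n}^{\pm}\mid((T'_{\pm})^{-1})^{\dagger}\psi_{k}\rangle$, and \eqref{eq:estimate_Tinvdagger_absk} gives the same polynomial bound $C_{n}\langle k\rangle^{n}$. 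Third, with $s=-n-\frac{1}{2}-\epsilon$,
\[
\sum_{k}\langle k\rangle^{-2n-1-2\epsilon}|w_{k}|^{2}\le C_{n}^{2}\sum_{k}\langle k\rangle^{-1-2\epsilon}<\infty,
\]
so both extensions lie in $H^{-n-\frac{1}{2}-\epsilon}(S^{1})$. They also act continuously on $H^{n+\frac{1}{2}+\epsilon}$ by Cauchy--Schwarz, which shows that the weak definition on trigonometric polynomials is consistent.

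The only real point to check is the uniformity of the constant in Lemma~\ref{lem:estimate_Tprime_pm_absk}. That lemma is stated asymptotically as $|k|\to\infty$, so I need every coefficient to be finite for small $|k|$, including the analytically continued integrals, where the Gamma quotients could a priori have poles. For $\lambda\ge0$ the Gamma factors have arguments with real part in $\tfrac{1}{2}+\mathbb{Z}$ (in the numerator $\Gamma(k+j+\tfrac{1}{2}+i\lambda)$), so no poles arise and finiteness is immediate. Apart from this bookkeeping, the corollary follows directly from the lemma.
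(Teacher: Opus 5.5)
Your proof is the same as the paper's: the Fourier characterization of $H^{s}(S^{1})$ combined with the $O(|k|^{n})$ bounds of Lemma~\ref{lem:estimate_Tprime_pm_absk}, which are square-summable against $\langle k\rangle^{2s}$ exactly when $s<-n-\frac{1}{2}$.

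One correction concerns your finiteness remark for small $|k|$. It holds for $\lambda>0$, but not for $\lambda\ge0$ as you state. The analytically continued beta integrals carry a factor $\Gamma(-n-2i\lambda)$ hidden in the constants $C_{n,\lambda,j}$. For example, with $n=k=0$ one gets $\int_{\mathbb{R}}(1+x^{2})^{-\frac{1}{2}+i\lambda}dx=\sqrt{\pi}\,\Gamma(-i\lambda)/\Gamma(\frac{1}{2}-i\lambda)$. This factor has a pole at $\lambda=0$; it is exactly the threshold divergence that the paper later cancels with $\rho(\lambda)$. So the corollary should be read with $\lambda>0$, as in the rest of the section.
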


\end{cBoxB}

\begin{proof}
Set, in the weak sense above, 
\[
u_{n,\pm}:=(T'_{\pm})^{\dagger}e_{n}^{\pm},\qquad v_{n,\pm}:=(T'_{\pm})^{-1}e_{n}^{\pm}.
\]
By Lemma~\ref{lem:estimate_Tprime_pm_absk}, for fixed $n$, 
\[
\left|\left\langle u_{n,\pm}\mid\psi_{k}\right\rangle \right|=\left|\left\langle e_{n}^{\pm}\mid T'_{\pm}\psi_{k}\right\rangle \right|\eq{\ref{eq:estimate_absk_infty}}O(|k|^{n}),\qquad|k|\to+\infty,
\]
and similarly 
\[
\left|\left\langle v_{n,\pm}\mid\psi_{k}\right\rangle \right|=\left|\left\langle e_{n}^{\pm}\mid\left((T'_{\pm})^{-1}\right)^{\dagger}\psi_{k}\right\rangle \right|\eq{\ref{eq:estimate_Tinvdagger_absk}}O(|k|^{n}).
\]

Using the Fourier characterization of Sobolev spaces on $S^{1}$,
a distribution $w$ belongs to $H^{s}(S^{1})$ if and only if 
\[
\sum_{k\in\mathbb{Z}}\langle k\rangle^{2s}\left|\left\langle w\mid\psi_{k}\right\rangle \right|^{2}<\infty,\qquad\langle k\rangle:=(1+k^{2})^{1/2}.
\]
For $w=u_{n,\pm}$ or $w=v_{n,\pm}$, the summand is 
\[
O\left(\langle k\rangle^{2(s+n)}\right).
\]
The series 
\[
\sum_{k\in\mathbb{Z}}\langle k\rangle^{2(s+n)}
\]
converges precisely when $2(s+n)<-1$, that is, 
\[
s<-n-\frac{1}{2}.
\]
Therefore 
\[
u_{n,\pm},\,v_{n,\pm}\in H^{s}(S^{1})\qquad\text{for every }s<-n-\frac{1}{2}.
\]
Taking $s=-n-\frac{1}{2}-\epsilon$ proves the result. 
\end{proof}

\subsection{Additional conjugation: algebraic gauge}

By \eqref{eq:estimate_decay}, for fixed $k$, 
\[
\left|\langle e_{n}^{-}\mid T'_{-}\psi_{k}\rangle\right|
\]
decays factorially as $n\to+\infty$, and therefore $T'_{-}\psi_{k}\in\ell^{2}(\mathbb{N})$.
This is not the case for the $+$-branch, where 
\[
\left|\langle e_{n}^{+}\mid T'_{+}\psi_{k}\rangle\right|
\]
has factorial growth.

We now introduce a diagonal algebraic gauge. It will be used to conjugate
the expressions in Lemma~\ref{lem:Tprime_pm}, in order to obtain
good decay for both signs and a more symmetric form of the operators.

\begin{cBoxB}{}

\begin{lem}[Diagonal algebraic gauge]
\label{lem:diagonal_algebraic_gauge} Let $\alpha\in\mathbb{R}$
and $c\in\mathbb{C}\setminus(-\mathbb{N})$. Choose once and for all
a determination of $(c+n)^{\alpha}$ for $n\in\mathbb{N}$, obtained
from a single branch of $z^{\alpha}$ on a domain avoiding $0$ and
containing all points $c+n$. For $n\in\mathbb{N}$, set 
\[
t_{n}:=\prod_{j=0}^{n-1}(c+j)^{\alpha},
\]
with the convention $t_{0}=1$. Define the diagonal operator $G_{c,\alpha}$
on $c_{00}(\mathbb{N})$ by 
\begin{equation}
G_{c,\alpha}e_{n}:=t_{n}e_{n}.\label{eq:def_G_c_alpha}
\end{equation}
Then, on $c_{00}(\mathbb{N})$, 
\[
G_{c,\alpha}A(G_{c,\alpha})^{-1}=A,
\]
\[
G_{c,\alpha}a^{+}(G_{c,\alpha})^{-1}=a^{+}(A+c)^{\alpha},\qquad G_{c,\alpha}a^{-}(G_{c,\alpha})^{-1}=(A+c)^{-\alpha}a^{-}.
\]
Consequently, for every $\beta\in\mathbb{R}$, 
\[
G_{c,\alpha}\,a^{+}(A+c)^{\beta}\,(G_{c,\alpha})^{-1}=a^{+}(A+c)^{\beta+\alpha},
\]
and 
\[
G_{c,\alpha}\,(A+c)^{\beta}a^{-}\,(G_{c,\alpha})^{-1}=(A+c)^{\beta-\alpha}a^{-}.
\]
\end{lem}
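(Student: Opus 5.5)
The plan is to check each identity directly on the canonical basis vectors $e_n$, which span $c_{00}(\mathbb{N})$. All operators involved ($A$, $a^{\pm}$, $(A+c)^{\beta}$, $G_{c,\alpha}$ and its inverse) preserve $c_{00}(\mathbb{N})$, so this suffices. Since $c\notin-\mathbb{N}$, every factor $c+j$ is nonzero, so $t_n\neq0$. Hence $G_{c,\alpha}$ is invertible on $c_{00}(\mathbb{N})$, with $(G_{c,\alpha})^{-1}e_n=t_n^{-1}e_n$. Throughout I would take $(A+c)^{\beta}e_n:=(c+n)^{\beta}e_n$ with the same fixed branch of $z^{\beta}$ used for $z^{\alpha}$. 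I would also choose that branch so that $(c+n)^{\alpha}(c+n)^{\beta}=(c+n)^{\alpha+\beta}$ on the points $c+n$; this holds for the principal branch on a domain avoiding the negative real axis, and for real exponents in general along a fixed logarithm.

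The first identity is immediate: $G_{c,\alpha}$ and $A$ are both diagonal in the basis $(e_n)$, so they commute.

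For $a^{+}$, I compute
\[
G_{c,\alpha}a^{+}(G_{c,\alpha})^{-1}e_n=t_n^{-1}\sqrt{n+1}\,t_{n+1}e_{n+1}=(c+n)^{\alpha}\sqrt{n+1}\,e_{n+1},
\]
using $t_{n+1}/t_n=(c+n)^{\alpha}$. The right-hand side gives
\[
a^{+}(A+c)^{\alpha}e_n=(c+n)^{\alpha}\sqrt{n+1}\,e_{n+1},
\]
so the two agree.

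For $a^{-}$ with $n\ge1$, I compute
\[
G_{c,\alpha}a^{-}(G_{c,\alpha})^{-1}e_n=t_n^{-1}\sqrt{n}\,t_{n-1}e_{n-1}=(c+n-1)^{-\alpha}\sqrt{n}\,e_{n-1}.
\]
This matches $(A+c)^{-\alpha}a^{-}e_n=\sqrt{n}\,(c+n-1)^{-\alpha}e_{n-1}$. For $n=0$ both sides vanish.

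The two consequences then follow from the algebra. Because $G_{c,\alpha}$ commutes with $(A+c)^{\beta}$,
\[
G_{c,\alpha}\,a^{+}(A+c)^{\beta}(G_{c,\alpha})^{-1}=\bigl(G_{c,\alpha}a^{+}G_{c,\alpha}^{-1}\bigr)(A+c)^{\beta}=a^{+}(A+c)^{\alpha}(A+c)^{\beta}=a^{+}(A+c)^{\alpha+\beta}.
\]
The same argument gives the companion formula $(A+c)^{\beta-\alpha}a^{-}$.

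The only delicate point is the branch bookkeeping. The exponents must combine additively, $(c+n)^{\alpha}(c+n)^{\beta}=(c+n)^{\alpha+\beta}$ and $(c+n)^{-\alpha}=((c+n)^{\alpha})^{-1}$, and the same determination must be used in the definitions of $t_n$ and of $(A+c)^{\beta}$. Fixing a single logarithm $\log(c+n)$ and defining every power as $\exp(\gamma\log(c+n))$ makes all of these identities exact, which is what the statement's "choose once and for all" requires.
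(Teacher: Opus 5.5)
Your proof is correct and follows the paper's argument: you check each identity on the canonical basis, using $t_{n+1}/t_n=(c+n)^{\alpha}$ and $t_{n-1}/t_n=(c+n-1)^{-\alpha}$, and then get the two consequences from the fact that $G_{c,\alpha}$ commutes with every function of $A$. Your remark that all powers must be defined from one fixed logarithm, so that $(c+n)^{\alpha}(c+n)^{\beta}=(c+n)^{\alpha+\beta}$ and $(c+n)^{-\alpha}=\bigl((c+n)^{\alpha}\bigr)^{-1}$, makes explicit a convention the paper leaves implicit when it writes $(A+c)^{\beta+\alpha}$.
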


\end{cBoxB}

\begin{proof}
All identities are checked on the canonical basis $(e_{n})_{n\in\mathbb{N}}$
of $c_{00}(\mathbb{N})$.

Since $G_{c,\alpha}$ is diagonal, it commutes with $A$, and hence
\[
G_{c,\alpha}A(G_{c,\alpha})^{-1}=A.
\]

Next, 
\[
G_{c,\alpha}a^{+}(G_{c,\alpha})^{-1}e_{n}=t_{n}^{-1}\sqrt{n+1}\,t_{n+1}e_{n+1}.
\]
Since 
\[
\frac{t_{n+1}}{t_{n}}=(c+n)^{\alpha},
\]
we obtain 
\[
G_{c,\alpha}a^{+}(G_{c,\alpha})^{-1}e_{n}=\sqrt{n+1}(c+n)^{\alpha}e_{n+1}=a^{+}(A+c)^{\alpha}e_{n}.
\]
Therefore 
\[
G_{c,\alpha}a^{+}(G_{c,\alpha})^{-1}=a^{+}(A+c)^{\alpha}.
\]

Similarly, 
\[
G_{c,\alpha}a^{-}(G_{c,\alpha})^{-1}e_{n}=t_{n}^{-1}\sqrt{n}\,t_{n-1}e_{n-1}.
\]
Since 
\[
\frac{t_{n-1}}{t_{n}}=(c+n-1)^{-\alpha},
\]
we get 
\[
G_{c,\alpha}a^{-}(G_{c,\alpha})^{-1}e_{n}=\sqrt{n}(c+n-1)^{-\alpha}e_{n-1}=(A+c)^{-\alpha}a^{-}e_{n}.
\]
Hence 
\[
G_{c,\alpha}a^{-}(G_{c,\alpha})^{-1}=(A+c)^{-\alpha}a^{-}.
\]

Finally, $G_{c,\alpha}$ commutes with every function of $A$. Thus
\[
G_{c,\alpha}(A+c)^{\beta}(G_{c,\alpha})^{-1}=(A+c)^{\beta}.
\]
Combining this with the two previous identities gives 
\[
G_{c,\alpha}\,a^{+}(A+c)^{\beta}\,(G_{c,\alpha})^{-1}=a^{+}(A+c)^{\alpha}(A+c)^{\beta}=a^{+}(A+c)^{\beta+\alpha},
\]
and 
\[
G_{c,\alpha}\,(A+c)^{\beta}a^{-}\,(G_{c,\alpha})^{-1}=(A+c)^{\beta}(A+c)^{-\alpha}a^{-}=(A+c)^{\beta-\alpha}a^{-}.
\]
This proves the lemma. 
\end{proof}
\begin{cBoxB}{}

\begin{lem}
\label{lem:gauge_estimate} For each sign $\pm$, set 
\[
c_{\pm}:=-2b_{\pm}.
\]
Then $\Re c_{\pm}=1$. For fixed $k$, as $n\to+\infty$, one has
\[
\left|\left\langle e_{n}^{\pm}\mid G_{c_{\pm},\alpha}T'_{\pm}\psi_{k}\right\rangle \right|=O\left(n^{|k|-\frac{1}{2}}(n!)^{\pm\frac{1}{2}+\alpha}\right).
\]
\end{lem}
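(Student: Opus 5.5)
The proof is a direct combination of the diagonal form of $G_{c_\pm,\alpha}$ with the coefficient bound \eqref{eq:estimate_decay} of Lemma~\ref{lem:estimate_Tprime_pm}. We begin with $\Re c_{\pm}$. Since $b_{\pm}=-\frac{1}{2}\pm i\lambda$, we get $c_{\pm}=1\mp 2i\lambda$, so $\Re c_{\pm}=1$. In particular $c_{\pm}\notin-\mathbb{N}$, and all points $c_{\pm}+j$, $j\in\mathbb{N}$, lie in the half-plane $\Re z\ge1$. We take the principal branch of $z^{\alpha}$ there, so Lemma~\ref{lem:diagonal_algebraic_gauge} applies.

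Because $G_{c_{\pm},\alpha}$ is diagonal, the weak coefficients factor as
\[
\left\langle e_{n}^{\pm}\mid G_{c_{\pm},\alpha}T'_{\pm}\psi_{k}\right\rangle =t_{n}\left\langle e_{n}^{\pm}\mid T'_{\pm}\psi_{k}\right\rangle ,\qquad t_{n}=\prod_{j=0}^{n-1}(c_{\pm}+j)^{\alpha}.
\]
Here we use that $G$ is real-diagonal in modulus, and any phase is irrelevant for absolute values. Lemma~\ref{lem:estimate_Tprime_pm} gives the second factor as $O(n^{|k|-\frac12}(n!)^{\pm\frac12})$. It therefore suffices to show $|t_{n}|=O((n!)^{\alpha})$, or at least $|t_n|\le C\, n^{\gamma}(n!)^{\alpha}$ with a harmless power.

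To estimate $|t_n|$, write $|t_{n}|=\prod_{j=0}^{n-1}|c_{\pm}+j|^{\alpha}$, using $|z^{\alpha}|=|z|^{\alpha}$ for real $\alpha$ and the principal branch. Since $|c_{\pm}+j|=\sqrt{(1+j)^{2}+4\lambda^{2}}$, we have
\[
|t_{n}|=(n!)^{\alpha}\prod_{j=1}^{n}\left(1+\frac{4\lambda^{2}}{j^{2}}\right)^{\alpha/2}.
\]
The last product converges as $n\to\infty$ because $\sum_j j^{-2}<\infty$. It is therefore bounded above and below by positive constants depending only on $\lambda$ and $\alpha$, which gives $|t_{n}|\asymp(n!)^{\alpha}$ and hence the claimed bound. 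If one instead allowed a general branch, the same conclusion holds, since $\arg(c_\pm+j)$ stays bounded and $|z^\alpha|=|z|^\alpha$ for real $\alpha$.

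No step here is a genuine obstacle. The only point needing care is to apply the bound on $T'_{\pm}\psi_k$ in the weak sense of \eqref{eq:T_+_T_-}–\eqref{eq:T_-_T_+}, where the coefficients are defined by Taylor coefficients or analytically continued beta integrals and not by $\ell^2$ pairings. Because $G_{c_{\pm},\alpha}$ is diagonal, it acts on these weak coefficient sequences componentwise, so the factorization above is legitimate without any convergence issue.
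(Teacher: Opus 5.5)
Your proof is correct and follows the paper's route: factor the diagonal gauge coefficient as $|t_n|$ times the coefficient of $T'_{\pm}\psi_k$, bound the latter by Lemma~\ref{lem:estimate_Tprime_pm}, and observe that $\Re c_{\pm}=1$ makes $|t_n|$ contribute exactly $(n!)^{\alpha}$ with no extra power of $n$. The only difference is how $|t_n|$ is estimated: the paper uses Stirling's formula for $\Gamma(c+n)$ to get $|t_n|=O((n!)^{\alpha}n^{\alpha(\Re c-1)})$, while you compute $|t_n|=(n!)^{\alpha}\prod_{j=1}^{n}(1+4\lambda^{2}/j^{2})^{\alpha/2}$ exactly, which gives a two-sided bound for either sign of $\alpha$ (the paper later uses $\alpha=-\frac12$ on the $+$ branch).
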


\end{cBoxB}

\begin{proof}
By Stirling's formula, for fixed $c\notin-\mathbb{N}$, 
\[
|\Gamma(c+n)|=O\left(\Gamma(n+1)n^{\Re c-1}\right),\qquad n\to+\infty.
\]
Therefore, with 
\[
t_{n}=\left(\frac{\Gamma(c+n)}{\Gamma(c)}\right)^{\alpha},
\]
we have 
\begin{equation}
|t_{n}|=O\left((n!)^{\alpha}n^{\alpha(\Re c-1)}\right).\label{eq:estimate_tk}
\end{equation}

Since $G_{c,\alpha}e_{n}=t_{n}e_{n}$, we get 
\[
\left\langle e_{n}^{\pm}\mid G_{c,\alpha}T'_{\pm}\psi_{k}\right\rangle =t_{n}\left\langle e_{n}^{\pm}\mid T'_{\pm}\psi_{k}\right\rangle 
\]
up to complex conjugation depending on the scalar-product convention;
in any case, after taking absolute values, 
\[
\left|\left\langle e_{n}^{\pm}\mid G_{c,\alpha}T'_{\pm}\psi_{k}\right\rangle \right|=|t_{n}|\left|\left\langle e_{n}^{\pm}\mid T'_{\pm}\psi_{k}\right\rangle \right|.
\]
Using \eqref{eq:estimate_tk} and Lemma~\ref{lem:estimate_Tprime_pm},
we obtain 
\[
\left|\left\langle e_{n}^{\pm}\mid G_{c,\alpha}T'_{\pm}\psi_{k}\right\rangle \right|=O\left(n^{\alpha(\Re c-1)}(n!)^{\alpha}n^{|k|-\frac{1}{2}}(n!)^{\pm\frac{1}{2}}\right).
\]
Now take $c=c_{\pm}=-2b_{\pm}$. Since 
\[
b_{\pm}=-\frac{1}{2}\pm i\lambda,\qquad c_{\pm}=1\mp2i\lambda,
\]
we have $\Re c_{\pm}=1$. Hence the extra polynomial factor disappears,
and 
\[
\left|\left\langle e_{n}^{\pm}\mid G_{c_{\pm},\alpha}T'_{\pm}\psi_{k}\right\rangle \right|=O\left(n^{|k|-\frac{1}{2}}(n!)^{\pm\frac{1}{2}+\alpha}\right).
\]
\end{proof}
\begin{cBoxB}{}

\begin{cor}
\label{cor:Tcal_pm} Suppose $\lambda>0$. We define the diagonal
operators $G_{\pm}$ on $c_{00}(\mathbb{N})$ by 
\begin{equation}
G_{\pm}e_{n}:=t_{n}^{\pm}e_{n},\label{eq:def_G+-}
\end{equation}
where 
\[
t_{n}^{\pm}:=\prod_{j=0}^{n-1}(-2b_{\pm}+j)^{\mp1/2}=\left(\frac{\Gamma(-2b_{\pm}+n)}{\Gamma(-2b_{\pm})}\right)^{\mp1/2}.
\]
We set 
\begin{equation}
\mathcal{T}_{\pm}:=G_{\pm}T'_{\pm}\quad:\quad\mathcal{D}_{\pm}\subset L^{2}(S_{\theta}^{1};d\theta)\longrightarrow\mathcal{D}'_{\pm},\label{eq:def_cal_T_+-}
\end{equation}
where 
\[
\mathcal{D}_{\pm}:=\mathcal{U}_{\pm}^{-1}D_{\mp},\qquad\mathcal{D}'_{\pm}:=G_{\pm}\bigl(F(D_{\pm})\bigr).
\]
The operators $\mathcal{T}_{\pm}$ are injective and densely defined,
with range $\mathcal{D}'_{\pm}$. Moreover, 
\begin{equation}
\mathcal{T}_{\pm}\tilde{X}(\mathcal{T}_{\pm})^{-1}=-A+b_{\pm},\label{eq:X_conj-1}
\end{equation}
\begin{equation}
\mathcal{T}_{\pm}\tilde{U}(\mathcal{T}_{\pm})^{-1}=\mp a^{+}(A-2b_{\pm})^{1/2},\label{eq:T3_U-1}
\end{equation}
and 
\begin{equation}
\mathcal{T}_{\pm}\tilde{S}(\mathcal{T}_{\pm})^{-1}=\pm(A-2b_{\pm})^{1/2}a^{-}.\label{eq:T3_S-1}
\end{equation}
Finally, for every $k\in\mathbb{Z}$, there exists $C_{k}>0$ such
that, for all $n\in\mathbb{N}$, 
\begin{equation}
\left|\langle e_{n}^{\pm}\mid\mathcal{T}_{\pm}\psi_{k}\rangle\right|\le C_{k}\langle n\rangle^{|k|-\frac{1}{2}},\label{eq:Fourier_decay}
\end{equation}
and 
\begin{equation}
\left|\left\langle e_{n}^{\pm}\middle|(\mathcal{T}_{\pm}^{-1})^{\dagger}\psi_{k}\right\rangle \right|\le C_{k}\langle n\rangle^{|k|-\frac{1}{2}},\label{eq:Fourier_decay-1}
\end{equation}
where $\langle n\rangle:=(1+n^{2})^{1/2}$. 
\end{cor}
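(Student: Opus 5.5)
The corollary follows from Lemma~\ref{lem:Tprime_pm} by conjugating with the diagonal gauge of Lemma~\ref{lem:diagonal_algebraic_gauge}. The parameters are $c=c_{\pm}=-2b_{\pm}$ and $\alpha=\mp\frac12$, so that $G_{\pm}=G_{c_{\pm},\mp1/2}$. Since $\lambda>0$, we have $c_{\pm}=1\mp2i\lambda\notin-\mathbb{N}$, and $c_\pm+n$ lies on the line $\Re z\ge1$. A principal branch of $z^{\mp1/2}$ is therefore available, and every $t_n^{\pm}$ is finite and nonzero. Hence $G_{\pm}$ is an invertible diagonal map on $c_{00}(\mathbb{N})$, and more generally on sequences.

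Injectivity, dense definition and the range of $\mathcal{T}_{\pm}=G_{\pm}T'_{\pm}$ follow from the properties of $T'_{\pm}$ in Lemma~\ref{lem:Tprime_pm} together with the invertibility of $G_\pm$. By definition the range is $\mathcal{D}'_\pm=G_\pm F(D_\pm)$.

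For the $X$ identity \eqref{eq:X_conj-1}, $G_\pm$ commutes with $A$, so \eqref{eq:X_conj} is unchanged. For the $+$ sign, \eqref{eq:T3_U} gives $-a^+(A-2b_+)^{1}$, and the gauge with $\alpha=-\frac12$ turns it into $-a^+(A+c_+)^{1/2}=-a^+(A-2b_+)^{1/2}$. Likewise $a^-=(A+c_+)^0a^-$ becomes $(A+c_+)^{1/2}a^-$. For the $-$ sign with $\alpha=+\frac12$, $a^+$ becomes $a^+(A+c_-)^{1/2}$ and $-(A-2b_-)^1a^-$ becomes $-(A-2b_-)^{1/2}a^-$. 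One should double-check that these signs match \eqref{eq:T3_U-1}–\eqref{eq:T3_S-1}: $\mp a^+$ reads $+a^+$ for the lower sign, and $\pm(\cdot)a^-$ reads $-(\cdot)a^-$. One must also verify that the branch chosen for $(A+c)^{\alpha}$ in the gauge lemma agrees with the one in $(A-2b_\pm)^{1/2}$. With the principal branch on $\Re z>0$, multiplicativity $(A+c)^{\alpha}(A+c)^{\beta}=(A+c)^{\alpha+\beta}$ holds, which is all that is needed.

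The decay estimate \eqref{eq:Fourier_decay} follows directly from Lemma~\ref{lem:gauge_estimate} with $\alpha=\mp\frac12$, since the factorial exponent becomes $(n!)^{\pm\frac12\mp\frac12}=1$. This gives $O(n^{|k|-1/2})$ as $n\to\infty$, and finitely many small $n$ are absorbed into $C_k$.

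For \eqref{eq:Fourier_decay-1}, write $(\mathcal{T}_{\pm}^{-1})^{\dagger}=((G_\pm)^{-1})^{\dagger}(T_\pm'^{-1})^{\dagger}$. Since $G_\pm$ is diagonal, its inverse adjoint is diagonal with entries $\overline{t_n^{\pm}}^{-1}$, of modulus $|t_n^\pm|^{-1}$. By \eqref{eq:estimate_tk} with $\Re c_\pm=1$, $|t_n^\pm|^{-1}\asymp (n!)^{\pm1/2}$. Combined with \eqref{eq:estimate_Tinvdagger_decay}, which carries $(n!)^{\mp1/2}$, the factorials again cancel.

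The main obstacle is that \eqref{eq:estimate_tk} is stated only as an upper bound $O(\cdot)$, whereas the inverse gauge requires a lower bound on $|t_n|$. I would resolve this by noting that Stirling gives the two-sided asymptotic $|\Gamma(c+n)|\sim C\,\Gamma(n+1)n^{\Re c-1}$ for fixed $c$. Then $|t_n^{\pm}|=|\Gamma(c_\pm+n)/\Gamma(c_\pm)|^{\mp1/2}\asymp (n!)^{\mp1/2}$ in both directions, and both estimates follow.
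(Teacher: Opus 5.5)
Your proposal is correct and follows the same route as the paper. You identify $G_\pm=G_{-2b_\pm,\mp1/2}$, transport the identities of Lemma~\ref{lem:Tprime_pm} through Lemma~\ref{lem:diagonal_algebraic_gauge}, obtain \eqref{eq:Fourier_decay} from Lemma~\ref{lem:gauge_estimate}, and obtain \eqref{eq:Fourier_decay-1} by factoring $(\mathcal{T}_\pm^{-1})^\dagger=(G_\pm^{-1})^\dagger((T'_\pm)^{-1})^\dagger$ and cancelling the factorials against \eqref{eq:estimate_Tinvdagger_decay}. Your use of the two-sided Stirling asymptotic $|t_n^\pm|\asymp(n!)^{\mp1/2}$ is more careful than the text: \eqref{eq:estimate_tk} as stated only yields an upper bound, which does not suffice for the inverse gauge, nor for Lemma~\ref{lem:gauge_estimate} itself when $\alpha=-\tfrac12$.
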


\end{cBoxB}

\begin{rem}
The spaces $\mathcal{D}'_{\pm}$ should be understood as algebraic
image spaces. In general, $\mathcal{D}'_{-}$ is not contained in
the standard Hilbert space $\ell^{2}(\mathbb{N})$. For the $+$-branch,
the gauge removes the factorial growth, since 
\[
|t_{n}^{+}|=O((n!)^{-1/2}).
\]
An adequate Hilbert-space realization will be constructed later. 
\end{rem}

\begin{proof}
We have 
\[
G_{\pm}=G_{c,\alpha}\qquad\text{with}\qquad c=-2b_{\pm},\quad\alpha=\mp\frac{1}{2}.
\]
Thus 
\[
\mathcal{T}_{\pm}\eq{\ref{eq:def_cal_T_+-}}G_{\pm}T'_{\pm}=G_{c,\alpha}T'_{\pm}.
\]

Since $T'_{\pm}$ is injective and densely defined by Lemma~\ref{lem:Tprime_pm},
and since $G_{\pm}$ is diagonal and invertible on $c_{00}(\mathbb{N})$,
the operator 
\[
\mathcal{T}_{\pm}=G_{\pm}T'_{\pm}
\]
is injective and densely defined, with range 
\[
\mathcal{D}'_{\pm}=G_{\pm}(F(D_{\pm})).
\]

The conjugation formula for $\tilde{X}$ is unchanged, because $G_{\pm}$
commutes with $A$. Hence 
\[
\mathcal{T}_{\pm}\tilde{X}\mathcal{T}_{\pm}^{-1}=G_{\pm}\bigl(T'_{\pm}\tilde{X}(T'_{\pm})^{-1}\bigr)G_{\pm}^{-1}\eq{\ref{eq:X_conj}}G_{\pm}(-A+b_{\pm})G_{\pm}^{-1}=-A+b_{\pm}.
\]
This proves \eqref{eq:X_conj-1}.

For $\tilde{U}$, we use \eqref{eq:T3_U} and Lemma~\ref{lem:diagonal_algebraic_gauge}.
In the $+$-case, 
\[
\mathcal{T}_{+}\tilde{U}\mathcal{T}_{+}^{-1}=G_{+}\bigl(-a^{+}(A-2b_{+})\bigr)G_{+}^{-1}.
\]
Here $c=-2b_{+}$ and $\alpha=-1/2$, hence 
\[
G_{+}\,a^{+}(A-2b_{+})\,G_{+}^{-1}=a^{+}(A-2b_{+})^{1/2}.
\]
Therefore 
\[
\mathcal{T}_{+}\tilde{U}\mathcal{T}_{+}^{-1}=-a^{+}(A-2b_{+})^{1/2}.
\]

In the $-$-case, 
\[
\mathcal{T}_{-}\tilde{U}\mathcal{T}_{-}^{-1}=G_{-}a^{+}G_{-}^{-1}.
\]
Here $c=-2b_{-}$ and $\alpha=1/2$, so 
\[
G_{-}a^{+}G_{-}^{-1}=a^{+}(A-2b_{-})^{1/2}.
\]
This proves \eqref{eq:T3_U-1}.

For $\tilde{S}$, we use \eqref{eq:T3_S}. In the $+$-case, 
\[
\mathcal{T}_{+}\tilde{S}\mathcal{T}_{+}^{-1}=G_{+}a^{-}G_{+}^{-1}.
\]
Since $c=-2b_{+}$ and $\alpha=-1/2$, 
\[
G_{+}a^{-}G_{+}^{-1}=(A-2b_{+})^{1/2}a^{-}.
\]
Thus 
\[
\mathcal{T}_{+}\tilde{S}\mathcal{T}_{+}^{-1}=(A-2b_{+})^{1/2}a^{-}.
\]

In the $-$-case, 
\[
\mathcal{T}_{-}\tilde{S}\mathcal{T}_{-}^{-1}=-G_{-}\bigl((A-2b_{-})a^{-}\bigr)G_{-}^{-1}.
\]
Since $c=-2b_{-}$, $\alpha=1/2$, and $\beta=1$, Lemma~\ref{lem:diagonal_algebraic_gauge}
gives 
\[
G_{-}(A-2b_{-})a^{-}G_{-}^{-1}=(A-2b_{-})^{1/2}a^{-}.
\]
Therefore 
\[
\mathcal{T}_{-}\tilde{S}\mathcal{T}_{-}^{-1}=-(A-2b_{-})^{1/2}a^{-}.
\]
This proves \eqref{eq:T3_S-1}.

It remains to prove the Fourier estimates. By Lemma~\ref{lem:gauge_estimate},
with $c=-2b_{\pm}$ and $\alpha=\mp1/2$, we get, for fixed $k$,
\[
\left|\langle e_{n}^{\pm}\mid G_{\pm}T'_{\pm}\psi_{k}\rangle\right|=O\left(n^{|k|-\frac{1}{2}}(n!)^{\pm\frac{1}{2}\mp\frac{1}{2}}\right)=O\left(n^{|k|-\frac{1}{2}}\right),\qquad n\to+\infty.
\]
After increasing the constant $C_{k}$, this gives 
\[
\left|\langle e_{n}^{\pm}\mid\mathcal{T}_{\pm}\psi_{k}\rangle\right|\le C_{k}\langle n\rangle^{|k|-\frac{1}{2}}
\]
for all $n\in\mathbb{N}$, proving \eqref{eq:Fourier_decay}.

Finally, 
\[
\mathcal{T}_{\pm}^{-1}=(T'_{\pm})^{-1}G_{\pm}^{-1},
\]
hence 
\[
(\mathcal{T}_{\pm}^{-1})^{\dagger}=(G_{\pm}^{-1})^{\dagger}\bigl((T'_{\pm})^{-1}\bigr)^{\dagger}.
\]
Since $G_{\pm}^{-1}e_{n}=(t_{n}^{\pm})^{-1}e_{n}$, the $n$-th coefficient
is multiplied, in absolute value, by $|t_{n}^{\pm}|^{-1}$. By Stirling,
\[
|t_{n}^{\pm}|^{-1}=O\left((n!)^{\pm\frac{1}{2}}\right),
\]
because $\Re(-2b_{\pm})=1$. Combining this with \eqref{eq:estimate_Tinvdagger_decay},
namely 
\[
\left|\left\langle e_{n}^{\pm}\middle|((T'_{\pm})^{-1})^{\dagger}\psi_{k}\right\rangle \right|=O\left(n^{|k|-\frac{1}{2}}(n!)^{\mp\frac{1}{2}}\right),
\]
we obtain 
\[
\left|\left\langle e_{n}^{\pm}\middle|(\mathcal{T}_{\pm}^{-1})^{\dagger}\psi_{k}\right\rangle \right|=O\left(n^{|k|-\frac{1}{2}}\right).
\]
Again increasing $C_{k}$ gives \eqref{eq:Fourier_decay-1} for all
$n\in\mathbb{N}$. 
\end{proof}

\subsection{Operators $\mathcal{T}_{\pm}$ and Lagrangian states}

Using the weak expansions of $T_{\pm}$ and $T_{\pm}^{-1}$ in \eqref{eq:T_+_T_-}--\eqref{eq:T_-_T_+},
we now obtain corresponding weak expansions for the operators $\mathcal{T}_{\pm}$
defined in \eqref{eq:def_cal_T_+-}. The resulting distributions on
$S^{1}$ will be interpreted as Lagrangian states associated with
the stable and unstable manifolds of the two fixed points.
\begin{rem}
As before on $\mathbb{R}$, we use the notation 
\[
\langle\cdot\mid\cdot\rangle_{L^{2}(S^{1};d\theta)}
\]
also for the natural extension of the $L^{2}(d\theta)$-pairing between
distributions and test functions, whenever this pairing is well-defined. 
\end{rem}

\begin{cBoxB}{}

\begin{lem}
\label{lem:Tcal_Lagrangian_states} Define 
\begin{equation}
\mathcal{S}_{n,+}:=\frac{(-1)^{n}\overline{t_{n}^{+}}}{\sqrt{n!}}\,(\mathcal{U}_{+})^{\dagger}\delta^{(n)},\qquad\mathcal{U}_{n,+}:=\frac{1}{t_{n}^{+}\sqrt{n!}}\,(\mathcal{U}_{+})^{\dagger}x^{n},\label{eq:def_Sk+_Uk+}
\end{equation}
and 
\begin{equation}
\mathcal{S}_{n,-}:=\frac{\overline{t_{n}^{-}}}{\sqrt{n!}}\,(\mathcal{U}_{-})^{\dagger}x^{n},\qquad\mathcal{U}_{n,-}:=\frac{(-1)^{n}}{t_{n}^{-}\sqrt{n!}}\,(\mathcal{U}_{-})^{\dagger}\delta^{(n)}.\label{eq:def_S_k-_Uk-}
\end{equation}
These are Lagrangian distributions on $S^{1}$, associated microlocally
with the stable and unstable manifolds of the two fixed points of
the vector field $\cos\theta\,\partial_{\theta}$; see Figure~\ref{fig:S_U_distributions}.

The operators $\mathcal{T}_{\pm}$ admit the weak expansions 
\begin{equation}
\mathcal{T}_{+}=\sum_{n\in\mathbb{N}}e_{n}^{+}\langle\mathcal{S}_{n,+}\mid\,\cdot\,\rangle_{L^{2}(S^{1})},\qquad\mathcal{T}_{+}^{-1}=\sum_{n\in\mathbb{N}}\mathcal{U}_{n,+}\langle e_{n}^{+}\mid\,\cdot\,\rangle,\label{eq:def_Tcal_+}
\end{equation}
and 
\begin{equation}
\mathcal{T}_{-}=\sum_{n\in\mathbb{N}}e_{n}^{-}\langle\mathcal{S}_{n,-}\mid\,\cdot\,\rangle_{L^{2}(S^{1})},\qquad\mathcal{T}_{-}^{-1}=\sum_{n\in\mathbb{N}}\mathcal{U}_{n,-}\langle e_{n}^{-}\mid\,\cdot\,\rangle.\label{eq:def_Tcal_-}
\end{equation}
\end{lem}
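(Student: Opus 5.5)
The plan is to read the expansions off from Corollary~\ref{cor:With-setting-}, by composing the factorization $\mathcal{T}_{\pm}=G_{\pm}T_{\pm}\mathcal{U}_{\pm}$ with the weak formulas \eqref{eq:T_+_T_-}--\eqref{eq:T_-_T_+}. For $\mathcal{T}_{+}$ I would take $\varphi\in\mathcal{D}_{+}=\mathcal{U}_{+}^{-1}D_{-}$. Then \eqref{eq:T_+_T_-} gives
\[
\langle e_{n}^{+}\mid T_{+}\mathcal{U}_{+}\varphi\rangle=\frac{(-1)^{n}}{\sqrt{n!}}\langle\delta^{(n)}\mid\mathcal{U}_{+}\varphi\rangle_{L^{2}(\mathbb{R})},
\]
and the diagonal gauge multiplies this coefficient by $t_{n}^{+}$. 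Moving the scalar into the antilinear slot gives $\overline{t_{n}^{+}}$, and moving $\mathcal{U}_{+}$ to the left gives $(\mathcal{U}_{+})^{\dagger}\delta^{(n)}$. Here $(\mathcal{U}_{+})^{\dagger}$ is the transpose of the unitary $\mathcal{U}_{+}$, extended to distributions by
\[
\langle(\mathcal{U}_{+})^{\dagger}w\mid\varphi\rangle_{L^{2}(S^{1})}:=\langle w\mid\mathcal{U}_{+}\varphi\rangle_{L^{2}(\mathbb{R})}.
\]
This produces exactly $\mathcal{S}_{n,+}$. The case of $\mathcal{T}_{-}$ is identical, using \eqref{eq:T_-_T_+} with $x^{n}/\sqrt{n!}$ and the coefficient $\overline{t_{n}^{-}}$.

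For the inverses, I would write $\mathcal{T}_{\pm}^{-1}=\mathcal{U}_{\pm}^{-1}T_{\pm}^{-1}G_{\pm}^{-1}$. Since $\mathcal{U}_{\pm}$ is unitary, $\mathcal{U}_{\pm}^{-1}=(\mathcal{U}_{\pm})^{\dagger}$. Corollary~\ref{cor:With-setting-} gives $T_{+}^{-1}=(T_{-})^{\dagger}$ and $T_{-}^{-1}=(T_{+})^{\dagger}$ weakly. Dualizing the expansions, this yields
\[
T_{+}^{-1}=\sum_{n}\frac{x^{n}}{\sqrt{n!}}\langle e_{n}\mid\cdot\rangle,\qquad T_{-}^{-1}=\sum_{n}\frac{(-1)^{n}\delta^{(n)}}{\sqrt{n!}}\langle e_{n}\mid\cdot\rangle.
\]
Precomposing with $G_{\pm}^{-1}$ contributes the factor $1/t_{n}^{\pm}$ on $e_{n}$, which gives $\mathcal{U}_{n,\pm}$. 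I would check the first identity by pairing both sides with $e_{m}$ and using $\langle T_{-}u\mid T_{+}v\rangle$-type duality on the dense domains: the identity $T_{+}=(T_{-}^{-1})^{\dagger}$ means $\langle T_{-}^{-1}c\mid v\rangle=\langle c\mid T_{+}v\rangle$. As a sanity check I would also verify directly that $\langle\frac{x^{m}}{\sqrt{m!}}\mid\frac{(-1)^{n}}{\sqrt{n!}}\delta^{(n)}\rangle=\delta_{mn}$, which is the biorthogonality $\langle\mathcal{S}_{m,\pm}\mid\mathcal{U}_{n,\pm}\rangle=\delta_{mn}$; here the gauge factors cancel because $\overline{\overline{t}}/t=1$.

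The main point needing care is the sense in which the sums converge. The sum for $\mathcal{T}_{\pm}$ is meant coefficientwise on the domain $\mathcal{D}_{\pm}$, and that requires only the corollary. The sum for $\mathcal{T}_{\pm}^{-1}$ applied to an element of $\mathcal{D}'_{\pm}$ is an infinite series of distributions. I would interpret it weakly against trigonometric polynomials $\psi_{k}$ and justify it by the bound \eqref{eq:Fourier_decay-1} on $\langle e_{n}\mid(\mathcal{T}_{\pm}^{-1})^{\dagger}\psi_{k}\rangle=\langle\mathcal{U}_{n,\pm}\mid\psi_{k}\rangle$. The Sobolev statement of Corollary~\ref{cor:weak_extensions_sobolev} confirms that each $\mathcal{S}_{n,\pm}$ and $\mathcal{U}_{n,\pm}$ lies in $H^{-n-1/2-\epsilon}(S^{1})$, which also makes the pairings well defined.

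The Lagrangian (microlocal) description is a qualitative remark, and I would justify it rather than prove it in depth. In the chart $h_{+}$, the distribution $\delta^{(n)}$ sits at $x=0$, which is $\theta=\pi/2$; it is conormal to that point, i.e. supported on the fibre over the attracting fixed point. Smooth weights and diffeomorphisms preserve conormality. The functions $(1+x^{2})^{b}x^{n}$ are smooth on $I_{+}$ and are conormal at $x=\infty$, which is the repelling point $-\pi/2$. The same holds with the roles of the two fixed points swapped for the $-$ chart.
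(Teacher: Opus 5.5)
You derive the expansions the same way the paper does. You factor $\mathcal T_\pm=G_\pm T_\pm\mathcal U_\pm$, insert \eqref{eq:T_+_T_-}--\eqref{eq:T_-_T_+}, and use $T_+^{-1}=T_-^{\dagger}$, $T_-^{-1}=T_+^{\dagger}$ together with $G_\pm^{-1}e_n=(t_n^\pm)^{-1}e_n$. You are right that moving $t_n^\pm$ into the antilinear slot produces $\overline{t_n^\pm}$. The paper's proof writes $t_n^+$ inside the bra, which matches its own definition of $\mathcal S_{n,+}$ only after this conjugation. Read as formal, coefficientwise identities, which is how the paper uses them, your argument is complete.

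The step that would fail is your attempt to give the $\mathcal T_\pm^{-1}$ expansions an analytic meaning by pairing with trigonometric polynomials and invoking \eqref{eq:Fourier_decay-1}.

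On the $-$ branch, $|t_n^-|\asymp\sqrt{n!}$, so elements of $\mathcal D'_-=G_-F(D_-)$ have coefficients of size roughly $\sqrt{n!}\,\rho^n$; the paper itself notes $\mathcal D'_-\not\subset\ell^2$. A polynomial bound on $\langle\psi_k\mid\mathcal U_{n,-}\rangle$ therefore gives nothing. Concretely, take $v=\mathcal T_-\varphi$, $u=\mathcal U_-\varphi\in D_+$ and $g=\mathcal U_-\psi_k$. The $n$-th term is then $\frac{1}{n!}\bigl(\int x^nu\,dx\bigr)\overline{g^{(n)}(0)}$. Gaussian moments grow like $\sqrt{n!}$ up to geometric factors, while $g$ has Taylor radius $1$, so the terms do not even tend to $0$.

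On the $+$ branch the series does converge, but the analytically continued moments $\int\overline{\mathcal U_+\psi_k}\,x^n\,dx$ do not sum to $\langle\psi_k\mid\mathcal T_+^{-1}v\rangle$. Take $k=0$ and $\mathcal U_+\mathcal T_+^{-1}v=e^{-cx^2/2}$ with $0<c<1$.
\begin{itemize}
\item The series equals $\frac{\Gamma(i\lambda)}{\Gamma(\frac12+i\lambda)}\,{}_1F_1(\tfrac12;1-i\lambda;\tfrac c2)$.
\item The true pairing is Tricomi's confluent function $U(\tfrac12,1-i\lambda,\tfrac c2)$.
\item By the connection formula, the true pairing contains an extra term $\frac{\Gamma(-i\lambda)}{\sqrt\pi}(\tfrac c2)^{i\lambda}\,{}_1F_1(\tfrac12+i\lambda;1+i\lambda;\tfrac c2)$. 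This is the non-analytic contribution from $x=\infty$, i.e.\ from the other fixed point.
\end{itemize}

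So keep the $\mathcal T_\pm^{-1}$ expansions as formal identities. Alternatively, test them only against Gaussian-type functions in the chart whose widths make dominated convergence applicable. The Sobolev corollary makes each individual pairing meaningful, but it says nothing about convergence of the series.

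A minor point: the chart weight in $(\mathcal U_+)^\dagger x^n$ is $(1+x^2)^{1+\bar b}x^n$, not $(1+x^2)^{b}x^n$.
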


\end{cBoxB}

\begin{proof}
By definition, 
\[
\mathcal{T}_{\pm}\eq{\ref{eq:def_cal_T_+-}}G_{\pm}T'_{\pm}\eq{\ref{eq:def_T3}}G_{\pm}T_{\pm}\mathcal{U}_{\pm}.
\]

We first consider the $+$-branch. From \eqref{eq:T_+_T_-}, 
\[
T_{+}=\sum_{n\in\mathbb{N}}e_{n}^{+}\left\langle \frac{(-1)^{n}}{\sqrt{n!}}\delta^{(n)}\middle|\,\cdot\,\right\rangle _{L^{2}(\mathbb{R})}.
\]
Applying $G_{+}$ on the left and using $G_{+}e_{n}^{+}=t_{n}^{+}e_{n}^{+}$,
we get 
\[
G_{+}T_{+}=\sum_{n\in\mathbb{N}}e_{n}^{+}\left\langle \frac{(-1)^{n}t_{n}^{+}}{\sqrt{n!}}\delta^{(n)}\middle|\,\cdot\,\right\rangle _{L^{2}(\mathbb{R})}.
\]
Composing on the right with $\mathcal{U}_{+}$, and using the unitary
adjoint $(\mathcal{U}_{+})^{\dagger}$, gives 
\[
\mathcal{T}_{+}=G_{+}T_{+}\mathcal{U}_{+}=\sum_{n\in\mathbb{N}}e_{n}^{+}\left\langle \frac{(-1)^{n}t_{n}^{+}}{\sqrt{n!}}(\mathcal{U}_{+})^{\dagger}\delta^{(n)}\middle|\,\cdot\,\right\rangle _{L^{2}(S^{1})}.
\]
This is the first formula in \eqref{eq:def_Tcal_+}.

Similarly, 
\[
\mathcal{T}_{+}^{-1}=\mathcal{U}_{+}^{-1}T_{+}^{-1}G_{+}^{-1}=(\mathcal{U}_{+})^{\dagger}T_{+}^{-1}G_{+}^{-1}.
\]
Using the weak expansion of $T_{+}^{-1}$, equivalently the adjoint
of \eqref{eq:T_-_T_+}, and $G_{+}^{-1}e_{n}^{+}=(t_{n}^{+})^{-1}e_{n}^{+}$,
we obtain 
\[
\mathcal{T}_{+}^{-1}=\sum_{n\in\mathbb{N}}\frac{1}{t_{n}^{+}\sqrt{n!}}(\mathcal{U}_{+})^{\dagger}x^{n}\langle e_{n}^{+}\mid\,\cdot\,\rangle,
\]
which is the second formula in \eqref{eq:def_Tcal_+}.

The $-$-branch is identical. From \eqref{eq:T_-_T_+}, 
\[
T_{-}=\sum_{n\in\mathbb{N}}e_{n}^{-}\left\langle \frac{x^{n}}{\sqrt{n!}}\middle|\,\cdot\,\right\rangle _{L^{2}(\mathbb{R})}.
\]
Thus 
\[
\mathcal{T}_{-}=G_{-}T_{-}\mathcal{U}_{-}=\sum_{n\in\mathbb{N}}e_{n}^{-}\left\langle \frac{t_{n}^{-}}{\sqrt{n!}}(\mathcal{U}_{-})^{\dagger}x^{n}\middle|\,\cdot\,\right\rangle _{L^{2}(S^{1})}.
\]
This is the first formula in \eqref{eq:def_Tcal_-}.

Finally, 
\[
\mathcal{T}_{-}^{-1}=\mathcal{U}_{-}^{-1}T_{-}^{-1}G_{-}^{-1}=(\mathcal{U}_{-})^{\dagger}T_{-}^{-1}G_{-}^{-1}.
\]
Using the weak expansion of $T_{-}^{-1}$, equivalently the adjoint
of \eqref{eq:T_+_T_-}, together with $G_{-}^{-1}e_{n}^{-}=(t_{n}^{-})^{-1}e_{n}^{-}$,
we get 
\[
\mathcal{T}_{-}^{-1}=\sum_{n\in\mathbb{N}}\frac{(-1)^{n}}{t_{n}^{-}\sqrt{n!}}(\mathcal{U}_{-})^{\dagger}\delta^{(n)}\langle e_{n}^{-}\mid\,\cdot\,\rangle.
\]
This is the second formula in \eqref{eq:def_Tcal_-}. 
\end{proof}
\begin{figure}[h]
\begin{centering}
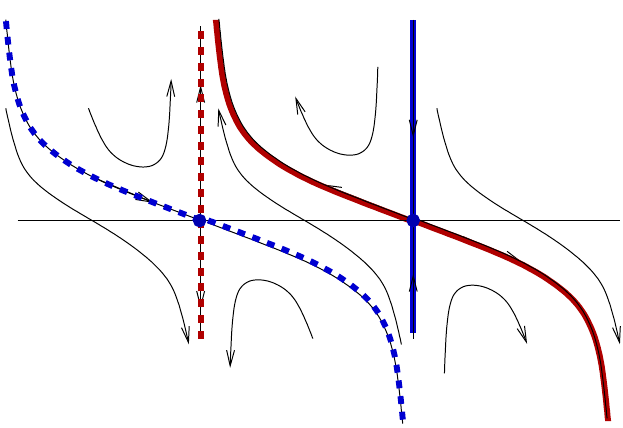
\par\end{centering}
\caption{\protect\protect\label{fig:S_U_distributions} The vector field $X$,
given in \eqref{eq:expression_X_sl2R}, has two fixed points at $\theta=\pm\frac{\pi}{2}$
and induces a Hamiltonian dynamics on $T^{*}S^{1}$, with coordinates
$(\theta,\xi)$. The figure illustrates the wave front sets of the
distributions $\mathcal{S}_{n,\pm}$ and $\mathcal{U}_{n,\pm}$ defined
in \eqref{eq:def_Sk+_Uk+} and \eqref{eq:def_S_k-_Uk-}. The distributions
obtained from $\delta^{(n)}$ are supported at the corresponding fixed
point and have conormal wave front set there. The distributions obtained
from $x^{n}$ are Lagrangian states whose oscillatory factor is $(1+h_{\pm}(\theta)^{2})^{-b}=(1+h_{\pm}(\theta)^{2})^{\frac{1}{2}-i\lambda}.$
Thus their phase is $-\lambda\log(1+h_{\pm}(\theta)^{2}),$ and the
associated Lagrangian graph is $\xi(\theta)=-\lambda\,\frac{d}{d\theta}\log(1+h_{\pm}(\theta)^{2})=-\lambda h_{\pm}(\theta).$
Compare with Figure~\ref{fig:projective}.}
\end{figure}

The next lemma computes the overlaps of these Lagrangian states.

\begin{cBoxB}{}

\begin{lem}
\label{lem:overlaps_Lagrangian_states} For all $n',n\in\mathbb{N}$,
one has 
\begin{equation}
\langle\mathcal{S}_{n',\pm}\mid\mathcal{U}_{n,\pm}\rangle_{L^{2}(S^{1})}=\delta_{n'=n}.\label{eq:S_U-1}
\end{equation}
Moreover, 
\[
\langle\mathcal{S}_{n',+}\mid\mathcal{U}_{n,-}\rangle_{L^{2}(S^{1})}=0,
\]
and, in the regularized sense above, if $\lambda>0$, then 
\begin{equation}
\langle\mathcal{S}_{n',-}\mid\mathcal{U}_{n,+}\rangle_{L^{2}(S^{1})}=0.\label{eq:SU_zero}
\end{equation}
\end{lem}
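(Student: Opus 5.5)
The plan is to reduce each overlap to a pairing on $\mathbb{R}$ in one of the two charts, and then evaluate it by the explicit formulas of Corollary~\ref{cor:With-setting-}.

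\textbf{Same-branch overlaps.} Here both distributions are images under the same $(\mathcal{U}_{\pm})^{\dagger}$. Since $\mathcal{U}_{\pm}$ is unitary, we get
\[
\langle\mathcal{S}_{n',\pm}\mid\mathcal{U}_{n,\pm}\rangle_{L^{2}(S^{1})}=\langle\mathcal{U}_{\pm}\mathcal{S}_{n',\pm}\mid\mathcal{U}_{\pm}\mathcal{U}_{n,\pm}\rangle_{L^{2}(\mathbb{R})}.
\]
For the $+$ sign this equals
\[
\frac{t_{n'}^{+}(-1)^{n'}}{t_{n}^{+}\sqrt{n'!\,n!}}\,\langle\delta^{(n')}\mid x^{n}\rangle=\frac{t_{n'}^{+}}{t_{n}^{+}\sqrt{n'!\,n!}}\,\frac{d^{n'}}{dx^{n'}}x^{n}\Big|_{x=0}=\delta_{n'=n}.
\]
The $-$ sign is the same computation with the roles of $\delta^{(n)}$ and $x^{n}$ exchanged. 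Equivalently, this is the weak statement $\mathcal{T}_{\pm}\mathcal{T}_{\pm}^{-1}=\mathrm{Id}$ on $c_{00}$, read off from the expansions \eqref{eq:def_Tcal_+}--\eqref{eq:def_Tcal_-}. Some care is needed with the conjugations and the factors $(-1)^{n}$: they must be matched against the convention $\langle\delta^{(n)}\mid\varphi\rangle=(-1)^{n}\varphi^{(n)}(0)$.

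\textbf{The pairing $\langle\mathcal{S}_{n',+}\mid\mathcal{U}_{n,-}\rangle$.} Both distributions are built from $\delta^{(n)}$ in their own chart, so they are supported at single points of $S^{1}$. Since $h_{+}^{-1}(0)=\pi/2$ and $h_{-}^{-1}(0)=-\pi/2$, the supports are the two distinct fixed points. Each distribution is a smooth factor times a finite-order derivative of a Dirac mass, and no derivative hits the other one. Hence the pairing is literally zero.

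\textbf{The pairing $\langle\mathcal{S}_{n',-}\mid\mathcal{U}_{n,+}\rangle$ (main obstacle).} Both distributions come from monomials, so they are not compactly supported. In the $\theta$-variable they are singular at the opposite fixed point, and the integral has to be defined by regularization.

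\emph{Computing in one chart.} I would work in the $+$ chart. The transition map is
\[
x_{-}=h_{-}\circ h_{+}^{-1}(x_{+})=-1/x_{+},
\]
which is the half-angle shift by $\pi$, up to sign. Under it,
\[
(\mathcal{U}_{-})^{\dagger}x^{n'}\longmapsto \mathrm{const}\cdot x^{-n'}(1+x^{2})^{\,\cdot\,},
\]
with weights from $(1+x^{2})^{b}$ and $(1+x^{2})^{\bar b}$ and the Jacobian. Using $b+\bar b=-1$, I expect the pairing to reduce to a constant multiple of
\[
\int_{\mathbb{R}}x^{n}\,x^{-n'}\,(1+x^{2})^{\gamma}\,dx
\qquad\text{or}\qquad
\int_{\mathbb{R}}x^{n+n'}(1+x^{2})^{\gamma}\,dx,
\]
with exponent $\gamma=\pm2i\lambda-1$ or similar. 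Each of these is defined by analytic continuation as a beta integral:
\[
\frac{\Gamma(r+\tfrac12)\,\Gamma(-\gamma-r-\tfrac12)}{\Gamma(-\gamma)}.
\]

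\emph{Vanishing.} Odd total parity gives zero immediately. For the even case I expect the answer to contain a factor $1/\Gamma$ evaluated at a non-positive integer, or equivalently the regularization of a homogeneous integrand $|x|^{-1+2i\lambda\cdot(\ldots)}$ with zero average. The hypothesis $\lambda>0$ should be exactly what makes the exponent non-real, so that the regularized integral vanishes rather than producing a pole.

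\emph{Alternative route.} If the explicit computation gets messy, use intertwining. Both $\mathcal{S}_{n',-}$ and $\mathcal{U}_{n,+}$ are generalized eigenvectors of $\tilde{X}$ and its adjoint, with eigenvalues read off from \eqref{eq:X_conj-1}: $-n'+b_{-}$ and $-n+b_{+}$. The regularized pairing is compatible with transposing $\tilde{X}$. It must therefore vanish unless the conjugated eigenvalues agree, and for $\lambda>0$ they never do, since $b_{+}\neq\overline{b_{-}}+$ integer shifts make the imaginary parts $\pm\lambda$ differ. The remaining work is to justify that this formal argument holds for the analytically continued pairing. I would do that via a meromorphic family in an auxiliary exponent $s$, where the identity holds for $\Re s$ large and persists by continuation.
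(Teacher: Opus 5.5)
Your same-branch computation is correct; it works directly in one chart, which is more self-contained than the paper's appeal to $\mathcal{T}_{\pm}\mathcal{T}_{\pm}^{-1}=\mathrm{Id}$. The disjoint-support argument for $\langle\mathcal{S}_{n',+}\mid\mathcal{U}_{n,-}\rangle$ is also correct. The gap is in the only nontrivial claim, $\langle\mathcal{S}_{n',-}\mid\mathcal{U}_{n,+}\rangle=0$: you never compute the transition operator, and the form you anticipate is wrong. With $\Upsilon(x)=1+x^{2}$ one has $\Upsilon(-1/x)=x^{-2}\Upsilon(x)$. Since $1+b+\overline{b}=0$, the $\Upsilon$-weights cancel \emph{exactly}:
\[
\mathcal{U}_{-}(\mathcal{U}_{+})^{\dagger}=|x|^{-1+2i\lambda}I^{\circ},\qquad I(x)=-1/x,
\]
hence
\[
\langle\mathcal{S}_{n',-}\mid\mathcal{U}_{n,+}\rangle=\frac{(-1)^{n}\,t_{n'}^{-}}{t_{n}^{+}\sqrt{n!\,n'!}}\int_{\mathbb{R}}x^{n'-n}|x|^{-1+2i\lambda}\,dx .
\]
So there is no residual $(1+x^{2})^{\gamma}$ with $\gamma=\pm2i\lambda-1$. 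With that $\gamma$, your Gamma quotient would have the finite denominator $\Gamma(1\mp2i\lambda)$ and would not vanish. The actual integrand is purely homogeneous. Odd $n'-n$ cancels by symmetry. For even $n'-n$ the integral is $2\int_{0}^{\infty}x^{s-1}dx$ with $s=n'-n+2i\lambda$, and this converges for \emph{no} value of $s$: at $0$ it needs $\Re s>0$, at $\infty$ it needs $\Re s<0$. It is therefore not a beta integral, and analytic continuation in its exponent gives it no meaning. The lemma only asserts vanishing in a regularized sense, so the proof consists of this reduction plus an explicit two-sided regularization, and your proposal supplies neither.

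The paper sets $x=e^{y}$ and damps by $e^{-\varepsilon|y|}$. This gives $\int_{\mathbb{R}}e^{sy}e^{-\varepsilon|y|}dy=\frac{2\varepsilon}{\varepsilon^{2}-s^{2}}\to0$ for $s\neq0$, and the hypothesis $\lambda>0$ enters exactly as $s\neq0$.

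Both of your mechanisms can also be made rigorous, provided the auxiliary weight vanishes at \emph{both} fixed points. A power of one chart coordinate improves one end and spoils the other. For instance, insert $|\cos\theta|^{\sigma}$, which in the $h_{-}$-chart is $\bigl(2|x|/(1+x^{2})\bigr)^{\sigma}$. For $\Re\sigma>|n'-n|$ the even case then equals, up to the same prefactor, $2^{\sigma}\Gamma\bigl(\tfrac{s+\sigma}{2}\bigr)\Gamma\bigl(\tfrac{\sigma-s}{2}\bigr)/\Gamma(\sigma)$. This continues to $0$ at $\sigma=0$ because $s\notin2\mathbb{Z}$; this is where a $1/\Gamma$ at a non-positive integer genuinely appears.

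Your intertwining argument also works with this weight. It pairs the left eigenvalue $-n'+b_{-}$ against the eigenvalue $-n+b_{+}$, giving the factor $b_{+}-b_{-}+n'-n=s$. The comparison is with $b_{-}$, not with $\overline{b_{-}}$, which equals $b_{+}$ and would give nothing when $n=n'$. The argument does not yield an exact identity, however. Since $[\tilde{X},|\cos\theta|^{\sigma}]=-\sigma\sin\theta\,|\cos\theta|^{\sigma}$, you get $s\,P(\sigma)=\sigma\,Q(\sigma)$, where $Q$ is the analogous pairing with an extra factor $\sin\theta$ and is regular at $\sigma=0$. Only the limit $\sigma\to0$ gives $s\,P(0)=0$.
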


\end{cBoxB}

\begin{proof}
For the diagonal terms, we use the weak expansions \eqref{eq:def_Tcal_+}
and \eqref{eq:def_Tcal_-}. For every $m\in\mathbb{N}$, 
\[
e_{m}^{\pm}=\mathcal{T}_{\pm}\mathcal{T}_{\pm}^{-1}e_{m}^{\pm}=\sum_{n\in\mathbb{N}}e_{n}^{\pm}\langle\mathcal{S}_{n,\pm}\mid\mathcal{U}_{m,\pm}\rangle_{L^{2}(S^{1})}.
\]
Since $(e_{n}^{\pm})_{n\in\mathbb{N}}$ is a basis, this gives 
\[
\langle\mathcal{S}_{n',\pm}\mid\mathcal{U}_{n,\pm}\rangle_{L^{2}(S^{1})}=\delta_{n'=n}.
\]

Next, 
\[
\operatorname{supp}(\mathcal{S}_{n,+})=\left\{ \frac{\pi}{2}\right\} ,\qquad\operatorname{supp}(\mathcal{U}_{n,-})=\left\{ -\frac{\pi}{2}\right\} .
\]
The supports are disjoint, hence 
\[
\langle\mathcal{S}_{n',+}\mid\mathcal{U}_{n,-}\rangle_{L^{2}(S^{1})}=0.
\]

It remains to compute 
\[
\langle\mathcal{S}_{n',-}\mid\mathcal{U}_{n,+}\rangle_{L^{2}(S^{1})}.
\]
Set 
\[
\Upsilon(x):=1+x^{2}.
\]
With the normalization 
\[
\mathcal{U}_{\pm}=\sqrt{2}\,\Upsilon^{b}h_{\pm}^{-\circ},
\]
one has 
\begin{equation}
(\mathcal{U}_{+})^{\dagger}=\frac{1}{\sqrt{2}}\,h_{+}^{\circ}\,\Upsilon^{1+\overline{b}}.\label{eq:U+_dagger}
\end{equation}
Moreover, 
\[
h_{+}\circ h_{-}^{-1}(x)=-\frac{1}{x}.
\]
Denoting this map by $I(x):=-1/x$, we obtain 
\begin{align}
\mathcal{U}_{-}(\mathcal{U}_{+})^{\dagger} & =\Upsilon^{b}h_{-}^{-\circ}h_{+}^{\circ}\Upsilon^{1+\overline{b}}\nonumber \\
 & =\Upsilon^{b}(1+x^{-2})^{1+\overline{b}}I^{\circ}\nonumber \\
 & =|x|^{-2(1+\overline{b})}\Upsilon^{1+b+\overline{b}}I^{\circ}\nonumber \\
 & =|x|^{-1+2i\lambda}I^{\circ}.\label{eq:UU_dagger}
\end{align}

Now set 
\[
f:=\frac{t_{n'}^{-}}{t_{n}^{+}\sqrt{n!}\sqrt{n'!}}.
\]
Using \eqref{eq:def_S_k-_Uk-}, \eqref{eq:def_Sk+_Uk+} and \eqref{eq:UU_dagger},
we get, in the regularized sense, 
\begin{align*}
\langle\mathcal{S}_{n',-}\mid\mathcal{U}_{n,+}\rangle_{L^{2}(S^{1})} & =f\,\left\langle x^{n'}\middle|\mathcal{U}_{-}(\mathcal{U}_{+})^{\dagger}x^{n}\right\rangle _{L^{2}(\mathbb{R})}\\
 & =f\,(-1)^{n}\int_{\mathbb{R}}x^{n'-n}|x|^{-1+2i\lambda}\,dx.
\end{align*}
Splitting $\mathbb{R}=\mathbb{R}_{+}\sqcup\mathbb{R}_{-}$, this becomes
\[
\langle\mathcal{S}_{n',-}\mid\mathcal{U}_{n,+}\rangle_{L^{2}(S^{1})}=f\,(-1)^{n}\left(1+(-1)^{n'-n}\right)\int_{0}^{\infty}x^{n'-n-1+2i\lambda}\,dx.
\]

We interpret this integral in the following Mellin-symmetric regularized
sense. For $m\in\mathbb{Z}$, set 
\[
\int_{0}^{\infty}x^{m-1+2i\lambda}\,dx:=\lim_{\varepsilon\to0^{+}}\frac{2\varepsilon}{\varepsilon^{2}-(m+2i\lambda)^{2}}.
\]
Indeed, with $x=e^{y}$, this is the meromorphic continuation, from
the strip $|\Re(m+2i\lambda)|<\varepsilon$, of 
\[
\int_{\mathbb{R}}e^{y(m+2i\lambda)}e^{-\varepsilon|y|}\,dy=\frac{1}{\varepsilon-(m+2i\lambda)}+\frac{1}{\varepsilon+(m+2i\lambda)}.
\]
Thus, if $(m,\lambda)\neq(0,0)$, 
\[
\int_{0}^{\infty}x^{m-1+2i\lambda}\,dx=0
\]
in this regularized sense. Since here $\lambda>0$, the regularized
integral vanishes for every $m\in\mathbb{Z}$. Hence 
\[
\langle\mathcal{S}_{n',-}\mid\mathcal{U}_{n,+}\rangle_{L^{2}(S^{1})}=0.
\]
This proves \eqref{eq:SU_zero}. 
\end{proof}

\subsection{\protect\label{subsec:Merging-the-two}Merging the two projective
representations}

We now merge the two representations defined by the bijective maps
\[
\mathcal{T}_{\pm}:\mathcal{D}_{\pm}\subset L^{2}(S_{\theta}^{1};d\theta)\longrightarrow\mathcal{D}'_{\pm},
\]
defined in \eqref{eq:def_cal_T_+-}.

\begin{cBoxB}{}

\begin{lem}
\label{lem:TCal_inverse} Assume that $\lambda>0$. Then 
\[
\mathcal{D}_{+}\cap\mathcal{D}_{-}=\{0\}.
\]
Hence the algebraic sum 
\begin{equation}
\mathcal{D}:=\mathcal{D}_{+}+\mathcal{D}_{-}\label{eq:Gaussian_domain_D}
\end{equation}
is a direct sum: 
\[
\mathcal{D}=\mathcal{D}_{+}\oplus\mathcal{D}_{-}.
\]
Moreover, $\mathcal{D}$ is dense in $L^{2}(S_{\theta}^{1};d\theta)$.

Set 
\[
\mathcal{D}':=\mathcal{D}'_{+}\oplus\mathcal{D}'_{-}.
\]
For $u=u_{+}+u_{-}\in\mathcal{D}$, with $u_{\pm}\in\mathcal{D}_{\pm}$,
define 
\begin{equation}
\mathcal{T}u:=\begin{pmatrix}\mathcal{T}_{+}u_{+}\\
\mathcal{T}_{-}u_{-}
\end{pmatrix}\in\mathcal{D}'.\label{eq:def_cal_T}
\end{equation}
Then 
\[
\mathcal{T}:\mathcal{D}\longrightarrow\mathcal{D}'
\]
is a bijection. Its inverse is 
\begin{equation}
\mathcal{T}^{-1}=\begin{pmatrix}\mathcal{T}_{+}^{-1} & \mathcal{T}_{-}^{-1}\end{pmatrix},\qquad\mathcal{T}^{-1}\begin{pmatrix}v_{+}\\
v_{-}
\end{pmatrix}=\mathcal{T}_{+}^{-1}v_{+}+\mathcal{T}_{-}^{-1}v_{-}.\label{eq:Tau_inverse}
\end{equation}
\end{lem}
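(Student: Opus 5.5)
The plan is to describe the two domains concretely as function spaces on $S^{1}$ and to read off three things from that description: the trivial intersection, the density, and the bijectivity. By definition $\mathcal{D}_{\pm}=\mathcal{U}_{\pm}^{-1}D_{\mp}$. Since $\mathcal{U}_{\pm}^{-1}v=\frac{1}{\sqrt{2}}(1+x^{2})^{-b}v\circ h_{\pm}$, an element of $\mathcal{D}_{+}$ is, in the chart $x=h_{+}(\theta)$, a finite sum of terms
\[
(1+x^{2})^{-b}P(x)e^{-\gamma x^{2}/2},\qquad \Re\gamma>0 .
\]
This holds because $D_{-}$ consists of Gaussians with $\Re\cot\beta>0$. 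Every such function is real-analytic on $I_{+}$. Near the excluded point $\theta=-\pi/2$, that is as $x\to\infty$, it vanishes to infinite order, since the Gaussian factor decays faster than any power of $x$. Near $\theta=\pi/2$, that is at $x=0$, it is analytic.

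\textbf{Trivial intersection.} Suppose $u\in\mathcal{D}_{+}\cap\mathcal{D}_{-}$. I will compare the behaviour of $u$ at the two fixed points. From its $\mathcal{D}_{+}$ description, $u$ is flat at $\theta=-\pi/2$, where all derivatives vanish. From its $\mathcal{D}_{-}$ description, $u$ is real-analytic near $\theta=-\pi/2$, because that point lies inside $I_{-}$ and corresponds to $x=0$ in the chart $h_{-}$. An analytic function that is flat at a point vanishes on the whole interval $I_{-}$ by analytic continuation. Since $I_{-}$ is dense in $S^{1}$, we get $u=0$.

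This step is where care is needed. The function $u$ is a finite combination of Gaussians with different widths, and each of these is an entire function of $x$ in the chart $h_{-}$. The difficulty is to make sure the flatness really holds: as $x\to\infty$ in the $h_{+}$ chart, the $\mathcal{D}_{+}$ expression behaves like $x^{\deg P+1-2i\lambda}e^{-\gamma x^{2}/2}$. In the coordinate $y=-1/x$, which is the $h_{-}$ chart, this becomes $e^{-\gamma/(2y^{2})}$ times a power of $y$. Such a function is flat and not analytic at $y=0$ unless it is identically zero. The same argument then goes through for finite sums, giving the contradiction. I do not expect this to need $\lambda>0$, although the hypothesis is harmless.

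\textbf{Direct sum, bijectivity and inverse.} Once $\mathcal{D}_{+}\cap\mathcal{D}_{-}=\{0\}$ is established, the decomposition $u=u_{+}+u_{-}$ is unique, so $\mathcal{T}$ is well defined. It is injective because each $\mathcal{T}_{\pm}$ is injective by Corollary~\ref{cor:Tcal_pm}. It is surjective onto $\mathcal{D}'_{+}\oplus\mathcal{D}'_{-}$ because $\mathcal{T}_{\pm}$ has range $\mathcal{D}'_{\pm}$. The formula \eqref{eq:Tau_inverse} for the inverse then follows immediately.

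\textbf{Density.} Density of $\mathcal{D}$ follows already from density of $\mathcal{D}_{+}$. The map $\mathcal{U}_{+}$ is unitary from $L^{2}(S^{1})$ onto $L^{2}(\mathbb{R})$, and $D_{-}$ is dense in $L^{2}(\mathbb{R})$ by Theorem~\ref{thm:For-,-the}, since it contains $D_{\beta}$ for any $\beta$ with $\pi/4<\Re\beta<\pi/2$. Hence $\mathcal{D}_{+}=\mathcal{U}_{+}^{-1}D_{-}$ is dense, and therefore so is $\mathcal{D}\supset\mathcal{D}_{+}$.
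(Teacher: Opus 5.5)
Your proof is correct. The key step, the trivial intersection, goes by a different route from the paper's.

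The paper argues through the Lagrangian states. Since $\mathcal{S}_{n',+}$ is supported at $\theta=\pi/2$ and $\mathcal{U}_{n,-}$ at $\theta=-\pi/2$, Lemma~\ref{lem:overlaps_Lagrangian_states} gives $\langle\mathcal{S}_{n',+}\mid\mathcal{U}_{n,-}\rangle=0$. The weak expansions then yield $\mathcal{T}_{+}\mathcal{T}_{-}^{-1}=0$ on $\mathcal{D}'_{-}$. Hence any $u\in\mathcal{D}_{+}\cap\mathcal{D}_{-}$ satisfies $\mathcal{T}_{+}u=\mathcal{T}_{+}\mathcal{T}_{-}^{-1}\mathcal{T}_{-}u=0$, and injectivity of $\mathcal{T}_{+}$ on $\mathcal{D}_{+}$ gives $u=0$.

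Unwound, this is the mirror image of your argument at the other fixed point. Up to nonzero factors, $\mathcal{T}_{+}u$ is the sequence of Taylor coefficients of $\mathcal{U}_{+}u$ at $x=0$, i.e. at $\theta=\pi/2$. There, elements of $\mathcal{D}_{-}$ are flat and elements of $\mathcal{D}_{+}$ are analytic.

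Your version proves the flatness explicitly, via the factor $e^{-\gamma/(2y^{2})}$ in the other chart. The paper's step instead pairs $\mathcal{S}_{n',+}$ term by term with the infinite sum $\sum_{n}\mathcal{U}_{n,-}\langle e_{n}^{-}\mid v_{-}\rangle$ of $\delta$-derivatives at $-\pi/2$. That sum represents a genuine function not supported at $-\pi/2$, so the interchange is formal, and it is justified precisely by the flatness you establish.

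You are also right that $\lambda>0$ plays no role here. Only the support-disjoint overlap $\langle\mathcal{S}_{n',+}\mid\mathcal{U}_{n,-}\rangle=0$ is actually used, not the Mellin-regularized one.

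What each approach buys: the paper's formulation fits the biorthogonal calculus used afterwards (off-diagonal blocks $\mathcal{T}_{\pm}\mathcal{T}_{\mp}^{-1}$, spectral projectors $\Pi_{n,\pm}$). Yours is elementary and fully rigorous.

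One small imprecision: in the $h_{-}$ chart, an element of $\mathcal{D}_{-}$ is $(1+y^{2})^{-b}$ times an entire function. It is therefore real-analytic on $\mathbb{R}$ (holomorphic for $|\operatorname{Im}y|<1$) but not entire; real-analyticity is all you use.

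Your bijectivity and density arguments coincide with the paper's.
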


\end{cBoxB}

\begin{proof}
By Lemma~\ref{lem:overlaps_Lagrangian_states}, for $\lambda>0$
the mixed overlaps vanish in the Mellin-symmetric regularized sense
fixed there. Using the weak expansions \eqref{eq:def_Tcal_+} and
\eqref{eq:def_Tcal_-}, this gives 
\[
\mathcal{T}_{+}\mathcal{T}_{-}^{-1}=0\quad\text{on }\mathcal{D}'_{-},\qquad\mathcal{T}_{-}\mathcal{T}_{+}^{-1}=0\quad\text{on }\mathcal{D}'_{+}.
\]
Indeed, for example, if $v_{-}\in\mathcal{D}'_{-}$, then 
\[
\mathcal{T}_{+}\bigl(\mathcal{T}_{-}^{-1}v_{-}\bigr)=\sum_{n',n\in\mathbb{N}}e_{n'}^{+}\langle\mathcal{S}_{n',+}\mid\mathcal{U}_{n,-}\rangle_{L^{2}(S^{1})}\langle e_{n}^{-}\mid v_{-}\rangle=0.
\]
The other identity is identical.

Let $u\in\mathcal{D}_{+}\cap\mathcal{D}_{-}$. Since $u=\mathcal{T}_{-}^{-1}\mathcal{T}_{-}u$,
applying $\mathcal{T}_{+}$ gives 
\[
\mathcal{T}_{+}u=\mathcal{T}_{+}\mathcal{T}_{-}^{-1}\mathcal{T}_{-}u=0.
\]
As $\mathcal{T}_{+}$ is injective on $\mathcal{D}_{+}$, we get $u=0$.
Thus 
\[
\mathcal{D}_{+}\cap\mathcal{D}_{-}=\{0\},
\]
and the algebraic sum 
\[
\mathcal{D}=\mathcal{D}_{+}+\mathcal{D}_{-}
\]
is a direct sum.

Both $\mathcal{D}_{+}$ and $\mathcal{D}_{-}$ are dense in $L^{2}(S_{\theta}^{1};d\theta)$,
because they are inverse images of dense spaces under the unitary
maps $\mathcal{U}_{\pm}$. Hence $\mathcal{D}$ is dense.

Since the decomposition $u=u_{+}+u_{-}$ is unique, the map 
\[
\mathcal{T}u=\begin{pmatrix}\mathcal{T}_{+}u_{+}\\
\mathcal{T}_{-}u_{-}
\end{pmatrix}
\]
is well defined. It is injective because $\mathcal{T}_{+}$ and $\mathcal{T}_{-}$
are injective. It is surjective onto $\mathcal{D}'_{+}\oplus\mathcal{D}'_{-}$,
since for 
\[
\begin{pmatrix}v_{+}\\
v_{-}
\end{pmatrix}\in\mathcal{D}'_{+}\oplus\mathcal{D}'_{-}
\]
one has 
\[
\mathcal{T}\left(\mathcal{T}_{+}^{-1}v_{+}+\mathcal{T}_{-}^{-1}v_{-}\right)=\begin{pmatrix}v_{+}\\
v_{-}
\end{pmatrix}.
\]
Therefore $\mathcal{T}$ is bijective and its inverse is 
\[
\mathcal{T}^{-1}=\begin{pmatrix}\mathcal{T}_{+}^{-1} & \mathcal{T}_{-}^{-1}\end{pmatrix}.
\]
\end{proof}

\subsection{\protect\label{subsec:Hilbert-space-realization-from}Hilbert-space
realization from trigonometric domains}

The domains $\mathcal{D}_{\pm}$ introduced above are well adapted
to the projective charts and to the explicit formulas for $\mathcal{T}_{\pm}$.
However, in the original geometric problem, the natural algebraic
core is the space of $K$-finite vectors, which in the compact picture
is precisely the space of trigonometric polynomials 
\[
\mathrm{Pol}(S^{1}):=\mathrm{Span}\{\psi_{k}\ ;\ k\in\mathbb{Z}\}.
\]
The transforms $\mathcal{T}_{\pm}$ are not directly adapted to this
core in the form needed below. We therefore propagate it by the geodesic
generator and consider, for $t>0$, 
\[
D_{t}:=e^{t\tilde{X}}\mathrm{Pol}(S^{1}).
\]
This positive-time propagation places the natural geometric core in
a domain where the weak transforms have the required analyticity and
decay properties.

The length of this subsection comes from the fact that the natural
geometric core, namely the space of trigonometric polynomials, is
not directly adapted to the projective charts. We therefore first
construct the algebraic transforms on Gaussian-type domains, then
pass to the propagated trigonometric domains $D_{\pm t}$, and finally
complete them into Hilbert spaces. This provides a genuine Hilbert
model in which $\tilde{X}$ becomes a normal operator with discrete
spectrum.

The next two lemmas establish the injectivity of the weak transforms
$\mathcal{T}$ and $(\mathcal{T}^{-1})^{\dagger}$ on these propagated
trigonometric domains.
\begin{rem}
We keep the same notation $\mathcal{T}=(\mathcal{T}_{+},\mathcal{T}_{-})$
for the weak transform on the propagated trigonometric domains. This
should be distinguished from the algebraic map on $\mathcal{D}_{+}\oplus\mathcal{D}_{-}$,
where $\mathcal{T}$ acts after decomposing $u=u_{+}+u_{-}$. No ambiguity
will arise from the context.
\end{rem}

\begin{cBoxB}{}

\begin{lem}
\label{lem:injectivity_on_Dt} Assume that $\lambda>0$, and let $t>0$.
Then the weak transform 
\[
\mathcal{T}=\begin{pmatrix}\mathcal{T}_{+}\\
\mathcal{T}_{-}
\end{pmatrix}
\]
is injective on 
\[
D_{t}=e^{t\tilde{X}}\mathrm{Pol}(S^{1}).
\]
More precisely, $\mathcal{T}_{+}$ is already injective on $D_{t}$. 
\end{lem}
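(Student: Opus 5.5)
We prove the stronger claim that $\mathcal{T}_{+}$ alone is injective on $D_{t}$. Let $u=e^{t\tilde{X}}p$ with $p\in\mathrm{Pol}(S^{1})$. By the weak expansion \eqref{eq:def_Tcal_+}, the coefficients of $\mathcal{T}_{+}u$ are, up to the nonzero factors $t_{n}^{+}/\sqrt{n!}$, the Taylor coefficients at $x=0$ of $f:=\mathcal{U}_{+}u$. Here $x=0$ is the attracting fixed point $\theta=\pi/2$ in the chart $h_{+}$. So if $\mathcal{T}_{+}u=0$, all derivatives of $f$ vanish at $0$. The plan is to show that $f$ is real-analytic on the whole of $\mathbb{R}$; then $f\equiv0$, so $u=0$ by unitarity of $\mathcal{U}_{+}$.

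\textbf{Step 1: compute $f$ explicitly.} By Lemma~\ref{lem:For-each-sign}, $\mathcal{U}_{+}\tilde{X}\mathcal{U}_{+}^{-1}=-(x\partial_{x}-b)$. The flow is explicit:
\[
\mathcal{U}_{+}e^{t\tilde{X}}\mathcal{U}_{+}^{-1}g(x)=e^{tb}g(e^{-t}x).
\]
Combining this with \eqref{eq:Upsi_pm_common}, for $p=\sum_{|k|\le K}p_{k}\psi_{k}$ we get
\[
f(x)=\frac{e^{tb}}{\sqrt{\pi}}\sum_{k}p_{k}e^{ik\pi/2}(1+e^{-2t}x^{2})^{b}\left(\frac{1+ie^{-t}x}{1-ie^{-t}x}\right)^{k}.
\]
This is $(1+e^{-2t}x^{2})^{b}$ times a rational function of $x$ whose only poles are at $x=\pm ie^{t}$. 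Hence $f$ is holomorphic in the disk $|x|<e^{t}$ and real-analytic on all of $\mathbb{R}$.

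\textbf{Step 2: conclude.} The pairing $\langle\mathcal{S}_{n,+}\mid u\rangle$ is genuinely evaluation of derivatives at $0$, since $\mathcal{S}_{n,+}$ is supported at $\pi/2$ and $u$ is smooth there. If all these coefficients vanish, $f$ vanishes identically near $0$. By analyticity on the connected set $\mathbb{R}$, $f\equiv0$, and therefore $u=\mathcal{U}_{+}^{-1}f=0$. Injectivity of the pair $\mathcal{T}=(\mathcal{T}_{+},\mathcal{T}_{-})$ then follows trivially.

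\textbf{Main obstacle.} The delicate point is to justify that the weak transform on $D_{t}$ is really given by these Taylor coefficients, i.e.\ that the expansion \eqref{eq:def_Tcal_+} applies to elements of $D_{t}$, which need not lie in $\mathcal{D}_{+}$. For $\mathcal{T}_{+}$ this is unproblematic, since it pairs with distributions supported at a point. We stress that $t>0$ is used only to keep the singularities at $\pm ie^{t}$ away from the origin. For $\mathcal{T}_{+}$ injectivity would already hold at $t=0$ by the same analyticity on $\mathbb{R}$, so this argument does not need a careful treatment of $\mathcal{T}_{-}$, whose moments require the analytic continuation used in Lemma~\ref{lem:estimate_Tprime_pm}.
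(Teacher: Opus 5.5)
Your proposal is correct and follows essentially the same argument as the paper: write $f_{+}=\mathcal{U}_{+}u=e^{tb}(\mathcal{U}_{+}p)(e^{-t}x)$ and observe that it is holomorphic near $0$ and real-analytic on $\mathbb{R}$; then $\mathcal{T}_{+}u=0$ forces all $f_{+}^{(n)}(0)=0$, so $f_{+}\equiv0$ and $u=0$. Your side remark is also accurate: the same reasoning already gives injectivity of $\mathcal{T}_{+}$ on $\mathrm{Pol}(S^{1})$, and $t>0$ is what provides the exponential decay placing $\mathcal{T}(D_{t})$ in $\ell^{2}$.
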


\end{cBoxB}

\begin{proof}
Let $u\in D_{t}$, and set 
\[
f_{+}:=\mathcal{U}_{+}u.
\]
By definition of $D_{t}$, there exists a trigonometric polynomial
\[
p=\sum_{|k|\le K}a_{k}\psi_{k}
\]
such that 
\[
u=e^{t\tilde{X}}p.
\]
Using 
\[
\mathcal{U}_{+}\tilde{X}\mathcal{U}_{+}^{-1}\eq{\ref{eq:def_Xtilde}}-x\frac{d}{dx}+b,
\]
we get 
\[
f_{+}=\mathcal{U}_{+}e^{t\tilde{X}}p=e^{t(-x\partial_{x}+b)}\mathcal{U}_{+}p=e^{tb}(\mathcal{U}_{+}p)(e^{-t}x).
\]
Moreover, for every $k\in\mathbb{Z}$, \eqref{eq:Upsi_pm_common}
gives 
\[
\mathcal{U}_{+}\psi_{k}=\frac{e^{ik\pi/2}}{\sqrt{\pi}}(1+ix)^{b+k}(1-ix)^{b-k}.
\]
Hence 
\[
f_{+}(x)=\sum_{|k|\le K}c_{k}(e^{t}+ix)^{b+k}(e^{t}-ix)^{b-k}
\]
for suitable constants $c_{k}$. In particular, $f_{+}$ is real-analytic
on $\mathbb{R}$ and holomorphic in a neighbourhood of $0$.

Assume now that $\mathcal{T}u=0$. Then $\mathcal{T}_{+}u=0$. Therefore,
for every $n\in\mathbb{N}$, 
\begin{align*}
0 & =\langle e_{n}^{+}\mid\mathcal{T}_{+}u\rangle\eq{\ref{eq:def_Tcal_+}}\langle\mathcal{S}_{n,+}\mid u\rangle_{L^{2}(S^{1})}\\
 & \eq{\ref{eq:def_Sk+_Uk+}}\frac{(-1)^{n}t_{n}^{+}}{\sqrt{n!}}\langle(\mathcal{U}_{+})^{\dagger}\delta^{(n)}\mid u\rangle_{L^{2}(S^{1})}\\
 & =\frac{(-1)^{n}t_{n}^{+}}{\sqrt{n!}}\langle\delta^{(n)}\mid\mathcal{U}_{+}u\rangle_{L^{2}(\mathbb{R})}\\
 & =\frac{(-1)^{n}t_{n}^{+}}{\sqrt{n!}}\langle\delta^{(n)}\mid f_{+}\rangle_{L^{2}(\mathbb{R})}=\frac{t_{n}^{+}}{\sqrt{n!}}f_{+}^{(n)}(0).
\end{align*}
Since $t_{n}^{+}\neq0$, all derivatives of $f_{+}$ at $0$ vanish:
\[
f_{+}^{(n)}(0)=0,\qquad n\in\mathbb{N}.
\]
Thus $f_{+}$ vanishes in a neighbourhood of $0$. Since $f_{+}$
is real-analytic on the connected real line, and vanishes on a neighbourhood
of $0$, the identity theorem gives 
\[
f_{+}\equiv0\qquad\text{on }\mathbb{R}.
\]
Therefore 
\[
u=\mathcal{U}_{+}^{-1}f_{+}=0.
\]
Thus $\mathcal{T}_{+}$, and hence $\mathcal{T}$, is injective on
$D_{t}$. 
\end{proof}
\begin{cBoxB}{}

\begin{lem}
\label{lem:injectivity_dual_on_Dmt} Assume that $\lambda>0$, and
let $t>0$. Then 
\[
(\mathcal{T}^{-1})^{\dagger}=\begin{pmatrix}(\mathcal{T}_{+}^{-1})^{\dagger}\\
(\mathcal{T}_{-}^{-1})^{\dagger}
\end{pmatrix}
\]
is injective on 
\[
D_{-t}:=e^{-t\tilde{X}}\mathrm{Pol}(S^{1}).
\]
More precisely, $(\mathcal{T}_{-}^{-1})^{\dagger}$ is already injective
on $D_{-t}$. 
\end{lem}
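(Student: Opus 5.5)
The argument mirrors the proof of Lemma~\ref{lem:injectivity_on_Dt}, with the roles of the two charts exchanged. By Lemma~\ref{lem:Tcal_Lagrangian_states}, $\mathcal{T}_{-}^{-1}=\sum_{n}\mathcal{U}_{n,-}\langle e_{n}^{-}\mid\cdot\,\rangle$. Hence, weakly,
\[
(\mathcal{T}_{-}^{-1})^{\dagger}=\sum_{n\in\mathbb{N}}e_{n}^{-}\langle\mathcal{U}_{n,-}\mid\,\cdot\,\rangle_{L^{2}(S^{1})}.
\]
By \eqref{eq:def_S_k-_Uk-}, $\mathcal{U}_{n,-}$ is a nonzero multiple of $(\mathcal{U}_{-})^{\dagger}\delta^{(n)}$. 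So for $u\in D_{-t}$, the $n$-th coefficient of $(\mathcal{T}_{-}^{-1})^{\dagger}u$ equals $\overline{(t_{n}^{-})^{-1}}(n!)^{-1/2}f_{-}^{(n)}(0)$, up to a sign, where $f_{-}:=\mathcal{U}_{-}u$. Since $t_{n}^{-}\neq0$ (because $-2b_{-}+j\neq0$), it suffices to show that $f_{-}$ is real-analytic on $\mathbb{R}$ and that vanishing of all its derivatives at $0$ forces $f_{-}\equiv0$.

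The first step is to compute $f_{-}$ explicitly. Write $u=e^{-t\tilde{X}}p$ with $p=\sum_{|k|\le K}a_{k}\psi_{k}$. By \eqref{eq:def_Xtilde} with the minus sign, $\mathcal{U}_{-}\tilde{X}\mathcal{U}_{-}^{-1}=x\partial_{x}-b$. Therefore
\[
\mathcal{U}_{-}e^{-t\tilde{X}}\mathcal{U}_{-}^{-1}=e^{-t(x\partial_{x}-b)},
\]
which acts by $g\mapsto e^{tb}g(e^{-t}x)$. Combined with \eqref{eq:Upsi_pm_common}, this gives
\[
f_{-}(x)=\sum_{|k|\le K}c_{k}(e^{t}+ix)^{b+k}(e^{t}-ix)^{b-k}.
\]
Each term is holomorphic in the strip $|\Im x|<e^{t}$, so $f_{-}$ is real-analytic on all of $\mathbb{R}$ and holomorphic near $0$. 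The sign convention here deserves care. The choice of $-t$ rather than $t$ is precisely what makes the dilation act on the $-$ chart in the direction compatible with the $\delta^{(n)}$ functionals. One could alternatively check the exponent by noting that $e^{-t\tilde{X}}$ on the $-$ chart is conjugate to $e^{t\tilde{X}}$ on the $+$ chart via the symmetry $\theta\mapsto\theta+\pi$. In either case, the formula for $f_{-}$ has the same structure as $f_{+}$ in Lemma~\ref{lem:injectivity_on_Dt}.

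The second step concludes. If $(\mathcal{T}^{-1})^{\dagger}u=0$, then in particular $(\mathcal{T}_{-}^{-1})^{\dagger}u=0$. All Taylor coefficients of $f_{-}$ at $0$ then vanish, so $f_{-}=0$ near $0$. By the identity theorem on the connected line, $f_{-}\equiv0$ on $\mathbb{R}$. Hence $u=\mathcal{U}_{-}^{-1}f_{-}=0$, because $\mathcal{U}_{-}$ is unitary and $h_{-}$ maps $I_{-}$, which is of full measure in $S^{1}$, onto $\mathbb{R}$.

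The main obstacle is justifying the weak pairing: one must show that $\langle\mathcal{U}_{n,-}\mid u\rangle$ is well defined on $D_{-t}$, and that it equals the Taylor coefficient rather than some regularized quantity. To settle this I would verify directly that $(\mathcal{U}_{-})^{\dagger}\delta^{(n)}$ is supported at the single point $h_{-}^{-1}(0)=-\pi/2$. Since $u$ is smooth near that point, the pairing is the honest value $\langle\delta^{(n)}\mid f_{-}\rangle=(-1)^{n}f_{-}^{(n)}(0)$, and no analytic continuation is needed, unlike on the $+$-chart side.
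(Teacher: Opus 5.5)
Your proof is correct and is essentially the paper's argument: you write $f_{-}=\mathcal{U}_{-}u$ explicitly as a finite sum of terms $(e^{t}+ix)^{b+k}(e^{t}-ix)^{b-k}$, identify the $n$-th coefficient of $(\mathcal{T}_{-}^{-1})^{\dagger}u$ with $f_{-}^{(n)}(0)$ up to the nonzero factor $1/(\overline{t_{n}^{-}}\sqrt{n!})$, and conclude by the identity theorem. Your observation that $\mathcal{U}_{n,-}$ is supported at $\theta=-\pi/2$, so that its pairing with the smooth function $u$ needs no regularization, usefully justifies a step the paper leaves implicit; note that the choice of $-t$ rather than $t$ matters only for the $\ell^{2}$ decay of the coefficients, not for injectivity.
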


\end{cBoxB}

\begin{proof}
Let $u\in D_{-t}$, and set 
\[
f_{-}:=\mathcal{U}_{-}u.
\]
By definition of $D_{-t}$, there exists a trigonometric polynomial
\[
p=\sum_{|k|\le K}a_{k}\psi_{k}
\]
such that 
\[
u=e^{-t\tilde{X}}p.
\]
Using 
\[
\mathcal{U}_{-}\tilde{X}\mathcal{U}_{-}^{-1}\eq{\ref{eq:def_Xtilde}}x\frac{d}{dx}-b,
\]
we get 
\[
f_{-}=\mathcal{U}_{-}e^{-t\tilde{X}}p=e^{-t(x\partial_{x}-b)}\mathcal{U}_{-}p=e^{tb}(\mathcal{U}_{-}p)(e^{-t}x).
\]
Moreover, for every $k\in\mathbb{Z}$, \eqref{eq:Upsi_pm_common}
gives 
\[
\mathcal{U}_{-}\psi_{k}=\frac{e^{-ik\pi/2}}{\sqrt{\pi}}(1+ix)^{b+k}(1-ix)^{b-k}.
\]
Hence 
\[
f_{-}(x)=\sum_{|k|\le K}c_{k}(e^{t}+ix)^{b+k}(e^{t}-ix)^{b-k}
\]
for suitable constants $c_{k}$. In particular, $f_{-}$ is real-analytic
on $\mathbb{R}$ and holomorphic in a neighbourhood of $0$.

Assume now that 
\[
(\mathcal{T}^{-1})^{\dagger}u=0.
\]
Then $(\mathcal{T}_{-}^{-1})^{\dagger}u=0$. Therefore, for every
$n\in\mathbb{N}$, using \eqref{eq:def_Tcal_-} and \eqref{eq:def_S_k-_Uk-},
we obtain 
\begin{align*}
0 & =\left\langle e_{n}^{-}\middle|(\mathcal{T}_{-}^{-1})^{\dagger}u\right\rangle =\langle\mathcal{U}_{n,-}\mid u\rangle_{L^{2}(S^{1})}\\
 & =\frac{(-1)^{n}}{t_{n}^{-}\sqrt{n!}}\langle(\mathcal{U}_{-})^{\dagger}\delta^{(n)}\mid u\rangle_{L^{2}(S^{1})}\\
 & =\frac{(-1)^{n}}{t_{n}^{-}\sqrt{n!}}\langle\delta^{(n)}\mid\mathcal{U}_{-}u\rangle_{L^{2}(\mathbb{R})}\\
 & =\frac{(-1)^{n}}{t_{n}^{-}\sqrt{n!}}\langle\delta^{(n)}\mid f_{-}\rangle_{L^{2}(\mathbb{R})}=\frac{1}{t_{n}^{-}\sqrt{n!}}\,f_{-}^{(n)}(0).
\end{align*}
Since $t_{n}^{-}\neq0$, all derivatives of $f_{-}$ at $0$ vanish:
\[
f_{-}^{(n)}(0)=0,\qquad n\in\mathbb{N}.
\]
Thus $f_{-}$ vanishes in a neighbourhood of $0$. Since $f_{-}$
is real-analytic on the connected real line, the identity theorem
gives 
\[
f_{-}\equiv0\qquad\text{on }\mathbb{R}.
\]
Therefore 
\[
u=\mathcal{U}_{-}^{-1}f_{-}=0.
\]
Thus $(\mathcal{T}_{-}^{-1})^{\dagger}$, and hence $(\mathcal{T}^{-1})^{\dagger}$,
is injective on $D_{-t}$. 
\end{proof}
\begin{cBoxB}{}

\begin{prop}
\label{prop:propagated_estimates_and_X_intertwining} Assume that
$\lambda>0$, and let $t>0$. Set 
\[
N:=(-A+b_{+})\oplus(-A+b_{-}),\qquad N^{\dagger}:=(-A+\overline{b_{+}})\oplus(-A+\overline{b_{-}})
\]
on 
\[
\ell_{+}^{2}(\mathbb{N})\oplus\ell_{-}^{2}(\mathbb{N}).
\]
Then: 
\begin{enumerate}
\item For every $k\in\mathbb{Z}$, there exists $C_{k,t}>0$ such that,
for all $n\in\mathbb{N}$, 
\begin{equation}
\left|\left\langle e_{n}^{\pm}\middle|\mathcal{T}_{\pm}\bigl(e^{t\tilde{X}}\psi_{k}\bigr)\right\rangle \right|\le C_{k,t}e^{-tn}\langle n\rangle^{|k|-\frac{1}{2}},\label{eq:Fourier_decay_t}
\end{equation}
and 
\begin{equation}
\left|\left\langle e_{n}^{\pm}\middle|(\mathcal{T}_{\pm}^{-1})^{\dagger}\bigl(e^{-t\tilde{X}}\psi_{k}\bigr)\right\rangle \right|\le C_{k,t}e^{-tn}\langle n\rangle^{|k|-\frac{1}{2}},\label{eq:Fourier_decay_t_dual}
\end{equation}
where $\langle n\rangle:=(1+n^{2})^{1/2}$.
\item For every $u\in D_{t}$, one has 
\[
\mathcal{T}u\in\mathcal{D}(N),
\]
and 
\begin{equation}
N\,\mathcal{T}u=\mathcal{T}(\tilde{X}u).\label{eq:X_intertwining_on_Dt}
\end{equation}
\item For every $v\in D_{-t}$, one has 
\[
(\mathcal{T}^{-1})^{\dagger}v\in\mathcal{D}(N^{\dagger}),
\]
and 
\begin{equation}
N^{\dagger}(\mathcal{T}^{-1})^{\dagger}v=(\mathcal{T}^{-1})^{\dagger}(-\tilde{X}v).\label{eq:X_dual_intertwining_on_Dmt}
\end{equation}
\end{enumerate}
\end{prop}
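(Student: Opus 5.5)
The plan rests on one observation: the weak transforms $\mathcal{T}_{\pm}$ intertwine $\tilde{X}$ with the diagonal operator $-A+b_{\pm}$ coefficientwise. Propagating by $e^{t\tilde{X}}$ therefore only multiplies the $n$-th coefficient by $e^{t(-n+b_{\pm})}$. The whole proof should then reduce to the polynomial bounds \eqref{eq:Fourier_decay} and \eqref{eq:Fourier_decay-1} of Corollary~\ref{cor:Tcal_pm}.

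\textbf{Step 1: coefficientwise intertwining.} Using the weak expansions \eqref{eq:def_Tcal_+}--\eqref{eq:def_Tcal_-}, the $n$-th coefficient of $\mathcal{T}_{\pm}u$ is $\langle\mathcal{S}_{n,\pm}\mid u\rangle$. I will show that $\mathcal{S}_{n,\pm}$ is a generalized eigenvector of the transpose, namely
\[
\tilde{X}^{\dagger}\mathcal{S}_{n,\pm}=\overline{(-n+b_{\pm})}\,\mathcal{S}_{n,\pm}
\]
in the distributional sense. This is checked in the chart: by \eqref{eq:def_Xtilde}, $\mathcal{U}_{\pm}\tilde{X}\mathcal{U}_{\pm}^{-1}=\mp(x\partial_{x}-b)$. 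On the $+$ side the relevant functionals are $\delta^{(n)}$; on the $-$ side they are $x^{n}$, paired through the analytically continued beta integrals used in Lemma~\ref{lem:estimate_Tprime_pm}. Each is an eigendistribution of the transpose of $x\partial_{x}$, since $x\partial_x$ has eigenvalue $n$ on $x^n$ and its transpose has eigenvalue $n$ on $\delta^{(n)}$; this is exactly the content of \eqref{eq:H_conj}. The analogous identity for $\mathcal{U}_{n,\pm}$ follows from $\mathcal{T}_{\pm}^{-1}$ intertwining in the opposite direction.

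\textbf{Step 2: the propagated coefficients.} Since $\psi_{k}$ is smooth and $e^{t\tilde{X}}$ is a smooth flow with a smooth weight, the function $s\mapsto\langle\mathcal{S}_{n,\pm}\mid e^{s\tilde{X}}\psi_{k}\rangle$ is differentiable. By Step~1 it solves a scalar linear ODE, so
\[
\langle e_{n}^{\pm}\mid\mathcal{T}_{\pm}e^{t\tilde{X}}\psi_{k}\rangle=e^{t(-n+b_{\pm})}\langle e_{n}^{\pm}\mid\mathcal{T}_{\pm}\psi_{k}\rangle .
\]
Since $|e^{tb_{\pm}}|=e^{-t/2}$, the bound \eqref{eq:Fourier_decay} gives \eqref{eq:Fourier_decay_t}. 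For the $-$ branch I will justify the identity by checking it on the explicit beta integrals, using \eqref{eq:Upsi_pm_common} together with the closed form of $f_{-}$ from the proof of Lemma~\ref{lem:injectivity_dual_on_Dmt}. This amounts to a rescaling $x\mapsto e^{-t}x$ of the analytically continued integral, and that rescaling commutes with the continuation. The dual estimate \eqref{eq:Fourier_decay_t_dual} is obtained the same way: $(\mathcal{T}_{\pm}^{-1})^{\dagger}$ carries $-\tilde{X}$ to $N^{\dagger}$, so $e^{-t\tilde{X}}$ produces the factor $e^{t(-n+\overline{b_{\pm}})}$, and we then invoke \eqref{eq:Fourier_decay-1}.

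\textbf{Step 3: items 2 and 3.} Take $u\in D_{t}$, a finite combination of the vectors $e^{t\tilde{X}}\psi_{k}$. The coefficients of $\mathcal{T}u$ are $O(e^{-tn}n^{K})$, so both $\mathcal{T}u$ and $N\mathcal{T}u$, whose coefficients are multiplied by $|-n+b_{\pm}|=O(n)$, lie in $\ell^{2}$. Hence $\mathcal{T}u\in\mathcal{D}(N)$. Moreover, $\tilde{X}u=\frac{d}{ds}\big|_{s=t}e^{s\tilde{X}}p$ is again a combination of such vectors. Differentiating the identity of Step~2 in $t$ then gives \eqref{eq:X_intertwining_on_Dt} coefficientwise, which is enough because both sides are in $\ell^{2}$. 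Item 3 is identical with the dual relations.

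\textbf{Main obstacle.} I expect the difficulty to be rigor in Step~1 for the $-$ branch. There the pairings are not genuine $L^{2}$ pairings: $x^{n}$ is not integrable against $(1+x^{2})^{b}$, so they are defined only by analytic continuation. The integration by parts behind $(x\partial_{x})^{T}x^{n}=-(n+1)x^{n}$ must therefore be justified inside that regularization. My plan is to introduce an auxiliary parameter $\Re b\ll0$, under which all integrals converge and the identities hold classically, and then continue analytically in $b$.
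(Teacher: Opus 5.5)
Your proposal is correct and is essentially the paper's proof. The weak intertwining $\mathcal{T}_{\pm}e^{s\tilde{X}}=e^{s(-A+b_{\pm})}\mathcal{T}_{\pm}$, and its adjoint form $(\mathcal{T}_{\pm}^{-1})^{\dagger}e^{-s\tilde{X}}=e^{s(-A+\overline{b_{\pm}})}(\mathcal{T}_{\pm}^{-1})^{\dagger}$, multiply the $n$-th coefficient by a factor of modulus $e^{-t(n+\frac{1}{2})}$; then \eqref{eq:Fourier_decay} and \eqref{eq:Fourier_decay-1} give item 1, and differentiating at $s=t$ gives items 2 and 3. The only difference is that the paper reads the weak intertwining directly off \eqref{eq:X_conj-1}, while you justify it through the eigendistribution property of $\delta^{(n)}$ and $x^{n}$ and analytic continuation of the regularized $x^{n}$-pairings, which is a sound way to fill in that step (two small bookkeeping points: for $\mathcal{T}_{-}$ on $D_{t}$ the relevant rescaling is $\mathcal{U}_{-}e^{t\tilde{X}}\psi_{k}=e^{-tb}(\mathcal{U}_{-}\psi_{k})(e^{t}x)$, and in the dual estimate it is the $+$ branch whose coefficients are regularized $x^{n}$-pairings).
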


\end{cBoxB}

\begin{proof}
Using \eqref{eq:X_conj-1}, we have, in the weak sense, 
\[
\mathcal{T}_{\pm}e^{s\tilde{X}}=e^{s(-A+b_{\pm})}\mathcal{T}_{\pm},\qquad s\ge0.
\]
Taking $s=t$, we get 
\[
\left\langle e_{n}^{\pm}\middle|\mathcal{T}_{\pm}\bigl(e^{t\tilde{X}}\psi_{k}\bigr)\right\rangle =e^{t(b_{\pm}-n)}\left\langle e_{n}^{\pm}\middle|\mathcal{T}_{\pm}\psi_{k}\right\rangle .
\]
Using \eqref{eq:Fourier_decay} and $\Re b_{\pm}=-\frac{1}{2}$, we
obtain, after increasing the constant if necessary, 
\[
\left|\left\langle e_{n}^{\pm}\middle|\mathcal{T}_{\pm}\bigl(e^{t\tilde{X}}\psi_{k}\bigr)\right\rangle \right|\le C_{k,t}e^{-tn}\langle n\rangle^{|k|-\frac{1}{2}}.
\]
This proves \eqref{eq:Fourier_decay_t}.

Similarly, by taking adjoints of the preceding weak intertwining relation,
\[
(\mathcal{T}_{\pm}^{-1})^{\dagger}e^{-s\tilde{X}}=e^{s(-A+\overline{b_{\pm}})}(\mathcal{T}_{\pm}^{-1})^{\dagger},\qquad s\ge0.
\]
Thus, at $s=t$, 
\[
\left\langle e_{n}^{\pm}\middle|(\mathcal{T}_{\pm}^{-1})^{\dagger}\bigl(e^{-t\tilde{X}}\psi_{k}\bigr)\right\rangle =e^{t(\overline{b_{\pm}}-n)}\left\langle e_{n}^{\pm}\middle|(\mathcal{T}_{\pm}^{-1})^{\dagger}\psi_{k}\right\rangle .
\]
Using \eqref{eq:Fourier_decay-1} and again $\Re b_{\pm}=-\frac{1}{2}$,
we get \eqref{eq:Fourier_decay_t_dual}.

Now let 
\[
u=\sum_{|k|\le K}a_{k}e^{t\tilde{X}}\psi_{k}\in D_{t}.
\]
By \eqref{eq:Fourier_decay_t}, each component $\mathcal{T}_{\pm}u$
has exponentially decaying coefficients. Hence 
\[
\mathcal{T}u\in\mathcal{D}(N).
\]
Differentiating the weak identity 
\[
\mathcal{T}_{\pm}e^{s\tilde{X}}=e^{s(-A+b_{\pm})}\mathcal{T}_{\pm}
\]
with respect to $s$ and then taking $s=t$, we obtain 
\[
\mathcal{T}_{\pm}\tilde{X}e^{t\tilde{X}}=(-A+b_{\pm})\mathcal{T}_{\pm}e^{t\tilde{X}}.
\]
By linearity, 
\[
N\,\mathcal{T}u=\mathcal{T}(\tilde{X}u),
\]
which proves \eqref{eq:X_intertwining_on_Dt}.

The proof of the dual statement is the same. If 
\[
v=\sum_{|k|\le K}a_{k}e^{-t\tilde{X}}\psi_{k}\in D_{-t},
\]
then \eqref{eq:Fourier_decay_t_dual} implies that 
\[
(\mathcal{T}^{-1})^{\dagger}v\in\mathcal{D}(N^{\dagger}).
\]
Differentiating 
\[
(\mathcal{T}_{\pm}^{-1})^{\dagger}e^{-s\tilde{X}}=e^{s(-A+\overline{b_{\pm}})}(\mathcal{T}_{\pm}^{-1})^{\dagger}
\]
with respect to $s$ and then taking $s=t$, we obtain 
\[
(\mathcal{T}_{\pm}^{-1})^{\dagger}(-\tilde{X})e^{-t\tilde{X}}=(-A+\overline{b_{\pm}})(\mathcal{T}_{\pm}^{-1})^{\dagger}e^{-t\tilde{X}}.
\]
By linearity, 
\[
N^{\dagger}(\mathcal{T}^{-1})^{\dagger}v=(\mathcal{T}^{-1})^{\dagger}(-\tilde{X}v),
\]
which proves \eqref{eq:X_dual_intertwining_on_Dmt}. 
\end{proof}
\begin{cBoxB}{}

\begin{lem}
\label{lem:each_mode_is_seen} Assume that $\lambda>0$. For every
$n\in\mathbb{N}$ and each sign $\pm$, the functions 
\[
k\longmapsto\langle e_{n}^{\pm}\mid\mathcal{T}_{\pm}\psi_{k}\rangle,\qquad k\longmapsto\langle e_{n}^{\pm}\mid(\mathcal{T}_{\pm}^{-1})^{\dagger}\psi_{k}\rangle
\]
are not identically zero on $\mathbb{Z}$. 
\end{lem}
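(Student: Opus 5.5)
The plan is to argue by contradiction using the weak expansions of Lemma~\ref{lem:Tcal_Lagrangian_states}. Fix $n$ and a sign. By \eqref{eq:def_Tcal_+}--\eqref{eq:def_Tcal_-},
\[
\langle e_{n}^{\pm}\mid\mathcal{T}_{\pm}\psi_{k}\rangle=\langle\mathcal{S}_{n,\pm}\mid\psi_{k}\rangle_{L^{2}(S^{1})},\qquad\langle e_{n}^{\pm}\mid(\mathcal{T}_{\pm}^{-1})^{\dagger}\psi_{k}\rangle=\langle\mathcal{U}_{n,\pm}\mid\psi_{k}\rangle_{L^{2}(S^{1})}.
\]
So these are the Fourier coefficients of the distributions $\mathcal{S}_{n,\pm}$ and $\mathcal{U}_{n,\pm}$ on $S^{1}$. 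By Corollary~\ref{cor:weak_extensions_sobolev}, they belong to $H^{-n-\frac12-\epsilon}(S^{1})$. A distribution on $S^{1}$ whose Fourier coefficients all vanish is zero, since trigonometric polynomials are dense in $C^{\infty}(S^{1})$. It therefore suffices to show that none of $\mathcal{S}_{n,\pm}$, $\mathcal{U}_{n,\pm}$ is the zero distribution.

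For the distributions built from $\delta^{(n)}$, namely $\mathcal{S}_{n,+}$ and $\mathcal{U}_{n,-}$, I will test against a smooth function supported in the chart $I_{\pm}$. Write $u=\mathcal{U}_{\pm}^{-1}\phi$ with $\phi\in C_{c}^{\infty}(\mathbb{R})$ and $\phi^{(n)}(0)\neq0$. The pairing equals a nonzero constant ($t_{n}^{\pm}\neq0$, $\sqrt{n!}$) times $\langle\delta^{(n)}\mid\phi\rangle=(-1)^{n}\phi^{(n)}(0)\neq0$. This is exactly the computation used in Lemma~\ref{lem:injectivity_on_Dt}. Since $\mathcal{U}_{\pm}^{-1}$ maps $C_{c}^{\infty}(\mathbb{R})$ into $C^{\infty}(S^{1})$, supported away from the excluded fixed point, this pairing is legitimate.

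For the distributions built from $x^{n}$, namely $\mathcal{U}_{n,+}$ and $\mathcal{S}_{n,-}$, I will test similarly against $u=\mathcal{U}_{\pm}^{-1}\phi$ with $\phi\in C_{c}^{\infty}(\mathbb{R})$. Away from the fixed point $h_{\pm}^{-1}(\infty)$, $(\mathcal{U}_{\pm})^{\dagger}x^{n}$ is a genuine locally integrable function. The pairing reduces to a nonzero constant times $\int x^{n}\phi(x)\,dx$, and choosing $\phi$ with nonzero $n$-th moment gives a nonzero value. The main obstacle is consistency. One must check that the regularized or weak definitions used in Lemmas~\ref{lem:estimate_Tprime_pm} and~\ref{lem:estimate_Tprime_pm_absk}, which analytically continue the beta integrals at the singular point, agree with the honest integral on test functions supported away from that point. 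I would handle this by noting that the analytic continuation in $b$ is only needed near the singular point. On functions supported inside the chart, the integrals converge absolutely for all $b$, so the continuation is trivial there.

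Alternatively, one can avoid the distributional pairing for the $x^{n}$ case by using the explicit Fourier coefficients. Lemma~\ref{lem:estimate_Tprime_pm_absk} expresses $\langle e_{n}^{-}\mid T'_{-}\psi_{k}\rangle$ through Gamma ratios such as $\Gamma(k+j+\frac12+i\lambda)/\Gamma(k-n+j+\frac12-i\lambda)$. For $\lambda>0$ these are polynomials in $k$ of degree $n$ with leading coefficient nonzero, hence nonzero for large $k$; the $+$ case follows from the $P_{n}(x,k)$ formula with leading term $(2ik)^{n}$. The $t_{n}^{\pm}$ factors are nonzero, and for $(\mathcal{T}_{\pm}^{-1})^{\dagger}$ one uses $T_{\pm}=(T_{\mp}^{-1})^{\dagger}$ to swap roles. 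I would present the polynomial-leading-term argument as the main proof, since it uses only the stated formulas. The leading coefficient must be tracked through the binomial sum $\sum_{j}(-1)^{n-j}\binom{n}{j}I_{j,k}$, whose top-degree terms in $k$ could cancel. If they do cancel, I fall back on the distributional argument above.
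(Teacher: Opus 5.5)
Your reduction is the same as the paper's. Both read the two functions of $k$ as the Fourier coefficients of $\mathcal{S}_{n,\pm}$ and $\mathcal{U}_{n,\pm}$, so it suffices to show these distributions are nonzero. The routes differ only in that last step.

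The paper concludes in one line from the biorthogonality $\langle\mathcal{S}_{n,\pm}\mid\mathcal{U}_{n,\pm}\rangle_{L^{2}(S^{1})}=1$ of Lemma~\ref{lem:overlaps_Lagrangian_states}: a zero distribution cannot pair to $1$. You instead test each distribution against an explicit $\mathcal{U}_{\pm}^{-1}\phi$ with $\phi\in C_{c}^{\infty}(\mathbb{R})$, or compute the large-$|k|$ behaviour of the coefficients directly.

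\begin{itemize}
\item The paper's argument is shorter given that lemma. However, \eqref{eq:S_U-1} was obtained from the weak expansions, and checking it directly requires pairing the point-supported $\mathcal{S}_{n,+}$ with $\mathcal{U}_{n,+}$. That uses exactly the fact you single out: the regularized $x^{n}$-type distribution agrees with the locally integrable function away from the excluded fixed point.
\item Your first route makes this step explicit. It holds because the regularization is the value at $b=-\frac{1}{2}+i\lambda$ of a holomorphic family of distributions in $b$. Their pairings with functions supported in the chart are entire in $b$ and equal the honest integrals.
\item For the $\delta^{(n)}$-type distributions, your first route needs nothing beyond the definitions.
\item Your second route avoids distribution theory entirely and uses only the defining coefficient formulas.
\end{itemize}

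One correction to the second route. For the moment-type coefficients, the Gamma ratios are not polynomials in $k$: the imaginary shifts in numerator and denominator differ, and
\[
\frac{\Gamma(k+j+\frac{1}{2}+i\lambda)}{\Gamma(k+j-n+\frac{1}{2}-i\lambda)}=k^{n+2i\lambda}\bigl(1+O(k^{-1})\bigr),\qquad k\to+\infty,
\]
uniformly in $0\le j\le n$, which is not a polynomial for $\lambda>0$.

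The conclusion still holds, and your fallback is not needed. Computing $I_{j,k}$ from the beta identity and the reflection formula produces the factor $(-1)^{k+j}\cosh(\pi\lambda)\,\Gamma(-n-2i\lambda)$, which is nonzero for $\lambda>0$. Its $(-1)^{j}$ cancels the alternating sign $(-1)^{n-j}$, so all terms of $\sum_{j}(-1)^{n-j}\binom{n}{j}I_{j,k}$ are asymptotically aligned. The sum is therefore a nonzero constant times $(-1)^{k}2^{n}k^{n+2i\lambda}(1+O(k^{-1}))$, with no cancellation.

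For the $\delta^{(n)}$-type coefficients, your polynomial claim with leading term $(2ik)^{n}$ is correct.
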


\end{cBoxB}

\begin{proof}
By the weak expansions \eqref{eq:def_Tcal_+}--\eqref{eq:def_Tcal_-},
\[
\langle e_{n}^{\pm}\mid\mathcal{T}_{\pm}\psi_{k}\rangle=\langle\mathcal{S}_{n,\pm}\mid\psi_{k}\rangle_{L^{2}(S^{1})},
\]
and 
\[
\langle e_{n}^{\pm}\mid(\mathcal{T}_{\pm}^{-1})^{\dagger}\psi_{k}\rangle=\langle\mathcal{U}_{n,\pm}\mid\psi_{k}\rangle_{L^{2}(S^{1})}.
\]
If the first function were identically zero in $k$, then all Fourier
coefficients of the distribution $\mathcal{S}_{n,\pm}$ would vanish,
hence 
\[
\mathcal{S}_{n,\pm}=0
\]
as a distribution on $S^{1}$. This contradicts the biorthogonality
relation 
\[
\langle\mathcal{S}_{n,\pm}\mid\mathcal{U}_{n,\pm}\rangle_{L^{2}(S^{1})}=1
\]
from \eqref{eq:S_U-1}. Thus 
\[
k\longmapsto\langle e_{n}^{\pm}\mid\mathcal{T}_{\pm}\psi_{k}\rangle
\]
is not identically zero.

The second statement is identical: if all Fourier coefficients of
$\mathcal{U}_{n,\pm}$ vanished, then $\mathcal{U}_{n,\pm}=0$, again
contradicting \eqref{eq:S_U-1}. 
\end{proof}
\begin{cBoxB}{}

\begin{prop}
\label{prop:density_of_images} Assume that $\lambda>0$, and let
$t>0$. Then 
\[
\mathcal{T}(D_{t})\quad\text{is dense in}\quad\ell_{+}^{2}(\mathbb{N})\oplus\ell_{-}^{2}(\mathbb{N}).
\]
Similarly, 
\[
(\mathcal{T}^{-1})^{\dagger}(D_{-t})\quad\text{is dense in}\quad\ell_{+}^{2}(\mathbb{N})\oplus\ell_{-}^{2}(\mathbb{N}).
\]
\end{prop}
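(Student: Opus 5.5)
The proof will show that the orthogonal complement of $\mathcal{T}(D_{t})$ in $\ell_{+}^{2}(\mathbb{N})\oplus\ell_{-}^{2}(\mathbb{N})$ is trivial. The idea is to use the $\tilde{X}$-invariance of the trigonometric core to decouple all modes, and then Lemma~\ref{lem:each_mode_is_seen} to kill each mode separately. By \eqref{eq:Fourier_decay_t}, $\mathcal{T}(D_{t})\subset\ell_{+}^{2}\oplus\ell_{-}^{2}$, so the statement makes sense.

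\textbf{Step 1: reduce to vanishing moments.} Fix $w=(w^{+},w^{-})$ orthogonal to $\mathcal{T}(D_{t})$. Since $\tilde{X}$ maps $\mathrm{Pol}(S^{1})$ into itself by \eqref{eq:expression_X_sl2R}, and commutes with $e^{t\tilde{X}}$, the space $D_{t}$ is $\tilde{X}$-invariant. Take $u\in D_{t}$ and set $v:=\mathcal{T}u$. Applying \eqref{eq:X_intertwining_on_Dt} repeatedly gives $N^{j}v=\mathcal{T}(\tilde{X}^{j}u)\in\mathcal{T}(D_{t})$ for all $j\in\mathbb{N}$. Hence
\[
\sum_{\pm}\sum_{n\in\mathbb{N}}\overline{w_{n}^{\pm}}\,v_{n}^{\pm}\,(b_{\pm}-n)^{j}=0,\qquad j\in\mathbb{N}.
\]
By \eqref{eq:Fourier_decay_t}, $|v_{n}^{\pm}|\le C e^{-tn}\langle n\rangle^{K}$, so all these sums converge absolutely.

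\textbf{Step 2: pass to a generating function.} Define
\[
F(s):=\sum_{\pm}\sum_{n}\overline{w_{n}^{\pm}}\,v_{n}^{\pm}\,e^{s(b_{\pm}-n)}.
\]
The exponential decay $e^{-tn}$ of $v_{n}^{\pm}$, together with $w\in\ell^{2}$, makes this series converge locally uniformly for $\Re s>-t$, and term-by-term differentiation is allowed there. So $F$ is holomorphic on the half-plane $\Re s>-t$, which contains $0$. Step 1 says all derivatives of $F$ at $0$ vanish, hence $F\equiv0$ on that half-plane.

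\textbf{Step 3: separate the modes.} Write $F(s)=e^{sb_{+}}G_{+}(e^{-s})+e^{sb_{-}}G_{-}(e^{-s})$, where $G_{\pm}(z)=\sum_{n}\overline{w_{n}^{\pm}}v_{n}^{\pm}z^{n}$ are power series convergent for $|z|<e^{t}$. The exponents $b_{\pm}-n$ are pairwise distinct, because $b_{+}-b_{-}=2i\lambda\neq0$ when $\lambda>0$. I would then extract coefficients as follows:
\begin{enumerate}
\item take $s=\sigma+i\tau$ with $\sigma>0$ fixed;
\item average $F(s)e^{-s(b_{\pm}-n)}$ over $\tau\in[-T,T]$;
\item let $T\to\infty$, so that only the single exponent $b_{\pm}-n$ survives, by absolute convergence and the uniqueness of coefficients of generalized Dirichlet series.
\end{enumerate}
This yields $\overline{w_{n}^{\pm}}\,(\mathcal{T}_{\pm}u)_{n}=0$ for every $n$, every sign, and every $u\in D_{t}$.

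\textbf{Step 4: conclude.} By Proposition~\ref{prop:propagated_estimates_and_X_intertwining}, the $n$-th coefficient of $\mathcal{T}_{\pm}(e^{t\tilde{X}}\psi_{k})$ equals $e^{t(b_{\pm}-n)}\langle e_{n}^{\pm}\mid\mathcal{T}_{\pm}\psi_{k}\rangle$. By Lemma~\ref{lem:each_mode_is_seen}, this is nonzero for some $k$. Taking $u=e^{t\tilde{X}}\psi_{k}$ for that $k$ in Step 3 gives $w_{n}^{\pm}=0$, so $w=0$ and $\mathcal{T}(D_{t})$ is dense.

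\textbf{Dual statement.} The argument is identical with the following substitutions:
\begin{itemize}
\item $D_{-t}$ in place of $D_{t}$, which is invariant under $-\tilde{X}$;
\item \eqref{eq:X_dual_intertwining_on_Dmt} in place of \eqref{eq:X_intertwining_on_Dt}, so that $N^{\dagger}$ replaces $N$;
\item the eigenvalues $\overline{b_{\pm}}-n$, again pairwise distinct since $\lambda>0$;
\item the decay bound \eqref{eq:Fourier_decay_t_dual} in place of \eqref{eq:Fourier_decay_t};
\item the second half of Lemma~\ref{lem:each_mode_is_seen}.
\end{itemize}

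\textbf{Main obstacle.} The delicate step is the mode separation in Step 3. It requires justifying term-by-term manipulation from the moment identities to the holomorphic function $F$, which is why the exponential decay from the positive-time propagation is essential. It also requires checking that the two families $b_{+}-\mathbb{N}$ and $b_{-}-\mathbb{N}$ never collide, which is exactly where the hypothesis $\lambda>0$ enters.
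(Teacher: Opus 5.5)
Your annihilator strategy is sound through Step 2, and Step 4 is fine: take $w\perp\mathcal{T}(D_{t})$, use $N$-invariance of $\mathcal{T}(D_{t})$ to get vanishing moments, conclude that $F$ is holomorphic on $\Re s>-t$ and vanishes identically. The gap is the mode-separation mechanism in Step 3, which fails as written.

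\textbf{Why vertical averaging fails.} Averaging along a vertical line $s=\sigma+i\tau$ isolates coefficients only when all frequencies are real, so that each term has modulus independent of $\tau$. Here the exponents $b_{\pm}-n$ have imaginary parts $\pm\lambda$, and
\[
|e^{s(b_{\pm}-n)}|=e^{-\sigma(n+\frac12)\mp\lambda\tau}.
\]
So each branch grows exponentially in $\tau$ in one direction. Concretely, suppose you isolate $b_{+}-n_{0}$. A cross-branch term with $\alpha=(b_{-}-n)-(b_{+}-n_{0})=(n_{0}-n)-2i\lambda$ contributes
\[
\frac{1}{2T}\int_{-T}^{T}e^{(\sigma+i\tau)\alpha}\,d\tau=e^{\sigma\alpha}\,\frac{\sin(T\alpha)}{T\alpha},
\]
whose modulus grows like $e^{2\lambda T}/T$. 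There is no dominated convergence, and the termwise limit is not the single surviving coefficient you claim. Absolute convergence at each fixed $s$ does not help, because it is not uniform along the line. In fact the hypothesis $\lambda>0$ obstructs this averaging rather than enabling it: distinctness of the exponents alone is not enough.

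\textbf{The fix.} Separate along the real axis, ordering the exponents by real part. Set $c_{n}^{\pm}:=\overline{w_{n}^{\pm}}\,v_{n}^{\pm}$; the decay $e^{-tn}$ gives $\sum_{n}|c_{n}^{\pm}|<\infty$. Assume inductively that $c_{m}^{\pm}=0$ for all $m<n$. Then, for real $s\to+\infty$,
\[
0=e^{s(n+\frac12)}F(s)=c_{n}^{+}e^{i\lambda s}+c_{n}^{-}e^{-i\lambda s}+O(e^{-s}).
\]
A Cesàro average in $s$ against $e^{\mp i\lambda s}$ then gives $c_{n}^{\pm}=0$, because $\lambda\neq0$.

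Alternatively, use periodicity in $\Im s$. The identity $F\equiv0$ reads $G_{+}(e^{-s})=-e^{-2i\lambda s}G_{-}(e^{-s})$ on $\Re s>-t$. Replacing $s$ by $s+2\pi i$ leaves the left side unchanged and multiplies the right side by $e^{4\pi\lambda}\neq1$. Hence $G_{-}\equiv0$, and then $G_{+}\equiv0$.

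\textbf{Comparison with the paper.} With either repair your proof is complete, and it is essentially the dual form of the paper's argument. The paper works inside $\mathcal{K}_{t}=\overline{\mathcal{T}(D_{t})}$ and uses invariance of $\mathcal{K}_{t}$ under $e^{sN}$, $s\ge0$, via analytic vectors. It then applies the real-time Cesàro projector $\frac{1}{T}\int_{0}^{T}e^{-sz_{n,\sigma}}e^{sN}\,ds$ inductively in $n$. Your version needs only holomorphy of the scalar function $F$, which avoids the analytic-vector step. The essential mechanism is the same: decay ordering in real time, plus oscillation of $e^{\pm i\lambda s}$ to separate the two branches.
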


\end{cBoxB}

\begin{proof}
We prove the first statement. Set 
\[
W_{t}:=\mathcal{T}(D_{t}),\qquad\mathcal{K}_{t}:=\overline{W_{t}}^{\,\ell_{+}^{2}\oplus\ell_{-}^{2}},
\]
and 
\[
N:=(-A+b_{+})\oplus(-A+b_{-}).
\]
By Proposition~\ref{prop:propagated_estimates_and_X_intertwining},
$W_{t}$ is invariant under $N$. Moreover, by \eqref{eq:Fourier_decay_t},
vectors in $W_{t}$ have exponentially decaying coefficients. Hence
they are analytic vectors for $N$, and therefore 
\[
e^{sN}w\in\mathcal{K}_{t},\qquad s\ge0,\quad w\in W_{t}.
\]

We prove by induction on $n$ that 
\[
e_{n}^{+}\in\mathcal{K}_{t},\qquad e_{n}^{-}\in\mathcal{K}_{t}.
\]
Write 
\[
z_{n,+}:=-n-\frac{1}{2}+i\lambda,\qquad z_{n,-}:=-n-\frac{1}{2}-i\lambda.
\]
Thus 
\[
Ne_{n}^{\pm}=z_{n,\pm}e_{n}^{\pm}.
\]

Fix a sign $\sigma\in\{+,-\}$, and assume that 
\[
e_{m}^{\pm}\in\mathcal{K}_{t}\qquad\text{for all }m<n.
\]
By Lemma~\ref{lem:each_mode_is_seen}, there exists $k\in\mathbb{Z}$
such that 
\[
\langle e_{n}^{\sigma}\mid\mathcal{T}_{\sigma}\psi_{k}\rangle\neq0.
\]
Using the intertwining relation, 
\[
\mathcal{T}_{\sigma}(e^{t\tilde{X}}\psi_{k})=e^{t(-A+b_{\sigma})}\mathcal{T}_{\sigma}\psi_{k},
\]
we get 
\[
\langle e_{n}^{\sigma}\mid\mathcal{T}_{\sigma}(e^{t\tilde{X}}\psi_{k})\rangle=e^{tz_{n,\sigma}}\langle e_{n}^{\sigma}\mid\mathcal{T}_{\sigma}\psi_{k}\rangle\neq0.
\]
Hence there exists $w\in W_{t}$ whose $e_{n}^{\sigma}$-coefficient
is nonzero.

By the induction hypothesis, we may subtract from $w$ its components
of order $m<n$. We obtain a vector $v\in\mathcal{K}_{t}$ such that
all its coefficients of order $m<n$ vanish, while 
\[
\langle e_{n}^{\sigma}\mid v\rangle\neq0.
\]

For $T>0$, define the Cesàro spectral average 
\[
P_{T,n}^{\sigma}:=\frac{1}{T}\int_{0}^{T}e^{-sz_{n,\sigma}}e^{sN}\,ds.
\]
Then $P_{T,n}^{\sigma}v\in\mathcal{K}_{t}$. On the coefficient of
$e_{m}^{\tau}$, $\tau\in\{+,-\}$, this operator acts by multiplication
with 
\[
\frac{1}{T}\int_{0}^{T}e^{s(z_{m,\tau}-z_{n,\sigma})}\,ds.
\]
Since the coefficients of $v$ vanish for $m<n$, only $m\ge n$ occur.
If $m>n$, then 
\[
\Re(z_{m,\tau}-z_{n,\sigma})=-(m-n)<0,
\]
so the average tends to $0$. If $m=n$ and $\tau\neq\sigma$, the
average also tends to $0$, because it is 
\[
\frac{1}{T}\int_{0}^{T}e^{\pm2i\lambda s}\,ds\longrightarrow0\qquad(\lambda>0).
\]
For $m=n$ and $\tau=\sigma$, the average is $1$. Since, after the
subtraction, only coefficients with $m\ge n$ remain, all averaging
factors are uniformly bounded by $1$. Dominated convergence in $\ell_{+}^{2}(\mathbb{N})\oplus\ell_{-}^{2}(\mathbb{N})$
gives
\[
P_{T,n}^{\sigma}v\longrightarrow\langle e_{n}^{\sigma}\mid v\rangle e_{n}^{\sigma}.
\]
Thus $e_{n}^{\sigma}\in\mathcal{K}_{t}$.

This proves the induction for both signs. Hence every basis vector
$e_{n}^{\pm}$ belongs to $\mathcal{K}_{t}$, and therefore 
\[
\mathcal{K}_{t}=\ell_{+}^{2}(\mathbb{N})\oplus\ell_{-}^{2}(\mathbb{N}).
\]
Thus $\mathcal{T}(D_{t})$ is dense.

The proof for 
\[
(\mathcal{T}^{-1})^{\dagger}(D_{-t})
\]
is identical, using $N^{\dagger}$ instead of $N$, the estimate \eqref{eq:Fourier_decay_t_dual},
and the second part of Lemma~\ref{lem:each_mode_is_seen}. 
\end{proof}
\begin{cBoxB}{}

\begin{cor}
\label{cor:Dt_functional_properties} Assume that $\lambda>0$, and
let $t>0$. Then: 
\begin{enumerate}
\item The spaces 
\[
D_{t}=e^{t\tilde{X}}\mathrm{Pol}(S^{1}),\qquad D_{-t}=e^{-t\tilde{X}}\mathrm{Pol}(S^{1}),
\]
are dense linear subspaces of $L^{2}(S_{\theta}^{1};d\theta)$.
\item The weak transform $\mathcal{T}$ is injective on $D_{t}$, and 
\[
\mathcal{T}(D_{t})\subset\ell_{+}^{2}(\mathbb{N})\oplus\ell_{-}^{2}(\mathbb{N})
\]
is dense.
\item The weak transform $(\mathcal{T}^{-1})^{\dagger}$ is injective on
$D_{-t}$, and 
\[
(\mathcal{T}^{-1})^{\dagger}(D_{-t})\subset\ell_{+}^{2}(\mathbb{N})\oplus\ell_{-}^{2}(\mathbb{N})
\]
is dense. 
\end{enumerate}
Moreover, the propagated coefficient estimates are those of \eqref{eq:Fourier_decay_t}
and \eqref{eq:Fourier_decay_t_dual}. 
\end{cor}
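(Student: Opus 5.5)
Most of this corollary collects what has already been proved, so the plan is to assemble Lemma~\ref{lem:injectivity_on_Dt}, Lemma~\ref{lem:injectivity_dual_on_Dmt}, Proposition~\ref{prop:propagated_estimates_and_X_intertwining} and Proposition~\ref{prop:density_of_images}. The only point needing a genuine argument is item~1, the density of $D_{\pm t}$ in $L^{2}(S^{1}_{\theta};d\theta)$.

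For item~1, I will exploit that in the chart $h_{+}$ the operator $e^{t\tilde{X}}$ acts explicitly. Indeed, $\mathcal{U}_{+}e^{t\tilde{X}}\mathcal{U}_{+}^{-1}f(x)=e^{tb}f(e^{-t}x)$. Since $\Re b=-\frac12$, this is $e^{i t\lambda}$ times the unitary dilation $f\mapsto e^{-t/2}f(e^{-t}\cdot)$ on $L^{2}(\mathbb{R};dx)$. Equivalently, $e^{t\tilde{X}}$ is, up to a unimodular constant, the unitary operator of the principal series representation. This is consistent with $X$ being skew-adjoint on $\mathcal{H}^{\mathrm{sph}}_{\mu}$, with $\mathcal{U}_{1}$ unitary. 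Hence $e^{\pm t\tilde{X}}$ is unitary on $L^{2}(S^{1})$. Unitary maps send dense subspaces to dense subspaces, and $\mathrm{Pol}(S^{1})$ is dense by Fourier theory, so $D_{\pm t}$ is dense.

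Two checks are needed here. First, $\mathrm{Pol}(S^{1})$ lies in the domain of $e^{\pm t\tilde{X}}$, which is automatic since the exponential is bounded. Second, the abstract exponential coincides with the action used in Lemmas~\ref{lem:injectivity_on_Dt}--\ref{lem:injectivity_dual_on_Dmt}. Both operators solve $\partial_{s}u=\tilde{X}u$ on smooth functions, so uniqueness for this transport equation identifies them.

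For item~2, injectivity of $\mathcal{T}$ on $D_{t}$ is Lemma~\ref{lem:injectivity_on_Dt}. By \eqref{eq:Fourier_decay_t}, the coefficients of $\mathcal{T}_{\pm}(e^{t\tilde{X}}\psi_{k})$ are bounded by $C_{k,t}e^{-tn}\langle n\rangle^{|k|-1/2}$, hence square summable. Since elements of $D_{t}$ are finite combinations, $\mathcal{T}(D_{t})\subset\ell^{2}_{+}(\mathbb{N})\oplus\ell^{2}_{-}(\mathbb{N})$. Density of this image is Proposition~\ref{prop:density_of_images}.

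Item~3 is symmetric. Injectivity comes from Lemma~\ref{lem:injectivity_dual_on_Dmt}, the inclusion into $\ell^{2}_{+}\oplus\ell^{2}_{-}$ from \eqref{eq:Fourier_decay_t_dual}, and density from the second half of Proposition~\ref{prop:density_of_images}. The final sentence of the corollary just restates \eqref{eq:Fourier_decay_t} and \eqref{eq:Fourier_decay_t_dual}.

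The main obstacle is therefore only the identification in item~1 of the propagator with a unitary operator, which reduces to the dilation formula above. If one prefers to avoid unitarity, an alternative route to density is to note that $D_{t}\supset e^{t\tilde{X}}\mathrm{Pol}$ and then use the approximation of the analytic functions $(e^{t}+ix)^{b+k}(e^{t}-ix)^{b-k}$. This is more cumbersome, so I would use the unitarity argument.
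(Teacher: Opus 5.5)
Your proposal is correct and follows the same route as the paper. Density of $D_{\pm t}$ comes from the bounded invertibility of $e^{\pm t\tilde{X}}$ on $L^{2}(S^{1}_{\theta};d\theta)$ together with the density of $\mathrm{Pol}(S^{1})$; you make this explicit as unitarity via the dilation formula $f(x)\mapsto e^{tb}f(e^{-t}x)$ in the $h_{+}$ chart. Items~2--3 follow, as in the paper, by citing Lemmas~\ref{lem:injectivity_on_Dt} and~\ref{lem:injectivity_dual_on_Dmt}, the estimates \eqref{eq:Fourier_decay_t}--\eqref{eq:Fourier_decay_t_dual} for the $\ell^{2}$ inclusions, and Proposition~\ref{prop:density_of_images} for the density of the images.
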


\end{cBoxB}

\begin{proof}
The density of $D_{t}$ and $D_{-t}$ follows from the invertibility
of $e^{t\tilde{X}}$ on $L^{2}(S_{\theta}^{1};d\theta)$ and from
the density of $\mathrm{Pol}(S^{1})$.

The injectivity statements are Lemmas~\ref{lem:injectivity_on_Dt}
and~\ref{lem:injectivity_dual_on_Dmt}. The inclusions into $\ell_{+}^{2}(\mathbb{N})\oplus\ell_{-}^{2}(\mathbb{N})$
follow from the exponential estimates \eqref{eq:Fourier_decay_t}
and \eqref{eq:Fourier_decay_t_dual}. Finally, the density of the
two images is Proposition~\ref{prop:density_of_images}. 
\end{proof}
\begin{cBoxB}{}

\begin{cor}
\label{cor:Ht_Hminus_t} Assume that $\lambda>0$, and let $t>0$.
Define 
\begin{equation}
\mathcal{H}_{t}:=\overline{D_{t}}^{\,\|\mathcal{T}(\cdot)\|_{\ell^{2}\oplus\ell^{2}}},\label{eq:def_H}
\end{equation}
and 
\begin{equation}
\mathcal{H}_{-t}:=\overline{D_{-t}}^{\,\|(\mathcal{T}^{-1})^{\dagger}(\cdot)\|_{\ell^{2}\oplus\ell^{2}}}.\label{eq:def_H-1}
\end{equation}
Then $\mathcal{H}_{t}$ and $\mathcal{H}_{-t}$ are Hilbert spaces,
and 
\[
\mathcal{T}:\mathcal{H}_{t}\xrightarrow{\sim}\ell_{+}^{2}(\mathbb{N})\oplus\ell_{-}^{2}(\mathbb{N}),\qquad(\mathcal{T}^{-1})^{\dagger}:\mathcal{H}_{-t}\xrightarrow{\sim}\ell_{+}^{2}(\mathbb{N})\oplus\ell_{-}^{2}(\mathbb{N})
\]
extend uniquely as unitary isomorphisms.

Moreover, the $L^{2}(S_{\theta}^{1};d\theta)$-pairing on $D_{-t}\times D_{t}$
extends uniquely to a continuous sesquilinear pairing 
\begin{equation}
\langle\cdot\mid\cdot\rangle:\mathcal{H}_{-t}\times\mathcal{H}_{t}\longrightarrow\mathbb{C},\label{eq:pairing_Hmt_Ht}
\end{equation}
given by 
\begin{equation}
\langle v\mid u\rangle=\left\langle (\mathcal{T}^{-1})^{\dagger}v\middle|\mathcal{T}u\right\rangle _{\ell_{+}^{2}(\mathbb{N})\oplus\ell_{-}^{2}(\mathbb{N})}.\label{eq:pairing_transport}
\end{equation}
In particular, $\mathcal{H}_{-t}$ identifies anti-linearly with the
continuous dual $\mathcal{H}_{t}'$. 
\end{cor}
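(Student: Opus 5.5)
The plan is to reduce everything to Corollary~\ref{cor:Dt_functional_properties}, which already gives injectivity of $\mathcal{T}$ on $D_{t}$ with dense image in $\ell_{+}^{2}(\mathbb{N})\oplus\ell_{-}^{2}(\mathbb{N})$, and the same for $(\mathcal{T}^{-1})^{\dagger}$ on $D_{-t}$. Because $\mathcal{T}$ is injective, $\|u\|_{t}:=\|\mathcal{T}u\|_{\ell^{2}\oplus\ell^{2}}$ is a genuine norm on $D_{t}$, and it comes from the inner product $\langle\mathcal{T}u\mid\mathcal{T}u'\rangle$. Hence $\mathcal{T}:(D_{t},\|\cdot\|_{t})\to\mathcal{T}(D_{t})$ is an isometric bijection. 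I will take $\mathcal{H}_{t}$ to be the abstract completion (equivalently, the closure in \eqref{eq:def_H}). An isometry defined on a dense subspace extends uniquely to an isometry from the completion. Its range is closed and contains the dense set $\mathcal{T}(D_{t})$, so it is all of $\ell_{+}^{2}\oplus\ell_{-}^{2}$, and the extension is unitary. The argument for $\mathcal{H}_{-t}$ is word for word the same, using $(\mathcal{T}^{-1})^{\dagger}$ and part 3 of Corollary~\ref{cor:Dt_functional_properties}.

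For the pairing, the key step is the identity \eqref{eq:pairing_transport} for $v\in D_{-t}$ and $u\in D_{t}$:
\[
\langle v\mid u\rangle_{L^{2}(S^{1})}=\sum_{\pm}\sum_{n\in\mathbb{N}}\overline{\langle e_{n}^{\pm}\mid(\mathcal{T}_{\pm}^{-1})^{\dagger}v\rangle}\,\langle e_{n}^{\pm}\mid\mathcal{T}_{\pm}u\rangle .
\]
To prove it, I will expand $u$ through the weak inverse $\mathcal{T}^{-1}=\mathcal{T}_{+}^{-1}+\mathcal{T}_{-}^{-1}$ of Lemma~\ref{lem:TCal_inverse}, which gives $u=\sum_{\pm}\sum_{n}\mathcal{U}_{n,\pm}\langle e_{n}^{\pm}\mid\mathcal{T}_{\pm}u\rangle$. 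Pairing with $v$ then uses $\langle v\mid\mathcal{U}_{n,\pm}\rangle=\overline{\langle\mathcal{U}_{n,\pm}\mid v\rangle}=\overline{\langle e_{n}^{\pm}\mid(\mathcal{T}_{\pm}^{-1})^{\dagger}v\rangle}$, by \eqref{eq:def_Tcal_+}--\eqref{eq:def_Tcal_-}.

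\textbf{Main obstacle.} I expect the justification of this resolution of the identity on $D_{t}$ to be the hardest part, since Lemma~\ref{lem:TCal_inverse} was stated on $\mathcal{D}_{+}\oplus\mathcal{D}_{-}$ rather than on $D_{t}$. I would first try to verify it on the generators $u=e^{t\tilde{X}}\psi_{k}$ and $v=e^{-t\tilde{X}}\psi_{k'}$. Both sides depend analytically on $t$; the left-hand side is $\langle\psi_{k'}\mid\psi_{k}\rangle=\delta_{kk'}$, independent of $t$, because $\tilde{X}$ is skew-adjoint. On the right-hand side, the bounds \eqref{eq:Fourier_decay_t} and \eqref{eq:Fourier_decay_t_dual} give absolute convergence, of order $e^{-2tn}\langle n\rangle^{|k|+|k'|-1}$. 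The intertwining relations in Proposition~\ref{prop:propagated_estimates_and_X_intertwining} give the $n$-th terms the time factors $e^{t(b_{\pm}-n)}$ and $\overline{e^{t(\overline{b_{\pm}}-n)}}=e^{t(b_{\pm}-n)}$, whose product is $e^{2t(b_{\pm}-n)}$. The identity therefore says that a Dirichlet series $\sum_{n,\pm}e^{2t(b_{\pm}-n)}\gamma_{n,\pm}^{k,k'}$ is constant equal to $\delta_{kk'}$. This is consistent only if every $\gamma_{n,\pm}^{k,k'}$ vanishes, which would contradict the density in Proposition~\ref{prop:density_of_images}.

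So the naive route fails: the relative time factors do not cancel in this bookkeeping. I will redo the adjoint convention, checking carefully whether $(\mathcal{T}^{-1})^{\dagger}e^{-s\tilde{X}}$ carries $e^{s(-A+\overline{b})}$ or its inverse, so that the factors actually cancel. Once they do, both sides are $t$-independent, and I will identify them by the biorthogonality of Lemma~\ref{lem:overlaps_Lagrangian_states} together with the vanishing mixed overlaps \eqref{eq:SU_zero}.

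Once \eqref{eq:pairing_transport} holds on $D_{-t}\times D_{t}$, Cauchy--Schwarz gives $|\langle v\mid u\rangle|\le\|v\|_{-t}\|u\|_{t}$. The pairing therefore extends uniquely and continuously to $\mathcal{H}_{-t}\times\mathcal{H}_{t}$, with the same formula. Since both transforms are unitary onto $\ell^{2}\oplus\ell^{2}$, the map $v\mapsto\langle v\mid\cdot\rangle$ is the composition of $(\mathcal{T}^{-1})^{\dagger}$, the Riesz map on $\ell^{2}\oplus\ell^{2}$, and $\mathcal{T}$. It is thus an antilinear isometric bijection $\mathcal{H}_{-t}\to\mathcal{H}_{t}'$.
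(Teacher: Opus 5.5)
Your first step (an isometry with dense range, by Corollary~\ref{cor:Dt_functional_properties}, completes to a unitary) and your last step (Cauchy--Schwarz and the Riesz map) are the paper's argument. You also isolate the right crux: the identity \eqref{eq:pairing_transport} on $D_{-t}\times D_{t}$, i.e.\ $\langle v\mid\mathcal{T}^{-1}\mathcal{T}u\rangle=\langle v\mid u\rangle$, which the paper takes directly from the weak definition of $(\mathcal{T}^{-1})^{\dagger}$. But your analysis of that step contains an error, and as written it leaves the identity unproven.

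For $v=e^{-t\tilde{X}}\psi_{k'}$ and $u=e^{t\tilde{X}}\psi_{k}$, skew-adjointness gives $(e^{-t\tilde{X}})^{\dagger}=e^{t\tilde{X}}$, hence
\[
\langle v\mid u\rangle_{L^{2}(S^{1})}=\langle\psi_{k'}\mid e^{2t\tilde{X}}\psi_{k}\rangle_{L^{2}(S^{1})}.
\]
This is neither $\delta_{kk'}$ nor independent of $t$. The factors $e^{2t(b_{\pm}-n)}$ you found on the right are therefore exactly the expected ones. On generators, \eqref{eq:pairing_transport} is the resonance expansion of this matrix coefficient at time $2t$; this is how the paper later uses it in Corollary~\ref{cor:correlation_formula_trigo} with $s=\tau/2$, so that corollary cannot be cited here.

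There is no conflict with Proposition~\ref{prop:density_of_images}, and no convention to repair. The relation $(\mathcal{T}_{\pm}^{-1})^{\dagger}e^{-s\tilde{X}}=e^{s(-A+\overline{b_{\pm}})}(\mathcal{T}_{\pm}^{-1})^{\dagger}$ is the correct adjoint of $e^{s\tilde{X}}\mathcal{T}_{\pm}^{-1}=\mathcal{T}_{\pm}^{-1}e^{s(-A+b_{\pm})}$. Inverting it would make the coefficients grow like $e^{tn}$, which destroys \eqref{eq:Fourier_decay_t_dual} and the inclusion $(\mathcal{T}^{-1})^{\dagger}(D_{-t})\subset\ell_{+}^{2}\oplus\ell_{-}^{2}$.

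Your fallback plan, making both sides $t$-independent and concluding by biorthogonality, therefore aims at a false target. Biorthogonality also points the wrong way. Lemma~\ref{lem:overlaps_Lagrangian_states} gives $\mathcal{T}\mathcal{T}^{-1}=\mathrm{Id}$ on the sequence side. What you need is completeness: $\mathcal{T}_{+}^{-1}\mathcal{T}_{+}u+\mathcal{T}_{-}^{-1}\mathcal{T}_{-}u=u$, tested weakly against $D_{-t}$, for $u\in D_{t}$ with both transforms applied to the undecomposed $u$. Lemma~\ref{lem:TCal_inverse} only covers the Gaussian core, where $u=u_{+}+u_{-}$ is split.

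Here is one way to supply this. Set $f_{\pm}=\mathcal{U}_{\pm}u$ and $g_{\pm}=\mathcal{U}_{\pm}v$.
\begin{itemize}
\item The $+$-sum is $\sum_{n}\frac{f_{+}^{(n)}(0)}{n!}\langle g_{+}\mid x^{n}\rangle$, with regularized moments.
\item The $-$-sum is $\sum_{n}\frac{\overline{g_{-}^{(n)}(0)}}{n!}\langle x^{n}\mid f_{-}\rangle$, with the roles of $u$ and $v$ exchanged.
\item Split $\langle v\mid u\rangle=\int_{\mathbb{R}}\overline{g_{+}}f_{+}\,dx$ at $|x|=1$. Expand $f_{+}$ in its Taylor series on $|x|<1$ and $g_{-}$ on the unit interval of the $-$-chart. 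Both series converge there, since both functions have radius of convergence $e^{t}>1$.
\item Check that the leftover regularized cross terms combine into the mixed overlaps $\langle\mathcal{S}_{m,-}\mid\mathcal{U}_{n,+}\rangle$, which vanish by \eqref{eq:SU_zero}.
\end{itemize}
Some such argument is the missing ingredient in your proposal.
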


\end{cBoxB}

\begin{proof}
By Corollary~\ref{cor:Dt_functional_properties}, the maps 
\[
\mathcal{T}:D_{t}\longrightarrow\ell_{+}^{2}(\mathbb{N})\oplus\ell_{-}^{2}(\mathbb{N}),\qquad(\mathcal{T}^{-1})^{\dagger}:D_{-t}\longrightarrow\ell_{+}^{2}(\mathbb{N})\oplus\ell_{-}^{2}(\mathbb{N})
\]
are injective and have dense range. Hence the norms defining $\mathcal{H}_{t}$
and $\mathcal{H}_{-t}$ are genuine norms, and the two maps extend
uniquely by completion to unitary isomorphisms onto $\ell_{+}^{2}(\mathbb{N})\oplus\ell_{-}^{2}(\mathbb{N})$.

We define the pairing between $\mathcal{H}_{-t}$ and $\mathcal{H}_{t}$
by transport through these two unitary identifications: 
\[
\langle v\mid u\rangle:=\left\langle (\mathcal{T}^{-1})^{\dagger}v\middle|\mathcal{T}u\right\rangle _{\ell_{+}^{2}(\mathbb{N})\oplus\ell_{-}^{2}(\mathbb{N})}.
\]
It is continuous by Cauchy--Schwarz.

On the dense subspaces $D_{-t}\times D_{t}$, this transported pairing
coincides with the original $L^{2}(S^{1})$-pairing. Indeed, the weak
definition of $(\mathcal{T}^{-1})^{\dagger}$ gives, for $v\in D_{-t}$
and $u\in D_{t}$, 
\[
\left\langle (\mathcal{T}^{-1})^{\dagger}v\middle|\mathcal{T}u\right\rangle _{\ell^{2}\oplus\ell^{2}}=\langle v\mid u\rangle_{L^{2}(S^{1})}.
\]
Equivalently, this follows from the weak expansions of $\mathcal{T}$
and $\mathcal{T}^{-1}$, together with the overlap relations of Lemma~\ref{lem:overlaps_Lagrangian_states}.

Thus the transported pairing is the unique continuous extension of
the $L^{2}$-pairing from $D_{-t}\times D_{t}$ to $\mathcal{H}_{-t}\times\mathcal{H}_{t}$.
Under the two unitary identifications with $\ell_{+}^{2}(\mathbb{N})\oplus\ell_{-}^{2}(\mathbb{N})$,
this pairing becomes the standard Hilbert pairing. Therefore $\mathcal{H}_{-t}$
identifies anti-linearly with the continuous dual $\mathcal{H}_{t}'$. 
\end{proof}
\begin{cBoxB}{}

\begin{thm}
\label{thm:summary_spectral} Assume that $\lambda>0$, and let $t>0$.
Then, in the Hilbert space $\mathcal{H}_{t}$, the operator $\tilde{X}$,
initially defined on the core $D_{t}$, extends to the normal operator
\begin{equation}
\tilde{X}=\mathcal{T}^{-1}\left((-A+b_{+})\oplus(-A+b_{-})\right)\mathcal{T}.\label{eq:X_tilde}
\end{equation}
Its spectrum is discrete and given by 
\begin{equation}
z_{n,\pm}=-n-\frac{1}{2}\pm i\lambda,\qquad n\in\mathbb{N}.\label{eq:def_z_n}
\end{equation}
In the original weak realization, the associated rank-one spectral
projectors are
\begin{equation}
\Pi_{n,\pm}=\mathcal{U}_{n,\pm}\langle\mathcal{S}_{n,\pm}\mid\cdot\rangle_{L^{2}(S^{1})}\label{eq:def_Pi_k}
\end{equation}
in the weak sense, and extend continuously to $\mathcal{H}_{t}$.

Moreover, for every $\tau>0$, 
\begin{equation}
e^{\tau\tilde{X}}=\sum_{n\in\mathbb{N}}\sum_{\sigma\in\{+,-\}}e^{\tau z_{n,\sigma}}\Pi_{n,\sigma},\label{eq:spectral_semigroup}
\end{equation}
where the series converges in operator norm on $\mathcal{H}_{t}$.

Finally, on the Gaussian core 
\[
\mathcal{D}=\mathcal{D}_{+}\oplus\mathcal{D}_{-},
\]
one has the algebraic identities 
\begin{equation}
\tilde{U}=\mathcal{T}^{-1}\left(\bigl(-a^{+}(A-2b_{+})^{1/2}\bigr)\oplus\bigl(a^{+}(A-2b_{-})^{1/2}\bigr)\right)\mathcal{T},\label{eq:U_tilde}
\end{equation}
and 
\begin{equation}
\tilde{S}=\mathcal{T}^{-1}\left(\bigl((A-2b_{+})^{1/2}a^{-}\bigr)\oplus\bigl(-(A-2b_{-})^{1/2}a^{-}\bigr)\right)\mathcal{T}.\label{eq:S_tilde}
\end{equation}
\end{thm}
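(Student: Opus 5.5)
The plan is to transport everything through the unitary isomorphism $\mathcal{T}:\mathcal{H}_{t}\to\ell_{+}^{2}(\mathbb{N})\oplus\ell_{-}^{2}(\mathbb{N})$ of Corollary~\ref{cor:Ht_Hminus_t}. In the model space, $N=(-A+b_{+})\oplus(-A+b_{-})$ is diagonal in the orthonormal basis $(e_{n}^{\pm})$, with eigenvalues $z_{n,\pm}=-n-\frac{1}{2}\pm i\lambda$. It is therefore a normal operator, defined on its maximal domain, with discrete spectrum $\{z_{n,\pm}\}$; this set has no finite accumulation point because $\Re z_{n,\pm}\to-\infty$. I then define the extension of $\tilde{X}$ on $\mathcal{H}_{t}$ as $\mathcal{T}^{-1}N\mathcal{T}$. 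Normality and the spectrum are preserved by unitary conjugation, which gives \eqref{eq:X_tilde} and \eqref{eq:def_z_n}.

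Several points need checking. First, $\mathcal{T}^{-1}N\mathcal{T}$ extends $\tilde{X}|_{D_{t}}$. This is Proposition~\ref{prop:propagated_estimates_and_X_intertwining}(2): $\tilde{X}D_{t}\subset D_{t}$, since $\tilde{X}$ commutes with $e^{t\tilde{X}}$ and preserves trigonometric polynomials, and $N\mathcal{T}u=\mathcal{T}\tilde{X}u$ on $D_{t}$. Second, $D_{t}$ is a core, i.e.\ the closure of $N|_{\mathcal{T}(D_{t})}$ is the full $N$. By Proposition~\ref{prop:density_of_images}, $W_{t}=\mathcal{T}(D_{t})$ is dense and $N$-invariant, and \eqref{eq:Fourier_decay_t} shows that $W_{t}$ consists of analytic vectors. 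I would then invoke Nelson's analytic vector theorem, applied to the self-adjoint pair $\Re N=-A-\frac{1}{2}$ and $\Im N=\pm\lambda$, which commute and are diagonal. Alternatively, and more simply, the argument of Proposition~\ref{prop:density_of_images} shows that each $e_{n}^{\pm}$ lies in the graph-norm closure of $W_{t}$. Indeed, the Cesàro averages $P_{T,n}^{\sigma}$ also converge in graph norm, because $N$ commutes with them and the coefficients decay exponentially. Since finite sums of the $e_{n}^{\pm}$ form a core for the diagonal operator $N$, the claim follows. I expect this core issue to be the main technical obstacle, and I would settle it using the graph-norm version of the Cesàro argument.

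For the spectral projectors, the orthogonal projector onto $\mathbb{C}e_{n}^{\sigma}$ in the model space, pulled back by $\mathcal{T}$, is $\mathcal{T}^{-1}e_{n}^{\sigma}\langle e_{n}^{\sigma}\mid\mathcal{T}\,\cdot\rangle$. By Lemma~\ref{lem:Tcal_Lagrangian_states} this equals $\mathcal{U}_{n,\sigma}\langle\mathcal{S}_{n,\sigma}\mid\cdot\rangle$ weakly on $D_{t}$, which is \eqref{eq:def_Pi_k}. It has norm $1$ on $\mathcal{H}_{t}$ and hence extends continuously. The semigroup identity \eqref{eq:spectral_semigroup} follows from $e^{\tau N}=\sum e^{\tau z_{n,\sigma}}P_{n,\sigma}$. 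Convergence in operator norm holds because the tail beyond $n_{0}$ is a diagonal operator of norm $\sup_{n>n_{0}}|e^{\tau z_{n,\sigma}}|=e^{-\tau(n_{0}+3/2)}\to0$. Here $e^{\tau\tilde{X}}$ on $\mathcal{H}_{t}$ means $\mathcal{T}^{-1}e^{\tau N}\mathcal{T}$, and this agrees with the geometric propagator on $D_{t}$ by the weak intertwining $\mathcal{T}_{\pm}e^{s\tilde{X}}=e^{s(-A+b_{\pm})}\mathcal{T}_{\pm}$ used above.

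Finally, \eqref{eq:U_tilde} and \eqref{eq:S_tilde} on the Gaussian core $\mathcal{D}=\mathcal{D}_{+}\oplus\mathcal{D}_{-}$ are purely algebraic. By Lemma~\ref{lem:TCal_inverse}, $\mathcal{T}$ acts componentwise on $u=u_{+}+u_{-}$. On each $\mathcal{D}_{\pm}$, Corollary~\ref{cor:Tcal_pm} gives $\mathcal{T}_{\pm}\tilde{U}\mathcal{T}_{\pm}^{-1}=\mp a^{+}(A-2b_{\pm})^{1/2}$ and $\mathcal{T}_{\pm}\tilde{S}\mathcal{T}_{\pm}^{-1}=\pm(A-2b_{\pm})^{1/2}a^{-}$. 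It remains to check that $\tilde{U}$ and $\tilde{S}$ preserve $\mathcal{D}_{\pm}$. In the $x$-chart they are polynomial-coefficient first-order operators by Lemma~\ref{lem:For-each-sign}, and these preserve each $D_{\beta}$. Combining the two components through \eqref{eq:Tau_inverse} gives the block-diagonal formulas.
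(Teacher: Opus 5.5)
Your proposal is correct and follows the paper's own proof. You transport everything through the unitary $\mathcal{T}$ of Corollary~\ref{cor:Ht_Hminus_t}, read off normality, the spectrum, the rank-one projectors and the norm-convergent expansion of $e^{\tau\tilde{X}}$ from the diagonal model $N$, and obtain the $\tilde{U},\tilde{S}$ formulas branchwise from Corollary~\ref{cor:Tcal_pm} on $\mathcal{D}=\mathcal{D}_{+}\oplus\mathcal{D}_{-}$. Your graph-norm Ces\`aro argument, showing that $D_{t}$ really is a core for $\mathcal{T}^{-1}N\mathcal{T}$, is a useful addition: the paper only asserts this. The Nelson alternative you mention would not work as stated, though. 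Injectivity of $\mathcal{T}_{+}$ on $D_{t}$ and density of $\mathcal{T}(D_{t})$ show that $\mathcal{T}(D_{t})$ is not invariant under $(v_{+},v_{-})\mapsto(v_{+},-v_{-})$, hence not under $\Re N$ or $\Im N$.
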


\end{cBoxB}

\begin{proof}
By Corollary~\ref{cor:Ht_Hminus_t}, 
\[
\mathcal{T}:\mathcal{H}_{t}\xrightarrow{\sim}\ell_{+}^{2}(\mathbb{N})\oplus\ell_{-}^{2}(\mathbb{N})
\]
is unitary. For brevity, set 
\[
N:=(-A+b_{+})\oplus(-A+b_{-}).
\]
The intertwining relation 
\[
N\mathcal{T}u=\mathcal{T}(\tilde{X}u),\qquad u\in D_{t},
\]
proved in \eqref{eq:X_intertwining_on_Dt}, gives 
\[
\tilde{X}=\mathcal{T}^{-1}N\mathcal{T}
\]
on the core $D_{t}$. Hence $\tilde{X}$ extends to the normal operator
$\mathcal{T}^{-1}N\mathcal{T}$ on $\mathcal{H}_{t}$.

Since $Ae_{n}^{\pm}=ne_{n}^{\pm}$, one has 
\[
Ne_{n}^{\pm}=(-n+b_{\pm})e_{n}^{\pm}=z_{n,\pm}e_{n}^{\pm}.
\]
Thus the spectrum is discrete, with eigenvalues \eqref{eq:def_z_n}.
The spectral projector of $N$ associated with $z_{n,\pm}$ is 
\[
e_{n}^{\pm}\langle e_{n}^{\pm}\mid\cdot\rangle.
\]
Transporting it through $\mathcal{T}$, we get 
\[
\Pi_{n,\pm}=\mathcal{T}^{-1}\bigl(e_{n}^{\pm}\langle e_{n}^{\pm}\mid\cdot\rangle\bigr)\mathcal{T}.
\]
Using the weak expansions \eqref{eq:def_Tcal_+}--\eqref{eq:def_Tcal_-}
and the formulas for $\mathcal{T}_{\pm}^{-1}$, this becomes 
\[
\Pi_{n,\pm}=\mathcal{U}_{n,\pm}\langle\mathcal{S}_{n,\pm}\mid\cdot\rangle_{L^{2}(S^{1})}.
\]

In the diagonal model, 
\[
e^{\tau N}=\sum_{n\in\mathbb{N}}\sum_{\sigma\in\{+,-\}}e^{\tau z_{n,\sigma}}e_{n}^{\sigma}\langle e_{n}^{\sigma}\mid\cdot\rangle.
\]
Since 
\[
|e^{\tau z_{n,\sigma}}|=e^{-\tau(n+\frac{1}{2})},
\]
the series converges in operator norm for every $\tau>0$. Transporting
this identity by $\mathcal{T}^{-1}$ and $\mathcal{T}$ gives \eqref{eq:spectral_semigroup}.

Finally, the formulas \eqref{eq:U_tilde} and \eqref{eq:S_tilde}
are the branchwise conjugation identities \eqref{eq:T3_U-1} and \eqref{eq:T3_S-1},
written after merging the two branches through 
\[
\mathcal{T}=\begin{pmatrix}\mathcal{T}_{+}\\
\mathcal{T}_{-}
\end{pmatrix},\qquad\mathcal{T}^{-1}=\begin{pmatrix}\mathcal{T}_{+}^{-1} & \mathcal{T}_{-}^{-1}\end{pmatrix}.
\]
They hold algebraically on the Gaussian core $\mathcal{D}=\mathcal{D}_{+}\oplus\mathcal{D}_{-}$. 
\end{proof}
\begin{cBoxB}{}

\begin{cor}
\label{cor:correlation_formula_trigo} Assume that $\lambda>0$. For
every $k,k'\in\mathbb{Z}$ and every $\tau>0$, one has the absolutely
convergent expansion 
\begin{equation}
\langle\psi_{k'}\mid e^{\tau\tilde{X}}\psi_{k}\rangle_{L^{2}(S^{1})}=\sum_{n\in\mathbb{N}}\sum_{\sigma\in\{+,-\}}e^{\tau z_{n,\sigma}}\,\langle\psi_{k'}\mid\mathcal{U}_{n,\sigma}\rangle_{L^{2}(S^{1})}\,\langle\mathcal{S}_{n,\sigma}\mid\psi_{k}\rangle_{L^{2}(S^{1})}.\label{eq:correlation_formula}
\end{equation}
\end{cor}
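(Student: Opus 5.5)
The plan is to split $e^{\tau\tilde{X}}$ into two half-time propagations, and to pair the two halves in the Hilbert models $\mathcal{H}_{t}$ and $\mathcal{H}_{-t}$ through the duality of Corollary~\ref{cor:Ht_Hminus_t}. Fix $\tau>0$ and set $t:=\tau/2$. Since $e^{t\tilde{X}}$ is unitary on $L^{2}(S^{1})$ (so that $(e^{t\tilde{X}})^{\dagger}=e^{-t\tilde{X}}$), we can write
\[
\langle\psi_{k'}\mid e^{\tau\tilde{X}}\psi_{k}\rangle_{L^{2}(S^{1})}=\langle e^{-t\tilde{X}}\psi_{k'}\mid e^{t\tilde{X}}\psi_{k}\rangle_{L^{2}(S^{1})},
\]
with $v:=e^{-t\tilde{X}}\psi_{k'}\in D_{-t}$ and $u:=e^{t\tilde{X}}\psi_{k}\in D_{t}$. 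The transported pairing \eqref{eq:pairing_transport} of Corollary~\ref{cor:Ht_Hminus_t} then gives
\[
\langle v\mid u\rangle_{L^{2}(S^{1})}=\sum_{n\in\mathbb{N}}\sum_{\sigma\in\{+,-\}}\overline{\langle e_{n}^{\sigma}\mid(\mathcal{T}_{\sigma}^{-1})^{\dagger}v\rangle}\,\langle e_{n}^{\sigma}\mid\mathcal{T}_{\sigma}u\rangle .
\]
This is a sum over the $\ell^{2}_{+}\oplus\ell^{2}_{-}$ basis, and it is absolutely convergent by Cauchy--Schwarz. We will in fact obtain a stronger bound in the last step.

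Next we evaluate each factor using the weak intertwining relations from the proof of Proposition~\ref{prop:propagated_estimates_and_X_intertwining}:
\[
\langle e_{n}^{\sigma}\mid\mathcal{T}_{\sigma}e^{t\tilde{X}}\psi_{k}\rangle=e^{tz_{n,\sigma}}\langle\mathcal{S}_{n,\sigma}\mid\psi_{k}\rangle,\qquad\langle e_{n}^{\sigma}\mid(\mathcal{T}_{\sigma}^{-1})^{\dagger}e^{-t\tilde{X}}\psi_{k'}\rangle=e^{t\overline{z_{n,\sigma}}}\langle\mathcal{U}_{n,\sigma}\mid\psi_{k'}\rangle .
\]
These use the weak expansions \eqref{eq:def_Tcal_+}--\eqref{eq:def_Tcal_-}, which identify the coefficients of $\mathcal{T}_{\sigma}\psi_{k}$ with $\langle\mathcal{S}_{n,\sigma}\mid\psi_{k}\rangle$, and those of $(\mathcal{T}_{\sigma}^{-1})^{\dagger}\psi_{k'}$ with $\langle\mathcal{U}_{n,\sigma}\mid\psi_{k'}\rangle$. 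Conjugating the second factor gives $e^{tz_{n,\sigma}}\langle\psi_{k'}\mid\mathcal{U}_{n,\sigma}\rangle$. Since $2t=\tau$, the product of the two factors is exactly the summand $e^{\tau z_{n,\sigma}}\langle\psi_{k'}\mid\mathcal{U}_{n,\sigma}\rangle\langle\mathcal{S}_{n,\sigma}\mid\psi_{k}\rangle$ in \eqref{eq:correlation_formula}.

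For absolute convergence we combine the unpropagated bounds \eqref{eq:Fourier_decay} and \eqref{eq:Fourier_decay-1}. Together they bound the $(n,\sigma)$ term by $C_{k}C_{k'}\,e^{-\tau(n+\frac{1}{2})}\langle n\rangle^{|k|+|k'|-1}$, which is summable in $n$ for every $\tau>0$.

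The main obstacle is justifying the first step: we need the $L^{2}$-pairing $\langle v\mid u\rangle_{L^{2}}$, with $v\in D_{-t}$ and $u\in D_{t}$, to equal the transported pairing in $\ell^{2}_{+}\oplus\ell^{2}_{-}$. Corollary~\ref{cor:Ht_Hminus_t} asserts this on $D_{-t}\times D_{t}$, so we invoke it directly. If a more self-contained argument were required, I would first try to verify the identity from the resolution of the identity $\mathrm{Id}=\mathcal{T}^{-1}\mathcal{T}=\sum_{n,\sigma}\mathcal{U}_{n,\sigma}\langle\mathcal{S}_{n,\sigma}\mid\cdot\rangle$ applied to $u$. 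This equals the spectral expansion \eqref{eq:spectral_semigroup} of Theorem~\ref{thm:summary_spectral}, which converges in operator norm, so pairing it with the fixed vector $\psi_{k'}$ would also yield \eqref{eq:correlation_formula}. That route requires checking that the weak pairing $\langle\psi_{k'}\mid\Pi_{n,\sigma}\,\cdot\rangle$ is continuous on $\mathcal{H}_{t}$, which again follows from the coefficient estimates above.
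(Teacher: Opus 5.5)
Your proposal is correct and follows the paper's proof essentially step for step: you write $\tau=2s$, use unitarity of $e^{s\tilde{X}}$ to move half of the propagation onto $\psi_{k'}$, and pair $D_{-s}$ with $D_{s}$ through Corollary~\ref{cor:Ht_Hminus_t}. You then evaluate the coefficients by the propagated intertwining relations, which is what the paper packages as the spectral resolution \eqref{eq:spectral_semigroup}, and obtain absolute convergence from the polynomial bounds \eqref{eq:Fourier_decay}--\eqref{eq:Fourier_decay-1} against $e^{-\tau(n+\frac{1}{2})}$. As you note yourself, the only substantive input is that the transported pairing agrees with the $L^{2}$-pairing on $D_{-s}\times D_{s}$, and the paper likewise takes this directly from Corollary~\ref{cor:Ht_Hminus_t}.
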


\end{cBoxB}

\begin{proof}
Set $s=\tau/2$. Since $\tilde{X}$ is skew-adjoint on $L^{2}(S^{1})$,
we have 
\[
\langle\psi_{k'}\mid e^{\tau\tilde{X}}\psi_{k}\rangle_{L^{2}(S^{1})}=\langle e^{-s\tilde{X}}\psi_{k'}\mid e^{s\tilde{X}}\psi_{k}\rangle_{L^{2}(S^{1})}.
\]
Now 
\[
e^{s\tilde{X}}\psi_{k}\in D_{s},\qquad e^{-s\tilde{X}}\psi_{k'}\in D_{-s}.
\]
Using the pairing between $\mathcal{H}_{-s}$ and $\mathcal{H}_{s}$
from Corollary~\ref{cor:Ht_Hminus_t}, together with the spectral
resolution \eqref{eq:spectral_semigroup}, we get 
\[
\langle\psi_{k'}\mid e^{\tau\tilde{X}}\psi_{k}\rangle_{L^{2}(S^{1})}=\sum_{n\in\mathbb{N}}\sum_{\sigma\in\{+,-\}}e^{\tau z_{n,\sigma}}\,\langle\psi_{k'}\mid\mathcal{U}_{n,\sigma}\rangle_{L^{2}(S^{1})}\,\langle\mathcal{S}_{n,\sigma}\mid\psi_{k}\rangle_{L^{2}(S^{1})}.
\]
The convergence is absolute because the two coefficient factors have
at most polynomial growth in $n$, while 
\[
|e^{\tau z_{n,\sigma}}|=e^{-\tau(n+1/2)}.
\]
\end{proof}
We have thus constructed, from the natural trigonometric core, a Hilbert
space $\mathcal{H}_{t}$ in which the hyperbolic generator $\tilde{X}$
is conjugated to the diagonal normal operator 
\[
(-A+b_{+})\oplus(-A+b_{-}).
\]
This completes the Hilbert realization of one spherical principal
series.

\subsection{\protect\label{subsec:Complementary-series}Complementary series
$0<\mu<\frac{1}{4}$}

We now indicate the modifications needed for the complementary series.
Let 
\[
0<\mu<\frac{1}{4},\qquad\nu:=\sqrt{\frac{1}{4}-\mu}\in\left(0,\frac{1}{2}\right),\qquad\lambda=i\nu.
\]

Since $0<\nu<1/2$, one has 
\[
-2b_{+}=1+2\nu>0,\qquad-2b_{-}=1-2\nu>0.
\]
Thus the diagonal gauge factors remain well defined, using the real
positive branch.

The two spectral branches are now real: 
\[
z_{n,+}:=-n-\frac{1}{2}-\nu,\qquad z_{n,-}:=-n-\frac{1}{2}+\nu,\qquad n\in\mathbb{N}.
\]
They are strictly interlaced: 
\[
z_{0,-}>z_{0,+}>z_{1,-}>z_{1,+}>\cdots.
\]

The algebraic formulas defining the projective transforms $\mathcal{T}_{\pm}$
are obtained from the principal-series case by the substitution $\lambda=i\nu$.
The intertwining relations remain valid: 
\[
\mathcal{T}_{\pm}\widetilde{X}=(-A+b_{\pm})\mathcal{T}_{\pm}.
\]
However, the corresponding projective models are no longer unitary
for the standard $L^{2}(S^{1};d\theta)$-pairing. Accordingly, in
the Hilbert space constructed below, we denote the eigenvectors by
\[
V_{n,\pm}:=\mathcal{T}^{-1}e_{n}^{\pm},
\]
rather than identifying them a priori with the distributions $\mathcal{U}_{n,\pm}$.

\begin{cBoxB}{}

\begin{thm}[Complementary series]
\label{thm:complementary_series} Assume that $0<\mu<\frac{1}{4}$,
and use the notation above. For $t>0$, set 
\[
D_{t}:=e^{t\widetilde{X}}\mathrm{Pol}(S^{1}),
\]
and define weakly on $D_{t}$ 
\begin{equation}
\mathcal{T}u:=\begin{pmatrix}\mathcal{T}_{+}u\\
\mathcal{T}_{-}u
\end{pmatrix}.\label{eq:def_cal_T-1}
\end{equation}
Then 
\[
\mathcal{T}:D_{t}\longrightarrow\ell_{+}^{2}(\mathbb{N})\oplus\ell_{-}^{2}(\mathbb{N})
\]
is well defined, injective, and has dense image. Consequently, 
\[
\mathcal{H}_{t}:=\overline{D_{t}}^{\,\|\mathcal{T}(\cdot)\|_{\ell^{2}\oplus\ell^{2}}}
\]
is a Hilbert space, and $\mathcal{T}$ extends uniquely to a unitary
isomorphism 
\[
\mathcal{T}:\mathcal{H}_{t}\xrightarrow{\sim}\ell_{+}^{2}(\mathbb{N})\oplus\ell_{-}^{2}(\mathbb{N}).
\]

In $\mathcal{H}_{t}$, the operator $\widetilde{X}$ extends to the
self-adjoint operator 
\[
\widetilde{X}=\mathcal{T}^{-1}\left((-A+b_{+})\oplus(-A+b_{-})\right)\mathcal{T}.
\]
Its spectrum is discrete and given by 
\[
\operatorname{Spec}(\widetilde{X})=\left\{ -n-\frac{1}{2}-\nu,\,-n-\frac{1}{2}+\nu\ ;\ n\in\mathbb{N}\right\} .
\]
The associated rank-one spectral projectors are 
\[
\Pi_{n,\pm}=V_{n,\pm}\left\langle \mathcal{S}_{n,\pm}\mid\cdot\right\rangle _{L^{2}(S^{1})},\qquad V_{n,\pm}:=\mathcal{T}^{-1}e_{n}^{\pm}.
\]
For every $\tau>0$, one has the norm-convergent expansion 
\[
e^{\tau\widetilde{X}}=\sum_{n\ge0}\sum_{\sigma=\pm}e^{\tau z_{n,\sigma}}\Pi_{n,\sigma}
\]
on $\mathcal{H}_{t}$. 
\end{thm}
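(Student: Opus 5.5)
We follow the principal-series argument of Subsections~\ref{subsec:Merging-the-two}--\ref{subsec:Hilbert-space-realization-from} step by step, substituting $\lambda=i\nu$ and checking at each step where $\lambda>0$ was actually used. The algebraic part carries over verbatim. Lemma~\ref{lem:For-each-sign} (dropping unitarity, since now $|(1+x^2)^b|^2=(1+x^2)^{-1\mp2\nu}$ for $b=b_\pm$), Lemma~\ref{lem:Tprime_pm}, Lemma~\ref{lem:diagonal_algebraic_gauge} and Corollary~\ref{cor:Tcal_pm} give
\[
\mathcal{T}_\pm\widetilde X=(-A+b_\pm)\mathcal{T}_\pm
\]
weakly, with real positive gauge parameters $-2b_\pm=1\pm2\nu$. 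I would first redo the coefficient estimates of Lemmas~\ref{lem:estimate_Tprime_pm} and~\ref{lem:gauge_estimate}. The Darboux and beta-integral analysis applies unchanged, but $\Re c_\pm=1\pm2\nu$ is no longer $1$. This produces an extra polynomial factor $n^{\pm\nu\cdot(\text{const})}$ in \eqref{eq:Fourier_decay}, so the bound becomes $|\langle e_n^\pm\mid\mathcal{T}_\pm\psi_k\rangle|\le C_k\langle n\rangle^{|k|+c}$ for some fixed $c$. That is harmless, because after propagation the factor $e^{tz_{n,\pm}}$ gives the decay $e^{-tn}$ exactly as in Proposition~\ref{prop:propagated_estimates_and_X_intertwining}. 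Hence $\mathcal{T}(D_t)\subset\ell^2_+\oplus\ell^2_-$ is well defined, and
\[
N\mathcal{T}u=\mathcal{T}\widetilde Xu,\qquad N:=(-A+b_+)\oplus(-A+b_-),
\]
holds on $D_t$.

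Injectivity comes from the proof of Lemma~\ref{lem:injectivity_on_Dt} alone, which used only that $\mathcal{T}_+u=0$ forces all Taylor coefficients of the real-analytic function $f_+=\mathcal{U}_+u$ at $0$ to vanish. This argument is insensitive to $\lambda$, because $t_n^+\ne0$. Density of the image is the delicate step. The proof of Proposition~\ref{prop:density_of_images} used $\lambda>0$ in only one place: to kill the cross term with $m=n$, $\tau\neq\sigma$ through the oscillating average of $e^{\pm2i\lambda s}$. Here the eigenvalues are real and strictly interlaced, $z_{0,-}>z_{0,+}>z_{1,-}>\cdots$, and in particular all distinct. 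So I would run the same induction, but along this total order of the eigenvalues $z_{n,\sigma}$ rather than on $n$. For a vector $v$ whose components on all larger eigenvalues have been removed, $e^{-sz_{n,\sigma}}e^{sN}v$ converges as $s\to\infty$ directly, with no Cesàro average needed, to its $e_n^\sigma$ component; the other components decay like $e^{-s(z_{n,\sigma}-z_{m,\tau})}$, and dominated convergence applies. The base input is that every mode is seen, which is the analogue of Lemma~\ref{lem:each_mode_is_seen}. I expect this to be the main obstacle, because that lemma relied on the biorthogonality \eqref{eq:S_U-1}, whose diagonal part is still valid since it comes only from $\mathcal{T}_\pm\mathcal{T}_\pm^{-1}=\mathrm{Id}$. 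So I would argue that $\mathcal{S}_{n,\pm}\ne0$ as a distribution, and therefore has some nonzero Fourier coefficient. The mixed overlaps \eqref{eq:SU_zero} are never needed, which is exactly why the eigenvectors are written $V_{n,\pm}$ rather than $\mathcal{U}_{n,\pm}$. If the direct-limit argument gets stuck, a fallback is to check nondegeneracy of the finite matrix of coefficients over $|k|\le K$ explicitly from the Taylor coefficients.

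Once injectivity and density are in hand, the construction of Corollary~\ref{cor:Ht_Hminus_t} (its first half) makes $\mathcal{H}_t$ a Hilbert space with $\mathcal{T}$ unitary. Then $\widetilde X=\mathcal{T}^{-1}N\mathcal{T}$ on the core $D_t$, and $N$ is a real diagonal operator, hence self-adjoint, with spectrum $\{z_{n,\pm}\}$. The spectral projectors are $\Pi_{n,\pm}=\mathcal{T}^{-1}e_n^\pm\langle e_n^\pm\mid\mathcal{T}\cdot\rangle$; inserting the weak expansion of $\mathcal{T}_\pm$ from Lemma~\ref{lem:Tcal_Lagrangian_states} gives $V_{n,\pm}\langle\mathcal{S}_{n,\pm}\mid\cdot\rangle$. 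Finally, norm convergence of the semigroup expansion follows from $|e^{\tau z_{n,\sigma}}|\le e^{-\tau(n+\frac12-\nu)}$ being summable for $\tau>0$, exactly as in Theorem~\ref{thm:summary_spectral}.
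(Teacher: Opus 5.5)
Your proposal is correct and follows essentially the paper's own proof. You substitute $\lambda=i\nu$ and accept only polynomial losses in the coefficient bounds, obtain injectivity from the vanishing of all Taylor coefficients of $\mathcal{U}_+u$ at $0$, and prove density by induction along the strictly interlaced real eigenvalues $z_{0,-}>z_{0,+}>z_{1,-}>\cdots$, where the direct limit $e^{-s\zeta_j}e^{sN}v\to c_jE_j$ replaces the Ces\`aro average needed for $\lambda>0$. The one point to make explicit is that the $e^{sN}$-invariance of the closure $\mathcal{K}_t$ comes from the analytic-vector argument in Proposition~\ref{prop:density_of_images}, since $D_t$ itself is only invariant under $\widetilde X$ and not under $e^{s\widetilde X}$; this is the same justification the paper relies on.
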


\end{cBoxB}

\begin{proof}
All algebraic conjugation identities are obtained from the principal
series by substituting $\lambda=i\nu$. In particular, 
\[
\mathcal{T}_{\pm}\widetilde{X}=(-A+b_{\pm})\mathcal{T}_{\pm}.
\]

The coefficient estimates used in the principal-series case remain
valid after this substitution, with polynomial changes in $n$. Indeed,
the real parts of the gauge parameters are now $1\pm2\nu$, so the
factorial weights are still cancelled, up to additional polynomial
powers. Thus, for each $k\in\mathbb{Z}$, 
\[
\left|\left\langle e_{n}^{\pm}\middle|\mathcal{T}_{\pm}(e^{t\widetilde{X}}\psi_{k})\right\rangle \right|\le C_{k,t}e^{-tn}\langle n\rangle^{M_{k}},
\]
for some $M_{k}$. Hence $\mathcal{T}(D_{t})\subset\ell_{+}^{2}\oplus\ell_{-}^{2}$.

The injectivity proof is the same as in Lemma~\ref{lem:injectivity_on_Dt}.
If $\mathcal{T}u=0$, then $\mathcal{T}_{+}u=0$. After applying $\mathcal{U}_{+}$,
all derivatives at the origin of the corresponding analytic function
vanish. Since $u\in D_{t}$, this function is holomorphic near $0$
and real-analytic on $\mathbb{R}$, hence it vanishes identically.
Therefore $u=0$.

It remains to prove density. Let 
\[
W_{t}:=\mathcal{T}(D_{t}),\qquad\mathcal{K}_{t}:=\overline{W_{t}}^{\,\ell^{2}\oplus\ell^{2}},
\]
and set 
\[
N:=(-A+b_{+})\oplus(-A+b_{-}).
\]
The intertwining relation implies that $W_{t}$, hence $\mathcal{K}_{t}$,
is invariant under $e^{sN}$, $s\ge0$.

Order the basis vectors according to the strictly decreasing real
eigenvalues: 
\[
E_{0}=e_{0}^{-},\quad E_{1}=e_{0}^{+},\quad E_{2}=e_{1}^{-},\quad E_{3}=e_{1}^{+},\quad\ldots
\]
so that 
\[
NE_{j}=\zeta_{j}E_{j},\qquad\zeta_{0}>\zeta_{1}>\zeta_{2}>\cdots.
\]
As in Lemma~\ref{lem:each_mode_is_seen}, each coordinate functional
$E_{j}$ is nonzero on $W_{t}$. We prove by induction that $E_{j}\in\mathcal{K}_{t}$
for every $j$. Suppose $E_{0},\ldots,E_{j-1}\in\mathcal{K}_{t}$.
Choose $w\in W_{t}$ with nonzero $E_{j}$-coefficient, and subtract
the already constructed components. We get 
\[
v=c_{j}E_{j}+\sum_{i>j}c_{i}E_{i},\qquad c_{j}\neq0,
\]
with $v\in\mathcal{K}_{t}$. Then, as $s\to+\infty$, 
\[
e^{-s\zeta_{j}}e^{sN}v=c_{j}E_{j}+\sum_{i>j}c_{i}e^{s(\zeta_{i}-\zeta_{j})}E_{i}\longrightarrow c_{j}E_{j}
\]
in $\ell^{2}\oplus\ell^{2}$, since $\zeta_{i}-\zeta_{j}<0$ for $i>j$.
Thus $E_{j}\in\mathcal{K}_{t}$. Hence all basis vectors belong to
$\mathcal{K}_{t}$, and $\mathcal{T}(D_{t})$ is dense.

The completion of $D_{t}$ for the norm $\|\mathcal{T}(\cdot)\|$
now makes $\mathcal{T}$ a unitary map from $\mathcal{H}_{t}$ onto
$\ell_{+}^{2}\oplus\ell_{-}^{2}$. Transporting the diagonal operator
$N$ gives 
\[
\widetilde{X}=\mathcal{T}^{-1}N\mathcal{T}.
\]
Since $b_{+}$ and $b_{-}$ are real, $N$ is self-adjoint. Its spectrum
is discrete and consists of the interlaced eigenvalues $z_{n,\pm}$.

The spectral projector of $N$ onto $\mathbb{C}e_{n}^{\pm}$ is 
\[
e_{n}^{\pm}\langle e_{n}^{\pm}\mid\cdot\rangle.
\]
Transporting this projector by $\mathcal{T}$ gives 
\[
\Pi_{n,\pm}=V_{n,\pm}\left\langle \mathcal{S}_{n,\pm}\mid\cdot\right\rangle _{L^{2}(S^{1})}.
\]
Finally, the expansion of $e^{\tau\widetilde{X}}$ is the transported
spectral resolution of $e^{\tau N}$. Since $e^{\tau z_{n,\pm}}$
decays exponentially in $n$ for every $\tau>0$, the series converges
in operator norm. 
\end{proof}

\subsection{\protect\label{subsec:Threshold-case-mu=00003D1/4}Threshold case
\texorpdfstring{$\mu=\frac{1}{4}$}{mu=1/4} : coalescence and Jordan
model}

We now discuss the threshold value 
\[
\mu=\frac{1}{4}.
\]
In the principal-series notation 
\[
\mu=\lambda^{2}+\frac{1}{4},
\]
the threshold corresponds to 
\[
\lambda=0.
\]
For $\lambda\neq0$, the two resonance branches are 
\[
z_{n,+}(\lambda)=-n-\frac{1}{2}+i\lambda,\qquad z_{n,-}(\lambda)=-n-\frac{1}{2}-i\lambda.
\]
At $\lambda=0$, they coalesce: 
\[
z_{n,+}(0)=z_{n,-}(0)=z_{n},\qquad z_{n}:=-n-\frac{1}{2}.
\]

The important point is that the two branches should not be kept as
two independent eigendirections at the threshold. The representation
depends naturally on the Casimir parameter $\mu$, whereas the individual
branches depend on the choice of a square root 
\[
\lambda=\sqrt{\mu-\frac{1}{4}}.
\]
Changing $\lambda$ into $-\lambda$ exchanges the two branches. Therefore
the object which has a regular limit at $\mu=1/4$ is not each branch
separately, but the two-dimensional space spanned by the symmetric
and divided-difference combinations. This is the mechanism which produces
the Jordan block at the threshold.

Set 
\begin{equation}
L:=-A-\frac{1}{2}\label{eq:def_L_threshold}
\end{equation}
on $\ell^{2}(\mathbb{N})$. Thus 
\[
Le_{n}=z_{n}e_{n},\qquad z_{n}=-n-\frac{1}{2}.
\]

\subsubsection{Renormalized branches and coefficientwise coalescence}

We identify the two copies $\ell_{+}^{2}(\mathbb{N})$ and $\ell_{-}^{2}(\mathbb{N})$
with a single copy of $\ell^{2}(\mathbb{N})$ by 
\[
J_{+}e_{n}^{+}=e_{n},\qquad J_{-}e_{n}^{-}=e_{n}.
\]
We also introduce the parity operator 
\begin{equation}
Pe_{n}:=(-1)^{n}e_{n}.\label{eq:def_parity_threshold}
\end{equation}
Finally, set 
\begin{equation}
\rho(\lambda):=\frac{\Gamma(\frac{1}{2}-i\lambda)}{\sqrt{\pi}\,\Gamma(-i\lambda)}.\label{eq:def_rho_threshold}
\end{equation}
By the duplication formula, 
\begin{equation}
\rho(\lambda)=\frac{\Gamma(\frac{1}{2}-i\lambda)^{2}}{\pi\,2^{1+2i\lambda}\Gamma(-2i\lambda)}.\label{eq:rho_duplication_threshold}
\end{equation}
In particular, 
\[
\rho(\lambda)=-i\lambda+O(\lambda^{2}).
\]

\begin{cBoxA}{}

\begin{defn}[Renormalized threshold branches]
\label{def:renormalized_threshold_branches} For $\lambda\neq0$,
define 
\begin{equation}
\widehat{\mathcal{T}}_{+,\lambda}:=J_{+}\mathcal{T}_{+,\lambda},\qquad\widehat{\mathcal{T}}_{-,\lambda}:=P\,\rho(\lambda)\,J_{-}\mathcal{T}_{-,\lambda}.\label{eq:def_renormalized_Tpm_threshold}
\end{equation}
\end{defn}

\end{cBoxA}

Since $P$ commutes with $A$, and since $\rho(\lambda)$ is a scalar,
this renormalization does not change the diagonal action. Thus, for
$\lambda\neq0$, 
\begin{equation}
\widehat{\mathcal{T}}_{+,\lambda}\widetilde{X}_{\lambda}=(L+i\lambda)\widehat{\mathcal{T}}_{+,\lambda},\qquad\widehat{\mathcal{T}}_{-,\lambda}\widetilde{X}_{\lambda}=(L-i\lambda)\widehat{\mathcal{T}}_{-,\lambda}.\label{eq:renormalized_intertwining_threshold}
\end{equation}

For $n\in\mathbb{N}$ and $k\in\mathbb{Z}$, set 
\begin{equation}
s_{n,k}^{+}(\lambda):=\left\langle e_{n}\mid\widehat{\mathcal{T}}_{+,\lambda}\psi_{k}\right\rangle ,\qquad s_{n,k}^{-}(\lambda):=\left\langle e_{n}\mid\widehat{\mathcal{T}}_{-,\lambda}\psi_{k}\right\rangle .\label{eq:def_s_nk_pm_threshold}
\end{equation}

\begin{cBoxB}{}

\begin{lem}[Coefficientwise coalescence]
\label{lem:threshold_coefficientwise_confluence} For every $n\in\mathbb{N}$
and $k\in\mathbb{Z}$, the functions 
\[
\lambda\longmapsto s_{n,k}^{+}(\lambda),\qquad\lambda\longmapsto s_{n,k}^{-}(\lambda)
\]
extend holomorphically to a neighbourhood of $\lambda=0$, and satisfy
\begin{equation}
s_{n,k}^{+}(0)=s_{n,k}^{-}(0).\label{eq:s_plus_minus_equal_threshold}
\end{equation}
Consequently, 
\[
\frac{s_{n,k}^{+}(\lambda)-s_{n,k}^{-}(\lambda)}{2i\lambda}
\]
has a finite limit as $\lambda\to0$. 
\end{lem}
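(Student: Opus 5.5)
The plan is to compute both coefficients explicitly through the weak expansions of Lemma~\ref{lem:Tcal_Lagrangian_states}. Then the $+$ branch is a Taylor coefficient at $0$, and the $-$ branch is a regularized moment, i.e.\ a Taylor coefficient at $\infty$. The $\lambda$-dependence is then tracked analytically. Concretely, by \eqref{eq:def_Tcal_+} and \eqref{eq:Upsi_pm_common},
\[
s_{n,k}^{+}(\lambda)=\frac{t_{n}^{+}}{\sqrt{n!}}\,\frac{e^{ik\pi/2}}{\sqrt{\pi}}\,\frac{d^{n}}{dx^{n}}\Bigl[(1+ix)^{b+k}(1-ix)^{b-k}\Bigr]_{x=0},\qquad b=-\tfrac12+i\lambda .
\]
The Taylor coefficient is a polynomial in $b$, hence entire in $\lambda$. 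Moreover $t_{n}^{+}=\prod_{j<n}(1-2i\lambda+j)^{-1/2}$ is holomorphic and nonvanishing near $\lambda=0$, with $t_{n}^{+}(0)=(n!)^{-1/2}$. So $s_{n,k}^{+}$ extends holomorphically with no further work.

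For the $-$ branch, \eqref{eq:def_Tcal_-} gives
\[
s_{n,k}^{-}(\lambda)=(-1)^{n}\rho(\lambda)\,\frac{t_{n}^{-}}{\sqrt{n!}}\int_{\mathbb{R}}x^{n}(\mathcal{U}_{-}\psi_{k})(x)\,dx .
\]
Here $t_{n}^{-}=\prod_{j<n}(1+2i\lambda+j)^{1/2}$ is holomorphic near $0$, with $t_n^-(0)=(n!)^{1/2}$. As in the proof of Lemma~\ref{lem:estimate_Tprime_pm_absk}, I will expand $x^{n}$ in powers of $(1\pm ix)$. This reduces the integral to a finite sum of analytically continued beta integrals
\[
\int_{\mathbb{R}}(1+ix)^{\alpha}(1-ix)^{\beta}dx=\frac{2^{\alpha+\beta+2}\pi\,\Gamma(-\alpha-\beta-1)}{\Gamma(-\alpha)\Gamma(-\beta)}.
\]
Each term has $\alpha+\beta=2b+n=n-1+2i\lambda$, so the common numerator is $\Gamma(-n-2i\lambda)$. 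This has a simple pole at $\lambda=0$, while the $\Gamma(-\alpha)^{-1}\Gamma(-\beta)^{-1}$ factors are entire. Since $\rho(\lambda)=-i\lambda+O(\lambda^{2})$ is holomorphic with a simple zero at $0$, the product $\rho(\lambda)\Gamma(-n-2i\lambda)$ is holomorphic near $0$. This proves holomorphy of $s_{n,k}^{-}$.

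For the equality \eqref{eq:s_plus_minus_equal_threshold}, computing residues term by term would be messy, so I would instead identify the residue structurally. The pole at $\lambda=0$ of the regularized moment $\int x^{n}g_\lambda(x)\,dx$, with $g_\lambda=\mathcal{U}_{-}\psi_{k}$, comes only from the tail $|x|\to\infty$. There $g_\lambda$ has an expansion in $|x|^{2b}x^{-m}$, and the term producing $\int^{\infty}x^{-1+2i\lambda}dx$ is the $x^{-n-1}$ coefficient. Its residue is that coefficient, counted from both ends $x\to\pm\infty$ (this is the symmetric splitting of the proof of Lemma~\ref{lem:overlaps_Lagrangian_states}). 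By \eqref{eq:UU_dagger} and $h_{+}\circ h_{-}^{-1}(x)=-1/x$, one has at $\lambda=0$
\[
\mathcal{U}_{-}\psi_{k}=|x|^{-1}\,(\mathcal{U}_{+}\psi_{k})(-1/x).
\]
Hence the coefficient of $x^{-n-1}$ at infinity is $(-1)^{n}$ times the $n$-th Taylor coefficient of $\mathcal{U}_{+}\psi_{k}$ at $0$. The parity operator $P$ in Definition~\ref{def:renormalized_threshold_branches} absorbs this $(-1)^{n}$, and the gauge factors at $\lambda=0$ combine as $t_{n}^{-}(0)/\sqrt{n!}=1=\sqrt{n!}\,t_{n}^{+}(0)\cdot(1/\sqrt{n!})\cdot\sqrt{n!}$, matching $f^{(n)}(0)/n!$. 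The remaining constants are the limit of $\lambda\cdot(\text{pole})$ times $\rho'(0)$. The normalization \eqref{eq:def_rho_threshold}, with its duplication form \eqref{eq:rho_duplication_threshold}, is presumably chosen exactly so that these combine to $1$. Checking this is the main obstacle: a careful bookkeeping of the factors $2$, $\pi$, the two ends $\pm\infty$, and the unitarity weight $\sqrt2$. I would first verify it on $k=0$, $n=0$, where everything is an explicit beta function, and then argue for general $(n,k)$ via the residue identification above.

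Finally, the consequence is immediate. The difference $s_{n,k}^{+}-s_{n,k}^{-}$ is holomorphic near $0$ and vanishes at $\lambda=0$, so dividing by $2i\lambda$ yields a holomorphic function. Its limit is $\frac{1}{2i}\frac{d}{d\lambda}(s_{n,k}^{+}-s_{n,k}^{-})(0)$.
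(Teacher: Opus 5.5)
Your holomorphy argument is the paper's own: the beta identity produces the common factor $\Gamma(-n-2i\lambda)$, whose simple pole is compensated by the simple zero of $\rho$. For the equality \eqref{eq:s_plus_minus_equal_threshold}, however, you take a genuinely different route.

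The paper evaluates $s^-_{n,k}(0)$ in closed form, using \eqref{eq:rho_duplication_threshold} and $\Gamma(-n-2i\lambda)/\Gamma(-2i\lambda)\to(-1)^n/n!$. This gives a finite sum $\sum_j(-1)^{n-j}\binom{n}{j}\Gamma(\tfrac12-k-j)^{-1}\Gamma(\tfrac12+k-n+j)^{-1}$. The paper then matches this sum with a binomial expansion of $[x^n]\bigl((1+ix)^{k-\frac12}(1-ix)^{-k-\frac12}\bigr)$.

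You instead locate the pole of the regularized moment in the tail $|x|\to\infty$. You then transport the relevant coefficient back to $x=0$ through the chart change $x\mapsto-1/x$. The paper's computation yields explicit formulas valid for all $\lambda$, which is convenient for the uniform bounds of Lemma~\ref{lem:uniform_threshold_estimates}. The price is a Gamma/binomial identity that the paper only states. Your route needs no such identity, and it explains why $P$ and $\rho$ are exactly the right renormalization: $P$ cancels the sign of the inversion, and $\rho$ cancels the two-ended logarithmic divergence. It also gives holomorphy without the beta formula, since only one tail term is singular.

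As written, though, you stop at the normalization (``presumably \ldots\ the main obstacle''), and that constant is the whole content of \eqref{eq:s_plus_minus_equal_threshold}. It closes in a few lines. There is no separate $\sqrt2$ to track, since it is already inside \eqref{eq:Upsi_pm_common}, and no test case is needed.

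From \eqref{eq:Upsi_pm_common} one checks directly, for all $\lambda$ near $0$ and not only at $\lambda=0$, that $g_\lambda:=\mathcal{U}_-\psi_k$ satisfies $g_\lambda(x)=|x|^{-1+2i\lambda}f_\lambda(-1/x)$ with $f_\lambda:=\mathcal{U}_+\psi_k$. The phases are absorbed by $e^{-ik\pi/2}=(-1)^ke^{ik\pi/2}$. Fix $R>1$ and write $f_\lambda(-y)=\sum_m c_m(\lambda)y^m$, so that $c_m=(-1)^m[y^m]f_\lambda$. The part $\int_{|x|<R}$ is entire in $\lambda$, and the tail equals
\[
-\sum_{m\ge0}c_m(\lambda)\bigl(1+(-1)^{n-m}\bigr)\frac{R^{n-m+2i\lambda}}{n-m+2i\lambda}.
\]
The only term singular at $\lambda=0$ is $m=n$, namely $i\,c_n(\lambda)R^{2i\lambda}/\lambda$. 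Since $\rho(\lambda)=-i\lambda+O(\lambda^2)$, one gets $\rho(\lambda)\int_{\mathbb{R}}x^ng_\lambda\,dx\to c_n(0)$. Hence
\[
s^-_{n,k}(0)=(-1)^n\frac{t_n^-(0)}{\sqrt{n!}}\,c_n(0)=[y^n]f_0=\frac{t_n^+(0)}{\sqrt{n!}}\,f_0^{(n)}(0)=s^+_{n,k}(0).
\]
For $n=k=0$ one even has $\rho(\lambda)\int_{\mathbb{R}}(1+x^2)^{b}\,dx=1$ identically: $\rho$ is precisely the inverse of the basic moment.

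One caution: the continuation must be the one in $\lambda$ used for the beta integrals, under which $\int_R^\infty x^{-1+2i\lambda}\,dx=-R^{2i\lambda}/(2i\lambda)$. Your parenthetical invokes the Mellin-symmetric regularization of Lemma~\ref{lem:overlaps_Lagrangian_states}. That regularization assigns $0$ to pure powers and would erase exactly the pole your argument relies on.
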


\end{cBoxB}

\begin{proof}
Write 
\[
b=-\frac{1}{2}+i\lambda.
\]
We choose the holomorphic branches of the gauge factors near $\lambda=0$
so that 
\[
t_{n}^{+}(0)=(n!)^{-1/2},\qquad t_{n}^{-}(0)=(n!)^{1/2}.
\]

For the positive branch, the weak expansion of $\mathcal{T}_{+,\lambda}$
gives 
\begin{equation}
s_{n,k}^{+}(\lambda)=t_{n}^{+}(\lambda)\sqrt{n!}\,\frac{e^{ik\pi/2}}{\sqrt{\pi}}\,[x^{n}]\left((1+ix)^{b+k}(1-ix)^{b-k}\right).\label{eq:s_plus_coeff_threshold}
\end{equation}
This is holomorphic near $\lambda=0$. In particular, 
\begin{equation}
s_{n,k}^{+}(0)=\frac{e^{ik\pi/2}}{\sqrt{\pi}}\,[x^{n}]\left((1+ix)^{k-\frac{1}{2}}(1-ix)^{-k-\frac{1}{2}}\right).\label{eq:s_plus_zero_threshold}
\end{equation}

For the negative branch, the weak expansion of $\mathcal{T}_{-,\lambda}$
gives, before the renormalization by $P\rho(\lambda)$, 
\[
\left\langle e_{n}^{-}\mid\mathcal{T}_{-,\lambda}\psi_{k}\right\rangle =\frac{t_{n}^{-}(\lambda)}{\sqrt{n!}}\,\frac{e^{-ik\pi/2}}{\sqrt{\pi}}\,I_{n,k}(\lambda),
\]
where 
\[
I_{n,k}(\lambda):=\int_{\mathbb{R}}x^{n}(1+ix)^{b+k}(1-ix)^{b-k}\,dx
\]
is understood by meromorphic continuation.

Using 
\[
x^{n}=(2i)^{-n}\bigl((1+ix)-(1-ix)\bigr)^{n}
\]
and the beta identity 
\[
\int_{\mathbb{R}}(1+ix)^{\alpha}(1-ix)^{\beta}\,dx=\pi\,2^{\alpha+\beta+2}\frac{\Gamma(-\alpha-\beta-1)}{\Gamma(-\alpha)\Gamma(-\beta)},
\]
we obtain 
\begin{align}
I_{n,k}(\lambda) & =\frac{\pi\,2^{n+1+2i\lambda}}{(2i)^{n}}\Gamma(-n-2i\lambda)\nonumber \\
 & \quad\times\sum_{j=0}^{n}(-1)^{n-j}\binom{n}{j}\frac{1}{\Gamma(\frac{1}{2}-i\lambda-k-j)\Gamma(\frac{1}{2}-i\lambda+k-n+j)}.\label{eq:I_nk_threshold}
\end{align}
Multiplying by $P\rho(\lambda)$, and using \eqref{eq:rho_duplication_threshold},
gives 
\begin{align}
s_{n,k}^{-}(\lambda) & =(-1)^{n}\frac{t_{n}^{-}(\lambda)}{\sqrt{n!}}\,\frac{e^{-ik\pi/2}}{\sqrt{\pi}}\,i^{-n}\frac{\Gamma(\frac{1}{2}-i\lambda)^{2}\Gamma(-n-2i\lambda)}{\Gamma(-2i\lambda)}\nonumber \\
 & \quad\times\sum_{j=0}^{n}(-1)^{n-j}\binom{n}{j}\frac{1}{\Gamma(\frac{1}{2}-i\lambda-k-j)\Gamma(\frac{1}{2}-i\lambda+k-n+j)}.\label{eq:s_minus_renormalized_threshold}
\end{align}
The quotient 
\[
\frac{\Gamma(-n-2i\lambda)}{\Gamma(-2i\lambda)}
\]
is holomorphic at $\lambda=0$, and 
\begin{equation}
\frac{\Gamma(-n-2i\lambda)}{\Gamma(-2i\lambda)}\longrightarrow\frac{(-1)^{n}}{n!}\qquad(\lambda\to0).\label{eq:Gamma_ratio_threshold}
\end{equation}
Thus $s_{n,k}^{-}(\lambda)$ is holomorphic near $\lambda=0$.

It remains to compare the two values at $\lambda=0$. Setting $\lambda=0$
in \eqref{eq:s_minus_renormalized_threshold}, and using \eqref{eq:Gamma_ratio_threshold},
gives 
\[
s_{n,k}^{-}(0)=\frac{e^{-ik\pi/2}}{\sqrt{\pi}}\,\frac{\pi i^{-n}}{n!}\sum_{j=0}^{n}(-1)^{n-j}\binom{n}{j}\frac{1}{\Gamma(\frac{1}{2}-k-j)\Gamma(\frac{1}{2}+k-n+j)}.
\]
On the other hand, the elementary binomial expansion of 
\[
(1+ix)^{k-\frac{1}{2}}(1-ix)^{-k-\frac{1}{2}}
\]
gives 
\[
[x^{n}]\left((1+ix)^{k-\frac{1}{2}}(1-ix)^{-k-\frac{1}{2}}\right)=\frac{\pi i^{n}}{n!}(-1)^{k+n}\sum_{j=0}^{n}(-1)^{n-j}\binom{n}{j}\frac{1}{\Gamma(\frac{1}{2}-k-j)\Gamma(\frac{1}{2}+k-n+j)}.
\]
Since 
\[
e^{-ik\pi/2}i^{-n}=e^{ik\pi/2}i^{n}(-1)^{k+n},
\]
this is exactly the equality 
\[
s_{n,k}^{-}(0)=s_{n,k}^{+}(0).
\]
This proves \eqref{eq:s_plus_minus_equal_threshold}. The existence
of the divided-difference limit follows from holomorphy. 
\end{proof}

\subsubsection{Uniform estimates for the threshold limit}

We now pass from the coefficientwise coalescence to an $\ell^{2}$-limit.
The key point is that the coefficients have at most polynomial growth
in $n$, uniformly for $\lambda$ near $0$, while the propagation
by $e^{t\widetilde{X}_{\lambda}}$ produces the exponential factor
$e^{-tn}$.

\begin{cBoxB}{}

\begin{lem}[Uniform threshold estimates]
\label{lem:uniform_threshold_estimates} For every $k\in\mathbb{Z}$,
there exist constants $C_{k}>0$, $M_{k}>0$, and $\lambda_{0}>0$
such that, for every $n\in\mathbb{N}$ and $|\lambda|<\lambda_{0}$,
\begin{equation}
|s_{n,k}^{\pm}(\lambda)|\le C_{k}\langle n\rangle^{M_{k}},\label{eq:uniform_threshold_bound}
\end{equation}
and 
\begin{equation}
|\partial_{\lambda}s_{n,k}^{\pm}(\lambda)|\le C_{k}\langle n\rangle^{M_{k}}\bigl(1+\log\langle n\rangle\bigr).\label{eq:uniform_threshold_derivative_bound}
\end{equation}
Here 
\[
\langle n\rangle:=(1+n^{2})^{1/2}.
\]
\end{lem}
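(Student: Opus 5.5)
We prove both bounds by working directly from the explicit formulas \eqref{eq:s_plus_coeff_threshold} and \eqref{eq:s_minus_renormalized_threshold} of Lemma~\ref{lem:threshold_coefficientwise_confluence}. Each $s_{n,k}^{\pm}$ is holomorphic in $\lambda$ on a fixed disk $|\lambda|<2\lambda_{0}$. If we first obtain the bound \eqref{eq:uniform_threshold_bound} on that larger complex disk, Cauchy's estimate on circles of radius $\lambda_{0}$ already gives $|\partial_{\lambda}s_{n,k}^{\pm}|\le C\langle n\rangle^{M_{k}}$ for $|\lambda|<\lambda_{0}$. This is stronger than \eqref{eq:uniform_threshold_derivative_bound}; the $\log\langle n\rangle$ there is what a direct differentiation of $n^{i\lambda}$-type factors would produce. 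So the plan reduces to a uniform polynomial bound for $|\lambda|\le 2\lambda_{0}$, complex $\lambda$ allowed, where $|\Re(i\lambda)|\le 2\lambda_{0}<1/4$.

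\textbf{$+$ branch.} We have $s_{n,k}^{+}=t_{n}^{+}(\lambda)\sqrt{n!}\,\pi^{-1/2}e^{ik\pi/2}\,[x^{n}]f_{k,\lambda}$, where $f_{k,\lambda}=(1+ix)^{b+k}(1-ix)^{b-k}$.
\begin{enumerate}
\item The gauge factor satisfies $t_{n}^{+}\sqrt{n!}=\bigl(\Gamma(n+1)\Gamma(1-2i\lambda)/\Gamma(n+1-2i\lambda)\bigr)^{1/2}$. By Stirling, uniformly on the disk, this is $O(n^{\Re(i\lambda)})=O(n^{2\lambda_{0}})$.
\item For the Taylor coefficient, we do not invoke Darboux asymptotics (which would need uniformity in $\lambda$). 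Instead we expand $[x^{n}]f_{k,\lambda}=\sum_{j}\binom{b+k}{j}\binom{b-k}{n-j}i^{j}(-i)^{n-j}$.
\item We use the uniform bound $|\binom{\alpha}{j}|\le C(1+j)^{-\Re\alpha-1}$ for $\alpha$ in a compact set, which follows from $\binom{\alpha}{j}=\Gamma(j-\alpha)/(\Gamma(-\alpha)\Gamma(j+1))$ with $1/\Gamma(-\alpha)$ entire. This gives a convolution of polynomially bounded sequences, hence $O(n^{|k|+1})$ uniformly.
\end{enumerate}
Combining the two steps gives the $+$ bound with $M_{k}=|k|+2$, say.

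\textbf{$-$ branch.} In \eqref{eq:s_minus_renormalized_threshold}, the relevant factors are as follows.
\begin{enumerate}
\item The gauge combination is $t_{n}^{-}/\sqrt{n!}=\bigl(\Gamma(n+1+2i\lambda)/(\Gamma(1+2i\lambda)\Gamma(n+1))\bigr)^{1/2}$, which is $O(n^{2\lambda_{0}})$.
\item The quotient $\Gamma(-n-2i\lambda)/\Gamma(-2i\lambda)=\prod_{m=1}^{n}(-m-2i\lambda)^{-1}$ is bounded by $C/n!\cdot n^{C\lambda_{0}}$.
\item The factor $\Gamma(\frac12-i\lambda)^{2}$ is bounded on the disk.
\item The main obstacle is the alternating sum. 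Its individual terms $\binom{n}{j}/(\Gamma(\frac12-i\lambda-k-j)\Gamma(\frac12-i\lambda+k-n+j))$ are each of size roughly $\binom{n}{j}\,j!\,(n-j)!=n!$ times polynomial factors. The crude bound therefore gives $O((n+1)\,n!\,n^{C})$, which, multiplied by the $1/n!$ from step 2, is still polynomial.
\end{enumerate}
So no cancellation is needed. What must be checked is the uniform estimate $1/|\Gamma(\frac12-\epsilon-m)|\le C\,m!\,(1+m)^{C}$ for $m\ge0$ and $|\epsilon|\le 2\lambda_{0}$. We get it from the reflection formula, using that $\sin\pi(\frac12-\epsilon)$ stays away from $0$. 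The second Gamma argument, $\frac12-i\lambda+k-n+j$, is handled the same way when negative and is bounded below when positive.

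Collecting the factors gives \eqref{eq:uniform_threshold_bound} for the $-$ branch on the complex disk. Cauchy's estimate then yields \eqref{eq:uniform_threshold_derivative_bound} for both branches, after enlarging $C_{k}$ and $M_{k}$.
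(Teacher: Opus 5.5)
Your proof is correct, but it takes a different route from the paper's, even though both start from the explicit formulas \eqref{eq:s_plus_coeff_threshold} and \eqref{eq:s_minus_renormalized_threshold}.

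The paper proceeds as follows. For the $+$ branch it invokes Darboux estimates for the Taylor coefficients of $(1+ix)^{k-\frac12+i\lambda}(1-ix)^{-k-\frac12+i\lambda}$, asserted to hold uniformly in $\lambda$. For the $-$ branch it rewrites the coefficients as finite sums of beta-type terms $\rho(\lambda)\int x^{m}(1+x^{2})^{-\frac12+i\lambda-q}\,dx$, in which $\rho(\lambda)$ cancels the pole at $\lambda=0$. It gets the derivative bound by differentiating these expressions directly; the factor $1+\log\langle n\rangle$ comes from $\partial_{\lambda}$ hitting $n^{i\lambda}$-type factors.

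You differ in three places.
\begin{itemize}
\item[(i)] You replace Darboux by an explicit convolution of the uniform binomial bounds $|\binom{\alpha}{j}|\le C(1+j)^{-\Re\alpha-1}$. This makes the uniformity in $\lambda$ automatic rather than asserted.
\item[(ii)] You bound the alternating sum in \eqref{eq:s_minus_renormalized_threshold} term by term. Each term is $n!$ times a polynomial, and the $1/n!$ coming from $\Gamma(-n-2i\lambda)/\Gamma(-2i\lambda)$ absorbs it, so no cancellation is needed. The pole cancellation the paper attributes to $\rho(\lambda)$ is already built into that quotient.
\item[(iii)] You obtain the derivative bound from Cauchy's estimate on a complex disk whose radius does not depend on $n$.
\end{itemize}

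Your route is more elementary and self-contained, and it gives the stronger bound $|\partial_{\lambda}s^{\pm}_{n,k}(\lambda)|\le C_{k}\langle n\rangle^{M_{k}}$ with no logarithm. The price is that every bound must hold for complex $\lambda$. You handle this correctly: only $|\Re(i\lambda)|\le 2\lambda_{0}$ enters the exponents, and all factors are holomorphic on a fixed disk.

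One small slip: in the reflection step
\[
\frac{1}{\Gamma(\frac12-\epsilon-m)}=\frac{\Gamma(\frac12+\epsilon+m)\,\sin\pi(\frac12-\epsilon-m)}{\pi},
\]
you only need $|\sin|$ bounded above on the disk, which is automatic. You do not need it bounded away from zero. The conclusion is unaffected.
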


\end{cBoxB}

\begin{proof}
We only recall the estimates needed later.

For the positive branch, formula \eqref{eq:s_plus_coeff_threshold}
expresses $s_{n,k}^{+}(\lambda)$ as 
\[
t_{n}^{+}(\lambda)\sqrt{n!}
\]
times the $n$-th Taylor coefficient of 
\[
(1+ix)^{k-\frac{1}{2}+i\lambda}(1-ix)^{-k-\frac{1}{2}+i\lambda}.
\]
For $\lambda$ in a small neighbourhood of $0$, the algebraic singularities
at $x=\pm i$ stay of fixed type. Standard Darboux estimates, uniformly
in $\lambda$, give polynomial bounds for these Taylor coefficients,
and differentiating with respect to $\lambda$ introduces at most
one logarithmic factor. The factor $t_{n}^{+}(\lambda)\sqrt{n!}$,
and its $\lambda$-derivative, have the same type of polynomial/logarithmic
bounds by Stirling's formula. This proves \eqref{eq:uniform_threshold_bound}
and \eqref{eq:uniform_threshold_derivative_bound} for $s_{n,k}^{+}$.

For the negative branch, after the renormalization by $\rho(\lambda)$,
the coefficients are finite sums of beta-type terms of the form 
\[
\rho(\lambda)\int_{\mathbb{R}}x^{m}(1+x^{2})^{-\frac{1}{2}+i\lambda-q}\,dx,
\]
with $q$ fixed and $m$ depending on $n$. For fixed $k$, the number
of such terms and the corresponding values of $q$ are bounded in
terms of $k$ only. These integrals are understood by meromorphic
continuation. These integrals are understood by meromorphic continuation.
The factor $\rho(\lambda)$ cancels the pole at $\lambda=0$. More
explicitly, for even $m=2r$, the beta identity gives 
\[
\rho(\lambda)\int_{\mathbb{R}}x^{2r}(1+x^{2})^{-\frac{1}{2}+i\lambda-q}\,dx=\frac{\Gamma(\frac{1}{2}-i\lambda)}{\sqrt{\pi}\Gamma(-i\lambda)}\frac{\Gamma(r+\frac{1}{2})\Gamma(q-r-i\lambda)}{\Gamma(q+\frac{1}{2}-i\lambda)}.
\]
When $r\ge q$, the quotient 
\[
\frac{\Gamma(q-r-i\lambda)}{\Gamma(-i\lambda)}
\]
is holomorphic at $\lambda=0$ and has polynomial growth in $r$;
its derivative has at most one logarithmic loss. The finitely many
cases $r<q$ are harmless. Combining these estimates with Stirling's
formula and with the polynomial bounds on $t_{n}^{-}(\lambda)/\sqrt{n!}$
gives the same estimates for $s_{n,k}^{-}$. 
\end{proof}
\begin{cBoxB}{}

\begin{prop}[Threshold $\ell^{2}$-limit]
\label{prop:threshold_l2_limit} For every $t>0$ and every $k\in\mathbb{Z}$,
the two sequences 
\[
\left(\frac{\left\langle e_{n}\mid\widehat{\mathcal{T}}_{+,\lambda}e^{t\widetilde{X}_{\lambda}}\psi_{k}\right\rangle +\left\langle e_{n}\mid\widehat{\mathcal{T}}_{-,\lambda}e^{t\widetilde{X}_{\lambda}}\psi_{k}\right\rangle }{2}\right)_{n\ge0}
\]
and 
\[
\left(\frac{\left\langle e_{n}\mid\widehat{\mathcal{T}}_{+,\lambda}e^{t\widetilde{X}_{\lambda}}\psi_{k}\right\rangle -\left\langle e_{n}\mid\widehat{\mathcal{T}}_{-,\lambda}e^{t\widetilde{X}_{\lambda}}\psi_{k}\right\rangle }{2i\lambda}\right)_{n\ge0}
\]
have limits in $\ell^{2}(\mathbb{N})$ as $\lambda\to0$. 
\end{prop}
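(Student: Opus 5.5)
The plan is to reduce everything to the coefficients $s_{n,k}^{\pm}(\lambda)$ of Lemma~\ref{lem:threshold_coefficientwise_confluence}, using the intertwining relations \eqref{eq:renormalized_intertwining_threshold}. Since $\widehat{\mathcal{T}}_{\pm,\lambda}e^{t\widetilde{X}_{\lambda}}=e^{t(L\pm i\lambda)}\widehat{\mathcal{T}}_{\pm,\lambda}$ (weakly, exactly as in the proof of Proposition~\ref{prop:propagated_estimates_and_X_intertwining}), the $n$-th coefficients are
\[
a_{n}^{\pm}(\lambda):=e^{t z_{n}}e^{\pm it\lambda}s_{n,k}^{\pm}(\lambda),\qquad z_{n}=-n-\tfrac{1}{2}.
\]
The two sequences in the statement are then $\frac{1}{2}(a_{n}^{+}+a_{n}^{-})$ and $\frac{1}{2i\lambda}(a_{n}^{+}-a_{n}^{-})$.

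\textbf{Candidate limits.} By Lemma~\ref{lem:threshold_coefficientwise_confluence}, $s_{n,k}^{\pm}$ are holomorphic near $\lambda=0$ and $s_{n,k}^{+}(0)=s_{n,k}^{-}(0)$. The symmetric sequence therefore converges coefficientwise to $e^{tz_{n}}s_{n,k}^{+}(0)$. For the divided difference, write
\[
a_{n}^{+}-a_{n}^{-}=e^{tz_{n}}\Big[(e^{it\lambda}-e^{-it\lambda})s_{n,k}^{+}(\lambda)+e^{-it\lambda}\big(s_{n,k}^{+}(\lambda)-s_{n,k}^{-}(\lambda)\big)\Big].
\]
After dividing by $2i\lambda$, this converges coefficientwise to
\[
e^{tz_{n}}\Big(t\,s_{n,k}^{+}(0)+\tfrac{1}{2i}\,\partial_{\lambda}(s^{+}-s^{-})_{n,k}(0)\Big).
\]
This expression already exhibits the Jordan structure: the term proportional to $t$ is what will produce the block $\begin{pmatrix}1&t\\0&1\end{pmatrix}$.

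\textbf{Upgrade to $\ell^{2}$ convergence.} I use dominated convergence in $\ell^{2}(\mathbb{N})$. Lemma~\ref{lem:uniform_threshold_estimates} gives, for $|\lambda|<\lambda_{0}$,
\[
|a_{n}^{\pm}(\lambda)|\le e^{t\lambda_{0}}e^{-t(n+1/2)}C_{k}\langle n\rangle^{M_{k}}.
\]
This is square-summable, independent of $\lambda$, and dominates the symmetric sequence. For the divided difference, the two pieces are controlled separately:
\begin{itemize}
\item the first piece is bounded using $|\sin(t\lambda)/\lambda|\le t\cosh(t\lambda_{0})$;
\item the second piece is handled by the mean value inequality applied to the holomorphic function $s^{+}-s^{-}$ on the segment $[0,\lambda]$, which vanishes at $0$. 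This gives $|s^{+}_{n,k}-s^{-}_{n,k}|(\lambda)\le|\lambda|\sup|\partial_{\lambda}(s^{+}-s^{-})|\le 2|\lambda|C_{k}\langle n\rangle^{M_{k}}(1+\log\langle n\rangle)$ by \eqref{eq:uniform_threshold_derivative_bound}.
\end{itemize}
Both pieces are then dominated by $C\,e^{-tn}\langle n\rangle^{M_{k}+1}$, uniformly in $\lambda$, which lies in $\ell^{2}$ because $t>0$. Together with coefficientwise convergence, dominated convergence gives the $\ell^{2}$ limits.

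\textbf{Main obstacle.} The main difficulty is making sure the uniform bounds of Lemma~\ref{lem:uniform_threshold_estimates} genuinely hold on a complex (or at least real) neighbourhood of $0$ with constants independent of $n$. The mean value step needs the derivative bound on the whole segment $[0,\lambda]$, and the holomorphic extension must not hide an $n$-dependent shrinking radius. If that lemma only gave bounds for real $\lambda$, I would use the real mean value theorem on the real and imaginary parts separately, which suffices here since $\lambda$ is real in the principal series. Everything else is the polynomial-versus-$e^{-tn}$ trade-off, which is where the hypothesis $t>0$ is essential.
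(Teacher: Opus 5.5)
Your proposal is correct and follows the paper's proof essentially step by step. You write the coefficients as $e^{-t(n+\frac{1}{2})}e^{\pm it\lambda}s^{\pm}_{n,k}(\lambda)$ via the renormalized intertwining relations, obtain coefficientwise limits from $s^{+}_{n,k}(0)=s^{-}_{n,k}(0)$, and conclude by dominated convergence in $\ell^{2}(\mathbb{N})$ using Lemma~\ref{lem:uniform_threshold_estimates}, with $e^{-tn}$ absorbing the polynomial and logarithmic growth.

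The differences are cosmetic. You split the divided difference into a $\sin(t\lambda)/\lambda$ piece and an $(s^{+}-s^{-})/\lambda$ piece, whereas the paper applies the derivative bound directly to $e^{\pm it\lambda}s^{\pm}_{n,k}$; you also state the limits explicitly. Your concern about real versus complex $\lambda$ is unnecessary, since $|f(\lambda)-f(0)|\le|\lambda|\sup_{[0,\lambda]}|f'|$ holds for complex-valued $f$ of a real variable.
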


\end{cBoxB}

\begin{proof}
By the renormalized intertwining relations \eqref{eq:renormalized_intertwining_threshold},
we have 
\begin{equation}
\left\langle e_{n}\mid\widehat{\mathcal{T}}_{+,\lambda}e^{t\widetilde{X}_{\lambda}}\psi_{k}\right\rangle =e^{-t(n+\frac{1}{2})}e^{it\lambda}s_{n,k}^{+}(\lambda),\label{eq:threshold_plus_propagated_coeff}
\end{equation}
and 
\begin{equation}
\left\langle e_{n}\mid\widehat{\mathcal{T}}_{-,\lambda}e^{t\widetilde{X}_{\lambda}}\psi_{k}\right\rangle =e^{-t(n+\frac{1}{2})}e^{-it\lambda}s_{n,k}^{-}(\lambda).\label{eq:threshold_minus_propagated_coeff}
\end{equation}

For each fixed $n$, Lemma~\ref{lem:threshold_coefficientwise_confluence}
implies 
\[
s_{n,k}^{+}(0)=s_{n,k}^{-}(0),
\]
hence both the average and the divided difference have pointwise limits
as $\lambda\to0$.

It remains to justify convergence in $\ell^{2}$. By Lemma~\ref{lem:uniform_threshold_estimates},
the average is bounded by 
\[
C_{k,t}e^{-tn}\langle n\rangle^{M_{k}},
\]
and the divided difference is bounded by 
\[
C_{k,t}e^{-tn}\langle n\rangle^{M_{k}}\bigl(1+\log\langle n\rangle\bigr).
\]
The second estimate follows from the derivative bound \eqref{eq:uniform_threshold_derivative_bound},
applied to the functions 
\[
e^{it\lambda}s_{n,k}^{+}(\lambda),\qquad e^{-it\lambda}s_{n,k}^{-}(\lambda),
\]
whose values agree at $\lambda=0$.

Both dominating sequences belong to $\ell^{2}(\mathbb{N})$, because
$t>0$. Therefore the two limits hold in $\ell^{2}(\mathbb{N})$. 
\end{proof}

\subsubsection{Algebraic Jordan normal form}

We now define the threshold transform obtained from the coalescence
of the two renormalized branches. For $\lambda\neq0$, write 
\[
f_{+,\lambda}:=\widehat{\mathcal{T}}_{+,\lambda}e^{t\widetilde{X}_{\lambda}}p,\qquad f_{-,\lambda}:=\widehat{\mathcal{T}}_{-,\lambda}e^{t\widetilde{X}_{\lambda}}p.
\]
The natural threshold coordinates are 
\[
D_{\lambda}:=\frac{f_{+,\lambda}-f_{-,\lambda}}{2i\lambda},\qquad S_{\lambda}:=\frac{f_{+,\lambda}+f_{-,\lambda}}{2}.
\]
Equivalently, 
\[
f_{+,\lambda}=S_{\lambda}+i\lambda D_{\lambda},\qquad f_{-,\lambda}=S_{\lambda}-i\lambda D_{\lambda}.
\]

\begin{cBoxA}{}

\begin{defn}[Renormalized threshold transform]
\label{def:threshold_transform} For $t>0$, set 
\[
D_{t}^{0}:=e^{t\widetilde{X}_{0}}\mathrm{Pol}(S^{1}).
\]
If 
\[
u=e^{t\widetilde{X}_{0}}p,\qquad p=\sum_{|k|\le K}a_{k}\psi_{k},
\]
define 
\begin{equation}
\mathcal{T}_{0}u:=\sum_{|k|\le K}a_{k}\lim_{\lambda\to0}\begin{pmatrix}\dfrac{\widehat{\mathcal{T}}_{+,\lambda}e^{t\widetilde{X}_{\lambda}}\psi_{k}-\widehat{\mathcal{T}}_{-,\lambda}e^{t\widetilde{X}_{\lambda}}\psi_{k}}{2i\lambda}\\[2.2ex]
\dfrac{\widehat{\mathcal{T}}_{+,\lambda}e^{t\widetilde{X}_{\lambda}}\psi_{k}+\widehat{\mathcal{T}}_{-,\lambda}e^{t\widetilde{X}_{\lambda}}\psi_{k}}{2}
\end{pmatrix}.\label{eq:def_T0_threshold}
\end{equation}
The limit is taken in 
\[
\ell^{2}(\mathbb{N})\oplus\ell^{2}(\mathbb{N}),
\]
and exists by Proposition~\ref{prop:threshold_l2_limit}. 
\end{defn}

\end{cBoxA}

\begin{rem}
The definition uses the same initial trigonometric polynomial $p$
for all $\lambda$: 
\[
u=e^{t\widetilde{X}_{0}}p,\qquad\mathcal{T}_{0}u=\lim_{\lambda\to0}\mathcal{T}_{\lambda}(e^{t\widetilde{X}_{\lambda}}p),
\]
where $\mathcal{T}_{\lambda}$ denotes the two-component transform
in the coordinates $(D_{\lambda},S_{\lambda})$. Thus $\mathcal{T}_{0}u$
is not defined as a limit with $u$ fixed independently of $\lambda$. 
\end{rem}

\begin{cBoxB}{}

\begin{lem}[Algebraic Jordan normal form]
\label{lem:threshold_algebraic_Jordan_normal_form} In the threshold
coordinates 
\[
D=\frac{f_{+}-f_{-}}{2i\lambda},\qquad S=\frac{f_{+}+f_{-}}{2},
\]
the diagonal model 
\[
(L+i\lambda)\oplus(L-i\lambda)
\]
becomes 
\begin{equation}
\mathbb{J}_{\lambda}=\begin{pmatrix}L & 1\\
-\lambda^{2} & L
\end{pmatrix}.\label{eq:J_lambda_threshold}
\end{equation}
Equivalently, since $\mu=\lambda^{2}+\frac{1}{4}$, 
\[
\mathbb{J}_{\lambda}=\begin{pmatrix}L & 1\\
\frac{1}{4}-\mu & L
\end{pmatrix}.
\]
At the threshold $\lambda=0$, this gives 
\begin{equation}
\mathbb{J}_{0}=\begin{pmatrix}L & 1\\
0 & L
\end{pmatrix}.\label{eq:J_threshold}
\end{equation}
\end{lem}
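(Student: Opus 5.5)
This is a finite-dimensional change of basis, carried out componentwise in $\ell^{2}(\mathbb{N})\oplus\ell^{2}(\mathbb{N})$. Since $L$ is diagonal and $\lambda$ is a scalar, everything commutes. The plan is to write the map from branch coordinates to threshold coordinates as an explicit $2\times2$ block matrix with scalar entries. Then we conjugate the diagonal operator $\operatorname{diag}(L+i\lambda,\,L-i\lambda)$ by it and read off $\mathbb{J}_{\lambda}$. Finally we let $\lambda\to0$, using $\mu=\lambda^{2}+\frac14$.

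First, for $\lambda\neq0$, the map $(f_{+},f_{-})\mapsto(D,S)$ is given by
\[
R_{\lambda}=\begin{pmatrix}\frac{1}{2i\lambda} & -\frac{1}{2i\lambda}\\ \frac12 & \frac12\end{pmatrix},\qquad R_{\lambda}^{-1}=\begin{pmatrix}i\lambda & 1\\ -i\lambda & 1\end{pmatrix},
\]
where the inverse is just the relations $f_{\pm}=S\pm i\lambda D$ already recorded before Definition~\ref{def:threshold_transform}. These entries are scalars (tensored with $\mathrm{Id}_{\ell^{2}}$), so they commute with $L$.

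Next, apply the diagonal model to $f_{\pm}$. This gives $(L\pm i\lambda)f_{\pm}$, which equals $(L\pm i\lambda)(S\pm i\lambda D)$. Forming the new coordinates:
\[
D'=\frac{(L+i\lambda)f_{+}-(L-i\lambda)f_{-}}{2i\lambda}=L\,\frac{f_{+}-f_{-}}{2i\lambda}+\frac{f_{+}+f_{-}}{2}=LD+S,
\]
\[
S'=\frac{(L+i\lambda)f_{+}+(L-i\lambda)f_{-}}{2}=LS+i\lambda\,\frac{f_{+}-f_{-}}{2}=LS+i\lambda\cdot i\lambda D=LS-\lambda^{2}D.
\]
So in the ordered coordinates $(D,S)$ the operator is $\begin{pmatrix}L&1\\-\lambda^{2}&L\end{pmatrix}$, which is \eqref{eq:J_lambda_threshold}. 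The identity $-\lambda^{2}=\frac14-\mu$ gives the second form.

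Finally, $\mathbb{J}_{\lambda}$ depends polynomially on $\lambda$, with no $1/\lambda$ left, so it is analytic in $\lambda$, indeed in $\mu$. Setting $\lambda=0$ gives \eqref{eq:J_threshold}. To connect this with the transform $\mathcal{T}_{0}$, we would also note how the limit interacts with the relation. By \eqref{eq:renormalized_intertwining_threshold}, $\mathcal{T}_{\lambda}(\widetilde{X}_{\lambda}e^{t\widetilde{X}_{\lambda}}p)=\mathbb{J}_{\lambda}\mathcal{T}_{\lambda}(e^{t\widetilde{X}_{\lambda}}p)$. Proposition~\ref{prop:threshold_l2_limit}, applied with $t$ slightly smaller to absorb the polynomial weight from $L$, gives $\ell^{2}$ convergence of both sides. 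The only point needing care, which I expect to be the mild obstacle, is domain control: $L$ is unbounded, so one must check that the $e^{-tn}$ decay from Lemma~\ref{lem:uniform_threshold_estimates} dominates the factor $n$. This holds since $\langle n\rangle^{M_{k}+1}e^{-tn}$ is summable. The purely algebraic statement itself needs only the $2\times2$ computation above.
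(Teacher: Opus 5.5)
Your proposal is correct and follows the paper's own proof: write $f_{\pm}=S\pm i\lambda D$, apply $(L\pm i\lambda)$ to each branch, and read off $D'=LD+S$ and $S'=LS-\lambda^{2}D$, then set $\lambda=0$. Your closing remark about passing to the limit for $\mathcal{T}_{0}$ belongs to the subsequent threshold intertwining lemma rather than to this algebraic statement, and is not needed here.
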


\end{cBoxB}

\begin{proof}
We have 
\[
f_{+}'=(L+i\lambda)f_{+},\qquad f_{-}'=(L-i\lambda)f_{-}.
\]
Using 
\[
f_{+}=S+i\lambda D,\qquad f_{-}=S-i\lambda D,
\]
we obtain 
\[
D'=\frac{f_{+}'-f_{-}'}{2i\lambda}=LD+S,
\]
and 
\[
S'=\frac{f_{+}'+f_{-}'}{2}=LS-\lambda^{2}D.
\]
Therefore 
\[
\begin{pmatrix}D'\\
S'
\end{pmatrix}=\begin{pmatrix}L & 1\\
-\lambda^{2} & L
\end{pmatrix}\begin{pmatrix}D\\
S
\end{pmatrix}.
\]
This proves \eqref{eq:J_lambda_threshold}, and the threshold case
is obtained by setting $\lambda=0$. 
\end{proof}
\begin{cBoxB}{}

\begin{lem}[Threshold intertwining]
\label{lem:threshold_intertwining_T0} On the core $D_{t}^{0}$,
one has 
\begin{equation}
\mathcal{T}_{0}\widetilde{X}_{0}=\begin{pmatrix}L & 1\\
0 & L
\end{pmatrix}\mathcal{T}_{0}.\label{eq:T0_intertwining_threshold}
\end{equation}
\end{lem}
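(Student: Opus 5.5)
The plan is to prove \eqref{eq:T0_intertwining_threshold} for $\lambda\neq0$ in the $(D_\lambda,S_\lambda)$ coordinates, where it follows from the diagonal intertwining, and then pass to the limit $\lambda\to0$ coefficientwise with $\ell^{2}$ domination. By linearity it suffices to treat $u=e^{t\widetilde X_0}\psi_k$ for a single $k\in\mathbb Z$.

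\textbf{Identities for $\lambda\neq0$.} From \eqref{eq:renormalized_intertwining_threshold}, differentiated as in Proposition~\ref{prop:propagated_estimates_and_X_intertwining}, we have for every trigonometric polynomial $q$
\[
\widehat{\mathcal T}_{\pm,\lambda}\,e^{t\widetilde X_\lambda}\widetilde X_\lambda q=(L\pm i\lambda)\,\widehat{\mathcal T}_{\pm,\lambda}\,e^{t\widetilde X_\lambda}q .
\]
Let $\mathcal T_\lambda(e^{t\widetilde X_\lambda}q)$ denote the pair $(D_\lambda,S_\lambda)$ built from $f_{\pm,\lambda}=\widehat{\mathcal T}_{\pm,\lambda}e^{t\widetilde X_\lambda}q$. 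Lemma~\ref{lem:threshold_algebraic_Jordan_normal_form} then gives
\[
\mathcal T_\lambda\bigl(e^{t\widetilde X_\lambda}\widetilde X_\lambda q\bigr)=\mathbb J_\lambda\,\mathcal T_\lambda\bigl(e^{t\widetilde X_\lambda}q\bigr).
\]

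\textbf{Left-hand side.} We have $\widetilde X_0u=e^{t\widetilde X_0}\widetilde X_0\psi_k$. By Lemma~\ref{lem:operators_compact_picture}, $\widetilde X_\lambda=\widetilde X_0+i\lambda\sin\theta$, so $\widetilde X_0\psi_k$ and $\sin\theta\,\psi_k$ are trigonometric polynomials involving only $\psi_{k\pm1}$. Write $\widetilde X_\lambda\psi_k=\widetilde X_0\psi_k+i\lambda\sin\theta\,\psi_k$. Then by definition \eqref{eq:def_T0_threshold}, applied with the fixed polynomial $\widetilde X_0\psi_k$,
\[
\mathcal T_\lambda\bigl(e^{t\widetilde X_\lambda}\widetilde X_0\psi_k\bigr)\to\mathcal T_0(\widetilde X_0u).
\]
The extra term $i\lambda\,\mathcal T_\lambda(e^{t\widetilde X_\lambda}\sin\theta\,\psi_k)$ tends to $0$ in $\ell^2\oplus\ell^2$, because the factor $\mathcal T_\lambda(\cdots)$ converges by Proposition~\ref{prop:threshold_l2_limit} and is therefore bounded. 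So the left-hand side tends to $\mathcal T_0(\widetilde X_0u)$.

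\textbf{Right-hand side (main obstacle).} We write $\mathbb J_\lambda=\mathbb J_0-\lambda^2E_{21}$, where $E_{21}$ is the elementary $2\times2$ matrix with entry $1$ in position $(2,1)$. The $\lambda^2$ term vanishes in the limit by boundedness, as above. The difficulty is that $L$ is unbounded, so we need $\mathcal T_\lambda(e^{t\widetilde X_\lambda}\psi_k)\to\mathcal T_0u$ in the graph norm of $L$, not just in $\ell^2$. I would obtain this by redoing the proof of Proposition~\ref{prop:threshold_l2_limit} with the weight $\langle n\rangle$ included. By \eqref{eq:threshold_plus_propagated_coeff}--\eqref{eq:threshold_minus_propagated_coeff} and Lemma~\ref{lem:uniform_threshold_estimates}, the $n$-th coefficients of both components, multiplied by $n+\tfrac12$, are dominated uniformly in small $\lambda$ by $C_{k,t}e^{-tn}\langle n\rangle^{M_k+1}(1+\log\langle n\rangle)$. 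This sequence is in $\ell^2$ because $t>0$. Together with the pointwise convergence from Lemma~\ref{lem:threshold_coefficientwise_confluence}, dominated convergence gives $\mathcal T_0u\in\mathcal D(L)\oplus\mathcal D(L)$ and $\mathbb J_0\mathcal T_\lambda(e^{t\widetilde X_\lambda}\psi_k)\to\mathbb J_0\mathcal T_0u$ in $\ell^2\oplus\ell^2$.

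Equating the two limits yields $\mathcal T_0\widetilde X_0u=\mathbb J_0\mathcal T_0u$, which is \eqref{eq:T0_intertwining_threshold}. One can also check this directly on coefficients: the $\lambda$-derivative at $0$ of $e^{\pm it\lambda}s^\pm_{n,k}(\lambda)$ is what produces the off-diagonal entry $1$ of $\mathbb J_0$.
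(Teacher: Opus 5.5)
Your proof is correct and follows the same route as the paper. You convert the diagonal intertwining for $\lambda\neq0$ into $\mathcal{T}_\lambda\widetilde{X}_\lambda=\mathbb{J}_\lambda\mathcal{T}_\lambda$, split $\widetilde{X}_\lambda\psi_k=\widetilde{X}_0\psi_k+i\lambda\sin\theta\,\psi_k$, and pass to the limit $\lambda\to0$ using Lemma~\ref{lem:uniform_threshold_estimates} and Proposition~\ref{prop:threshold_l2_limit}; your weighted domination argument (the extra factor $\langle n\rangle$, giving convergence in the graph norm of the unbounded $L$) makes explicit a step the paper only invokes when it says the passage to the limit is justified by the uniform estimates.
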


\end{cBoxB}

\begin{proof}
For $\lambda\neq0$, Lemma~\ref{lem:threshold_algebraic_Jordan_normal_form}
and the renormalized intertwining relations \eqref{eq:renormalized_intertwining_threshold}
give 
\[
\mathcal{T}_{\lambda}\widetilde{X}_{\lambda}=\begin{pmatrix}L & 1\\
-\lambda^{2} & L
\end{pmatrix}\mathcal{T}_{\lambda}.
\]
Let 
\[
u=e^{t\widetilde{X}_{0}}p\in D_{t}^{0}.
\]
Applying the preceding identity to $e^{t\widetilde{X}_{\lambda}}p$,
and letting $\lambda\to0$, gives the desired identity. Indeed, 
\[
\widetilde{X}_{\lambda}p=\widetilde{X}_{0}p+O(\lambda)
\]
in the finite-dimensional space $\mathrm{Pol}(S^{1})$. The passage
to the limit is justified by the uniform estimates of Proposition~\ref{prop:threshold_l2_limit},
applied to the finitely many trigonometric polynomials occurring in
\[
\widetilde{X}_{\lambda}p=\widetilde{X}_{0}p+i\lambda\sin\theta\,p.
\]
Hence 
\[
\mathcal{T}_{0}\widetilde{X}_{0}u=\begin{pmatrix}L & 1\\
0 & L
\end{pmatrix}\mathcal{T}_{0}u.
\]
\end{proof}

\subsubsection{Hilbertian Jordan model at the threshold}

We now complete the construction at the threshold.

\begin{cBoxB}{}

\begin{thm}[Hilbertian Jordan model at the threshold]
\label{thm:threshold_hilbertian_jordan_model} Let $\mu=\frac{1}{4}$,
and let $t>0$. Define 
\[
\mathcal{H}_{t}^{\mathrm{thr}}:=\overline{D_{t}^{0}}^{\,\|\mathcal{T}_{0}(\cdot)\|_{\ell^{2}\oplus\ell^{2}}},\qquad D_{t}^{0}=e^{t\widetilde{X}_{0}}\mathrm{Pol}(S^{1}).
\]
Then $\mathcal{H}_{t}^{\mathrm{thr}}$ is a Hilbert space, and 
\[
\mathcal{T}_{0}:\mathcal{H}_{t}^{\mathrm{thr}}\xrightarrow{\sim}\ell^{2}(\mathbb{N})\oplus\ell^{2}(\mathbb{N})
\]
extends uniquely as a unitary isomorphism.

Moreover, in $\mathcal{H}_{t}^{\mathrm{thr}}$, the operator $\widetilde{X}_{0}$
is unitarily equivalent to 
\begin{equation}
\mathbb{J}_{0}=\begin{pmatrix}L & 1\\
0 & L
\end{pmatrix},\qquad L=-A-\frac{1}{2}.\label{eq:def_J0_threshold_final}
\end{equation}
Equivalently, 
\begin{equation}
\widetilde{X}_{0}=\mathcal{T}_{0}^{-1}\begin{pmatrix}L & 1\\
0 & L
\end{pmatrix}\mathcal{T}_{0}.\label{eq:X0_Jordan_model}
\end{equation}

Therefore, as an operator on $\mathcal{H}_{t}^{\mathrm{thr}}$, 
\[
\operatorname{Spec}(\widetilde{X}_{0})=\left\{ -n-\frac{1}{2}\ ;\ n\in\mathbb{N}\right\} .
\]
For each 
\[
z_{n}=-n-\frac{1}{2},
\]
the generalized spectral space is two-dimensional, and the restriction
of $\widetilde{X}_{0}$ to this generalized spectral space is conjugated
to the Jordan block 
\[
\begin{pmatrix}z_{n} & 1\\
0 & z_{n}
\end{pmatrix}.
\]
In particular, there are no Jordan blocks of size $>2$.

Finally, in the model coordinates, for every $\tau>0$, one has the
norm-convergent expansion 
\begin{equation}
e^{\tau\mathbb{J}_{0}}=\sum_{n\in\mathbb{N}}e^{\tau z_{n}}\left(\Pi_{n}+\tau\mathcal{N}_{n}\right),\label{eq:threshold_semigroup_expansion_model}
\end{equation}
where 
\[
\Pi_{n}=\begin{pmatrix}e_{n}\langle e_{n}\mid\cdot\rangle & 0\\
0 & e_{n}\langle e_{n}\mid\cdot\rangle
\end{pmatrix},\qquad\mathcal{N}_{n}=\begin{pmatrix}0 & e_{n}\langle e_{n}\mid\cdot\rangle\\
0 & 0
\end{pmatrix}.
\]
Equivalently, transporting this identity by $\mathcal{T}_{0}^{-1}$
and $\mathcal{T}_{0}$ gives the corresponding norm-convergent expansion
for $e^{\tau\widetilde{X}_{0}}$ on $\mathcal{H}_{t}^{\mathrm{thr}}$.
\end{thm}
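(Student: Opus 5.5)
The plan follows the template of Corollary~\ref{cor:Ht_Hminus_t} and Theorem~\ref{thm:summary_spectral}. The only genuinely new inputs are injectivity and density of $\mathcal{T}_{0}$ on $D_{t}^{0}$. First, $\mathcal{T}_{0}$ is well defined and linear on $D_{t}^{0}$. Writing $u=e^{t\widetilde{X}_{0}}p$, the polynomial $p$ is determined by $u$, since $e^{t\widetilde{X}_{0}}$ is invertible on $L^{2}(S^{1})$ (the $\psi_k$ being linearly independent). Proposition~\ref{prop:threshold_l2_limit} then gives $\mathcal{T}_{0}u\in\ell^{2}\oplus\ell^{2}$, with the explicit bound $|\langle e_n\mid\cdot\rangle|\le C e^{-tn}\langle n\rangle^{M}(1+\log\langle n\rangle)$. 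Once injectivity and density hold, the completion $\mathcal{H}_{t}^{\mathrm{thr}}$ is a Hilbert space, and $\mathcal{T}_{0}$ extends to a unitary isomorphism onto $\ell^{2}\oplus\ell^{2}$, exactly as in Corollary~\ref{cor:Ht_Hminus_t}.

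\textbf{Injectivity.} I would use the second ($S$) component. By \eqref{eq:s_plus_zero_threshold} and \eqref{eq:threshold_plus_propagated_coeff}, at $\lambda=0$ the $S$-component of $\mathcal{T}_{0}u$ equals $\widehat{\mathcal{T}}_{+,0}u$, i.e.\ $t_n^{+}(0)\sqrt{n!}\,f_{+}^{(n)}(0)/n!$ up to nonzero constants, where $f_{+}=\mathcal{U}_{+}u$ has $b=-\tfrac12$. As in Lemma~\ref{lem:injectivity_on_Dt}, $f_{+}(x)=\sum c_k(e^{t}+ix)^{k-1/2}(e^{t}-ix)^{-k-1/2}$ is real-analytic on $\mathbb{R}$. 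If $\mathcal{T}_{0}u=0$, all Taylor coefficients of $f_{+}$ at $0$ vanish, so $f_{+}\equiv0$ and $u=0$. The Jordan structure is irrelevant for injectivity; only the $S$-component is needed.

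\textbf{Density.} This is the main obstacle, because the two branches now share the eigenvalue $z_n$, so the Cesàro separation of Proposition~\ref{prop:density_of_images} no longer distinguishes them. Set $W=\mathcal{T}_{0}(D_{t}^{0})$ and $\mathcal{K}=\overline{W}$. By Lemma~\ref{lem:threshold_intertwining_T0} together with the exponential decay, $\mathcal{K}$ is invariant under $e^{s\mathbb{J}_{0}}$, $s\ge0$.

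I would prove $(e_m,0),(0,e_m)\in\mathcal{K}$ for all $m<n$ by induction on $n$. Given $v\in\mathcal{K}$ with no components of order $<n$, apply the normalized averages $e^{-sz_n}e^{s\mathbb{J}_{0}}$. Components of order $>n$ decay like $e^{-s}\cdot s$. On the order-$n$ block the action is $\begin{pmatrix}1&s\\0&1\end{pmatrix}$, so $s^{-1}e^{-sz_n}e^{s\mathbb{J}_{0}}v\to(S_n e_n,0)$, where $S_n$ is the $S$-coefficient. The difference $e^{-sz_n}e^{s\mathbb{J}_0}v-s\,(S_ne_n,0)$ converges to $(D_ne_n,S_ne_n)$, provided $(e_n,0)$ is already known to lie in $\mathcal{K}$.

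It therefore suffices to find elements of $W$ with $S_n\neq0$; these yield both $(e_n,0)$ and then $(0,e_n)$. The nonvanishing of $k\mapsto s^{+}_{n,k}(0)$ follows as in Lemma~\ref{lem:each_mode_is_seen}. If all $s^{+}_{n,k}(0)$ vanished, the distribution $\mathcal{S}_{n,+}$ at $\lambda=0$, which is $(\mathcal{U}_+)^\dagger\delta^{(n)}$ up to a nonzero constant, would be zero. That is absurd, since its pairing with $(\mathcal{U}_{+})^{\dagger}x^{n}$ is nonzero.

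\textbf{Conclusions.} Unitary equivalence \eqref{eq:X0_Jordan_model} is transported from Lemma~\ref{lem:threshold_intertwining_T0} on the core, with closure taken in the model where $\mathbb{J}_{0}$ is a bounded perturbation of the normal operator $L\oplus L$. The spectrum of $\mathbb{J}_{0}$ is read off block by block. $\mathbb{J}_{0}$ is block diagonal in the $2$-dimensional spaces $\mathrm{span}\{(e_n,0),(0,e_n)\}$, on which it is exactly $\begin{pmatrix}z_n&1\\0&z_n\end{pmatrix}$; for $z\notin\{z_n\}$ the resolvent formula in the remark after Theorem~\ref{thm:main_result} gives a uniform bound, so the spectrum is $\{z_n\}$ with Jordan blocks of size exactly two. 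Finally, $e^{\tau\mathbb{J}_{0}}$ acts blockwise by $e^{\tau z_n}(I+\tau\mathcal{N})$, and since $\sum_n(1+\tau)e^{-\tau(n+1/2)}<\infty$, the series \eqref{eq:threshold_semigroup_expansion_model} converges in operator norm.
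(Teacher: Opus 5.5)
Your proposal is correct and follows the paper's proof essentially step by step. Injectivity comes from the $S$-component, which at $\lambda=0$ is the Taylor data of the real-analytic function $\mathcal{U}_{+,0}u$; density comes from the induction on $n$, using $s^{-1}e^{-sz_n}e^{s\mathbb{J}_0}$ to extract $\binom{e_n}{0}$ and then $e^{-sz_n}e^{s\mathbb{J}_0}v-s\,S_n\binom{e_n}{0}$ to extract $\binom{0}{e_n}$. The only (harmless) variation is that you justify the nonvanishing of the order-$n$ $S$-coefficient by the nontriviality of the distribution $\mathcal{S}_{n,+}$ at $\lambda=0$, as in Lemma~\ref{lem:each_mode_is_seen}, whereas the paper observes directly that this coefficient is a nonzero polynomial of degree $n$ in $k$.
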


\end{cBoxB}

\begin{proof}
We first prove that $\mathcal{T}_{0}$ is injective on $D_{t}^{0}$.
Let 
\[
u\in D_{t}^{0},\qquad u=e^{t\widetilde{X}_{0}}p,\qquad p\in\mathrm{Pol}(S^{1}),
\]
and assume that 
\[
\mathcal{T}_{0}u=0.
\]
The second component of $\mathcal{T}_{0}u$ is the common threshold
limit of the two renormalized branches. Using the positive branch,
set 
\[
f_{+}:=\mathcal{U}_{+,0}u.
\]
The vanishing of the second component gives, for every $n\in\mathbb{N}$,
\[
f_{+}^{(n)}(0)=0.
\]
Since $u\in D_{t}^{0}$, the function $f_{+}$ is holomorphic near
$0$ and real-analytic on its interval of definition. Hence $f_{+}$
vanishes identically. Therefore 
\[
u=\mathcal{U}_{+,0}^{-1}f_{+}=0.
\]
Thus $\mathcal{T}_{0}$ is injective.

We next prove that $\mathcal{T}_{0}(D_{t}^{0})$ is dense in $\ell^{2}(\mathbb{N})\oplus\ell^{2}(\mathbb{N})$.
Set 
\[
\mathcal{K}:=\overline{\mathcal{T}_{0}(D_{t}^{0})}^{\,\ell^{2}\oplus\ell^{2}}.
\]
By the threshold intertwining relation \eqref{eq:T0_intertwining_threshold},
the image $\mathcal{T}_{0}(D_{t}^{0})$ is invariant under 
\[
\mathbb{J}_{0}=\begin{pmatrix}L & 1\\
0 & L
\end{pmatrix}.
\]
Moreover, the exponential estimates obtained in Proposition~\ref{prop:threshold_l2_limit}
imply that the vectors of $\mathcal{T}_{0}(D_{t}^{0})$ are analytic
vectors for $\mathbb{J}_{0}$. Hence $\mathcal{K}$ is invariant under
\[
e^{s\mathbb{J}_{0}},\qquad s\ge0.
\]

We prove by induction on $n$ that 
\[
\binom{e_{n}}{0}\in\mathcal{K},\qquad\binom{0}{e_{n}}\in\mathcal{K}.
\]
For fixed $n$, the $e_{n}$-coordinate of the second component of
$\mathcal{T}_{0}(e^{t\widetilde{X}_{0}}\psi_{k})$ is not identically
zero as a function of $k$. Indeed, it is proportional to the $n$-th
Taylor coefficient at $0$ of 
\[
(1+x^{2})^{-1/2}\left(\frac{1+ix}{1-ix}\right)^{k},
\]
which is a polynomial in $k$ of degree $n$ with nonzero leading
coefficient.

Assume that all basis vectors of order $m<n$ already belong to $\mathcal{K}$.
Choose $u\in D_{t}^{0}$ such that 
\[
v:=\mathcal{T}_{0}u
\]
has nonzero $e_{n}$-coefficient in its second component. Subtracting
the already constructed lower-order components, we may write 
\[
v=d_{n}\binom{e_{n}}{0}+c_{n}\binom{0}{e_{n}}+\text{terms of order }m>n,\qquad c_{n}\neq0.
\]
Let 
\[
z_{n}=-n-\frac{1}{2}.
\]
Since 
\[
e^{s\mathbb{J}_{0}}=\begin{pmatrix}e^{sL} & se^{sL}\\
0 & e^{sL}
\end{pmatrix},
\]
we have, as $s\to+\infty$, 
\[
\frac{1}{s}e^{-sz_{n}}e^{s\mathbb{J}_{0}}v\longrightarrow c_{n}\binom{e_{n}}{0}.
\]
Hence 
\[
\binom{e_{n}}{0}\in\mathcal{K}.
\]
Subtracting this first component from $v$, we may suppose 
\[
v=c_{n}\binom{0}{e_{n}}+\text{terms of order }m>n.
\]
Then, as $s\to+\infty$, 
\[
e^{-sz_{n}}e^{s\mathbb{J}_{0}}v-sc_{n}\binom{e_{n}}{0}\longrightarrow c_{n}\binom{0}{e_{n}}.
\]
Thus 
\[
\binom{0}{e_{n}}\in\mathcal{K}.
\]
This completes the induction. Therefore $\mathcal{K}$ contains the
canonical orthonormal basis of 
\[
\ell^{2}(\mathbb{N})\oplus\ell^{2}(\mathbb{N}),
\]
and hence 
\[
\mathcal{K}=\ell^{2}(\mathbb{N})\oplus\ell^{2}(\mathbb{N}).
\]

The injectivity and density just proved imply that the norm 
\[
\|u\|_{\mathcal{H}_{t}^{\mathrm{thr}}}:=\|\mathcal{T}_{0}u\|_{\ell^{2}\oplus\ell^{2}}
\]
is a genuine norm on $D_{t}^{0}$, and that the completion gives a
Hilbert space for which 
\[
\mathcal{T}_{0}:\mathcal{H}_{t}^{\mathrm{thr}}\xrightarrow{\sim}\ell^{2}(\mathbb{N})\oplus\ell^{2}(\mathbb{N})
\]
is unitary.

The intertwining formula 
\[
\mathcal{T}_{0}\widetilde{X}_{0}=\mathbb{J}_{0}\mathcal{T}_{0}
\]
then gives 
\[
\widetilde{X}_{0}=\mathcal{T}_{0}^{-1}\mathbb{J}_{0}\mathcal{T}_{0}.
\]
Since 
\[
Le_{n}=\left(-n-\frac{1}{2}\right)e_{n}=z_{n}e_{n},
\]
the two-dimensional space 
\[
F_{n}:=\mathbb{C}e_{n}\oplus\mathbb{C}e_{n}
\]
is invariant under $\mathbb{J}_{0}$, and the restriction of $\mathbb{J}_{0}$
to $F_{n}$ is 
\[
\begin{pmatrix}z_{n} & 1\\
0 & z_{n}
\end{pmatrix}.
\]
The spectral and Jordan statements follow by conjugation through $\mathcal{T}_{0}$.

Finally, 
\[
e^{\tau\mathbb{J}_{0}}=\begin{pmatrix}e^{\tau L} & \tau e^{\tau L}\\
0 & e^{\tau L}
\end{pmatrix},
\]
and 
\[
e^{\tau L}=\sum_{n\in\mathbb{N}}e^{\tau z_{n}}e_{n}\langle e_{n}\mid\cdot\rangle.
\]
Thus 
\[
e^{\tau\mathbb{J}_{0}}=\sum_{n\in\mathbb{N}}e^{\tau z_{n}}\left(\Pi_{n}+\tau\mathcal{N}_{n}\right)
\]
in model coordinates. Since 
\[
e^{\tau z_{n}}=e^{-\tau(n+\frac{1}{2})},
\]
the series converges in operator norm for every $\tau>0$. Transporting
this identity by $\mathcal{T}_{0}^{-1}$ and $\mathcal{T}_{0}$ gives
\eqref{eq:threshold_semigroup_expansion_model}. 
\end{proof}
\begin{rem}
The factor 
\[
\tau e^{\tau z_{n}}
\]
in \eqref{eq:threshold_semigroup_expansion_model} is the signature
of the Jordan block. Equivalently, it is the contribution of the logarithmic
partner produced by the coalescence of the two branches 
\[
z_{n,+}(\lambda)=z_{n}+i\lambda,\qquad z_{n,-}(\lambda)=z_{n}-i\lambda.
\]
\end{rem}

\subsection{\protect\label{subsec:Trace-of--1}Trace of $e^{t\tilde{X}}$ in
the Hilbert model}

\begin{cBoxB}{}

\begin{thm}
\label{thm:flat_trace_equals_Hilbert_trace} Assume that $\lambda>0$.
For every $t>0$, the operator $e^{t\tilde{X}}$ is not trace class
on $L^{2}(S^{1})$. However, its flat trace is well defined and is
given by 
\begin{equation}
\operatorname{Tr}_{S^{1}}^{\flat}\left(e^{t\tilde{X}}\right):=\int_{S^{1}}\langle\delta_{\theta}\mid e^{t\tilde{X}}\delta_{\theta}\rangle\,d\theta=2\cos(t\lambda)\frac{e^{-t/2}}{1-e^{-t}}.\label{eq:trace_flat_spherical}
\end{equation}
On the other hand, for every $\tau>0$, the operator $e^{t\tilde{X}}$
is trace class in the Hilbert model $\mathcal{H}_{\tau}$, and 
\[
\operatorname{Tr}_{\mathcal{H}_{\tau}}\left(e^{t\tilde{X}}\right)=\operatorname{Tr}_{S^{1}}^{\flat}\left(e^{t\tilde{X}}\right).
\]
\end{thm}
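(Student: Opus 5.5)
The proof splits into three parts: the flat trace, the non-trace-class property on $L^{2}(S^{1})$, and the Hilbert-model trace.

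\textbf{Flat trace.} The operator $e^{t\tilde{X}}$ is a weighted pull-back by the flow $\phi^{t}$ of the vector field $\cos\theta\,\partial_{\theta}$ on $S^{1}$. By Lemma~\ref{lem:operators_compact_picture}, it acts as
\[
(e^{t\tilde{X}}u)(\theta)=w_{t}(\theta)\,u(\phi^{t}(\theta)),
\]
where the weight $w_{t}$ comes from the multiplicative term $b\sin\theta$. Its Schwartz kernel is therefore $w_{t}(\theta)\delta(\theta'-\phi^{t}(\theta))$. The flat trace, namely the pullback to the diagonal, localizes at the two fixed points $\theta=\pm\pi/2$ by the Atiyah--Bott--Guillemin formula:
\[
\operatorname{Tr}^{\flat}=\sum_{\theta_{0}=\pm\pi/2}\frac{w_{t}(\theta_{0})}{|1-(\phi^{t})'(\theta_{0})|}.
\]
These quantities are easiest to compute in the charts of Lemma~\ref{lem:For-each-sign}, where $\tilde{X}$ becomes $\mp(x\partial_{x}-b)$. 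The chart map $\mathcal{U}_{\pm}$ is a unitary change of variables with a smooth multiplier, so it does not change the flat trace: the local contribution is computed from the kernel $e^{\mp tx\partial_{x}}$ near $x=0$.
\begin{itemize}
\item In the $+$ chart, the operator is $u\mapsto e^{tb}u(e^{-t}x)$, whose kernel is $e^{tb}\delta(y-e^{-t}x)$. Its contribution is $e^{tb}/(1-e^{-t})$.
\item In the $-$ chart, the operator is $u\mapsto e^{-tb}u(e^{t}x)$, whose contribution is $e^{-tb}/(e^{t}-1)=e^{-tb-t}/(1-e^{-t})$.
\end{itemize}
With $b=-\tfrac12+i\lambda$ we have $-b-1=-\tfrac12-i\lambda$, so the sum is $2\cos(t\lambda)e^{-t/2}/(1-e^{-t})$, which is \eqref{eq:trace_flat_spherical}.

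\textbf{Non-trace-class on $L^{2}(S^{1})$.} The operator $e^{t\tilde{X}}$ equals a unitary operator (the $L^{2}$-normalized pullback) because $\tilde{X}$ is skew-adjoint. A unitary operator on an infinite-dimensional space is not compact, hence not trace class.

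\textbf{Trace in $\mathcal{H}_{\tau}$.} By Theorem~\ref{thm:summary_spectral}, $e^{t\tilde{X}}$ is unitarily conjugate through $\mathcal{T}$ to $e^{tN}$ with $N=(-A+b_{+})\oplus(-A+b_{-})$. This operator is diagonal with eigenvalues $e^{tz_{n,\pm}}$, and $|e^{tz_{n,\pm}}|=e^{-t(n+1/2)}$ is summable, so $e^{tN}$ is trace class. Its trace is
\[
\sum_{n}\bigl(e^{tz_{n,+}}+e^{tz_{n,-}}\bigr)=2\cos(t\lambda)\,e^{-t/2}\sum_{n}e^{-tn},
\]
which equals the flat trace computed above.

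\textbf{Main obstacle.} The delicate step is justifying the flat trace rigorously: the wavefront set of the kernel must avoid the conormal of the diagonal, and the weight factors at the fixed points must be handled carefully. This holds because the fixed points are hyperbolic ($(\phi^{t})'=e^{\mp t}\neq1$). Away from the fixed points the pullback to the diagonal vanishes, and near each fixed point the chart computation above applies via a partition of unity.
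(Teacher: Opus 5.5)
Your proposal is correct and follows essentially the same route as the paper: unitarity rules out trace class on $L^{2}(S^{1})$, the flat trace is the Atiyah--Bott--Guillemin sum over the two hyperbolic fixed points $\theta=\pm\pi/2$ with weights $e^{\pm tb}$, and the Hilbert-model trace is the sum of the eigenvalues $e^{tz_{n,\pm}}$ given by Theorem~\ref{thm:summary_spectral}. The only cosmetic difference is where you compute the local fixed-point contributions: you read them off in the linearizing charts $\mathcal{U}_{\pm}$, whereas the paper evaluates the transport weight $\exp\bigl(\int_{0}^{t}b\sin\phi^{r}(\theta)\,dr\bigr)$ directly at the fixed points; both give $e^{tb}/(1-e^{-t})$ and $e^{-tb}/(e^{t}-1)$.
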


\end{cBoxB}

\begin{proof}
On $L^{2}(S^{1})$, the operator $\tilde{X}$ is skew-adjoint, hence
$e^{t\tilde{X}}$ is unitary. Since $L^{2}(S^{1})$ is infinite-dimensional,
this unitary operator is not compact, and therefore not trace class.

We now compute its flat trace. Recall that 
\[
\tilde{X}\eq{\ref{eq:expression_X_sl2R}}\cos\theta\,\frac{d}{d\theta}+b\sin\theta,\qquad b\eq{\ref{eq:def_b}}-\frac{1}{2}+i\lambda.
\]
Let $\phi^{t}:S^{1}\to S^{1}$ be the flow generated by 
\[
\cos\theta\,\frac{d}{d\theta}.
\]
Its fixed points are 
\[
\theta=\frac{\pi}{2},\qquad\theta=-\frac{\pi}{2},
\]
and 
\[
d\phi_{\pi/2}^{t}=e^{-t},\qquad d\phi_{-\pi/2}^{t}=e^{t}.
\]
The propagator is the weighted transport operator 
\[
(e^{t\tilde{X}}u)(\theta)=\exp\left(\int_{0}^{t}b\sin(\phi^{r}(\theta))\,dr\right)u(\phi^{t}(\theta)).
\]
Therefore the flat trace is the sum of the two fixed-point contributions:
\[
\operatorname{Tr}_{S^{1}}^{\flat}\left(e^{t\tilde{X}}\right)=\frac{e^{tb}}{|1-e^{-t}|}+\frac{e^{-tb}}{|1-e^{t}|}.
\]
Since $t>0$, this becomes 
\[
\operatorname{Tr}_{S^{1}}^{\flat}\left(e^{t\tilde{X}}\right)=\frac{e^{tb}}{1-e^{-t}}+\frac{e^{-tb}}{e^{t}-1}.
\]
Using 
\[
\frac{1}{e^{t}-1}=\frac{e^{-t}}{1-e^{-t}},
\]
and $b=-\frac{1}{2}+i\lambda$, we get 
\[
\operatorname{Tr}_{S^{1}}^{\flat}\left(e^{t\tilde{X}}\right)=\frac{e^{-t/2+it\lambda}}{1-e^{-t}}+\frac{e^{-t/2-it\lambda}}{1-e^{-t}},
\]
hence 
\[
\operatorname{Tr}_{S^{1}}^{\flat}\left(e^{t\tilde{X}}\right)=2\cos(t\lambda)\frac{e^{-t/2}}{1-e^{-t}}.
\]

In the Hilbert model $\mathcal{H}_{\tau}$, Theorem~\ref{thm:summary_spectral}
gives the discrete spectrum 
\[
z_{n,\pm}\eq{\ref{eq:def_z_n}}-n-\frac{1}{2}\pm i\lambda,\qquad n\in\mathbb{N}.
\]
Thus $e^{t\tilde{X}}$ is trace class on $\mathcal{H}_{\tau}$, and
\[
\operatorname{Tr}_{\mathcal{H}_{\tau}}\left(e^{t\tilde{X}}\right)=\sum_{n\ge0}\left(e^{tz_{n,+}}+e^{tz_{n,-}}\right).
\]
Therefore 
\[
\operatorname{Tr}_{\mathcal{H}_{\tau}}\left(e^{t\tilde{X}}\right)=2\cos(t\lambda)\sum_{n\ge0}e^{-t(n+\frac{1}{2})}=2\cos(t\lambda)\frac{e^{-t/2}}{1-e^{-t}}.
\]
This is exactly \eqref{eq:trace_flat_spherical}. 
\end{proof}
The same identity extends to the complementary series by analytic
continuation in the spectral parameter $\lambda$, with $\lambda=i\nu$,
$0<\nu<1/2$. At the threshold $\lambda=0$, the two branches coalesce
into a Jordan block; the nilpotent part has zero trace, and the trace
is the limit of the preceding formula.

\subsection{Global expressions for spherical principal and complementary series}

We now globalize the preceding constructions over the spectral decomposition
of the Laplacian on the compact surface 
\[
\mathcal{N}=\Gamma\backslash\mathbb{H}^{2}.
\]
Since we assume $-\mathrm{Id}\in\Gamma$, only even spherical representations
occur in the present part.

Let 
\[
L_{0}^{2}(\mathcal{N}):=(\mathbb{C}\mathbf{1})^{\perp}\subset L^{2}(\mathcal{N}).
\]
The positive spectrum of $\Delta$ on $L_{0}^{2}(\mathcal{N})$ is
discrete. For $\mu>0$, we write 
\[
E_{\mu}:=\ker(\Delta-\mu).
\]
We separate the possible threshold eigenspace: 
\[
E_{1/4}:=\ker\left(\Delta-\frac{1}{4}\right),\qquad E_{\neq1/4}:=\mathbf{1}_{(0,\infty)\setminus\{1/4\}}(\Delta)L_{0}^{2}(\mathcal{N}).
\]
Thus 
\[
L_{0}^{2}(\mathcal{N})=E_{\neq1/4}\oplus E_{1/4},
\]
where $E_{1/4}$ may be zero.

On $E_{\neq1/4}$, define by spectral calculus 
\begin{equation}
\Lambda:=\left(\Delta-\frac{1}{4}\right)^{1/2},\label{eq:def_global_Lambda_spherical}
\end{equation}
with the convention 
\[
\Lambda_{\mid E_{\mu}}=\begin{cases}
\sqrt{\mu-\frac{1}{4}}\,\mathrm{Id}, & \mu>\frac{1}{4},\\[0.4em]
i\sqrt{\frac{1}{4}-\mu}\,\mathrm{Id}, & 0<\mu<\frac{1}{4}.
\end{cases}
\]
We also set 
\[
L:=-A-\frac{1}{2}
\]
on $\ell^{2}(\mathbb{N})$, and 
\[
\Sigma:=\begin{pmatrix}1 & 0\\
0 & -1
\end{pmatrix}
\]
on $\mathbb{C}^{2}$.

The model Hilbert space for the non-threshold spherical part is 
\begin{equation}
\widetilde{\mathcal{K}}_{\mathrm{sph},\neq1/4}:=\ell^{2}(\mathbb{N})\otimes E_{\neq1/4}\otimes\mathbb{C}^{2}.\label{eq:model_spherical_non_threshold}
\end{equation}
On this space, the global model operator is 
\begin{equation}
\mathbb{X}_{\mathrm{sph},\neq1/4}:=L\otimes\mathrm{Id}\otimes\mathrm{Id}+i\,\mathrm{Id}\otimes\Lambda\otimes\Sigma.\label{eq:X_model_spherical_non_threshold}
\end{equation}
Indeed, on $E_{\mu}$ its eigenvalues are 
\[
z_{n,\pm}(\mu)=-n-\frac{1}{2}\pm i\sqrt{\mu-\frac{1}{4}},
\]
where the square root is understood with the convention above. Thus
this single formula contains both the principal and complementary
series.

At the threshold $\mu=\frac{1}{4}$, the two branches coalesce and
the diagonal model must be replaced by the Jordan model 
\[
\mathbb{J}_{0}=\begin{pmatrix}L & 1\\
0 & L
\end{pmatrix}.
\]
If $E_{1/4}\neq0$, we set 
\begin{equation}
\widetilde{\mathcal{K}}_{\mathrm{sph},1/4}:=\ell^{2}(\mathbb{N})\otimes E_{1/4}\otimes\mathbb{C}^{2},\label{eq:model_spherical_threshold}
\end{equation}
with model operator 
\begin{equation}
\mathbb{X}_{\mathrm{sph},1/4}:=\mathbb{J}_{0}\otimes\mathrm{Id}_{E_{1/4}}.\label{eq:X_model_spherical_threshold}
\end{equation}
If $E_{1/4}=0$, this summand is absent.

Finally, we set 
\begin{equation}
\widetilde{\mathcal{K}}_{\mathrm{sph}}:=\widetilde{\mathcal{K}}_{\mathrm{sph},\neq1/4}\oplus\widetilde{\mathcal{K}}_{\mathrm{sph},1/4}.\label{eq:model_spherical_total}
\end{equation}

Let 
\[
\mathcal{P}_{\mathrm{sph}}:=\mathcal{P}(M)\cap\mathcal{H}^{\mathrm{sph}}
\]
be the $K$-finite and spectrally finite algebraic core of the spherical
part. For $t\in\mathbb{R}$, set 
\begin{equation}
\mathcal{D}_{\mathrm{sph},t}:=e^{tX}\mathcal{P}_{\mathrm{sph}}.\label{eq:def_D_sph_t}
\end{equation}

\begin{cBoxB}{}

\begin{thm}[Global Hilbert model for the spherical part]
\label{thm:global_spherical_model_X} Let $t>0$. With the convention
that $\mathcal{T}$ denotes, in each spectral fiber, the appropriate
transform constructed above in (\ref{eq:def_cal_T}, \ref{eq:def_cal_T-1})
and (\ref{eq:def_T0_threshold}), and $\mathcal{U}_{1}$ defined in
(\ref{eq:def_U}), we set 
\begin{equation}
\mathbb{T}_{\mathrm{sph}}:=\bigoplus_{\mu>0}\mathcal{T}\mathcal{U}_{1}\qquad:\mathcal{D}_{\mathrm{sph},t}\longrightarrow\widetilde{\mathcal{K}}_{\mathrm{sph}}.\label{eq:def_TT}
\end{equation}
This map is injective and has dense image. We define 
\[
\|u\|_{\mathcal{H}_{\mathrm{sph},t}}:=\|\mathbb{T}_{\mathrm{sph}}u\|_{\widetilde{\mathcal{K}}_{\mathrm{sph}}},\qquad u\in\mathcal{D}_{\mathrm{sph},t},
\]
and 
\[
\mathcal{H}_{\mathrm{sph},t}:=\overline{\mathcal{D}_{\mathrm{sph},t}}^{\|\cdot\|_{\mathcal{H}_{\mathrm{sph},t}}}.
\]
Then 
\[
\mathbb{T}_{\mathrm{sph}}:\mathcal{H}_{\mathrm{sph},t}\xrightarrow{\sim}\widetilde{\mathcal{K}}_{\mathrm{sph}}
\]
extends uniquely as a unitary isomorphism.

On the non-threshold spherical part, one has 
\begin{equation}
\mathbb{T}_{\mathrm{sph}}X\mathbb{T}_{\mathrm{sph}}^{-1}=L+i\Sigma\Lambda=-A-\frac{1}{2}+i\Sigma\Lambda.\label{eq:global_X_non_threshold}
\end{equation}
Moreover, on the same non-threshold part, 
\begin{equation}
\mathbb{T}_{\mathrm{sph}}U\mathbb{T}_{\mathrm{sph}}^{-1}=-\Sigma\,a^{+}\left(A+1-2i\Sigma\Lambda\right)^{1/2},\label{eq:global_U_spherical}
\end{equation}
and 
\begin{equation}
\mathbb{T}_{\mathrm{sph}}S\mathbb{T}_{\mathrm{sph}}^{-1}=\Sigma\left(A+1-2i\Sigma\Lambda\right)^{1/2}a^{-}.\label{eq:global_S_spherical}
\end{equation}
The square root in $(A+1-2i\Sigma\Lambda)^{1/2}$ is understood fiberwise,
with the branch determined by the diagonal gauge of Lemma~\ref{lem:diagonal_algebraic_gauge},
as used in Corollary~\ref{cor:Tcal_pm} and Subsection~\ref{subsec:Complementary-series}.
Here tensor products with identity operators are omitted from the
notation.

On the threshold part $E_{1/4}$, if it is nonzero, one has 
\begin{equation}
\mathbb{T}_{\mathrm{sph}}X\mathbb{T}_{\mathrm{sph}}^{-1}=\mathbb{J}_{0}=\begin{pmatrix}L & 1\\
0 & L
\end{pmatrix}.\label{eq:global_X_threshold}
\end{equation}

Finally, on the whole spherical part, 
\begin{equation}
\mathbb{T}_{\mathrm{sph}}\Omega\mathbb{T}_{\mathrm{sph}}^{-1}=\mathrm{Id}_{\ell^{2}(\mathbb{N})}\otimes\Delta\otimes\mathrm{Id}_{\mathbb{C}^{2}}.\label{eq:global_Omega_spherical}
\end{equation}

Consequently, on the non-threshold spherical part, the resonances
of $X$ are 
\begin{equation}
z_{n,\pm}(\mu)=-n-\frac{1}{2}\pm i\sqrt{\mu-\frac{1}{4}},\qquad n\in\mathbb{N},\qquad\mu\neq\frac{1}{4},\label{eq:global_spherical_resonances}
\end{equation}
where for $0<\mu<1/4$ the square root is understood through the convention
\eqref{eq:def_global_Lambda_spherical}. At the threshold, the resonance
\[
z_{n,0}=-n-\frac{1}{2}
\]
has a Jordan block of size $2$ on 
\[
e_{n}\otimes E_{1/4}\otimes\mathbb{C}^{2}.
\]
\end{thm}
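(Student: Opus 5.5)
The plan is to reduce everything to the single-representation results already proved, by decomposing $\mathcal{D}_{\mathrm{sph},t}$ along the spectral decomposition of Theorem~\ref{thm:We-have-a}. First, fix for each $\mu\in\mathrm{Spec}(\Delta_{/L_0^2(\mathcal{N})})$ an orthonormal basis $(v_{\mu,j})_j$ of $\mathcal{H}_{\mu,0}\simeq E_\mu$. Each $v_{\mu,j}$ generates an irreducible $\mathcal{H}_\mu^{\mathrm{sph}}(v_{\mu,j})$ with the orthonormal basis $(\varphi_k)$ of \eqref{eq:def_phi_k}. Then $\mathcal{H}^{\mathrm{sph}}_\mu\simeq \ell^2(\mathbb{Z})\otimes E_\mu$, and $\mathcal{U}_1\otimes\mathrm{Id}_{E_\mu}$ identifies it with $L^2(S^1)\otimes E_\mu$. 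Because $X$ preserves each irreducible summand, $e^{tX}$ does too, so
\[
\mathcal{D}_{\mathrm{sph},t}=\bigoplus_{\mu}\,D_t^{(\mu)}\otimes E_\mu,
\]
an algebraic (finite) direct sum, with $D_t^{(\mu)}=e^{t\tilde X_\mu}\mathrm{Pol}(S^1)$. Here $\mathcal{P}_{\mathrm{sph}}$ involves only finitely many $\mu$ and finitely many $K$-types. Accordingly, $\mathbb{T}_{\mathrm{sph}}$ is the finite direct sum of $\mathcal{T}_{(\mu)}\otimes\mathrm{Id}_{E_\mu}$.

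Next, I verify the Hilbert-space statements fiberwise. Injectivity on each fiber is Lemma~\ref{lem:injectivity_on_Dt} for $\mu>1/4$, Theorem~\ref{thm:complementary_series} for $0<\mu<1/4$, and Theorem~\ref{thm:threshold_hilbertian_jordan_model} for $\mu=1/4$. Since the fibers are mutually orthogonal summands of $\widetilde{\mathcal{K}}_{\mathrm{sph}}$, injectivity of the finite direct sum follows. Density of the image follows from fiberwise density (Proposition~\ref{prop:density_of_images} and its analogues): the algebraic direct sum of dense subspaces of the summands is dense in the Hilbert direct sum. Completion then gives the unitary extension exactly as in Corollary~\ref{cor:Ht_Hminus_t}.

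For the operator identities, on the fiber $\mu\ne1/4$ Theorem~\ref{thm:summary_spectral} (resp. Theorem~\ref{thm:complementary_series}) gives $\mathcal{T}\tilde X\mathcal{T}^{-1}=(-A+b_+)\oplus(-A+b_-)$. Since $b_\pm=-\tfrac12\pm i\lambda$ and $\Lambda|_{E_\mu}=\lambda$ (with $\lambda=i\nu$ in the complementary case), this equals $L+i\Sigma\Lambda$, which is \eqref{eq:global_X_non_threshold}. For $U,S$, I use \eqref{eq:U_tilde}, \eqref{eq:S_tilde} and rewrite $A-2b_\pm=A+1\mp2i\lambda=A+1-2i\Sigma\Lambda$ on the $\pm$ component. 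The signs $\mp$ and $\pm$ become $-\Sigma$ and $\Sigma$, giving \eqref{eq:global_U_spherical} and \eqref{eq:global_S_spherical}. The threshold identity \eqref{eq:global_X_threshold} is Lemma~\ref{lem:threshold_intertwining_T0} tensored with $\mathrm{Id}_{E_{1/4}}$. The Casimir acts by $\mu$ on $\mathcal{H}_\mu^{\mathrm{sph}}$, so it becomes $\mathrm{Id}\otimes\Delta\otimes\mathrm{Id}$ on $E_\mu$. The resonance list and the Jordan blocks are then read off from the diagonal and Jordan models.

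The main obstacle is domain bookkeeping for $U$ and $S$. Formulas \eqref{eq:U_tilde} and \eqref{eq:S_tilde} were proved only algebraically on the Gaussian core $\mathcal{D}_+\oplus\mathcal{D}_-$, not on $D_t$. I would first try to transfer them to $D_t$ through the weak expansions of Lemma~\ref{lem:Tcal_Lagrangian_states}, checking coefficientwise that $\langle e_n^\pm\mid\mathcal{T}_\pm\tilde U\psi\rangle$ matches the model operator applied to $\mathcal{T}_\pm\psi$. This comes down to the same Taylor-coefficient and beta-integral identities already used in Lemma~\ref{lem:Tprime_pm}. The exponential decay \eqref{eq:Fourier_decay_t} then guarantees that the unbounded weighted creation and annihilation operators are defined on $\mathbb{T}_{\mathrm{sph}}(\mathcal{D}_{\mathrm{sph},t})$. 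A secondary point is branch consistency of $(A+1-2i\Sigma\Lambda)^{1/2}$ across the principal and complementary fibers, which I would fix using the gauge branches chosen in Corollary~\ref{cor:Tcal_pm}.
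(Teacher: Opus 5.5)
Your proposal is correct and follows the same route as the paper. Like the paper, you decompose $\mathcal{D}_{\mathrm{sph},t}$ along the eigenspaces $E_\mu$, use the fiberwise principal, complementary and threshold transforms for injectivity, density and the intertwining identities, and then pass to the Hilbert direct sum by completion.

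Your extra step of transferring the $U,S$ identities from the Gaussian core to $D_t$, which the paper passes over silently, is sound. It is most easily carried out by using $\tilde U e^{t\tilde X}=e^{t}e^{t\tilde X}\tilde U$ and $\tilde S e^{t\tilde X}=e^{-t}e^{t\tilde X}\tilde S$. These reduce everything to coefficientwise identities on trigonometric polynomials: Taylor coefficients for the $+$ branch and analytically continued moments for the $-$ branch.
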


\end{cBoxB}

\begin{proof}
The construction is obtained by applying the preceding fiberwise models
to each eigenspace 
\[
E_{\mu}=\ker(\Delta-\mu).
\]
For $\mu\neq1/4$, the spherical model is diagonal and is given by
\[
L+i\Sigma\Lambda.
\]
Indeed, if $\mu>1/4$, then 
\[
\Lambda_{\mid E_{\mu}}=\sqrt{\mu-\frac{1}{4}},
\]
and this gives the two principal-series branches 
\[
-n-\frac{1}{2}\pm i\sqrt{\mu-\frac{1}{4}}.
\]
If $0<\mu<1/4$, then 
\[
\Lambda_{\mid E_{\mu}}=i\sqrt{\frac{1}{4}-\mu},
\]
and the same formula gives the two complementary-series branches 
\[
-n-\frac{1}{2}\mp\sqrt{\frac{1}{4}-\mu}.
\]
Thus \eqref{eq:global_X_non_threshold} contains both cases.

The formulas for $U$ and $S$ are obtained by the same fiberwise
globalization of the local formulas. On the $+$-branch one has 
\[
U=-a^{+}(A+1-2i\Lambda)^{1/2},\qquad S=(A+1-2i\Lambda)^{1/2}a^{-},
\]
while on the $-$-branch one has 
\[
U=a^{+}(A+1+2i\Lambda)^{1/2},\qquad S=-(A+1+2i\Lambda)^{1/2}a^{-}.
\]
These two branch formulas are exactly \eqref{eq:global_U_spherical}
and \eqref{eq:global_S_spherical}. The order of the factors is inherited
from the local formulas; in particular, $a^{\pm}$ should not be commuted
with functions of $A$.

At $\mu=1/4$, the two branches coalesce, and the threshold construction
gives the Jordan model 
\[
\mathbb{J}_{0}=\begin{pmatrix}L & 1\\
0 & L
\end{pmatrix}.
\]
This proves \eqref{eq:global_X_threshold}. The corresponding threshold
formulas for $U$ and $S$ can be obtained by the same limiting procedure,
but they are not needed here.

Taking the Hilbert direct sum over the spectral decomposition of $\Delta$
gives the global transform 
\[
\mathbb{T}_{\mathrm{sph}}:\mathcal{D}_{\mathrm{sph},t}\longrightarrow\widetilde{\mathcal{K}}_{\mathrm{sph}}.
\]
The fiberwise injectivity and density imply that $\mathbb{T}_{\mathrm{sph}}$
is injective and has dense image. Hence the norm 
\[
\|u\|_{\mathcal{H}_{\mathrm{sph},t}}=\|\mathbb{T}_{\mathrm{sph}}u\|_{\widetilde{\mathcal{K}}_{\mathrm{sph}}}
\]
is a genuine norm on $\mathcal{D}_{\mathrm{sph},t}$, and the completion
gives the unitary isomorphism 
\[
\mathbb{T}_{\mathrm{sph}}:\mathcal{H}_{\mathrm{sph},t}\xrightarrow{\sim}\widetilde{\mathcal{K}}_{\mathrm{sph}}.
\]

Finally, the Casimir operator acts on the spherical summand associated
with $E_{\mu}$ by the scalar $\mu$. Therefore, globally, it is transported
to 
\[
\mathrm{Id}_{\ell^{2}(\mathbb{N})}\otimes\Delta\otimes\mathrm{Id}_{\mathbb{C}^{2}},
\]
which proves \eqref{eq:global_Omega_spherical}. The spectral statement
then follows directly from \eqref{eq:global_X_non_threshold} and
\eqref{eq:global_X_threshold}. 
\end{proof}
\begin{rem}
The formulas \eqref{eq:global_U_spherical} and \eqref{eq:global_S_spherical}
are stated only on the non-threshold spherical part, which is sufficient
for the applications below. The essential global formula is \eqref{eq:global_X_non_threshold},
together with the Jordan replacement \eqref{eq:global_X_threshold}
at $\mu=1/4$. 
\end{rem}

\section{\protect\label{sec:Discrete-series}Discrete series}

As before, we assume that $-\mathrm{Id}\in\Gamma$, so that only even
$K$-types occur. We keep the notation of Section~\ref{subsec:Decomposition-of-}:
the genuine $K$-type is denoted by 
\[
l\in2\mathbb{Z}.
\]
Thus the holomorphic discrete series have lowest $K$-type 
\[
l\in2\mathbb{N}^{*},
\]
as in \eqref{eq:def_ds_plus}, while the anti-holomorphic discrete
series have highest $K$-type $-l$, as in \eqref{eq:def_ds_minus}.

The purpose of this section is to construct Hilbert spaces adapted
to the generator $X$ on the discrete-series part. As in the spherical
case, the $K$-finite basis is not an eigenbasis for $X$. The eigenvectors
of $X$ will appear only after a non-unitary transform, and they should
be understood as generalized resonant states. We therefore again use
propagated domains.

We first treat one irreducible holomorphic discrete series 
\[
\mathcal{H}_{l}^{\mathrm{d.s.},+}(v),
\]
defined in \eqref{eq:def_ds_plus}. The anti-holomorphic case will
be obtained by complex conjugation.

The construction proceeds in three steps. We first identify $\mathcal{H}_{l}^{\mathrm{d.s.},+}(v)$
with a weighted Bergman space $H_{l}(\mathbb{D})$. We then apply
a non-unitary Cayley transform $T_{\mathcal{C}^{-1}}$, and finally
pass to the canonical basis of $\ell^{2}(\mathbb{N})$. Schematically,
\[
\xymatrix{\mathcal{H}_{l}^{\mathrm{d.s.},+}(v)\ar[r]_{\eqref{eq:def_U1}}^{\mathcal{U}_{1}} & H_{l}(\mathbb{D})\ar[r]_{\eqref{eq:def_Cayley_operator_ds}}^{T_{\mathcal{C}^{-1}}} & H_{l}(\mathbb{D})\ar[r]_{\eqref{eq:def_F-1}}^{F} & \ell^{2}(\mathbb{N}).}
\]
We write 
\begin{equation}
\mathbb{T}_{\mathrm{d.s.}}\eq{\eqref{eq:def_TT_ds}}F\,T_{\mathcal{C}^{-1}}\mathcal{U}_{1}.\label{eq:def_T_l_alg_ds_intro}
\end{equation}
Later, after summing over all discrete-series components, $\mathbb{T}_{\mathrm{d.s.}}$
will form the discrete-series part of the global transform $\mathbb{T}$
appearing in the main theorem, in (\ref{eq:TT}).

\subsection{Holomorphic realization on the disk}

We recall standard results, see for instance \cite[p.~71]{perelomov1}.
Let 
\[
\mathbb{D}:=\{w\in\mathbb{C}\ ;\ |w|<1\}.
\]
For $l\ge2$, define the weighted Bergman space $H_{l}(\mathbb{D})$
by the scalar product 
\begin{equation}
\langle f\mid g\rangle_{H_{l}}:=\frac{l-1}{\pi}\int_{\mathbb{D}}\overline{f(w)}g(w)(1-|w|^{2})^{l-2}\,dA(w),\label{eq:def_Hl_scalar_product_ds}
\end{equation}
where 
\[
dA(w)=d(\operatorname{Re}w)\,d(\operatorname{Im}w)
\]
denotes the Euclidean area measure. The space $H_{l}(\mathbb{D})$
admits the orthonormal basis 
\begin{equation}
\psi_{k}(w):=\binom{l+k-1}{k}^{1/2}w^{k},\qquad k\in\mathbb{N}.\label{eq:def_ek_disk_ds}
\end{equation}

\begin{cBoxB}{}

\begin{lem}[Holomorphic model of one discrete series]
\label{lem:holomorphic_model_ds} \cite[p.~71]{perelomov1} Let $l\in2\mathbb{N}^{*}$,
and let $(\varphi_{k})_{k\in\mathbb{N}}$ be the orthonormal basis
of $\mathcal{H}_{l}^{\mathrm{d.s.},+}(v)$ constructed in Subsection~\ref{subsec:Orthonormal-basis-for}.
The map 
\begin{equation}
\mathcal{U}_{1}:\mathcal{H}_{l}^{\mathrm{d.s.},+}(v)\longrightarrow H_{l}(\mathbb{D}),\qquad\mathcal{U}_{1}\varphi_{k}=\psi_{k},\label{eq:def_U1}
\end{equation}
is unitary and gives, on the holomorphic polynomial core, 
\begin{equation}
\mathcal{U}_{1}\Theta(\mathcal{U}_{1})^{-1}=2w\partial_{w}+l,\label{eq:Theta_disk_ds}
\end{equation}
\begin{equation}
\mathcal{U}_{1}N_{+}(\mathcal{U}_{1})^{-1}=w^{2}\partial_{w}+lw,\label{eq:Nplus_disk_ds}
\end{equation}
and 
\begin{equation}
\mathcal{U}_{1}N_{-}(\mathcal{U}_{1})^{-1}=-\partial_{w}.\label{eq:Nminus_disk_ds}
\end{equation}
Consequently, 
\begin{equation}
\mathcal{U}_{1}\Omega(\mathcal{U}_{1})^{-1}=-\frac{1}{4}l(l-2)\,\mathrm{Id}.\label{eq:Omega_disk_ds}
\end{equation}
\end{lem}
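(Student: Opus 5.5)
We will prove the lemma by a direct matrix-element comparison on the orthonormal bases $(\varphi_k)$ and $(\psi_k)$, as was done for $\tilde{X}$ in Lemma~\ref{lem:operators_compact_picture}. Since $\mathcal{U}_1$ sends one orthonormal basis onto another, it is unitary. The three identities then reduce to checking that the differential operators on the right-hand sides act on $\psi_k$ with the same coefficients as $\Theta,N_\pm$ act on $\varphi_k$, namely
\[
\Theta\varphi_k=(l+2k)\varphi_k,\qquad N_+\varphi_k=c_{l+2k}\varphi_{k+1},\qquad N_-\varphi_{k+1}=-c_{l+2k}\varphi_k,\qquad N_-\varphi_0=0,
\]
with $c_{l+2k}=\sqrt{(l+k)(k+1)}$, from \eqref{eq:action_Nplus_discrete_basis}--\eqref{eq:action_Nminus_discrete_basis}. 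The first relation holds because $\varphi_k\in\mathcal{H}_{\mu,l+2k}$.

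First we handle $\Theta$. Since $\psi_k$ is proportional to $w^k$, we get $(2w\partial_w+l)\psi_k=(2k+l)\psi_k$, which gives \eqref{eq:Theta_disk_ds}.

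Next we handle $N_+$. Write $\beta_k:=\binom{l+k-1}{k}^{1/2}$. Then $(w^2\partial_w+lw)\beta_k w^k=(k+l)\beta_k w^{k+1}=(k+l)\frac{\beta_k}{\beta_{k+1}}\psi_{k+1}$. The identity $\beta_{k+1}^2/\beta_k^2=(l+k)/(k+1)$ gives $(k+l)\beta_k/\beta_{k+1}=\sqrt{(l+k)(k+1)}=c_{l+2k}$. This proves \eqref{eq:Nplus_disk_ds}.

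Then we handle $N_-$. We have $-\partial_w\psi_0=0$, and $-\partial_w\beta_{k+1}w^{k+1}=-(k+1)\frac{\beta_{k+1}}{\beta_k}\psi_k=-\sqrt{(k+1)(l+k)}\,\psi_k=-c_{l+2k}\psi_k$. This proves \eqref{eq:Nminus_disk_ds}.

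These identities hold on the polynomial core $\mathrm{span}\{\psi_k\}=\mathcal{U}_1(\mathrm{span}\{\varphi_k\})$, which is where they are claimed.

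For the Casimir, we use \eqref{eq:NpNm}, $\Omega=-N_+N_--\frac14\Theta(\Theta-2)$, and transport each factor through $\mathcal{U}_1$. Rather than compute the second-order expression, it is quicker to note that $\Omega$ acts by the scalar $\mu=-\frac14 l(l-2)$ on each $\varphi_k$, by \eqref{eq:N-0}, and that unitary conjugation preserves scalars. As a cross-check, $-N_+N_-\psi_0-\frac14 l(l-2)\psi_0=-\frac14 l(l-2)\psi_0$.

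The only real obstacle is the normalization bookkeeping: we must match the sign conventions in \eqref{eq:action_Nminus_discrete_basis}, which come from $N_+^\dagger=-N_-$, and verify the binomial ratio. The statement $\psi_k$ is orthonormal in $H_l(\mathbb{D})$ is cited from \cite{perelomov1}. If we wanted to verify it, a polar-coordinate beta integral gives $\frac{l-1}{\pi}\int|w|^{2k}(1-|w|^2)^{l-2}dA=(l-1)B(k+1,l-1)=\binom{l+k-1}{k}^{-1}$, and orthogonality follows from rotation invariance.
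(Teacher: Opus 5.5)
Your proposal is correct and follows essentially the same route as the paper. Unitarity comes from $\mathcal{U}_1$ mapping an orthonormal basis onto an orthonormal basis, with the same beta-integral normalization, and the three identities come from matching the action on $\psi_k$ with that on $\varphi_k$ via $\binom{l+k}{k+1}=\frac{l+k}{k+1}\binom{l+k-1}{k}$. The only minor difference is the Casimir step: you use directly that $\Omega$ acts by $\mu=-\frac14 l(l-2)$ on every $\varphi_k$, whereas the paper evaluates $\Omega\psi_0$ from the transported operators and then invokes centrality; both arguments are valid.
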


\end{cBoxB}

\begin{proof}
Orthogonality follows from the angular integration in polar coordinates.
Moreover, 
\[
\frac{l-1}{\pi}\int_{\mathbb{D}}|w|^{2k}(1-|w|^{2})^{l-2}\,dA(w)=\frac{k!(l-1)!}{(l+k-1)!}.
\]
Hence the functions $\psi_{k}$ form an orthonormal family. Since
holomorphic polynomials are dense in $H_{l}(\mathbb{D})$, this family
is an orthonormal basis. By construction, $\mathcal{U}_{1}$ maps
an orthonormal basis onto an orthonormal basis, hence is unitary.

We compute 
\[
(2w\partial_{w}+l)\psi_{k}=(2k+l)\psi_{k}.
\]
Since 
\[
\Theta\varphi_{k}=(l+2k)\varphi_{k},
\]
this proves \eqref{eq:Theta_disk_ds}.

Next, 
\[
(w^{2}\partial_{w}+lw)\psi_{k}=(k+l)\sqrt{\binom{l+k-1}{k}}\,w^{k+1}.
\]
Using 
\[
\binom{l+k}{k+1}=\frac{l+k}{k+1}\binom{l+k-1}{k},
\]
we get 
\[
(w^{2}\partial_{w}+lw)\psi_{k}=\sqrt{(k+1)(l+k)}\,\psi_{k+1}.
\]
This agrees with the action of $N_{+}$ on the basis $(\varphi_{k})$,
and proves \eqref{eq:Nplus_disk_ds}.

Similarly, 
\[
-\partial_{w}\psi_{k}=-\sqrt{k(l+k-1)}\,\psi_{k-1},
\]
which agrees with the action of $N_{-}$. This proves \eqref{eq:Nminus_disk_ds}.

Finally, using 
\[
\Omega=-\left(\frac{1}{2}\Theta\right)^{2}-\frac{1}{2}N_{+}N_{-}-\frac{1}{2}N_{-}N_{+},
\]
it is enough to compute on the lowest weight vector $\psi_{0}$. Since
\[
N_{-}\psi_{0}=0,\qquad\Theta\psi_{0}=l\psi_{0},\qquad N_{-}N_{+}\psi_{0}=-l\psi_{0},
\]
we get 
\[
\Omega\psi_{0}=-\frac{l^{2}}{4}\psi_{0}+\frac{l}{2}\psi_{0}=-\frac{1}{4}l(l-2)\psi_{0}.
\]
Since $\Omega$ is central, it acts by this scalar on the whole irreducible
representation. 
\end{proof}
\begin{cBoxB}{}

\begin{lem}[Right action in the disk model]
\label{lem:right_action_disk_ds} Let $l\in2\mathbb{N}^{*}$, and
let $g\in SL_{2}(\mathbb{R})$. Recall that we use the right action
\[
(R_{g}^{\circ}v)(g')=v(g'g).
\]
Let 
\begin{equation}
\mathcal{C}:=\left(\frac{1+i}{2}\right)\begin{pmatrix}1 & i\\
1 & -i
\end{pmatrix}\in SL(2,\mathbb{C})\label{eq:def_Cayley}
\end{equation}
be the Cayley transform, and write 
\[
\mathcal{C}g^{-1}\mathcal{C}^{-1}=\begin{pmatrix}\alpha_{g} & \beta_{g}\\
\overline{\beta_{g}} & \overline{\alpha_{g}}
\end{pmatrix}\in SU(1,1).
\]
Then 
\[
\mathcal{U}_{1}R_{g}^{\circ}\mathcal{U}_{1}^{-1}=T_{g},
\]
where $T_{g}:H_{l}(\mathbb{D})\to H_{l}(\mathbb{D})$ is the unitary
operator 
\begin{equation}
(T_{g}u)(w)=(\overline{\beta_{g}}w+\overline{\alpha_{g}})^{-l}u\left(\frac{\alpha_{g}w+\beta_{g}}{\overline{\beta_{g}}w+\overline{\alpha_{g}}}\right).\label{eq:def_Tg_disk_ds}
\end{equation}
\end{lem}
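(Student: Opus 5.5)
The plan is to check the claim at the level of the Lie algebra and then exponentiate. First we verify that $g\mapsto T_{g}$ is a unitary representation of $SL_{2}(\mathbb{R})$ on $H_{l}(\mathbb{D})$. Define $\tilde{g}:=\mathcal{C}g^{-1}\mathcal{C}^{-1}\in SU(1,1)$. The map $g\mapsto\tilde{g}$ is an anti-homomorphism. The operator $u\mapsto(\overline{\beta}w+\overline{\alpha})^{-l}u(\tilde{g}\cdot w)$ is the standard weight-$l$ pullback, and the cocycle identity for $(\overline{\beta}w+\overline{\alpha})$ makes it an anti-representation of $SU(1,1)$. Composing the two gives $T_{gh}=T_{g}T_{h}$, which is the composition law of the right action, $R_{gh}^{\circ}=R_{g}^{\circ}R_{h}^{\circ}$. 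Since $l$ is even, the factor $(\overline{\beta}w+\overline{\alpha})^{-l}$ is single valued and insensitive to the sign ambiguity $\pm\tilde{g}$.

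Unitarity on $H_{l}(\mathbb{D})$ is the classical change of variables. The Jacobian of $w\mapsto\tilde{g}\cdot w$ is $|\overline{\beta}w+\overline{\alpha}|^{-4}$, and
\[
1-|\tilde{g}\cdot w|^{2}=(1-|w|^{2})|\overline{\beta}w+\overline{\alpha}|^{-2}.
\]
Together these show that the weight $(1-|w|^{2})^{l-2}dA$ combined with the factor $|\overline{\beta}w+\overline{\alpha}|^{-2l}$ is invariant.

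Both $g\mapsto R_{g}^{\circ}$ and $g\mapsto\mathcal{U}_{1}^{-1}T_{g}\mathcal{U}_{1}$ are strongly continuous unitary representations of the connected group $SL_{2}(\mathbb{R})$. Two such representations coincide once their infinitesimal generators agree on a common core of analytic vectors. For $T_{g}$ we take the holomorphic polynomials, which are $K$-finite and hence analytic vectors. For $R^{\circ}$ we take the span of the $\varphi_{k}$. It is therefore enough to compute $\frac{d}{ds}\big|_{s=0}T_{e^{sV}}$ for $V\in\{X,U,S\}$, or equivalently for $\Theta=i(U-S)$ and $N_{\pm}=X\pm iY$, and to compare with Lemma~\ref{lem:holomorphic_model_ds}.

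For the derivative computation, if $\tilde{g}_{s}=\mathrm{Id}+s\tilde{V}+O(s^{2})$ with $\tilde{V}=-\mathcal{C}V\mathcal{C}^{-1}=\begin{pmatrix}a & b\\ \overline{b} & \overline{a}\end{pmatrix}\in\mathfrak{su}(1,1)$, then the generator is
\[
\frac{d}{ds}\Big|_{s=0}T_{e^{sV}}=(b+(a-\overline{a})w-\overline{b}w^{2})\partial_{w}-l\,(\overline{b}w+\overline{a}).
\]
We first compute $\mathcal{C}J\mathcal{C}^{-1}$, which is diagonal (a rotation acts by $w\mapsto e^{\pm2i\theta}w$). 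This yields $\pm(2w\partial_{w}+l)$ up to the factor $i$, and hence $\Theta$ once the sign is checked. Then $\mathcal{C}X\mathcal{C}^{-1}$ and $\mathcal{C}Y\mathcal{C}^{-1}$ are off-diagonal, and the complex combinations $N_{\pm}$ isolate the terms $\partial_{w}$ and $w^{2}\partial_{w}+lw$. The results must match \eqref{eq:Theta_disk_ds}, \eqref{eq:Nplus_disk_ds} and \eqref{eq:Nminus_disk_ds}.

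\textbf{Main obstacle.} The hard part is this sign and normalization bookkeeping: the $\tfrac{1+i}{2}$ prefactor in $\mathcal{C}$, the inverse $g^{-1}$ coming from the right action, and the orientation conventions for $\Theta$ and $N_{\pm}$. If a sign mismatch appears (for instance $N_{-}\mapsto+\partial_{w}$), we would absorb it by the unitary diagonal change $\psi_{k}\mapsto(-1)^{k}\psi_{k}$ or a phase, which would have to be justified by the normalization of $\varphi_{k}$. We expect, however, that the stated convention is designed so that the matrices match directly. Once the generators agree, equality for all $g$ follows from exponentiation and connectedness.
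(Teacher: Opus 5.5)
Your proposal is correct and follows the paper's proof. The cocycle identity together with the anti-homomorphism $g\mapsto\mathcal{C}g^{-1}\mathcal{C}^{-1}$ makes $g\mapsto T_{g}$ a representation, and matching its infinitesimal generators with Lemma~\ref{lem:holomorphic_model_ds} concludes by connectedness; your unitarity check and analytic-vector core argument fill in details the paper leaves implicit.

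The sign bookkeeping you flagged closes directly. One has $-\mathcal{C}\Theta\mathcal{C}^{-1}=2X$, $-\mathcal{C}N_{+}\mathcal{C}^{-1}=-U$ and $-\mathcal{C}N_{-}\mathcal{C}^{-1}=-S$. Feeding these into the complex-linear entrywise form of your generator formula gives exactly $2w\partial_{w}+l$, $w^{2}\partial_{w}+lw$ and $-\partial_{w}$. So no rephasing of $\psi_{k}$ is needed, and none would be allowed, since $\mathcal{U}_{1}$ is fixed by the statement.
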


\end{cBoxB}

\begin{proof}
Let 
\[
h=\begin{pmatrix}\alpha & \beta\\
\overline{\beta} & \overline{\alpha}
\end{pmatrix}\in SU(1,1).
\]
The standard holomorphic action of $SU(1,1)$ on $H_{l}(\mathbb{D})$
is 
\[
(S_{h}u)(w)=(\overline{\beta}w+\overline{\alpha})^{-l}u\left(\frac{\alpha w+\beta}{\overline{\beta}w+\overline{\alpha}}\right).
\]
It satisfies 
\[
S_{h}S_{h'}=S_{h'h},
\]
because the automorphy factor satisfies the usual cocycle identity.

Since the representation on $L^{2}(M)$ is the right regular action,
we set 
\[
T_{g}:=S_{\mathcal{C}g^{-1}\mathcal{C}^{-1}}.
\]
Then, for $g_{1},g_{2}\in SL_{2}(\mathbb{R})$, 
\[
T_{g_{1}}T_{g_{2}}=S_{\mathcal{C}g_{1}^{-1}\mathcal{C}^{-1}}S_{\mathcal{C}g_{2}^{-1}\mathcal{C}^{-1}}=S_{\mathcal{C}g_{2}^{-1}g_{1}^{-1}\mathcal{C}^{-1}}=S_{\mathcal{C}(g_{1}g_{2})^{-1}\mathcal{C}^{-1}}=T_{g_{1}g_{2}}.
\]
This agrees with 
\[
R_{g_{1}}^{\circ}R_{g_{2}}^{\circ}=R_{g_{1}g_{2}}^{\circ}.
\]

Differentiating this action at the identity gives 
\[
\Theta\mapsto2w\partial_{w}+l,\qquad N_{+}\mapsto w^{2}\partial_{w}+lw,\qquad N_{-}\mapsto-\partial_{w},
\]
which agrees with Lemma~\ref{lem:holomorphic_model_ds}. Therefore
the two strongly continuous representations 
\[
g\longmapsto\mathcal{U}_{1}R_{g}^{\circ}\mathcal{U}_{1}^{-1},\qquad g\longmapsto T_{g}
\]
have the same infinitesimal action. Since $SL_{2}(\mathbb{R})$ is
connected, they coincide. Hence 
\[
\mathcal{U}_{1}R_{g}^{\circ}\mathcal{U}_{1}^{-1}=T_{g}.
\]
\end{proof}

\subsection{Algebraic Cayley transform and diagonalization of $X$}

In the disk model of Subsection~\ref{lem:holomorphic_model_ds},
the operator 
\[
\mathcal{U}_{1}\Theta\mathcal{U}_{1}^{-1}=2w\partial_{w}+l
\]
is diagonal in the orthonormal basis $(\psi_{k})_{k\in\mathbb{N}}$.
Our aim is to construct a non-unitary model in which the hyperbolic
generator $X$, rather than the elliptic generator $\Theta$, becomes
diagonal.

The key algebraic fact is that the Cayley transform exchanges the
elliptic and hyperbolic directions. We use the matrix $\mathcal{C}$
defined in \eqref{eq:def_Cayley}.

\begin{cBoxB}{}

\begin{lem}[Cayley conjugation]
\label{lem:With-the-Cayley} In the complexified Lie algebra $\mathfrak{sl}_{2}(\mathbb{C})$,
one has 
\begin{equation}
\mathcal{C}\Theta\mathcal{C}^{-1}=-2X,\qquad\mathcal{C}N_{+}\mathcal{C}^{-1}=U,\qquad\mathcal{C}N_{-}\mathcal{C}^{-1}=S.\label{eq:conjugason_C}
\end{equation}
\end{lem}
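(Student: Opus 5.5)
This is a direct matrix computation in $\mathfrak{sl}_{2}(\mathbb{C})$. The plan is to write $\Theta$, $N_{\pm}$ as explicit complex $2\times2$ matrices and conjugate them by $\mathcal{C}$.

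\textbf{Step 1: matrices of $\Theta$ and $N_{\pm}$.} From \eqref{eq:def_Theta} and \eqref{eq:X,U,S}, $\Theta=-iJ=i(U-S)=\begin{pmatrix}0&-i\\ i&0\end{pmatrix}$. From \eqref{eq:def_Y}, $Y=\frac12(U+S)=\frac12\begin{pmatrix}0&1\\1&0\end{pmatrix}$. Hence, by \eqref{eq:def_N+-},
\[
N_{\pm}=X\pm iY=\frac12\begin{pmatrix}1&\pm i\\ \pm i&-1\end{pmatrix}.
\]

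\textbf{Step 2: the inverse of $\mathcal{C}$.} Write $\mathcal{C}=c\,M$ with $c=\frac{1+i}{2}$ and $M=\begin{pmatrix}1&i\\1&-i\end{pmatrix}$. Then $\det M=-2i$ and $c^{2}=\frac{i}{2}$, so $\det\mathcal{C}=1$. Since the scalar $c$ cancels in any conjugation, it suffices to compute $MVM^{-1}$, with $M^{-1}=\frac{1}{-2i}\begin{pmatrix}-i&-i\\-1&1\end{pmatrix}=\frac12\begin{pmatrix}1&1\\ -i&i\end{pmatrix}$.

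\textbf{Step 3: the three conjugations.} For $\Theta$: first $M\Theta=\begin{pmatrix}-1&-i\\1&-i\end{pmatrix}$, and multiplying by $M^{-1}$ gives $\begin{pmatrix}-1&0\\0&1\end{pmatrix}=-2X$. For $N_{+}$: $MN_{+}=\frac12\begin{pmatrix}0&0\\2&2i\end{pmatrix}$, and multiplying by $M^{-1}$ gives $\begin{pmatrix}0&0\\1&0\end{pmatrix}=U$. For $N_{-}$, the analogous product gives $\begin{pmatrix}0&1\\0&0\end{pmatrix}=S$.

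\textbf{Consistency check.} The brackets $[\Theta,N_{\pm}]=\pm2N_{\pm}$ should map to $[-2X,U]=2U$ and $[-2X,S]=-2S$. These agree with \eqref{eq:commutt}, which also guards against sign and ordering errors in Step 3.

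The only real obstacle is sign bookkeeping: the factors of $i$ in $\Theta$, and the order of conjugation $\mathcal{C}(\cdot)\mathcal{C}^{-1}$ as opposed to $\mathcal{C}^{-1}(\cdot)\mathcal{C}$. A wrong convention would exchange $U$ and $S$ or flip the sign of $X$, and the bracket check is there to catch that.

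Finally, the identities are equalities of matrices, which are the abstract Lie-algebra elements. They therefore hold in the complexified universal enveloping algebra, and pass to any representation, such as the disk model of Lemma~\ref{lem:right_action_disk_ds}, wherever the Cayley element acts.
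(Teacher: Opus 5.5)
Your proof is correct and takes the same route as the paper: you write $\Theta$ and $N_{\pm}$ as explicit complex $2\times 2$ matrices and check the three identities by direct multiplication. The difference is only that you carry out the products the paper leaves implicit, and they all check out, including $\det\mathcal{C}=1$, the formula for $M^{-1}$, and each of the three conjugations.
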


\end{cBoxB}

\begin{proof}
Recall that 
\[
\Theta\eq{\ref{eq:def_J}}\begin{pmatrix}0 & -i\\
i & 0
\end{pmatrix},
\]
and 
\[
N_{+}\eq{\ref{eq:def_N+-},\ref{eq:X,U,S}}\frac{1}{2}\begin{pmatrix}1 & i\\
i & -1
\end{pmatrix},\qquad N_{-}=\frac{1}{2}\begin{pmatrix}1 & -i\\
-i & -1
\end{pmatrix}.
\]
Together with the definitions of $X,U,S$ in \eqref{eq:X,U,S}, the
identities \eqref{eq:conjugason_C} follow by direct matrix multiplication. 
\end{proof}
We now use the Cayley transform to diagonalize the generator $X$
in one holomorphic discrete series.

Let $l\in2\mathbb{N}^{*}$ be fixed. For simplicity, set 
\[
\tilde{X}:=\mathcal{U}_{1}X\mathcal{U}_{1}^{-1},\qquad\tilde{U}:=\mathcal{U}_{1}U\mathcal{U}_{1}^{-1},\qquad\tilde{S}:=\mathcal{U}_{1}S\mathcal{U}_{1}^{-1}.
\]
As in \eqref{eq:def_F}, we introduce the coefficient map 
\begin{equation}
F:=\sum_{n\in\mathbb{N}}e_{n}\langle\psi_{n}\mid\,\cdot\,\rangle_{H_{l}(\mathbb{D})}\quad:\quad H_{l}(\mathbb{D})\longrightarrow\ell^{2}(\mathbb{N}).\label{eq:def_F-1}
\end{equation}
We shall also use the algebraic space of $K$-finite vectors 
\[
\mathcal{P}_{l}:=\operatorname{Span}\{\psi_{k}\ ;\ k\in\mathbb{N}\}\subset H_{l}(\mathbb{D}).
\]

We algebraically extend the formula \eqref{eq:def_Tg_disk_ds} for
$T_{g}$ to the complex matrix 
\[
g=\mathcal{C}^{-1}\in SL_{2}(\mathbb{C}).
\]
With the right-action convention used above, this gives, for $u\in\mathcal{P}_{l}$,
\begin{equation}
\left(T_{\mathcal{C}^{-1}}u\right)(w)=\left(\frac{1-i}{w-i}\right)^{l}u\left(\frac{w+i}{w-i}\right).\label{eq:def_Cayley_operator_ds}
\end{equation}
The operator $T_{\mathcal{C}^{-1}}$ is not unitary. It maps $\mathcal{P}_{l}$
to holomorphic functions on $\mathbb{D}$ which extend meromorphically
to a neighbourhood of $\overline{\mathbb{D}}$, with a possible pole
at the boundary point $w=i$. More explicitly, 
\begin{equation}
T_{\mathcal{C}^{-1}}\psi_{k}(w)=\binom{l+k-1}{k}^{1/2}(1-i)^{l}\frac{(w+i)^{k}}{(w-i)^{l+k}}.\label{eq:T_C_inv_psi_k_ds}
\end{equation}
Thus $T_{\mathcal{C}^{-1}}\psi_{k}$ has at most a pole of order $l+k$
at $w=i$, and is not in $H_{l}(\mathbb{D})$ in general.

We introduce the space of polynomially growing sequences 
\[
\mathscr{S}'_{\mathrm{pol}}(\mathbb{N}):=\left\{ (a_{n})_{n\ge0}\ ;\ \exists C,N>0,\ |a_{n}|\le C(1+n)^{N}\right\} .
\]
Since $T_{\mathcal{C}^{-1}}u$ is holomorphic in $\mathbb{D}$, it
has a Taylor expansion at $0$. We define 
\begin{equation}
\mathcal{T}:=FT_{\mathcal{C}^{-1}}\qquad:\mathcal{P}_{l}\longrightarrow\mathscr{S}'_{\mathrm{pol}}(\mathbb{N})\label{eq:def_T_l_alg_ds-1}
\end{equation}
by taking the coefficients of this Taylor expansion in the normalized
basis $(\psi_{n})_{n\in\mathbb{N}}$. Equivalently, if 
\[
T_{\mathcal{C}^{-1}}u(w)=\sum_{n\ge0}b_{n}\psi_{n}(w),
\]
then 
\[
\mathcal{T}u=(b_{n})_{n\ge0}.
\]

\begin{cBoxB}{}

\begin{thm}[Algebraic Cayley model for one discrete series]
\label{thm:algebraic_Cayley_model_ds} Let $l\in2\mathbb{N}^{*}$.
The operator 
\begin{equation}
\mathcal{T}=FT_{\mathcal{C}^{-1}}\qquad:\mathcal{P}_{l}\longrightarrow\mathscr{S}'_{\mathrm{pol}}(\mathbb{N})\label{eq:def_T_l_alg_ds}
\end{equation}
is injective. On this algebraic domain, one has 
\begin{equation}
\mathcal{T}\tilde{X}\mathcal{T}^{-1}=-A-\frac{l}{2},\label{eq:T_l_alg_X_ds}
\end{equation}
\begin{equation}
\mathcal{T}\tilde{U}\mathcal{T}^{-1}=a^{+}(A+l)^{1/2},\label{eq:T_l_alg_U_ds}
\end{equation}
and 
\begin{equation}
\mathcal{T}\tilde{S}\mathcal{T}^{-1}=-(A+l)^{1/2}a^{-}.\label{eq:T_l_alg_S_ds}
\end{equation}
Moreover, for every $k\in\mathbb{N}$, there exist constants $C_{l,k},C'_{l,k}>0$
such that, for every $n\in\mathbb{N}$, 
\begin{equation}
\left|\left\langle e_{n}\mid\mathcal{T}\psi_{k}\right\rangle \right|\le C_{l,k}(1+n)^{k+\frac{l-1}{2}},\label{eq:estimate_T_ds}
\end{equation}
and 
\begin{equation}
\left|\left\langle e_{n}\mid(\mathcal{T}^{-1})^{\dagger}\psi_{k}\right\rangle \right|\le C'_{l,k}(1+n)^{k+\frac{l-1}{2}}.\label{eq:estimate_T_dual_ds}
\end{equation}
Here $(\mathcal{T}^{-1})^{\dagger}$ is understood algebraically,
through the coefficients of $T_{\mathcal{C}}\psi_{n}$ in the basis
$(\psi_{k})$. 
\end{thm}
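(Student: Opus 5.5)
The plan is to exploit the fact that $T_{\mathcal{C}^{-1}}$ is the (algebraically extended) image of the group element $\mathcal{C}^{-1}$ under the holomorphic representation of Lemma~\ref{lem:right_action_disk_ds}. Conjugating infinitesimal generators by it will therefore act on the Lie algebra by $\mathrm{Ad}$. Concretely, for $V\in\mathfrak{sl}_2(\mathbb{C})$ with derived action $dT(V)$ (a first-order holomorphic differential operator on $\mathcal{P}_l$), the identity $T_{g}\,dT(V)\,T_{g}^{-1}=dT(gVg^{-1})$ holds for real $g$. I would extend it to $g=\mathcal{C}^{-1}$ by direct verification: both sides are explicit first-order operators with rational coefficients, so the check reduces to the chain rule applied to the Möbius map $w\mapsto(w+i)/(w-i)$ and the automorphy factor in \eqref{eq:def_Cayley_operator_ds}. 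Since the representation is a right action, the orientation of $\mathrm{Ad}$ needs care. Lemma~\ref{lem:With-the-Cayley} then gives $T_{\mathcal{C}^{-1}}\tilde X T_{\mathcal{C}^{-1}}^{-1}=-\tfrac12(2w\partial_w+l)$, $T_{\mathcal{C}^{-1}}\tilde U T_{\mathcal{C}^{-1}}^{-1}=w^2\partial_w+lw$ and $T_{\mathcal{C}^{-1}}\tilde S T_{\mathcal{C}^{-1}}^{-1}=-\partial_w$, with the sign fixed by the same orientation. These identities are understood on holomorphic germs at $0$.

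Next I apply $F$, taking Taylor coefficients in the normalized basis $\psi_n$. The operator $2w\partial_w+l$ acts by $2n+l$, so $F(-w\partial_w-\tfrac l2)F^{-1}=-A-\tfrac l2$, which is \eqref{eq:T_l_alg_X_ds}. The computations in the proof of Lemma~\ref{lem:holomorphic_model_ds} give $w^2\partial_w+lw\mapsto a^+(A+l)^{1/2}$, since $\sqrt{(n+1)(n+l)}$ equals $a^+(A+l)^{1/2}$ on $e_n$, and $-\partial_w\mapsto-(A+l)^{1/2}a^-$. These computations are valid coefficientwise on arbitrary formal power series, because each coefficient of the image depends on finitely many input coefficients. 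This yields \eqref{eq:T_l_alg_U_ds} and \eqref{eq:T_l_alg_S_ds}.

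Injectivity is immediate. $T_{\mathcal{C}^{-1}}u$ is the composition of $u$ with a Möbius map, multiplied by a nonvanishing factor, so if all its Taylor coefficients at $0$ vanish, then $u\circ$Möbius vanishes near $0$, hence $u$ is a polynomial vanishing on an open set, so $u=0$.

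For \eqref{eq:estimate_T_ds} I would use \eqref{eq:T_C_inv_psi_k_ds}. The Taylor coefficients of $(w+i)^k(w-i)^{-l-k}$ are controlled by the unique pole at $w=i$, of order $l+k$, on the unit circle, and grow like $n^{l+k-1}$. Dividing by the normalization $\binom{l+n-1}{n}^{1/2}\sim n^{(l-1)/2}$ gives $n^{k+(l-1)/2}$. For the dual estimate \eqref{eq:estimate_T_dual_ds} I would write $(\mathcal{T}^{-1})^\dagger$ algebraically via $T_{\mathcal{C}}$, whose explicit formula has a pole at $w=-i$ (or at $\pm1$, depending on the form of $\mathcal{C}$) of the same order, and repeat the same Darboux/binomial estimate.

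I expect the main obstacle to be the sign and orientation bookkeeping in the first step, that is, confirming that the extension to the complex element $\mathcal{C}^{-1}$ produces exactly $-A-\tfrac l2$ and the stated signs for $U$ and $S$. I would settle this by checking directly on $\psi_0$ and $\psi_1$.
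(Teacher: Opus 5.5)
This is essentially the paper's proof: conjugation by $T_{\mathcal{C}^{-1}}$ acting through $\mathrm{Ad}_{\mathcal{C}^{-1}}$ combined with Lemma~\ref{lem:With-the-Cayley}, then $F$ via the ladder formulas of Lemma~\ref{lem:holomorphic_model_ds}, injectivity from the open image of $w\mapsto(w+i)/(w-i)$, and \eqref{eq:estimate_T_ds} from the order-$(l+k)$ pole at $w=i$ divided by $\binom{l+n-1}{n}^{1/2}$ (your direct chain-rule check of the three conjugation identities is more careful than the paper's formal $\mathrm{Ad}$ argument, and it does give exactly the stated signs). The one imprecision is in \eqref{eq:estimate_T_dual_ds}: there $k$ is fixed and $n$ varies, so the paper bounds the $w^k$-coefficient of $T_{\mathcal{C}}\psi_n\propto\binom{l+n-1}{n}^{1/2}(1+w)^n(1-w)^{-l-n}$ by a finite binomial sum $O(n^k)$, which is now \emph{multiplied} by $\binom{l+n-1}{n}^{1/2}$; note that this function has a pole at $w=1$ whose order $l+n$ grows with $n$, so no fixed-pole Darboux estimate applies. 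Running a Darboux estimate on $T_{\mathcal{C}}\psi_k$ would instead bound the transposed matrix entry. That entry has the same modulus only because $\sum_{k,n}\binom{l+n-1}{n}[w^k]\bigl((1+w)^n(1-w)^{-l-n}\bigr)\,w^kz^n=(1-w-z-wz)^{-l}$ is symmetric in $(w,z)$, and you would have to state that.
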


\end{cBoxB}

\begin{proof}
We first prove injectivity. If $\mathcal{T}u=0$, then all Taylor
coefficients of $T_{\mathcal{C}^{-1}}u$ vanish. Hence 
\[
T_{\mathcal{C}^{-1}}u=0
\]
as a holomorphic function on $\mathbb{D}$. Since the prefactor 
\[
\left(\frac{1-i}{w-i}\right)^{l}
\]
does not vanish in $\mathbb{D}$, and since the map 
\[
w\longmapsto\frac{w+i}{w-i}
\]
has open image, the polynomial $u$ vanishes on a nonempty open set.
Thus $u=0$.

We now compute the conjugated infinitesimal action. For the right
action, if $A\in\mathfrak{sl}_{2}(\mathbb{R})$, then 
\[
(Au)(g')=\left.\frac{d}{ds}\right|_{s=0}u(g'\exp(sA)).
\]
Thus, formally, for $h$ in the group, or in the complexified group,
\[
R_{h}^{\circ}A(R_{h}^{\circ})^{-1}=\operatorname{Ad}_{h}(A).
\]
Indeed, 
\[
\begin{aligned}\left(R_{h}^{\circ}A(R_{h}^{\circ})^{-1}u\right)(g') & =\left.\frac{d}{ds}\right|_{s=0}u(g'h\exp(sA)h^{-1})\\
 & =\left.\frac{d}{ds}\right|_{s=0}u(g'\exp(s\,\operatorname{Ad}_{h}A)).
\end{aligned}
\]
Applying this with $h=\mathcal{C}^{-1}$, and using \eqref{eq:conjugason_C},
gives algebraically 
\[
T_{\mathcal{C}^{-1}}\tilde{X}T_{\mathcal{C}}=-\frac{1}{2}\tilde{\Theta},\qquad T_{\mathcal{C}^{-1}}\tilde{U}T_{\mathcal{C}}=\tilde{N}_{+},\qquad T_{\mathcal{C}^{-1}}\tilde{S}T_{\mathcal{C}}=\tilde{N}_{-}.
\]

By Lemma~\ref{lem:holomorphic_model_ds}, 
\[
\tilde{\Theta}=2w\partial_{w}+l,\qquad\tilde{N}_{+}=w^{2}\partial_{w}+lw,\qquad\tilde{N}_{-}=-\partial_{w}.
\]
Passing to the normalized basis $(\psi_{k})_{k\in\mathbb{N}}$, we
have 
\[
F\left(-\frac{1}{2}\tilde{\Theta}\right)F^{-1}=-A-\frac{l}{2},
\]
because 
\[
-\frac{1}{2}(2w\partial_{w}+l)\psi_{k}=-\left(k+\frac{l}{2}\right)\psi_{k}.
\]
Similarly, 
\[
(w^{2}\partial_{w}+lw)\psi_{k}=\sqrt{(k+1)(l+k)}\,\psi_{k+1},
\]
hence 
\[
F\tilde{N}_{+}F^{-1}=a^{+}(A+l)^{1/2}.
\]
Finally, 
\[
-\partial_{w}\psi_{k}=-\sqrt{k(l+k-1)}\,\psi_{k-1},
\]
hence 
\[
F\tilde{N}_{-}F^{-1}=-(A+l)^{1/2}a^{-}.
\]
Combining these identities with 
\[
\mathcal{T}=FT_{\mathcal{C}^{-1}}
\]
gives \eqref{eq:T_l_alg_X_ds}, \eqref{eq:T_l_alg_U_ds}, and \eqref{eq:T_l_alg_S_ds}.

It remains to prove the coefficient estimates. Up to a constant depending
only on $l$ and $k$, one has 
\[
T_{\mathcal{C}^{-1}}\psi_{k}(w)=(1-iw)^{k}(1+iw)^{-l-k}.
\]
The coefficient of $w^{n}$ in this function is a finite linear combination
of coefficients of $(1+iw)^{-l-k}$, hence is bounded by 
\[
C_{l,k}(1+n)^{l+k-1}.
\]
Since 
\[
\psi_{n}(w)=\sqrt{\binom{l+n-1}{n}}\,w^{n},\qquad\binom{l+n-1}{n}\asymp(1+n)^{l-1},
\]
the coefficient in the normalized basis $(\psi_{n})$ satisfies 
\[
\left|\left\langle e_{n}\mid\mathcal{T}\psi_{k}\right\rangle \right|\le C_{l,k}(1+n)^{l+k-1-\frac{l-1}{2}}=C_{l,k}(1+n)^{k+\frac{l-1}{2}}.
\]
This proves \eqref{eq:estimate_T_ds}.

For the dual estimate, we use the algebraic inverse 
\[
\mathcal{T}^{-1}=T_{\mathcal{C}}F^{-1}.
\]
Thus the coefficient 
\[
\left\langle e_{n}\mid(\mathcal{T}^{-1})^{\dagger}\psi_{k}\right\rangle 
\]
is, by definition, the coefficient of $\psi_{k}$ in $T_{\mathcal{C}}\psi_{n}$.
Now 
\[
(T_{\mathcal{C}}u)(w)=\left(\frac{1-i}{1-w}\right)^{l}u\left(-i\,\frac{1+w}{1-w}\right).
\]
Hence, up to a constant depending only on $l$ and $n$ in modulus,
\[
T_{\mathcal{C}}\psi_{n}(w)=\sqrt{\binom{l+n-1}{n}}\,(1+w)^{n}(1-w)^{-l-n}.
\]
For fixed $k$, the coefficient of $w^{k}$ in 
\[
(1+w)^{n}(1-w)^{-l-n}
\]
is bounded by 
\[
C_{l,k}(1+n)^{k}.
\]
Since the normalization factor of $\psi_{k}$ is fixed when $k$ is
fixed, we obtain 
\[
\left|\left\langle e_{n}\mid(\mathcal{T}^{-1})^{\dagger}\psi_{k}\right\rangle \right|\le C'_{l,k}\sqrt{\binom{l+n-1}{n}}\,(1+n)^{k}.
\]
Using again 
\[
\binom{l+n-1}{n}\asymp(1+n)^{l-1},
\]
we get 
\[
\left|\left\langle e_{n}\mid(\mathcal{T}^{-1})^{\dagger}\psi_{k}\right\rangle \right|\le C'_{l,k}(1+n)^{k+\frac{l-1}{2}}.
\]
This proves \eqref{eq:estimate_T_dual_ds}. 
\end{proof}

\subsection{Propagated domains and Hilbert realization}

For $t\in\mathbb{R}$, we define 
\[
\mathcal{D}_{t}:=e^{t\tilde{X}}\mathcal{P}_{l}.
\]
Thus $\mathcal{D}_{t}$ is the propagated $K$-finite domain associated
with the fixed holomorphic discrete series 
\[
\mathcal{H}_{l}^{\mathrm{d.s.},+}(v).
\]

For $t>0$, we extend the algebraic transform $\mathcal{T}$ from
$\mathcal{P}_{l}$ to $\mathcal{D}_{t}$ by 
\[
\mathcal{T}(e^{t\tilde{X}}p):=e^{-t(A+l/2)}\mathcal{T}p,\qquad p\in\mathcal{P}_{l}.
\]
Similarly, we extend the algebraic dual transform $(\mathcal{T}^{-1})^{\dagger}$
to $\mathcal{D}_{-t}$ by 
\[
(\mathcal{T}^{-1})^{\dagger}(e^{-t\tilde{X}}p):=e^{-t(A+l/2)}(\mathcal{T}^{-1})^{\dagger}p,\qquad p\in\mathcal{P}_{l}.
\]

\begin{cBoxB}{}

\begin{prop}[Functional properties of the propagated domains]
\label{prop:ds_functional_properties_short} Let $l\in2\mathbb{N}^{*}$,
and let $t>0$. Then: 
\begin{enumerate}
\item The spaces $\mathcal{D}_{t}$ and $\mathcal{D}_{-t}$ are dense in
$H_{l}(\mathbb{D})$.
\item The propagated transform $\mathcal{T}$ is injective on $\mathcal{D}_{t}$,
and 
\[
\mathcal{T}(\mathcal{D}_{t})\subset\ell^{2}(\mathbb{N})
\]
is dense.
\item The propagated dual transform $(\mathcal{T}^{-1})^{\dagger}$ is injective
on $\mathcal{D}_{-t}$, and 
\[
(\mathcal{T}^{-1})^{\dagger}(\mathcal{D}_{-t})\subset\ell^{2}(\mathbb{N})
\]
is dense.
\item On these domains, one has 
\begin{equation}
\left(A+\frac{l}{2}\right)\mathcal{T}u=-\mathcal{T}(\tilde{X}u),\qquad u\in\mathcal{D}_{t},\label{eq:ds_intertwining_short}
\end{equation}
and 
\begin{equation}
\left(A+\frac{l}{2}\right)(\mathcal{T}^{-1})^{\dagger}v=(\mathcal{T}^{-1})^{\dagger}(\tilde{X}v),\qquad v\in\mathcal{D}_{-t}.\label{eq:ds_dual_intertwining_short}
\end{equation}
\item For every $k\in\mathbb{N}$, there exists $C_{l,k,t}>0$ such that,
for every $n\in\mathbb{N}$, 
\begin{equation}
\left|\left\langle e_{n}\mid\mathcal{T}(e^{t\tilde{X}}\psi_{k})\right\rangle \right|\le C_{l,k,t}e^{-tn}(1+n)^{k+\frac{l-1}{2}},\label{eq:ds_decay_T_short}
\end{equation}
and 
\begin{equation}
\left|\left\langle e_{n}\mid(\mathcal{T}^{-1})^{\dagger}(e^{-t\tilde{X}}\psi_{k})\right\rangle \right|\le C_{l,k,t}e^{-tn}(1+n)^{k+\frac{l-1}{2}}.\label{eq:ds_decay_Tdual_short}
\end{equation}
\end{enumerate}
\end{prop}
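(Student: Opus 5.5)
We follow the spherical case (Proposition~\ref{prop:propagated_estimates_and_X_intertwining}, Proposition~\ref{prop:density_of_images}, Corollary~\ref{cor:Dt_functional_properties}), which is simpler here because there is a single branch and no merging is needed.

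\emph{Density of the domains.} Item 1 follows because $e^{\pm t\tilde{X}}$ is unitary on $H_{l}(\mathbb{D})$ (it is the right action of $e^{\pm tX}$, Lemma~\ref{lem:right_action_disk_ds}), while $\mathcal{P}_{l}$ is dense.

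\emph{Estimates and intertwining.} For item 5, the propagated transform is defined by $\mathcal{T}(e^{t\tilde{X}}\psi_{k})=e^{-t(A+l/2)}\mathcal{T}\psi_{k}$. Its $n$-th coefficient is therefore $e^{-t(n+l/2)}\langle e_{n}\mid\mathcal{T}\psi_{k}\rangle$, and \eqref{eq:estimate_T_ds} gives \eqref{eq:ds_decay_T_short}; the dual estimate comes from \eqref{eq:estimate_T_dual_ds} in the same way. Consequently both images lie in $\ell^{2}(\mathbb{N})$, and in fact in the domain of every power of $A$. For item 4, write $u=e^{t\tilde{X}}p$ and use $\tilde{X}u=e^{t\tilde{X}}\tilde{X}p$ with $\tilde{X}p\in\mathcal{P}_{l}$ (since $X=\frac12(N_{+}+N_{-})$ preserves $K$-finite vectors). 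The algebraic identity \eqref{eq:T_l_alg_X_ds}, i.e. $\mathcal{T}\tilde{X}p=-(A+\frac l2)\mathcal{T}p$, then yields \eqref{eq:ds_intertwining_short} after applying $e^{-t(A+l/2)}$, which commutes with $A$. The dual relation follows by taking adjoints of the algebraic relation on $\mathcal{P}_{l}$; only the sign convention needs checking. One should also check that the extension is well defined, i.e. that $e^{t\tilde{X}}p=e^{t\tilde{X}}p'$ forces $p=p'$. This is immediate from the unitarity of $e^{t\tilde{X}}$.

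\emph{Injectivity.} Item 2 reduces to item 5 and the algebraic injectivity. If $\mathcal{T}(e^{t\tilde{X}}p)=0$, then $e^{-t(A+l/2)}\mathcal{T}p=0$ coefficientwise. Since $e^{-t(n+l/2)}\neq0$, this gives $\mathcal{T}p=0$, hence $p=0$ by Theorem~\ref{thm:algebraic_Cayley_model_ds}, hence $u=0$. Here the definition through $p$ makes injectivity easier than in Lemma~\ref{lem:injectivity_on_Dt}. The dual case of item 3 is identical, using that $(\mathcal{T}^{-1})^{\dagger}$ is injective on $\mathcal{P}_{l}$: it is the adjoint of an algebraic bijection, so its coefficients determine $p$.

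\emph{Density of the images.} This is the main step. Let $\mathcal{K}_{t}$ be the $\ell^{2}$-closure of $W_{t}=\mathcal{T}(\mathcal{D}_{t})$; it is invariant under $e^{-s(A+l/2)}$ for $s\ge0$. We prove $e_{n}\in\mathcal{K}_{t}$ by induction on $n$, as in the complementary-series proof: the eigenvalues $-n-l/2$ are real and strictly decreasing, so the limit $e^{s(n+l/2)}e^{-s(A+l/2)}v\to c_{n}e_{n}$ as $s\to\infty$ isolates the lowest surviving mode, and no Cesàro averaging is required.

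The key input is that for each $n$ some $\psi_{k}$ satisfies $\langle e_{n}\mid\mathcal{T}\psi_{k}\rangle\neq0$. Here it is simplest to use that $\mathcal{T}=FT_{\mathcal{C}^{-1}}$ maps $\operatorname{Span}\{\psi_{0},\dots,\psi_{m}\}$ injectively into functions of the form $q(w)(w-i)^{-l-m}$ with $\deg q\le m$. The Taylor coefficients at $0$ therefore range over the image of an $(m+1)$-dimensional space. Since the coefficient of $w^{n}$ cannot vanish on all of $T_{\mathcal{C}^{-1}}\mathcal{P}_{l}$ (take $q$ generic, e.g. $m=0$ already gives $(w-i)^{-l}$, all of whose coefficients are nonzero), every mode is seen. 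The same argument applies to the dual, using $T_{\mathcal{C}}\psi_{n}=c(1+w)^{n}(1-w)^{-l-n}$, whose $w^{0}$-coefficient is nonzero; this gives nonvanishing for every $n$ with $k=0$.
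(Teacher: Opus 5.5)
Your proposal is correct and follows essentially the same route as the paper. You reduce everything to the algebraic relations of Theorem~\ref{thm:algebraic_Cayley_model_ds} on $\mathcal{P}_{l}$ via $\mathcal{T}(e^{t\tilde{X}}p)=e^{-t(A+l/2)}\mathcal{T}p$, deduce injectivity from injectivity on $\mathcal{P}_{l}$, and get density by isolating modes one at a time with the real diagonal semigroup, using that $T_{\mathcal{C}^{-1}}\psi_{0}\propto(w-i)^{-l}$ has all Taylor coefficients nonzero.

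Two small remarks. For the dual density, your use of the constant term of $T_{\mathcal{C}}\psi_{n}$ is the correct reading of the paper's terse ``using $T_{\mathcal{C}}\psi_{0}$''. For the dual injectivity on $\mathcal{P}_{l}$, which both you and the paper treat only briefly, the clean reason is not an abstract ``adjoint of a bijection'' argument. It is that the combinations $2^{j}w^{j}(1-w)^{-l-j}$ of the functions $(1+w)^{n}(1-w)^{-l-n}$, $n\le j$, have triangular degree-$\le K$ truncations, so these truncations span all polynomials of degree $\le K$.
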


\end{cBoxB}

\begin{proof}
The density of $\mathcal{D}_{\pm t}$ follows from the density of
$\mathcal{P}_{l}$ in $H_{l}(\mathbb{D})$ and from the unitarity
of $e^{t\tilde{X}}$ in the original Hilbert space.

From Theorem~\ref{thm:algebraic_Cayley_model_ds}, one has on $\mathcal{P}_{l}$
\[
\mathcal{T}\tilde{X}=-\left(A+\frac{l}{2}\right)\mathcal{T}.
\]
Therefore 
\[
\mathcal{T}e^{t\tilde{X}}=e^{-t(A+l/2)}\mathcal{T}.
\]
Since 
\[
e^{-t(A+l/2)}e_{n}=e^{-t(n+l/2)}e_{n},
\]
the polynomial estimate \eqref{eq:estimate_T_ds} gives 
\[
\left|\left\langle e_{n}\mid\mathcal{T}(e^{t\tilde{X}}\psi_{k})\right\rangle \right|\le C_{l,k}e^{-t(n+l/2)}(1+n)^{k+\frac{l-1}{2}}.
\]
After absorbing $e^{-tl/2}$ into the constant, this proves \eqref{eq:ds_decay_T_short}.
In particular, 
\[
\mathcal{T}(\mathcal{D}_{t})\subset\ell^{2}(\mathbb{N}).
\]

For the dual transform, the algebraic relation 
\[
\tilde{X}\mathcal{T}^{-1}=-\mathcal{T}^{-1}\left(A+\frac{l}{2}\right)
\]
gives, after taking adjoints and using $\tilde{X}^{\dagger}=-\tilde{X}$,
\[
(\mathcal{T}^{-1})^{\dagger}\tilde{X}=\left(A+\frac{l}{2}\right)(\mathcal{T}^{-1})^{\dagger}.
\]
Hence 
\[
(\mathcal{T}^{-1})^{\dagger}e^{-t\tilde{X}}=e^{-t(A+l/2)}(\mathcal{T}^{-1})^{\dagger}.
\]
Together with \eqref{eq:estimate_T_dual_ds}, this proves \eqref{eq:ds_decay_Tdual_short},
and therefore 
\[
(\mathcal{T}^{-1})^{\dagger}(\mathcal{D}_{-t})\subset\ell^{2}(\mathbb{N}).
\]

The intertwining identities \eqref{eq:ds_intertwining_short} and
\eqref{eq:ds_dual_intertwining_short} follow by differentiating the
two propagated identities above.

It remains to prove injectivity and density. The injectivity of $\mathcal{T}$
on $\mathcal{D}_{t}$ follows from the injectivity of $\mathcal{T}$
on $\mathcal{P}_{l}$, since 
\[
\mathcal{T}(e^{t\tilde{X}}p)=e^{-t(A+l/2)}\mathcal{T}p
\]
and $e^{-t(A+l/2)}$ is injective. The injectivity of $(\mathcal{T}^{-1})^{\dagger}$
on $\mathcal{D}_{-t}$ is analogous.

Finally, we prove density of $\mathcal{T}(\mathcal{D}_{t})$. Let
\[
W_{t}:=\mathcal{T}(\mathcal{D}_{t}),\qquad\mathcal{K}_{t}:=\overline{W_{t}}^{\,\ell^{2}}.
\]
The intertwining relation shows that $\mathcal{K}_{t}$ is invariant
under $e^{-s(A+l/2)}$, $s\ge0$. Moreover, 
\[
\mathcal{T}(e^{t\tilde{X}}\psi_{0})=e^{-t(A+l/2)}\mathcal{T}\psi_{0}.
\]
By \eqref{eq:T_C_inv_psi_k_ds} with $k=0$, all Taylor coefficients
of $T_{\mathcal{C}^{-1}}\psi_{0}$ are nonzero. Hence $\mathcal{T}(e^{t\tilde{X}}\psi_{0})$
has nonzero $e_{n}$-coefficient for every $n$.

Using the diagonal semigroup $e^{-s(A+l/2)}$, we isolate the coefficients
successively. More precisely, if the basis vectors $e_{0},\ldots,e_{n-1}$
are already in $\mathcal{K}_{t}$, subtract their components from
$\mathcal{T}(e^{t\tilde{X}}\psi_{0})$. Applying 
\[
e^{s(n+l/2)}e^{-s(A+l/2)}
\]
and letting $s\to+\infty$, we obtain a nonzero multiple of $e_{n}$.
Thus $e_{n}\in\mathcal{K}_{t}$ for every $n$, and so 
\[
\mathcal{K}_{t}=\ell^{2}(\mathbb{N}).
\]

The proof of the density of 
\[
(\mathcal{T}^{-1})^{\dagger}(\mathcal{D}_{-t})
\]
is the same, using $T_{\mathcal{C}}\psi_{0}$ and the estimate \eqref{eq:ds_decay_Tdual_short}. 
\end{proof}
\begin{cBoxB}{}

\begin{thm}[Hilbert realization of one holomorphic discrete series]
\label{thm:ds_Hilbert_realization_short} Let $l\in2\mathbb{N}^{*}$
and $t>0$. Define 
\[
\|u\|_{\mathcal{H}_{t}}:=\|\mathcal{T}u\|_{\ell^{2}(\mathbb{N})},\qquad u\in\mathcal{D}_{t},
\]
and 
\[
\mathcal{H}_{t}:=\overline{\mathcal{D}_{t}}^{\|\cdot\|_{\mathcal{H}_{t}}}.
\]
Then $\mathcal{T}$ extends uniquely to a unitary isomorphism 
\[
\mathcal{T}:\mathcal{H}_{t}\xrightarrow{\sim}\ell^{2}(\mathbb{N}).
\]
In this Hilbert space, 
\begin{equation}
\tilde{X}=-\mathcal{T}^{-1}\left(A+\frac{l}{2}\right)\mathcal{T}.\label{eq:ds_X_model_one_irrep}
\end{equation}
Thus $\tilde{X}$ is self-adjoint in $\mathcal{H}_{t}$, with discrete
spectrum 
\begin{equation}
z_{n,l}=-n-\frac{l}{2},\qquad n\in\mathbb{N}.\label{eq:ds_resonances_one_irrep}
\end{equation}
The associated rank-one spectral projectors are 
\[
\Pi_{n,l}=\mathcal{T}^{-1}\left(e_{n}\langle e_{n}\mid\cdot\rangle\right)\mathcal{T}.
\]
Moreover, for every $\tau>0$, 
\begin{equation}
e^{\tau\tilde{X}}=\mathcal{T}^{-1}e^{-\tau(A+l/2)}\mathcal{T}=\sum_{n\in\mathbb{N}}e^{-\tau(n+l/2)}\Pi_{n,l},\label{eq:ds_semigroup_one_irrep}
\end{equation}
where the series converges in operator norm. 
\end{thm}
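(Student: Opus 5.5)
The argument will follow the spherical case (Corollary~\ref{cor:Ht_Hminus_t} and Theorem~\ref{thm:summary_spectral}), but it is simpler: there is only one branch and no merging. I will take Proposition~\ref{prop:ds_functional_properties_short} as the input. By items 2 and 5 there, the propagated transform $\mathcal{T}:\mathcal{D}_{t}\to\ell^{2}(\mathbb{N})$ is well defined, injective, and has dense range. So $\|u\|_{\mathcal{H}_{t}}:=\|\mathcal{T}u\|_{\ell^{2}}$ is a genuine norm coming from an inner product. The completion $\mathcal{H}_{t}$ is then a Hilbert space, and $\mathcal{T}$ extends by continuity to an isometry of $\mathcal{H}_{t}$ whose range is closed and contains the dense set $\mathcal{T}(\mathcal{D}_{t})$. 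Hence the extension is a unitary isomorphism, and it is unique because $\mathcal{D}_{t}$ is dense in $\mathcal{H}_{t}$ by construction.

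Next I transport the generator. By \eqref{eq:ds_intertwining_short}, for every $u\in\mathcal{D}_{t}$ we have $\mathcal{T}(\tilde{X}u)=-(A+\frac{l}{2})\mathcal{T}u$. We also need $\tilde{X}\mathcal{D}_{t}\subset\mathcal{D}_{t}$. This holds because $\tilde{X}e^{t\tilde{X}}p=e^{t\tilde{X}}\tilde{X}p$ and $\tilde{X}=\frac{1}{2}(\tilde{N}_{+}+\tilde{N}_{-})$ preserves $\mathcal{P}_{l}$. Therefore $\tilde{X}|_{\mathcal{D}_{t}}=-\mathcal{T}^{-1}(A+\frac{l}{2})\mathcal{T}|_{\mathcal{D}_{t}}$.

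The closure claim needs care. I will check that $\mathcal{T}(\mathcal{D}_{t})$ is a core for the self-adjoint operator $A$ on $\ell^{2}$. It is a dense subspace of $\mathcal{D}(A)$, by the exponential decay \eqref{eq:ds_decay_T_short}, and it is invariant under the contraction semigroup $e^{-s(A+l/2)}$, since $\mathcal{T}e^{s\tilde{X}}=e^{-s(A+l/2)}\mathcal{T}$ and $e^{s\tilde{X}}\mathcal{D}_{t}=\mathcal{D}_{t+s}\subset$ ... Here a subtlety appears: $\mathcal{D}_{t+s}$ need not equal $\mathcal{D}_{t}$. I would handle it instead with Nelson's analytic-vector theorem. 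The elements of $\mathcal{T}(\mathcal{D}_{t})$ have coefficients $O(e^{-tn}n^{N})$, so they are analytic vectors for $A$. Together with the invariance of $\mathcal{T}(\mathcal{D}_{t})$ under $A$, this makes $A$ essentially self-adjoint on that domain. Hence the closure of $\tilde{X}|_{\mathcal{D}_{t}}$ in $\mathcal{H}_{t}$ is exactly $-\mathcal{T}^{-1}(A+\frac{l}{2})\mathcal{T}$, which is self-adjoint, since it is unitarily conjugate to the real diagonal operator $-(A+\frac{l}{2})$. This essential self-adjointness/core step is the only point I expect to be delicate, and the analytic-vector route settles it.

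The spectral statements then follow by transport. The operator $-(A+\frac{l}{2})$ has simple eigenvalues $-n-\frac{l}{2}$ with eigenvectors $e_{n}$, which gives \eqref{eq:ds_resonances_one_irrep}. The spectral projectors are $e_{n}\langle e_{n}\mid\cdot\rangle$, and conjugating them by $\mathcal{T}$ gives $\Pi_{n,l}$. For $\tau>0$, $e^{-\tau(A+l/2)}=\sum_{n}e^{-\tau(n+l/2)}e_{n}\langle e_{n}\mid\cdot\rangle$ converges in operator norm, since the tail beyond $n\ge N$ has norm at most $e^{-\tau(N+l/2)}$. Conjugating by the unitary $\mathcal{T}$ preserves norms, which yields \eqref{eq:ds_semigroup_one_irrep}. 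Finally, the semigroup generated by the self-adjoint closure agrees with $e^{\tau\tilde{X}}$ on $\mathcal{D}_{t}$, because of the propagated identity $\mathcal{T}e^{\tau\tilde{X}}=e^{-\tau(A+l/2)}\mathcal{T}$ from the proof of Proposition~\ref{prop:ds_functional_properties_short}.
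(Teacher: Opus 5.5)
Your proof is correct and follows the paper's route: isometric completion using the injectivity and dense range from Proposition~\ref{prop:ds_functional_properties_short}, transport of the intertwining relation $\mathcal{T}\tilde{X}=-(A+\frac{l}{2})\mathcal{T}$, and then spectral and functional calculus for the diagonal operator. Your check that $\tilde{X}\mathcal{D}_{t}\subset\mathcal{D}_{t}$ and your Nelson analytic-vector argument (essential self-adjointness of $A$ on $\mathcal{T}(\mathcal{D}_{t})$) justify a step the paper only asserts, namely passing from the identity on the core $\mathcal{D}_{t}$ to the self-adjoint operator $-\mathcal{T}^{-1}(A+\frac{l}{2})\mathcal{T}$ on $\mathcal{H}_{t}$.
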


\end{cBoxB}

\begin{proof}
By definition of the norm, $\mathcal{T}$ is an isometry from $\mathcal{D}_{t}$
into $\ell^{2}(\mathbb{N})$. By Proposition~\ref{prop:ds_functional_properties_short},
its image is dense. Hence $\mathcal{T}$ extends uniquely by completion
to a unitary isomorphism 
\[
\mathcal{T}:\mathcal{H}_{t}\xrightarrow{\sim}\ell^{2}(\mathbb{N}).
\]

The intertwining identity \eqref{eq:ds_intertwining_short} gives
\[
\mathcal{T}\tilde{X}=-\left(A+\frac{l}{2}\right)\mathcal{T},
\]
hence 
\[
\tilde{X}=-\mathcal{T}^{-1}\left(A+\frac{l}{2}\right)\mathcal{T}.
\]
Since $A+\frac{l}{2}$ is self-adjoint and diagonal on $\ell^{2}(\mathbb{N})$,
the operator $\tilde{X}$ is self-adjoint in $\mathcal{H}_{t}$. Its
eigenvalues are 
\[
z_{n,l}=-n-\frac{l}{2},\qquad n\in\mathbb{N},
\]
with rank-one spectral projectors 
\[
\Pi_{n,l}=\mathcal{T}^{-1}\left(e_{n}\langle e_{n}\mid\cdot\rangle\right)\mathcal{T}.
\]
The formula for the semigroup follows by functional calculus. Since
\[
e^{-\tau(n+l/2)}
\]
decays exponentially in $n$ for every $\tau>0$, the series converges
in operator norm. 
\end{proof}
\begin{cBoxB}{}

\begin{cor}[Correlation formula]
\label{cor:ds_correlation_short} For every $k,k'\in\mathbb{N}$
and every $\tau>0$, one has the absolutely convergent expansion 
\begin{equation}
\langle\psi_{k'}\mid e^{\tau\tilde{X}}\psi_{k}\rangle_{H_{l}}=\sum_{n\ge0}e^{-\tau(n+l/2)}\overline{\left\langle e_{n}\mid(\mathcal{T}^{-1})^{\dagger}\psi_{k'}\right\rangle }\left\langle e_{n}\mid\mathcal{T}\psi_{k}\right\rangle .\label{eq:ds_correlation_short}
\end{equation}
\end{cor}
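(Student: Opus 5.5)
The plan is to copy the argument of Corollary~\ref{cor:correlation_formula_trigo}, replacing the spherical transforms with the Cayley model of Theorem~\ref{thm:ds_Hilbert_realization_short}. Put $s=\tau/2$. The operator $\tilde{X}$ is skew-adjoint on $H_{l}(\mathbb{D})$ and $e^{s\tilde{X}}$ is unitary there, so
\[
\langle\psi_{k'}\mid e^{\tau\tilde{X}}\psi_{k}\rangle_{H_{l}}=\langle e^{-s\tilde{X}}\psi_{k'}\mid e^{s\tilde{X}}\psi_{k}\rangle_{H_{l}},
\]
with $e^{s\tilde{X}}\psi_{k}\in\mathcal{D}_{s}$ and $e^{-s\tilde{X}}\psi_{k'}\in\mathcal{D}_{-s}$. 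By Proposition~\ref{prop:ds_functional_properties_short}, both $\mathcal{T}(e^{s\tilde{X}}\psi_{k})$ and $(\mathcal{T}^{-1})^{\dagger}(e^{-s\tilde{X}}\psi_{k'})$ lie in $\ell^{2}(\mathbb{N})$.

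The central step is the Parseval-type identity
\[
\langle v\mid u\rangle_{H_{l}}=\left\langle (\mathcal{T}^{-1})^{\dagger}v\middle|\mathcal{T}u\right\rangle _{\ell^{2}},\qquad u\in\mathcal{D}_{s},\ v\in\mathcal{D}_{-s}.
\]
To prove it, write $u=e^{s\tilde{X}}p$ and $v=e^{-s\tilde{X}}p'$ with $p,p'\in\mathcal{P}_{l}$, and use the propagation rules $\mathcal{T}e^{s\tilde{X}}=e^{-s(A+l/2)}\mathcal{T}$ and $(\mathcal{T}^{-1})^{\dagger}e^{-s\tilde{X}}=e^{-s(A+l/2)}(\mathcal{T}^{-1})^{\dagger}$. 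These turn the right-hand side into
\[
\sum_{n}e^{-2s(n+l/2)}\,\overline{\langle e_{n}\mid(\mathcal{T}^{-1})^{\dagger}p'\rangle}\,\langle e_{n}\mid\mathcal{T}p\rangle .
\]
By the estimates \eqref{eq:estimate_T_ds} and \eqref{eq:estimate_T_dual_ds}, this series converges absolutely. It then remains to check that it equals $\langle p'\mid e^{\tau\tilde{X}}p\rangle_{H_{l}}$. Setting $p=\psi_{k}$ and $p'=\psi_{k'}$ gives exactly the claimed expansion \eqref{eq:ds_correlation_short}.

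The main obstacle is this last equality, because $\mathcal{T}$ is only algebraic on $\mathcal{P}_{l}$: it is not bounded, and $T_{\mathcal{C}^{-1}}\psi_{k}\notin H_{l}$. My approach is to check it through the group action. By Lemma~\ref{lem:right_action_disk_ds}, $e^{\tau\tilde{X}}=T_{e^{\tau X}}$. The Cayley conjugation of Lemma~\ref{lem:With-the-Cayley} gives $T_{\mathcal{C}^{-1}}T_{e^{\tau X}}T_{\mathcal{C}}=T_{\exp(-\frac{\tau}{2}\Theta)}$, and this operator acts diagonally by $e^{-\tau(n+l/2)}$ on $\psi_{n}$. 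Hence, as a formal identity on $\mathcal{P}_{l}$,
\[
e^{\tau\tilde{X}}=\mathcal{T}^{-1}e^{-\tau(A+l/2)}\mathcal{T}.
\]
Pairing both sides with $\psi_{k'}$ expresses the matrix element as the sum above, where the coefficient $\langle e_{n}\mid(\mathcal{T}^{-1})^{\dagger}\psi_{k'}\rangle$ is by definition the coefficient of $\psi_{k'}$ in $T_{\mathcal{C}}\psi_{n}$. To make the interchange of sum and pairing rigorous, I will evaluate $T_{\mathcal{C}}\bigl(e^{-\tau(A+l/2)}\mathcal{T}\psi_{k}\bigr)$ as an explicit holomorphic function on $\mathbb{D}$. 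The polynomial bounds and the factor $e^{-\tau n}$ make the series in $n$ converge, since $e^{-\tau}<1$ damps the pole at $w=i$. The $k'$-th Taylor coefficient can then be taken term by term.

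An alternative, and probably cleaner, route is to use density and continuity. Corollary~\ref{cor:Ht_Hminus_t}-style reasoning identifies $\mathcal{H}_{-s}$ as the dual of $\mathcal{H}_{s}$ through the pairing transported by $\mathcal{T}$ and $(\mathcal{T}^{-1})^{\dagger}$. The identity $\langle(\mathcal{T}^{-1})^{\dagger}v\mid\mathcal{T}u\rangle=\langle v\mid\mathcal{T}^{-1}\mathcal{T}u\rangle$ is just the weak definition of the adjoint on $\mathcal{P}_{l}$. It therefore extends to the propagated domains, because both sides are computed coefficientwise from absolutely summable series. Finally, absolute convergence of \eqref{eq:ds_correlation_short} follows from the factor $e^{-\tau n}$ multiplying coefficients of polynomial growth $(1+n)^{k+k'+l-1}$.
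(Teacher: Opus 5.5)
Your proof is correct, and its key step is more carefully justified than in the paper. The paper asserts, as ``the algebraic identities'', that $\langle\psi_{k'}\mid u\rangle_{H_l}=\langle(\mathcal{T}^{-1})^{\dagger}\psi_{k'}\mid\mathcal{T}u\rangle_{\ell^{2}}$ for $u=e^{\tau\tilde{X}}\psi_k$. It then inserts $\mathcal{T}e^{\tau\tilde{X}}=e^{-\tau(A+l/2)}\mathcal{T}$ and uses the bounds \eqref{eq:estimate_T_ds}--\eqref{eq:estimate_T_dual_ds}; it never splits $\tau$.

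Your Cayley group-law route actually proves that identity. Since $l$ is an integer, the cocycle identity $S_hS_{h'}=S_{h'h}$ holds for all $h,h'\in SL_2(\mathbb{C})$ acting on rational functions. Hence $e^{\tau\tilde{X}}\psi_k=T_{\mathcal{C}}h$, where $h(z)=e^{-\tau l/2}(T_{\mathcal{C}^{-1}}\psi_k)(e^{-\tau}z)=\sum_n e^{-\tau(n+l/2)}b_n\psi_n(z)$ is holomorphic on $|z|<e^{\tau}$. For $w$ near $0$, the point $-i\frac{1+w}{1-w}$ stays near $-i$, hence inside that disk. So $\sum_n e^{-\tau(n+l/2)}b_n\,T_{\mathcal{C}}\psi_n(w)$ converges uniformly near $w=0$, and Taylor coefficients can be taken term by term.

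This convergence is only local: each $T_{\mathcal{C}}\psi_n$ has a pole of order $l+n$ at $w=1$, so do not claim it on all of $\mathbb{D}$. As a check, $k=k'=0$ gives $\sum_n e^{-\tau(n+l/2)}2^{l}(-1)^n\binom{l+n-1}{n}=\cosh(\tau/2)^{-l}$, which is the correct matrix coefficient.

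Three corrections to the write-up.
\begin{enumerate}
\item The splitting $s=\tau/2$ and the Parseval-type identity on $\mathcal{D}_{-s}\times\mathcal{D}_{s}$ are a detour. After propagation, that identity is literally the claimed formula for general $p,p'\in\mathcal{P}_l$. Apply the Cayley argument directly to $\langle\psi_{k'}\mid e^{\tau\tilde{X}}\psi_k\rangle_{H_l}$, as the paper does.
\item Drop the ``density and continuity'' alternative. On $\mathcal{P}_l\times\mathcal{P}_l$ the series $\sum_n\overline{\langle e_n\mid(\mathcal{T}^{-1})^{\dagger}v\rangle}\langle e_n\mid\mathcal{T}u\rangle$ diverges: for $u=v=\psi_0$ its terms are $2^{l}(-1)^n\binom{l+n-1}{n}$. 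So there is no identity on $\mathcal{P}_l$ to extend. Moreover, a duality in the style of Corollary~\ref{cor:Ht_Hminus_t} presupposes exactly the identity you are trying to prove. The analytic input of your first route is indispensable.
\item With the weak-adjoint convention, the coefficient of $\psi_{k'}$ in $T_{\mathcal{C}}\psi_n$ is $\overline{\langle e_n\mid(\mathcal{T}^{-1})^{\dagger}\psi_{k'}\rangle}=\langle\psi_{k'}\mid T_{\mathcal{C}}\psi_n\rangle_{H_l}$, not $\langle e_n\mid(\mathcal{T}^{-1})^{\dagger}\psi_{k'}\rangle$ itself. Read this way, your computation gives exactly \eqref{eq:ds_correlation_short}.
\end{enumerate}
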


\end{cBoxB}

\begin{proof}
For $u$ in the propagated domain, the algebraic identities give 
\[
\langle\psi_{k'}\mid u\rangle_{H_{l}}=\left\langle (\mathcal{T}^{-1})^{\dagger}\psi_{k'}\middle|\mathcal{T}u\right\rangle _{\ell^{2}}.
\]
Applying this to $u=e^{\tau\tilde{X}}\psi_{k}$, and using 
\[
\mathcal{T}e^{\tau\tilde{X}}=e^{-\tau(A+l/2)}\mathcal{T},
\]
we get 
\[
\langle\psi_{k'}\mid e^{\tau\tilde{X}}\psi_{k}\rangle_{H_{l}}=\left\langle (\mathcal{T}^{-1})^{\dagger}\psi_{k'}\middle|e^{-\tau(A+l/2)}\mathcal{T}\psi_{k}\right\rangle _{\ell^{2}}.
\]
Expanding in the canonical basis $(e_{n})_{n\in\mathbb{N}}$, this
gives 
\[
\langle\psi_{k'}\mid e^{\tau\tilde{X}}\psi_{k}\rangle_{H_{l}}=\sum_{n\ge0}e^{-\tau(n+l/2)}\overline{\left\langle e_{n}\mid(\mathcal{T}^{-1})^{\dagger}\psi_{k'}\right\rangle }\left\langle e_{n}\mid\mathcal{T}\psi_{k}\right\rangle .
\]

Finally, by Theorem~\ref{thm:algebraic_Cayley_model_ds}, 
\[
\left|\left\langle e_{n}\mid\mathcal{T}\psi_{k}\right\rangle \right|\le C_{l,k}(1+n)^{k+\frac{l-1}{2}},
\]
and 
\[
\left|\left\langle e_{n}\mid(\mathcal{T}^{-1})^{\dagger}\psi_{k'}\right\rangle \right|\le C'_{l,k'}(1+n)^{k'+\frac{l-1}{2}}.
\]
Thus the $n$-th term is bounded by 
\[
Ce^{-\tau n}(1+n)^{k+k'+l-1},
\]
which is summable for every $\tau>0$. Hence the expansion is absolutely
convergent. 
\end{proof}

\subsection{\protect\label{subsec:Trace-of-}Trace of $e^{t\tilde{X}}$ in one
discrete series}

We fix $l\in2\mathbb{N}^{*}$ and one holomorphic discrete series
\[
\mathcal{H}_{l}^{\mathrm{d.s.},+}(v).
\]
In the holomorphic disk model, recall that 
\[
\tilde{X}\eq{\ref{eq:Nplus_disk_ds},\ref{eq:X_from_R}}\frac{1}{2}\left((w^{2}-1)\partial_{w}+lw\right).
\]

Let $\phi^{t}$ be the holomorphic flow generated by 
\[
\frac{1}{2}(w^{2}-1)\partial_{w}.
\]
We write 
\[
\rho_{t}(w):=\exp\left(\int_{0}^{t}\frac{l}{2}\,\phi^{s}(w)\,ds\right).
\]
Then, formally, 
\[
(e^{t\tilde{X}}u)(w)=\rho_{t}(w)\,u(\phi^{t}(w)).
\]

For a holomorphic function $u$, Cauchy's formula gives 
\[
u(\phi^{t}(w))=\frac{1}{2\pi i}\int_{\partial\mathbb{D}}\frac{u(\zeta)}{\zeta-\phi^{t}(w)}\,d\zeta.
\]
Thus the holomorphic kernel of $e^{t\tilde{X}}$, with respect to
the Cauchy boundary measure $d\zeta/(2\pi i)$, is 
\begin{equation}
K_{t}^{+}(w,\zeta):=\frac{\rho_{t}(w)}{\zeta-\phi^{t}(w)}.\label{eq:holomorphic_kernel_discrete}
\end{equation}

\begin{cBoxB}{}

\begin{thm}
\label{thm:trace_one_discrete_series} Let $l\in2\mathbb{N}^{*}$.
For $t>0$, the operator $e^{t\tilde{X}}$ is not trace class in the
original Hilbert space $H_{l}(\mathbb{D})$. However its holomorphic
flat trace is well defined by 
\begin{equation}
\operatorname{Tr}_{\partial\mathbb{D}}^{\flat,+}\left(e^{t\tilde{X}}\right):=\frac{1}{2\pi i}\int_{\partial\mathbb{D}}^{\flat,+}K_{t}^{+}(w,w)\,dw.\label{eq:def_holomorphic_flat_trace_kernel}
\end{equation}
Equivalently, 
\begin{equation}
\operatorname{Tr}_{\partial\mathbb{D}}^{\flat,+}\left(e^{t\tilde{X}}\right)=\sum_{\substack{\phi^{t}(w)=w\\
|(\phi^{t})'(w)|<1
}
}\frac{\rho_{t}(w)}{1-(\phi^{t})'(w)}.\label{eq:def_holomorphic_flat_trace_fixed_points}
\end{equation}
For $t>0$, this gives 
\begin{equation}
\operatorname{Tr}_{\partial\mathbb{D}}^{\flat,+}\left(e^{t\tilde{X}}\right)=\frac{e^{-tl/2}}{1-e^{-t}}.\label{eq:trace_flat_discrete_one}
\end{equation}
On the other hand, for every $\tau>0$, the same operator is trace
class in the Hilbert model $\mathcal{H}_{\tau}$, and 
\[
\operatorname{Tr}_{\mathcal{H}_{\tau}}\left(e^{t\tilde{X}}\right)=\operatorname{Tr}_{\partial\mathbb{D}}^{\flat,+}\left(e^{t\tilde{X}}\right).
\]
\end{thm}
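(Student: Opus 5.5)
The argument mirrors the proof of Theorem~\ref{thm:flat_trace_equals_Hilbert_trace}, in three steps.

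\textbf{Non-trace-class claim.} On $H_{l}(\mathbb{D})$ the operator $\tilde{X}$ is skew-adjoint, so $e^{t\tilde{X}}$ is unitary. A unitary operator on an infinite-dimensional space is not compact, hence not trace class.

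\textbf{Flat trace.} First we make the flow explicit. The vector field $\frac{1}{2}(w^{2}-1)\partial_{w}$ has fixed points $w=\pm1$, with linearizations $\pm w\,\partial_w$ there. Hence $(\phi^{t})'(1)=e^{t}$ and $(\phi^{t})'(-1)=e^{-t}$. Explicitly, $\phi^{t}$ is the Möbius map
\[
\phi^{t}(w)=\frac{w\cosh(t/2)-\sinh(t/2)}{\cosh(t/2)-w\sinh(t/2)},
\]
which is $\tanh$-like, so $-1$ is the attracting fixed point for $t>0$.

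Next we justify the fixed-point formula \eqref{eq:def_holomorphic_flat_trace_fixed_points}. Deform the contour $\partial\mathbb{D}$ slightly inward near the repelling point $w=1$, which is the meaning of the $\flat,+$ prescription; equivalently, integrate the diagonal kernel over a curve enclosing only the attracting fixed point. Then
\[
\frac{1}{2\pi i}\oint\frac{\rho_{t}(w)}{w-\phi^{t}(w)}\,dw
\]
is evaluated by the residue theorem. The only pole inside is the simple zero of $w-\phi^{t}(w)$ at $w=-1$, with residue $\rho_{t}(-1)/(1-(\phi^{t})'(-1))$.

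Finally, at $w=-1$ the flow is stationary, so $\rho_{t}(-1)=\exp(\int_{0}^{t}\frac{l}{2}(-1)\,ds)=e^{-tl/2}$. This gives \eqref{eq:trace_flat_discrete_one}: $\frac{e^{-tl/2}}{1-e^{-t}}$.

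\textbf{Hilbert-model trace.} By Theorem~\ref{thm:ds_Hilbert_realization_short}, $e^{t\tilde{X}}=\mathcal{T}^{-1}e^{-t(A+l/2)}\mathcal{T}$ with $\mathcal{T}$ unitary from $\mathcal{H}_\tau$ onto $\ell^2(\mathbb{N})$. The operator $e^{-t(A+l/2)}$ is diagonal with summable eigenvalues, so $e^{t\tilde{X}}$ is trace class on $\mathcal{H}_\tau$ and
\[
\operatorname{Tr}_{\mathcal{H}_{\tau}}=\sum_{n}e^{-t(n+l/2)}=\frac{e^{-tl/2}}{1-e^{-t}}.
\]
This equals the flat trace.

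The main obstacle is making the "$\flat,+$" contour prescription rigorous and consistent with the kernel \eqref{eq:holomorphic_kernel_discrete}. The kernel has its singular diagonal on $\partial\mathbb{D}$, and the prescription must select the attracting point $-1$ and discard $+1$. The $+1$ point would contribute $e^{tl/2}/(1-e^{t})$, which is not consistent with the spectral side. I would define $\operatorname{Tr}^{\flat,+}$ via the limit over contours $|w|=r\to1^{-}$, deformed around $w=1$.

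As a cross-check, I would verify the answer directly from the coefficients: $\sum_{k}\langle\psi_k\mid e^{t\tilde{X}}\psi_k\rangle$ computed via generating functions, or via the Lefschetz formula applied with $\rho_t$ as above.
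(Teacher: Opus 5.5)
Your proposal is correct and follows the paper's proof step for step: unitarity on $H_{l}(\mathbb{D})$ rules out trace class, the boundary fixed points $w=\pm1$ with multipliers $e^{\pm t}$ and $\rho_{t}(-1)=e^{-tl/2}$ give the attracting-point contribution, and the Hilbert-model trace is read off from $e^{-t(A+l/2)}$. Your explicit M\"obius flow and residue-theorem reading of the $\flat,+$ prescription make the paper's one-line ``keep the attracting contribution'' more concrete, but note one slip: circles $|w|=r$ with $r\to1^{-}$ give $0$, because $\rho_{t}(w)/(w-\phi^{t}(w))=(\cosh\frac{t}{2}-w\sinh\frac{t}{2})^{1-l}/\bigl(\sinh\frac{t}{2}\,(1-w^{2})\bigr)$ is holomorphic on $\mathbb{D}$, so the contour must be pushed outward around $w=-1$ (as in your residue argument), not merely indented at $w=1$.
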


\end{cBoxB}

\begin{proof}
On the original Hilbert space $H_{l}(\mathbb{D})$, the representation
is unitary. Hence $e^{t\tilde{X}}$ is unitary. Since $H_{l}(\mathbb{D})$
is infinite-dimensional, this operator is not compact, and therefore
not trace class.

The fixed points of 
\[
\frac{1}{2}(w^{2}-1)\partial_{w}
\]
on $\partial\mathbb{D}$ are 
\[
w=-1,\qquad w=1.
\]
For $t>0$, $w=-1$ is attracting and 
\[
(\phi^{t})'(-1)=e^{-t},
\]
whereas $w=1$ is repelling and 
\[
(\phi^{t})'(1)=e^{t}.
\]

The diagonal of the holomorphic kernel is 
\[
K_{t}^{+}(w,w)=\frac{\rho_{t}(w)}{w-\phi^{t}(w)}.
\]
Thus the diagonal singularities occur exactly at the fixed points
of $\phi^{t}$. The holomorphic, or positive-frequency, prescription
keeps the attracting boundary contribution. Hence 
\[
\operatorname{Tr}_{\partial\mathbb{D}}^{\flat,+}\left(e^{t\tilde{X}}\right)=\frac{\rho_{t}(-1)}{1-(\phi^{t})'(-1)}.
\]
Since 
\[
\rho_{t}(-1)=\exp\left(\int_{0}^{t}\frac{l}{2}(-1)\,ds\right)=e^{-tl/2},
\]
we obtain 
\[
\operatorname{Tr}_{\partial\mathbb{D}}^{\flat,+}\left(e^{t\tilde{X}}\right)=\frac{e^{-tl/2}}{1-e^{-t}}.
\]

In the Hilbert model $\mathcal{H}_{\tau}$, Theorem~\ref{thm:ds_Hilbert_realization_short}
gives 
\[
\mathcal{T}\tilde{X}\mathcal{T}^{-1}=-A-\frac{l}{2}.
\]
Thus 
\[
\operatorname{Tr}_{\mathcal{H}_{\tau}}\left(e^{t\tilde{X}}\right)=\sum_{n\ge0}e^{-t(n+l/2)}=\frac{e^{-tl/2}}{1-e^{-t}}.
\]
This proves the equality between the Hilbert trace and the holomorphic
flat trace. 
\end{proof}
\begin{rem}
The notation $\operatorname{Tr}_{\partial\mathbb{D}}^{\flat,+}$ emphasizes
that we use the holomorphic Cauchy kernel and keep the positive-frequency,
attracting boundary contribution. This is analogous to the fixed-point
contribution in holomorphic Lefschetz formulas, but here the fixed
point lies on the boundary and the operator is not trace class on
the original Bergman space. 
\end{rem}

\begin{cBoxB}{}

\begin{thm}[Global Hilbert model for the discrete series]
\label{thm:global_discrete_model_X} Let $t>0$. The fiberwise transforms
constructed above define an injective map 
\begin{equation}
\mathbb{T}_{\mathrm{d.s.}}:\mathcal{D}_{\mathrm{d.s.},t}\longrightarrow\widetilde{\mathcal{K}}_{\mathrm{d.s.}},\label{eq:def_TT_ds}
\end{equation}
with dense image. More explicitly, $\mathbb{T}_{\mathrm{d.s.}}$ is
the direct sum, over all holomorphic and anti-holomorphic discrete
series components, of the corresponding maps 
\[
\mathcal{T}\mathcal{U}_{1}.
\]
We define 
\[
\|u\|_{\mathcal{H}_{\mathrm{d.s.},t}}:=\|\mathbb{T}_{\mathrm{d.s.}}u\|_{\widetilde{\mathcal{K}}_{\mathrm{d.s.}}},\qquad u\in\mathcal{D}_{\mathrm{d.s.},t},
\]
and 
\[
\mathcal{H}_{\mathrm{d.s.},t}:=\overline{\mathcal{D}_{\mathrm{d.s.},t}}^{\|\cdot\|_{\mathcal{H}_{\mathrm{d.s.},t}}}.
\]
Then 
\[
\mathbb{T}_{\mathrm{d.s.}}:\mathcal{H}_{\mathrm{d.s.},t}\xrightarrow{\sim}\widetilde{\mathcal{K}}_{\mathrm{d.s.}}
\]
extends uniquely as a unitary isomorphism.

On the algebraic core, one has 
\begin{equation}
\mathbb{T}_{\mathrm{d.s.}}X\mathbb{T}_{\mathrm{d.s.}}^{-1}=-A-\frac{1}{2}-\Lambda_{\mathrm{d.s.}},\label{eq:global_X_discrete}
\end{equation}
\begin{equation}
\mathbb{T}_{\mathrm{d.s.}}U\mathbb{T}_{\mathrm{d.s.}}^{-1}=a^{+}\left(A+2\Lambda_{\mathrm{d.s.}}+1\right)^{1/2},\label{eq:global_U_discrete}
\end{equation}
and 
\begin{equation}
\mathbb{T}_{\mathrm{d.s.}}S\mathbb{T}_{\mathrm{d.s.}}^{-1}=-\left(A+2\Lambda_{\mathrm{d.s.}}+1\right)^{1/2}a^{-}.\label{eq:global_S_discrete}
\end{equation}

Moreover, 
\begin{equation}
\mathbb{T}_{\mathrm{d.s.}}\Omega\mathbb{T}_{\mathrm{d.s.}}^{-1}=\frac{1}{4}-\Lambda_{\mathrm{d.s.}}^{2}.\label{eq:global_Omega_discrete}
\end{equation}
Consequently, the discrete-series resonances of $X$ are 
\begin{equation}
z_{n,l}=-n-\frac{l}{2},\qquad n\in\mathbb{N},\quad l\in2\mathbb{N}^{*},\label{eq:global_discrete_resonances}
\end{equation}
with multiplicity $2m_{l}$. 
\end{thm}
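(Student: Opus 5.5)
The plan is to assemble the statement fiberwise from Theorem~\ref{thm:ds_Hilbert_realization_short} and Theorem~\ref{thm:algebraic_Cayley_model_ds}, and then globalize over the orthogonal decomposition \eqref{eq:def_H_ds} of Theorem~\ref{thm:We-have-a}.

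\textbf{Model space.} For each $l=2q\in2\mathbb{N}^{*}$, fix an orthonormal basis $(v_{j})_{1\le j\le m_{l}}$ of $\mathcal{H}_{\mathrm{hol},+}(l)$. This gives $\mathcal{H}_{l}^{\mathrm{d.s.},+}=\bigoplus_{j}\mathcal{H}_{l}^{\mathrm{d.s.},+}(v_{j})$. The components are mutually orthogonal, since the lowest-weight vectors are orthogonal and $N_{\pm}$ preserve orthogonality up to the common factors of Lemma~\ref{lem:norm_Npm}. On each component I apply $\mathcal{T}\mathcal{U}_{1}$ and send $e_{n}$ to $e_{n}\otimes v_{j}\in\ell^{2}(\mathbb{N})\otimes\mathcal{H}_{\mathrm{hol},+}(2q)$. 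This produces $\widetilde{\mathcal{K}}_{\mathrm{d.s.}}:=\ell^{2}(\mathbb{N})\otimes\mathcal{H}_{\mathrm{hol}}$.

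\textbf{Anti-holomorphic part.} I define the transform on the anti-holomorphic part by complex conjugation, $\mathcal{T}^{-}:=\overline{\mathcal{T}\mathcal{U}_{1}(C\,\cdot)}$ (coefficientwise conjugation in $\ell^{2}$). Since $X,U,S$ are real vector fields, $C$ commutes with them. Therefore the conjugated transform satisfies the same intertwining relations, with the real coefficients $-n-l/2$ and $\sqrt{(n+1)(n+l)}$ unchanged.

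\textbf{Identities on each fiber.} With $\Lambda_{\mathrm{d.s.}}=q-\tfrac12=\tfrac{l-1}{2}$ on the $l$-th block, the fiberwise formulas become global ones:
\begin{itemize}
\item $-A-\tfrac{l}{2}=-A-\tfrac12-\Lambda_{\mathrm{d.s.}}$, which gives \eqref{eq:global_X_discrete};
\item $A+l=A+2\Lambda_{\mathrm{d.s.}}+1$, so \eqref{eq:T_l_alg_U_ds} and \eqref{eq:T_l_alg_S_ds} give \eqref{eq:global_U_discrete} and \eqref{eq:global_S_discrete};
\item $\Omega=-\tfrac14 l(l-2)$ from \eqref{eq:Omega_disk_ds}, and $\tfrac14-\bigl(\tfrac{l-1}{2}\bigr)^{2}=-\tfrac14 l(l-2)$, which gives \eqref{eq:global_Omega_discrete}.
\end{itemize}
The resonance list then reads $-n-l/2$ with multiplicity $m_{l}$ from each of the holomorphic and anti-holomorphic sides, hence $2m_{l}$.

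\textbf{Analytic part.} The domain is $\mathcal{D}_{\mathrm{d.s.},t}=e^{tX}\bigl(\mathcal{P}(M)\cap\mathcal{H}^{\mathrm{d.s.}}\bigr)$. An element of this space is a finite sum of $K$-finite vectors lying in finitely many components. Injectivity of $\mathbb{T}_{\mathrm{d.s.}}$ therefore reduces to injectivity on each component, which is Proposition~\ref{prop:ds_functional_properties_short}, together with orthogonality of the target summands.

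The main point to check is density of the image in the infinite orthogonal sum. The algebraic direct sum of the fiber images is contained in the image. Each fiber image is dense in its own summand $\ell^{2}(\mathbb{N})\otimes\mathbb{C}v_{j}$, again by Proposition~\ref{prop:ds_functional_properties_short}. The algebraic sum of dense subspaces of the orthogonal summands is dense in the Hilbert sum. So the image is dense.

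Completion then makes $\mathbb{T}_{\mathrm{d.s.}}$ unitary. The identities extend from the core, since they hold on each finite piece and both sides act diagonally along the fiber decomposition.
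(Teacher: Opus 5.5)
Your proposal is correct and follows the paper's proof essentially step by step. Both arguments apply Theorems~\ref{thm:ds_Hilbert_realization_short} and~\ref{thm:algebraic_Cayley_model_ds} on each component, use the substitutions $\Lambda_{\mathrm{d.s.}}=\frac{l-1}{2}$, $A+l=A+2\Lambda_{\mathrm{d.s.}}+1$ and $\frac14-\Lambda_{\mathrm{d.s.}}^{2}=-\frac14 l(l-2)$, treat the anti-holomorphic copies by complex conjugation, and take a Hilbert direct sum over $l$ and over orthonormal bases of $\mathcal{H}_{\mathrm{hol},\pm}$. Beyond the paper, you spell out a few points it leaves implicit: the linear transform $\overline{\mathcal{T}\mathcal{U}_{1}(C\,\cdot)}$, which works because the model matrices are real, and the density of an algebraic sum of dense subspaces in an infinite orthogonal sum.
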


\end{cBoxB}

\begin{proof}
Fix $l\in2\mathbb{N}^{*}$. On each holomorphic discrete-series copy
generated by 
\[
0\neq v\in\mathcal{H}_{\mathrm{hol},+}(l),
\]
Theorem~\ref{thm:ds_Hilbert_realization_short} gives the local model
\[
\mathcal{T}X\mathcal{T}^{-1}=-A-\frac{l}{2}.
\]
Moreover, on the algebraic core, Theorem~\ref{thm:algebraic_Cayley_model_ds}
gives 
\[
\mathcal{T}U\mathcal{T}^{-1}=a^{+}(A+l)^{1/2},\qquad\mathcal{T}S\mathcal{T}^{-1}=-(A+l)^{1/2}a^{-}.
\]

By definition of $\Lambda_{\mathrm{d.s.}}$, 
\[
\Lambda_{\mathrm{d.s.}}v=\frac{l-1}{2}v.
\]
Thus, on the summand $\ell^{2}(\mathbb{N})\otimes\mathbb{C}v$, 
\[
-A-\frac{1}{2}-\Lambda_{\mathrm{d.s.}}=-A-\frac{1}{2}-\frac{l-1}{2}=-A-\frac{l}{2},
\]
and 
\[
A+2\Lambda_{\mathrm{d.s.}}+1=A+l.
\]
This gives the formulas \eqref{eq:global_X_discrete}, \eqref{eq:global_U_discrete},
and \eqref{eq:global_S_discrete} on the holomorphic part. The order
of the factors in \eqref{eq:global_U_discrete} and \eqref{eq:global_S_discrete}
is inherited from the local formulas; in particular, one should not
commute $a^{\pm}$ with functions of $A$.

The anti-holomorphic part is obtained by complex conjugation. Since
$X,U,S$ and $\Omega$ are real operators, the same formulas hold
on the anti-holomorphic copies. We keep the direct sum 
\[
\mathcal{H}_{\mathrm{hol},+}\oplus\mathcal{H}_{\mathrm{hol},-}
\]
rather than identifying the two spaces linearly, because complex conjugation
is anti-linear.

Taking the Hilbert direct sum over all $l\in2\mathbb{N}^{*}$ and
over orthonormal bases of the finite-dimensional lowest/highest-weight
spaces gives the global transform 
\[
\mathbb{T}_{\mathrm{d.s.}}:\mathcal{D}_{\mathrm{d.s.},t}\longrightarrow\widetilde{\mathcal{K}}_{\mathrm{d.s.}}.
\]
The local injectivity and density statements imply that $\mathbb{T}_{\mathrm{d.s.}}$
is injective and has dense image. Hence the norm 
\[
\|u\|_{\mathcal{H}_{\mathrm{d.s.},t}}=\|\mathbb{T}_{\mathrm{d.s.}}u\|_{\widetilde{\mathcal{K}}_{\mathrm{d.s.}}}
\]
is a genuine norm, and completion gives the unitary isomorphism 
\[
\mathbb{T}_{\mathrm{d.s.}}:\mathcal{H}_{\mathrm{d.s.},t}\xrightarrow{\sim}\widetilde{\mathcal{K}}_{\mathrm{d.s.}}.
\]

Finally, on the summand of lowest weight $l$, 
\[
\Omega=-\frac{1}{4}l(l-2),\qquad\Lambda_{\mathrm{d.s.}}=\frac{l-1}{2}.
\]
Therefore 
\[
\frac{1}{4}-\Lambda_{\mathrm{d.s.}}^{2}=\frac{1}{4}-\frac{(l-1)^{2}}{4}=-\frac{1}{4}l(l-2),
\]
which proves \eqref{eq:global_Omega_discrete}.

Since $Ae_{n}=ne_{n}$, the eigenvalues of 
\[
-A-\frac{1}{2}-\Lambda_{\mathrm{d.s.}}
\]
on the summand of weight $l$ are 
\[
-n-\frac{1}{2}-\frac{l-1}{2}=-n-\frac{l}{2}.
\]
This gives \eqref{eq:global_discrete_resonances}. The multiplicity
is $2m_{l}$, because for each $l$ there are $m_{l}$ holomorphic
and $m_{l}$ anti-holomorphic copies. 
\end{proof}

\section{\protect\label{sec:Global-Hilbert-model}Global Hilbert model}

We now collect the spherical, discrete and trivial components into
a single Hilbert model.

Recall the decomposition 
\[
L^{2}(M)=\mathcal{H}^{\mathrm{triv}}\widehat{\oplus}\mathcal{H}^{\mathrm{sph}}\widehat{\oplus}\mathcal{H}^{\mathrm{d.s.}},
\]
where 
\[
\mathcal{H}^{\mathrm{triv}}=\mathbb{C}\boldsymbol{1}.
\]
Let 
\begin{equation}
\mathcal{P}(M):=\bigoplus_{l\in2\mathbb{Z}}^{\mathrm{alg}}\bigoplus_{\mu\in\mathrm{Spec}(\Omega_{l})}^{\mathrm{alg}}\mathcal{H}_{\mu,l}\label{eq:def_calP_M}
\end{equation}
be the algebraic core of $K$-finite and spectrally finite vectors.
For $r\in\mathbb{R}$, set 
\begin{equation}
\mathcal{D}_{r}:=e^{rX}\mathcal{P}(M).\label{eq:def_global_D_t}
\end{equation}

We set 
\[
N_{2}:=\begin{pmatrix}0 & 1\\
0 & 0
\end{pmatrix}
\]
and 
\begin{equation}
\mathcal{H}_{\mathrm{hol}}:=\mathcal{H}_{\mathrm{hol},+}\oplus\mathcal{H}_{\mathrm{hol},-}.\label{eq:def_Hol}
\end{equation}

Using the decomposition 
\[
L_{0}^{2}(\mathcal{N})=E_{\neq1/4}\oplus E_{1/4},
\]
we introduce the parameter space 
\begin{equation}
\mathcal{B}:=\left(E_{\neq1/4}\otimes\mathbb{C}^{2}\right)\oplus\left(E_{1/4}\otimes\mathbb{C}^{2}\right)\oplus\mathcal{H}_{\mathrm{hol}}.\label{eq:def_global_B}
\end{equation}
The global model space is 
\begin{equation}
\mathcal{K}:=\left(\ell^{2}(\mathbb{N})\otimes\mathcal{B}\right)\oplus\mathbb{C}\boldsymbol{1}.\label{eq:def_global_K}
\end{equation}

\begin{cBoxB}{}

\begin{thm}[Global Hilbert model]
\label{thm:global_Hilbert_model_XUS} Let $r>0$. There exists an
injective map 
\[
\mathbb{T}:\mathcal{D}_{r}\longrightarrow\mathcal{K}
\]
with dense image. We define 
\[
\|u\|_{\mathcal{H}_{r}}:=\|\mathbb{T}u\|_{\mathcal{K}},\qquad u\in\mathcal{D}_{r},
\]
and 
\begin{equation}
\mathcal{H}_{r}:=\overline{\mathcal{D}_{r}}^{\|\cdot\|_{\mathcal{H}_{r}}}.\label{eq:def_global_H_t}
\end{equation}
Then 
\[
\mathbb{T}:\mathcal{H}_{r}\xrightarrow{\sim}\mathcal{K}
\]
extends uniquely as a unitary isomorphism.

In the model space, the generator $X$ is represented by 
\begin{equation}
\mathbb{T}X\mathbb{T}^{-1}=\left(-\left(A+\frac{1}{2}\right)\otimes\mathrm{Id}_{\mathcal{B}}+\mathrm{Id}_{\ell^{2}(\mathbb{N})}\otimes X_{\mathcal{B}}\right)\oplus0_{\mathbb{C}\boldsymbol{1}},\label{eq:global_X_all_blocks}
\end{equation}
where 
\begin{equation}
X_{\mathcal{B}}=i\Sigma\Lambda\oplus N_{2}\oplus\left(-\Lambda_{\mathrm{d.s.}}\right)\label{eq:def_X_B}
\end{equation}
on 
\[
\mathcal{B}=\left(E_{\neq1/4}\otimes\mathbb{C}^{2}\right)\oplus\left(E_{1/4}\otimes\mathbb{C}^{2}\right)\oplus\mathcal{H}_{\mathrm{hol}}.
\]
Here $\Sigma$ acts on the factor $\mathbb{C}^{2}$, and $\Lambda$
acts on $E_{\neq1/4}$.

Consequently, for every $\tau\ge0$, 
\begin{equation}
\mathbb{T}e^{\tau X}\mathbb{T}^{-1}=\left(e^{-\tau(A+\frac{1}{2})}\otimes e^{\tau X_{\mathcal{B}}}\right)\oplus\mathrm{Id}_{\mathbb{C}\boldsymbol{1}},\label{eq:global_exp_X}
\end{equation}
with 
\begin{equation}
e^{\tau X_{\mathcal{B}}}=e^{i\tau\Sigma\Lambda}\oplus\left(\mathrm{Id}_{2}+\tau N_{2}\right)\oplus e^{-\tau\Lambda_{\mathrm{d.s.}}}.\label{eq:global_exp_X_B}
\end{equation}

The operators $U$ and $S$ are represented blockwise as follows.
On the non-threshold spherical part, 
\begin{equation}
U_{\mathrm{sph},\neq1/4}=-\Sigma\,a^{+}\left(A+1-2i\Sigma\Lambda\right)^{1/2},\label{eq:U_sph_global_again}
\end{equation}
\begin{equation}
S_{\mathrm{sph},\neq1/4}=\Sigma\left(A+1-2i\Sigma\Lambda\right)^{1/2}a^{-}.\label{eq:S_sph_global_again}
\end{equation}
On the discrete-series part, 
\begin{equation}
U_{\mathrm{d.s.}}=a^{+}\left(A+2\Lambda_{\mathrm{d.s.}}+1\right)^{1/2},\label{eq:U_ds_global_again}
\end{equation}
\begin{equation}
S_{\mathrm{d.s.}}=-\left(A+2\Lambda_{\mathrm{d.s.}}+1\right)^{1/2}a^{-}.\label{eq:S_ds_global_again}
\end{equation}
On the trivial representation, 
\[
X\boldsymbol{1}=U\boldsymbol{1}=S\boldsymbol{1}=0.
\]

Moreover, 
\begin{equation}
\mathbb{T}\Omega\mathbb{T}^{-1}=\left(\mathrm{Id}_{\ell^{2}(\mathbb{N})}\otimes\Omega_{\mathcal{B}}\right)\oplus0_{\mathbb{C}\boldsymbol{1}},\label{eq:global_Omega_all_blocks}
\end{equation}
where 
\begin{equation}
\Omega_{\mathcal{B}}=\Delta_{|E_{\neq1/4}}\otimes\mathrm{Id}_{\mathbb{C}^{2}}\oplus\frac{1}{4}\,\mathrm{Id}_{E_{1/4}\otimes\mathbb{C}^{2}}\oplus\left(\frac{1}{4}-\Lambda_{\mathrm{d.s.}}^{2}\right).\label{eq:def_Omega_B}
\end{equation}
\end{thm}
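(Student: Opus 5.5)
The plan is to assemble the global model from the three blockwise results already proved: Theorem~\ref{thm:global_spherical_model_X} for the spherical part, Theorem~\ref{thm:global_discrete_model_X} for the discrete series, and the trivial representation handled by hand. The key structural fact is that $e^{rX}$ commutes with the orthogonal projections onto the irreducible summands in Theorem~\ref{thm:We-have-a}, because these summands are $\mathrm{SL}_2(\mathbb{R})$-invariant. Hence
\[
\mathcal{D}_{r}=e^{rX}\mathcal{P}(M)=\mathbb{C}\boldsymbol{1}\oplus\mathcal{D}_{\mathrm{sph},r}\oplus\mathcal{D}_{\mathrm{d.s.},r}
\]
as an algebraic direct sum. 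Here each element has only finitely many nonzero components, since $\mathcal{P}(M)$ involves finitely many $(\mu,l)$, and $e^{rX}$ preserves each isotypic component. I would define $\mathbb{T}:=\mathrm{Id}_{\mathbb{C}\boldsymbol{1}}\oplus\mathbb{T}_{\mathrm{sph}}\oplus\mathbb{T}_{\mathrm{d.s.}}$ and identify $\widetilde{\mathcal{K}}_{\mathrm{sph}}\oplus\widetilde{\mathcal{K}}_{\mathrm{d.s.}}$ with $\ell^{2}(\mathbb{N})\otimes\mathcal{B}$. This uses the reordering $\ell^{2}(\mathbb{N})\otimes E\otimes\mathbb{C}^{2}\simeq\ell^{2}(\mathbb{N})\otimes(E\otimes\mathbb{C}^{2})$ and, for the discrete series, the identification of the multiplicity space with $\mathcal{H}_{\mathrm{hol}}$ by choosing orthonormal bases of $\mathcal{H}_{\mathrm{hol},\pm}(\pm l)$.

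Injectivity and density of the image then follow componentwise. A vector in the image is zero if and only if each block component is zero. Density holds because a finite direct sum of dense subspaces is dense in the orthogonal sum. Within each block this was already established fiber by fiber, and density of an algebraic direct sum over countably many fibers in the Hilbert direct sum is immediate. The norm $\|\mathbb{T}\cdot\|_{\mathcal{K}}$ is therefore a genuine norm, and completion yields the unitary extension $\mathbb{T}:\mathcal{H}_{r}\to\mathcal{K}$.

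For the operator identities, I would read off \eqref{eq:global_X_all_blocks} directly. On the non-threshold spherical part, \eqref{eq:global_X_non_threshold} reads $L+i\Sigma\Lambda=-(A+\tfrac12)+i\Sigma\Lambda$. On the threshold part, \eqref{eq:global_X_threshold} gives $\mathbb{J}_{0}=L\otimes\mathrm{Id}+\mathrm{Id}\otimes N_{2}$. On the discrete series, \eqref{eq:global_X_discrete} gives $-(A+\tfrac12)-\Lambda_{\mathrm{d.s.}}$. Finally $X\boldsymbol{1}=0$. Since $-(A+\tfrac12)\otimes\mathrm{Id}$ commutes with $\mathrm{Id}\otimes X_{\mathcal{B}}$, and $X_{\mathcal{B}}$ is bounded on each finite-dimensional spectral fiber, the exponential factorizes as \eqref{eq:global_exp_X}. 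Here $e^{\tau N_{2}}=\mathrm{Id}_2+\tau N_{2}$ because $N_{2}^{2}=0$.

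To justify passing from the generator to the semigroup, I would not rely on abstract semigroup theory. I would instead use the propagated identity $\mathbb{T}e^{\tau X}u=(\cdots)\mathbb{T}u$ for $u\in\mathcal{D}_{r}$, obtained fiberwise from the intertwining relations of the earlier sections (for example $\mathcal{T}_{\pm}e^{s\tilde X}=e^{s(-A+b_{\pm})}\mathcal{T}_{\pm}$), and then extend by continuity. The right-hand side is a bounded operator on $\mathcal{K}$ for $\tau\ge0$: the damping $e^{-\tau(n+1/2)}$ controls everything, and on the complementary part $|e^{i\tau\Sigma\Lambda}|\le e^{\tau/2}$ is absorbed by the factor $e^{-\tau/2}$.

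The formulas for $U$ and $S$ are \eqref{eq:global_U_spherical}, \eqref{eq:global_S_spherical}, \eqref{eq:global_U_discrete} and \eqref{eq:global_S_discrete} restated, holding on the algebraic cores of the respective blocks. The identity for $\Omega$ combines \eqref{eq:global_Omega_spherical}, where $\Omega=\tfrac14$ on $E_{1/4}$, with \eqref{eq:global_Omega_discrete}, and uses $\Omega\boldsymbol{1}=0$.

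The main obstacle I expect is domain bookkeeping. The fiberwise transforms were each built on propagated cores $e^{r\tilde X}\mathrm{Pol}$ of a single irreducible representation. I need to check that $e^{rX}\mathcal{P}(M)$ decomposes exactly as the algebraic sum of these cores, across the multiplicity spaces $\mathcal{H}_{\mu,0}$ and $\mathcal{H}_{\mathrm{hol},\pm}$ with the chosen orthonormal bases. I also need to check that the threshold transform, which is defined through a limit in $\lambda$, is compatible with this splitting. I would handle both points by fixing, for each $\mu$ and $l$, an orthonormal basis of the lowest $K$-type multiplicity space, so that each $K$-finite vector is a finite combination of basis vectors of single irreducibles.
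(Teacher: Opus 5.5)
Your proposal is correct and follows essentially the same route as the paper: $\mathbb{T}=\mathrm{Id}_{\mathbb{C}\boldsymbol{1}}\oplus\mathbb{T}_{\mathrm{sph}}\oplus\mathbb{T}_{\mathrm{d.s.}}$, with injectivity, density and the unitary completion inherited blockwise, and the formulas for $X$, $U$, $S$, $\Omega$ read off from Theorems~\ref{thm:global_spherical_model_X} and~\ref{thm:global_discrete_model_X} together with the trivial block. The paper leaves two points implicit that you make explicit, consistently with its argument: the splitting of $\mathcal{D}_r$ via equivariance of the isotypic projections, and deriving the semigroup identity from the propagated weak intertwining relations plus continuity, which is needed because $e^{\tau X}$ maps $\mathcal{D}_r$ into $\mathcal{D}_{r+\tau}$ rather than into itself.
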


\end{cBoxB}

\begin{proof}
The decomposition 
\[
L^{2}(M)=\mathcal{H}^{\mathrm{triv}}\widehat{\oplus}\mathcal{H}^{\mathrm{sph}}\widehat{\oplus}\mathcal{H}^{\mathrm{d.s.}}
\]
is orthogonal and invariant under the right action of $\mathrm{SL}_{2}(\mathbb{R})$.
The transforms $\mathbb{T}_{\mathrm{sph}}$ in \eqref{eq:def_TT}
and $\mathbb{T}_{\mathrm{d.s.}}$ in \eqref{eq:def_TT_ds} are defined
fiberwise on the spherical and discrete parts, and have dense image
in their corresponding model spaces. On the trivial summand we take
the identity map 
\[
\mathbb{C}\boldsymbol{1}\longrightarrow\mathbb{C}\boldsymbol{1}.
\]
Taking their Hilbert direct sum gives 
\[
\mathbb{T}:\mathcal{D}_{r}\longrightarrow\mathcal{K}.
\]
The injectivity and density of the image follow from the corresponding
properties on each block.

By definition of the norm 
\[
\|u\|_{\mathcal{H}_{r}}=\|\mathbb{T}u\|_{\mathcal{K}},
\]
the map $\mathbb{T}$ is an isometry on $\mathcal{D}_{r}$. Hence
it extends by completion to a unitary isomorphism 
\[
\mathbb{T}:\mathcal{H}_{r}\xrightarrow{\sim}\mathcal{K}.
\]

The formula for $X$ is the direct sum of the spherical formula 
\[
L+i\Sigma\Lambda
\]
on the non-threshold spherical part, the Jordan threshold formula
\[
L+N_{2},\qquad L=-A-\frac{1}{2},
\]
on $E_{1/4}$, and the discrete-series formula 
\[
-A-\frac{1}{2}-\Lambda_{\mathrm{d.s.}}.
\]
This is exactly \eqref{eq:global_X_all_blocks}.

The formulas for $U$, $S$, and $\Omega$ are the direct sums of
the formulas proved in the spherical model, Theorem~\ref{thm:global_spherical_model_X},
and in the discrete model, Theorem~\ref{thm:global_discrete_model_X}.
On the trivial representation all vector fields act by zero, and the
Casimir also acts by zero. This proves the theorem. 
\end{proof}
\begin{rem}
The threshold part $E_{1/4}$, if non-trivial, is represented by the
Jordan block 
\[
L+N_{2}=\begin{pmatrix}L & 1\\
0 & L
\end{pmatrix}.
\]
The formulas \eqref{eq:U_sph_global_again}--\eqref{eq:S_sph_global_again}
are stated only on the non-threshold spherical part. The threshold
formulas for $U$ and $S$ can be obtained from the same limiting
construction as the Jordan model for $X$, but they are not needed
below. 
\end{rem}

\subsection{Spectral trace of $e^{\tau X}$ in the Hilbert model}

Let $r>0$ be fixed. By Theorem~\ref{thm:global_Hilbert_model_XUS},
we have constructed a Hilbert space $\mathcal{H}_{r}$ and a unitary
transform 
\[
\mathbb{T}:\mathcal{H}_{r}\xrightarrow{\sim}\mathcal{K}
\]
such that $X$ is represented blockwise on $\mathcal{K}$.

For each irreducible summand, $e^{\tau X}$, $\tau>0$, is trace class
in the corresponding Hilbert model. Globally, however, the sum over
the Laplace spectrum is not an ordinary trace-class trace. We therefore
define the global spectral trace as the sum of the fiberwise traces,
understood as a distribution in the time variable $\tau$.

In this subsection, we extend the operator 
\[
\Lambda=\left(\Delta-\frac{1}{4}\right)^{1/2}
\]
to all of $L_{0}^{2}(\mathcal{N})$ by setting 
\[
\Lambda_{\mid E_{1/4}}=0,
\]
with the same convention as in \eqref{eq:def_global_Lambda_spherical}
on $E_{\neq1/4}$.

\begin{cBoxB}{}

\begin{prop}[Spectral trace as a sum over irreducible representations]
\label{prop:spectral_trace_sum_irreps} As a distribution in $\tau>0$,
the spectral trace of $e^{\tau X}$ in the global Hilbert model is
\begin{align}
\operatorname{Tr}_{\mathcal{H}_{r}}^{\mathrm{spec}}\left(e^{\tau X}\right) & =\operatorname{Tr}_{\mathcal{H}_{\mathrm{triv}}}\left(e^{\tau X}\right)\nonumber \\
 & \quad+\sum_{\mu\in\operatorname{Spec}(\Delta|_{L_{0}^{2}(\mathcal{N})})}\operatorname{Tr}_{\mathcal{H}_{\mu}^{\mathrm{sph}}}\left(e^{\tau X}\right)\nonumber \\
 & \quad+\sum_{l\in2\mathbb{N}^{*}}\left[\operatorname{Tr}_{\mathcal{H}_{l}^{\mathrm{d.s.},+}}\left(e^{\tau X}\right)+\operatorname{Tr}_{\mathcal{H}_{-l}^{\mathrm{d.s.},-}}\left(e^{\tau X}\right)\right].\label{eq:spectral_trace_sum_irreps}
\end{align}
Equivalently, 
\begin{align}
\operatorname{Tr}_{\mathcal{H}_{r}}^{\mathrm{spec}}\left(e^{\tau X}\right) & =1+\frac{2e^{-\tau/2}}{1-e^{-\tau}}\,\operatorname{Tr}_{L_{0}^{2}(\mathcal{N})}\left(\cos(\tau\Lambda)\right)\nonumber \\
 & \quad+\frac{2}{1-e^{-\tau}}\sum_{l\in2\mathbb{N}^{*}}m_{l}e^{-\tau l/2}.\label{eq:spectral_trace_model_before_RR}
\end{align}
\end{prop}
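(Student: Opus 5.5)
The plan is to compute the spectral trace blockwise through the unitary transform $\mathbb{T}$ of Theorem~\ref{thm:global_Hilbert_model_XUS}. The first identity \eqref{eq:spectral_trace_sum_irreps} is essentially the definition of the global spectral trace: $\mathcal{K}$ is a Hilbert direct sum of the trivial block, the spherical fibers $\ell^{2}(\mathbb{N})\otimes E_{\mu}\otimes\mathbb{C}^{2}$, and the discrete-series fibers $\ell^{2}(\mathbb{N})\otimes\mathcal{H}_{\mathrm{hol},\pm}(\pm l)$. By \eqref{eq:global_exp_X}, $\mathbb{T}e^{\tau X}\mathbb{T}^{-1}$ preserves each fiber. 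So the global trace is defined as the sum of the fiberwise traces. Each of these is trace class for $\tau>0$ by Theorems~\ref{thm:summary_spectral}, \ref{thm:complementary_series}, \ref{thm:threshold_hilbertian_jordan_model} and \ref{thm:ds_Hilbert_realization_short}.

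Next I evaluate each fiber explicitly.

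\textbf{Trivial block.} It contributes $1$.

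\textbf{Spherical fiber, $\mu\neq1/4$.} On one irreducible copy the eigenvalues are $-n-\frac12\pm i\lambda$. Summing the geometric series gives $2\cos(\tau\lambda)e^{-\tau/2}/(1-e^{-\tau})$, exactly as in Theorem~\ref{thm:flat_trace_equals_Hilbert_trace}. For the complementary series, $\lambda=i\nu$ gives $\cos(\tau\Lambda)=\cosh(\tau\nu)$, which matches the convention $\Lambda|_{E_\mu}=i\sqrt{1/4-\mu}$. Multiplying by $\dim E_{\mu}$ gives $\frac{2e^{-\tau/2}}{1-e^{-\tau}}\operatorname{Tr}_{E_\mu}\cos(\tau\Lambda)$.

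\textbf{Threshold fiber.} The block is $e^{\tau z_n}(\Pi_n+\tau\mathcal{N}_n)$ from \eqref{eq:threshold_semigroup_expansion_model}. The nilpotent part $\mathcal{N}_n$ is off-diagonal and so has zero trace. The trace is therefore $2\sum_n e^{-\tau(n+1/2)}$, which agrees with $\cos(\tau\Lambda)=1$ under the convention $\Lambda|_{E_{1/4}}=0$.

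\textbf{Discrete series.} Each of the $m_l$ holomorphic and $m_l$ anti-holomorphic copies contributes $e^{-\tau l/2}/(1-e^{-\tau})$, by Theorem~\ref{thm:trace_one_discrete_series} and the complex-conjugation argument of Theorem~\ref{thm:global_discrete_model_X}. Together they give $\frac{2}{1-e^{-\tau}}m_l e^{-\tau l/2}$.

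Collecting these contributions yields \eqref{eq:spectral_trace_model_before_RR}.

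\textbf{Main obstacle: the sense of the sum over $\mu$.} The discrete-series sum converges absolutely for $\tau>0$, because $m_l=O(l)$ by Proposition~\ref{prop:RR_discrete_multiplicities}. The finitely many exceptional $\mu\le1/4$ are harmless. The sum over $\mu>1/4$ of $\cos(\tau\lambda_\mu)$ does not converge pointwise, but it does converge as a distribution on $(0,\infty)$. By Weyl's law $\#\{\lambda_\mu\le R\}=O(R^2)$, so for a test function $\phi\in C_c^\infty(0,\infty)$ the pairing $\int\phi(\tau)\frac{e^{-\tau/2}}{1-e^{-\tau}}\cos(\tau\lambda)\,d\tau$ is the Fourier transform of a smooth compactly supported function and decays faster than any power of $\lambda$. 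Hence the series converges absolutely after pairing. This is also what gives $\operatorname{Tr}_{L_0^2}\cos(\tau\Lambda)$ its meaning as the distributional wave trace, and it justifies exchanging the sum with the pairing.
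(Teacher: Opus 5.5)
Your proposal is correct and follows the paper's proof essentially step by step: the trivial block contributes $1$, each spherical fiber is a geometric series giving $\frac{2e^{-\tau/2}}{1-e^{-\tau}}\cos(\tau\Lambda)$ (with the traceless nilpotent part at the threshold and $\Lambda_{\mid E_{1/4}}=0$), and the $m_l$ holomorphic plus $m_l$ anti-holomorphic copies each contribute $e^{-\tau l/2}/(1-e^{-\tau})$. Your Weyl-law argument for the distributional convergence of the sum over $\mu$ is a welcome addition, since the paper only asserts this by appeal to the ``usual wave-trace distribution.''
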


\end{cBoxB}

\begin{proof}
The trivial representation contributes 
\[
\operatorname{Tr}_{\mathcal{H}_{\mathrm{triv}}}(e^{\tau X})=1.
\]

For the spherical part, the non-threshold model gives the two branches
\[
z_{n,\pm}(\mu)=-n-\frac{1}{2}\pm i\sqrt{\mu-\frac{1}{4}},\qquad n\in\mathbb{N}.
\]
Thus the contribution of an eigenspace $E_{\mu}$, $\mu\neq1/4$,
is 
\[
\sum_{n\ge0}\left(e^{\tau z_{n,+}(\mu)}+e^{\tau z_{n,-}(\mu)}\right)=\frac{2e^{-\tau/2}}{1-e^{-\tau}}\,\cos(\tau\Lambda_{\mid E_{\mu}}).
\]
At the threshold $\mu=1/4$, the two branches coalesce into a Jordan
block. The nilpotent part has trace zero, and the contribution is
the same formula with 
\[
\Lambda_{\mid E_{1/4}}=0.
\]
Summing over the spectral decomposition of $\Delta$ on $L_{0}^{2}(\mathcal{N})$
gives the spherical contribution 
\[
\frac{2e^{-\tau/2}}{1-e^{-\tau}}\,\operatorname{Tr}_{L_{0}^{2}(\mathcal{N})}\left(\cos(\tau\Lambda)\right),
\]
understood as the usual wave-trace distribution.

For one holomorphic discrete series of lowest $K$-type $l$, the
model gives 
\[
z_{n,l}=-n-\frac{l}{2},\qquad n\in\mathbb{N}.
\]
Hence 
\[
\operatorname{Tr}(e^{\tau X})=\sum_{n\ge0}e^{-\tau(n+l/2)}=\frac{e^{-\tau l/2}}{1-e^{-\tau}}.
\]
The anti-holomorphic copy gives the same contribution. Since the multiplicity
of the holomorphic discrete series of lowest $K$-type $l$ is $m_{l}$,
the full discrete contribution is 
\[
\frac{2}{1-e^{-\tau}}\sum_{l\in2\mathbb{N}^{*}}m_{l}e^{-\tau l/2}.
\]

Combining the trivial, spherical and discrete contributions gives
\eqref{eq:spectral_trace_model_before_RR}. 
\end{proof}
\begin{cBoxB}{}

\begin{cor}[Spectral trace after Riemann--Roch]
\label{cor:spectral_trace_after_RR} Using Proposition~\ref{prop:RR_discrete_multiplicities},
the spectral trace becomes 
\begin{align}
\operatorname{Tr}_{\mathcal{H}_{r}}^{\mathrm{spec}}\left(e^{\tau X}\right) & =1+\frac{2e^{-\tau/2}}{1-e^{-\tau}}\,\operatorname{Tr}_{L_{0}^{2}(\mathcal{N})}\left(\cos(\tau\Lambda)\right)\nonumber \\
 & \quad+\frac{2e^{-\tau}}{1-e^{-\tau}}+|\chi(\mathcal{N})|\frac{e^{-\tau}(1+e^{-\tau})}{(1-e^{-\tau})^{3}}.\label{eq:spectral_trace_model_after_RR}
\end{align}
\end{cor}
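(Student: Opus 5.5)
The plan is to start from \eqref{eq:spectral_trace_model_before_RR} in Proposition~\ref{prop:spectral_trace_sum_irreps}. The trivial and spherical terms carry over unchanged, so the task reduces to evaluating the discrete-series generating function
\[
\Sigma(\tau):=\frac{2}{1-e^{-\tau}}\sum_{l\in2\mathbb{N}^{*}}m_{l}e^{-\tau l/2}
\]
in closed form. We substitute the multiplicities of Proposition~\ref{prop:RR_discrete_multiplicities}, namely $m_{2}=g$ and $m_{l}=(l-1)(g-1)$ for $l\ge4$, and write $l=2q$ and $x:=e^{-\tau}$.

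The key manipulation is to make the Riemann--Roch formula uniform in $q$. For $q=1$ the formula $(2q-1)(g-1)$ would give $g-1$, which is exactly one less than the true value $m_{2}=g$. Hence
\[
m_{2q}=(2q-1)(g-1)+\delta_{q,1},\qquad q\ge1.
\]
The correction term $\delta_{q,1}$ contributes $\frac{2x}{1-x}=\frac{2e^{-\tau}}{1-e^{-\tau}}$, which is the third term of \eqref{eq:spectral_trace_model_after_RR}. The remaining part is
\[
\frac{2(g-1)}{1-x}\sum_{q\ge1}(2q-1)x^{q}.
\]

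To evaluate it we use the elementary series $\sum_{q\ge1}qx^{q}=x/(1-x)^{2}$ and $\sum_{q\ge1}x^{q}=x/(1-x)$. These give
\[
\sum_{q\ge1}(2q-1)x^{q}=\frac{2x-x(1-x)}{(1-x)^{2}}=\frac{x(1+x)}{(1-x)^{2}},
\]
so the remaining part equals $2(g-1)\,x(1+x)/(1-x)^{3}$. Since $|\chi(\mathcal{N})|=2g-2$ for $g\ge2$, this is exactly $|\chi(\mathcal{N})|\,e^{-\tau}(1+e^{-\tau})/(1-e^{-\tau})^{3}$. Adding the correction term recovers \eqref{eq:spectral_trace_model_after_RR}.

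There is no genuine obstacle; the only care required is in two places. First, the $l=2$ exception must be handled correctly, since the multiplicity there is $g$ rather than $g-1$; this exception is the source of the separate term $2e^{-\tau}/(1-e^{-\tau})$. Second, the geometric series must converge for $\tau>0$, which is automatic, so the discrete-series part is an honest smooth function of $\tau$. The spherical term remains a distribution in $\tau$, exactly as in the proposition.
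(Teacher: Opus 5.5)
Your proposal is correct and follows the paper's proof essentially verbatim: the rewriting $m_{2q}=(2q-1)(g-1)+\delta_{q,1}$, the evaluation $\sum_{q\ge1}(2q-1)x^{q}=x(1+x)/(1-x)^{2}$ with $x=e^{-\tau}$, and the identity $|\chi(\mathcal{N})|=2(g-1)$ are exactly the steps used there. Your observation that the $l=2$ exception produces the separate term $2e^{-\tau}/(1-e^{-\tau})$ matches the paper's bookkeeping.
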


\end{cBoxB}

\begin{proof}
Write 
\[
l=2q,\qquad x=e^{-\tau}.
\]
By Proposition~\ref{prop:RR_discrete_multiplicities}, 
\[
m_{2}=g,\qquad m_{2q}=(2q-1)(g-1),\qquad q\ge2.
\]
Equivalently, 
\[
m_{2q}=(2q-1)(g-1)+\delta_{q,1},\qquad q\ge1.
\]
Therefore 
\[
\sum_{q\ge1}m_{2q}x^{q}=x+(g-1)\sum_{q\ge1}(2q-1)x^{q}.
\]
Since 
\[
\sum_{q\ge1}(2q-1)x^{q}=\frac{x(1+x)}{(1-x)^{2}},
\]
we obtain 
\[
\frac{2}{1-x}\sum_{q\ge1}m_{2q}x^{q}=\frac{2x}{1-x}+2(g-1)\frac{x(1+x)}{(1-x)^{3}}.
\]
Finally, 
\[
|\chi(\mathcal{N})|=2g-2=2(g-1).
\]
Substituting $x=e^{-\tau}$ into \eqref{eq:spectral_trace_model_before_RR}
gives \eqref{eq:spectral_trace_model_after_RR}. 
\end{proof}

\section{\protect\label{sec:Atiyah=002013Bott-and-Selberg}Atiyah--Bott and
Selberg trace formula}

In the previous sections, we constructed Hilbert models in which the
geodesic flow $e^{tX}$ has a discrete spectral representation and
becomes trace class. We now compare the resulting spectral trace with
the dynamical flat trace, and recover the Selberg trace formula.

\subsection{Atiyah--Bott--Guillemin flat trace formula}

Let 
\[
\varphi^{t}:M\to M
\]
be the flow generated by $X$. We denote by $\mathcal{P}_{X}$ the
set of primitive closed orbits of $\varphi^{t}$. If $\gamma\in\mathcal{P}_{X}$,
we write $\ell_{\gamma}>0$ for its primitive period. The $m$-th
iterate $\gamma^{m}$ has period $m\ell_{\gamma}$.

\begin{cBoxB}{}

\begin{thm}[Atiyah--Bott--Guillemin flat trace formula]
\label{thm:ABG_flat_trace_geodesic_flow} For $t>0$, the flat trace
of the propagator $e^{tX}$ is the distribution 
\begin{equation}
\operatorname{Tr}_{L^{2}(M)}^{\flat}\left(e^{tX}\right)=\sum_{\gamma\in\mathcal{P}_{X}}\sum_{m\ge1}\frac{\ell_{\gamma}\,\delta(t-m\ell_{\gamma})}{\left|\det\left(\mathrm{Id}-d\varphi_{|E_{s}\oplus E_{u}}^{m\ell_{\gamma}}\right)\right|}.\label{eq:ABG_flat_trace_general}
\end{equation}
Since the curvature of $\mathcal{N}$ is constant equal to $-1$,
one has, for $T>0$, 
\[
d\varphi_{|E_{s}\oplus E_{u}}^{T}\simeq\begin{pmatrix}e^{-T} & 0\\
0 & e^{T}
\end{pmatrix}.
\]
Hence 
\[
\left|\det\left(\mathrm{Id}-d\varphi_{|E_{s}\oplus E_{u}}^{T}\right)\right|=(1-e^{-T})(e^{T}-1)=4\sinh^{2}\left(\frac{T}{2}\right).
\]
Therefore 
\begin{equation}
\operatorname{Tr}_{L^{2}(M)}^{\flat}\left(e^{tX}\right)=\sum_{\gamma\in\mathcal{P}_{X}}\sum_{m\ge1}\frac{\ell_{\gamma}}{4\sinh^{2}\left(\frac{m\ell_{\gamma}}{2}\right)}\,\delta(t-m\ell_{\gamma}).\label{eq:ABG_flat_trace_geodesic}
\end{equation}
\end{thm}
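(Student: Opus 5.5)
The plan is to compute the flat trace directly from the Schwartz kernel of $e^{tX}$ on $M=\Gamma\backslash\mathrm{SL}_{2}(\mathbb{R})$. First I write the pull-back operator as $(e^{tX}u)(x)=u(\varphi^{t}(x))$, so that its kernel is $K_{t}(x,y)=\delta(y-\varphi^{t}(x))$. The flat trace of the family $t\mapsto e^{tX}$, tested against $f\in C_{c}^{\infty}((0,\infty))$, is then the pairing of $\int f(t)K_{t}\,dt$ with the diagonal. This restriction to the diagonal is well defined by the Guillemin transversality criterion. Because the flow is Anosov, every closed orbit is nondegenerate: for a closed orbit of period $T$, the map $\mathrm{Id}-d\varphi^{T}$ restricted to $E_{s}\oplus E_{u}$ is invertible. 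This makes the graph of $(t,x)\mapsto\varphi^{t}(x)$ transverse to the diagonal in $(0,\infty)\times M\times M$, modulo the flow direction.

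The local computation proceeds one closed orbit at a time. Near a periodic orbit $\gamma^{m}$ of period $T=m\ell_{\gamma}$, I choose coordinates $(s,z)\in(\mathbb{R}/\ell_{\gamma}\mathbb{Z})\times\mathbb{R}^{2}$, where $z$ parametrizes a local transversal spanned by $E_{s}\oplus E_{u}$ and $P$ denotes the Poincaré map. In these coordinates,
\[
\int f(t)\,\delta\bigl(x-\varphi^{t}(x)\bigr)\,dx\,dt .
\]
The $\delta$ in the flow direction localizes $t$ near $T$. Integrating over $s$ along the primitive orbit contributes $\ell_{\gamma}$. The transverse $\delta(z-P(z))$ contributes $|\det(\mathrm{Id}-dP)|^{-1}$. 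Summing over all $\gamma\in\mathcal{P}_{X}$ and $m\ge1$ gives \eqref{eq:ABG_flat_trace_general}. The sum is locally finite in $t$, because the number of closed orbits with period at most $T$ is finite on the compact manifold $M$.

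For the constant-curvature specialization I use Subsection~\ref{subsec:Hyperbolic-dynamics-on}. There $d\varphi^{T}$ acts on $S$ by $e^{-T}$ and on $U$ by $e^{T}$, since left-invariance gives $d\phi^{T}(gS)=ge^{-T\,\mathrm{ad}_X}\ldots$. Thus $dP=\mathrm{diag}(e^{-T},e^{T})$, and
\[
|(1-e^{-T})(1-e^{T})|=(1-e^{-T})(e^{T}-1)=e^{T}-2+e^{-T}=4\sinh^{2}(T/2),
\]
which yields \eqref{eq:ABG_flat_trace_geodesic}.

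The main obstacle is making the transversal reduction rigorous, namely the change of variables identifying $\int\delta(x-\varphi^{t}x)$ with the product of the orbit-length factor and the determinant factor. I would invoke Guillemin's original argument, or the wavefront-set criterion of Dyatlov--Zworski, to justify pulling back the $\delta$ distribution. The rest is explicit linear algebra.
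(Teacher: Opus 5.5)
Your proposal is correct and takes the same route as the paper. The kernel $\delta(y-\varphi^{t}(x))$ localizes the flat trace on closed orbits, each iterate $\gamma^{m}$ contributes $\ell_{\gamma}\,\delta(t-m\ell_{\gamma})/|\det(\mathrm{Id}-d\varphi^{m\ell_{\gamma}}_{|E_{s}\oplus E_{u}})|$ by the Atiyah--Bott--Guillemin computation, and the multipliers $e^{\mp T}$ come from $[X,S]=S$ and $[X,U]=-U$ in the left-invariant frame. You give more detail than the paper (transversality of the graph to the diagonal, local finiteness, the flow-box reduction), and the only slips are cosmetic: the transverse factor for $\gamma^{m}$ should use $P^{m}$ rather than $P$, and the left-invariance identity should read $d\phi^{T}(gS)=(ge^{TX})\,e^{-T\operatorname{ad}_{X}}S$.
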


\end{cBoxB}

\begin{proof}
The Schwartz kernel of $e^{tX}$ is supported on the graph of the
flow. Formally, 
\[
K_{t}(x,y)=\delta\bigl(y-\varphi^{t}(x)\bigr).
\]
Thus the flat trace is the diagonal flat integral 
\[
\operatorname{Tr}^{\flat}(e^{tX})=\int_{M}^{\flat}K_{t}(x,x)\,dx.
\]
Its singularities occur precisely when 
\[
\varphi^{t}(x)=x,
\]
that is, along points belonging to closed orbits whose period divides
$t$.

The Atiyah--Bott--Guillemin fixed point formula for flows gives
the contribution of the $m$-th iterate of a primitive closed orbit
$\gamma$: 
\[
\frac{\ell_{\gamma}\,\delta(t-m\ell_{\gamma})}{\left|\det\left(\mathrm{Id}-d\varphi_{|E_{s}\oplus E_{u}}^{m\ell_{\gamma}}\right)\right|}.
\]
Summing over primitive closed orbits and their iterates gives \eqref{eq:ABG_flat_trace_general}.

In constant curvature $-1$, the stable and unstable multipliers over
a closed orbit of period $T$ are respectively 
\[
e^{-T}\qquad\text{and}\qquad e^{T}.
\]
Therefore 
\[
\left|\det\left(\mathrm{Id}-d\varphi_{|E_{s}\oplus E_{u}}^{T}\right)\right|=(1-e^{-T})(e^{T}-1)=4\sinh^{2}\left(\frac{T}{2}\right).
\]
This gives \eqref{eq:ABG_flat_trace_geodesic}. 
\end{proof}
\begin{rem}
Here $\mathcal{P}_{X}$ denotes primitive closed orbits of the oriented
geodesic flow on $M=\Gamma\backslash SL_{2}(\mathbb{R})$. If one
sums instead over primitive unoriented closed geodesics on $\mathcal{N}$,
one must account for the two possible orientations. 
\end{rem}

\subsection{Decomposition into irreducible components}

The representation-theoretic decomposition gives 
\[
L^{2}(M)=\mathcal{H}_{\mathrm{triv}}\widehat{\oplus}\mathcal{H}^{\mathrm{d.s.}}\widehat{\oplus}\mathcal{H}^{\mathrm{sph}},
\]
with 
\[
\mathcal{H}_{\mathrm{triv}}=\mathbb{C}\boldsymbol{1},
\]
\[
\mathcal{H}^{\mathrm{d.s.}}\eq{\ref{eq:def_H_ds}}\widehat{\bigoplus}_{l\in2\mathbb{N}^{*}}\left(\mathcal{H}_{l}^{\mathrm{d.s.},+}\oplus\mathcal{H}_{-l}^{\mathrm{d.s.},-}\right),
\]
and 
\[
\mathcal{H}^{\mathrm{sph}}\eq{\ref{eq:H_sph}}\widehat{\bigoplus}_{\mu\in\operatorname{Spec}(\Delta_{|L_{0}^{2}(\mathcal{N})})}\mathcal{H}_{\mu}^{\mathrm{sph}}.
\]
Here $\mathcal{H}_{l}^{\mathrm{d.s.},+}$ denotes the full multiplicity
space generated by $\mathcal{H}_{\mathrm{hol},+}(l)$, and similarly
for $\mathcal{H}_{-l}^{\mathrm{d.s.},-}$.

\begin{cBoxB}{}

\begin{lem}[Flat trace decomposition]
\label{lem:flat_trace_decomposition} For $t>0$, in the sense of
flat traces, one has 
\begin{align}
\operatorname{Tr}_{L^{2}(M)}^{\flat}\left(e^{tX}\right) & =\operatorname{Tr}_{\mathcal{H}_{\mathrm{triv}}}^{\flat}\left(e^{tX}\right)\nonumber \\
 & \quad+\sum_{\mu\in\operatorname{Spec}(\Delta_{|L_{0}^{2}(\mathcal{N})})}\operatorname{Tr}_{\mathcal{H}_{\mu}^{\mathrm{sph}}}^{\flat}\left(e^{tX}\right)\nonumber \\
 & \quad+\sum_{l\in2\mathbb{N}^{*}}\left[\operatorname{Tr}_{\mathcal{H}_{l}^{\mathrm{d.s.},+}}^{\flat}\left(e^{tX}\right)+\operatorname{Tr}_{\mathcal{H}_{-l}^{\mathrm{d.s.},-}}^{\flat}\left(e^{tX}\right)\right].\label{eq:flat_trace_decomposition}
\end{align}
The possible threshold contribution $\mu=1/4$ is included in the
spherical sum. 
\end{lem}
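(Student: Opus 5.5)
The plan is to define the flat trace by the pairing of the Schwartz kernel of $e^{tX}$ with a time test function, and to reduce the decomposition to the corresponding decomposition of the kernel. Fix $\chi\in C_c^\infty((0,\infty))$ and set $P_\chi:=\int\chi(t)e^{tX}\,dt$. The flat trace $\operatorname{Tr}^{\flat}(e^{tX})$ paired with $\chi$ is the integral over the diagonal of the kernel of $P_\chi$. This integral is well defined because the kernel of $P_\chi$ is conormal to the set $\{(\varphi^{t}(x),x)\}$, which is transverse to the diagonal (Guillemin). Since the right regular action commutes with the orthogonal projectors $\Pi_\alpha$ onto the irreducible blocks (trivial, $\mathcal{H}^{\mathrm{sph}}_\mu$, $\mathcal{H}^{\mathrm{d.s.},\pm}_{\pm l}$), we have $P_\chi=\sum_\alpha \Pi_\alpha P_\chi\Pi_\alpha$. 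We then define $\operatorname{Tr}^{\flat}_{\mathcal{H}_\alpha}$ as the flat trace of the block, computed in the model realization of that block. For the spherical blocks this is the flat trace on $S^{1}$ from Theorem~\ref{thm:flat_trace_equals_Hilbert_trace}, multiplied by $\dim E_\mu$. For the discrete-series blocks it is the holomorphic flat trace of Theorem~\ref{thm:trace_one_discrete_series}, multiplied by $m_l$.

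The first key step is to show that the flat trace on $M$ is the $K$-type-wise sum. We use the $\theta$-Fourier decomposition \eqref{eq:decomp_L2M} together with the fact that the Schwartz kernel restricted to each joint eigenspace $\mathcal{H}_{\mu,l}$ is smooth of finite rank. The kernel of $P_\chi$ then expands as $\sum_{\mu,l}\sum_{j,j'}\langle v_j\mid P_\chi v_{j'}\rangle v_{j}\otimes\overline{v_{j'}}$, and its diagonal integral is formally $\sum_{\mu,l}\operatorname{tr}(\Pi_{\mu,l}P_\chi\Pi_{\mu,l})$. Grouping the terms $(\mu,l)$ by irreducible component gives the right-hand side, provided each group sum is identified with the model flat trace.

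That identification is the second step. Inside one component we reuse the correlation formulas, Corollary~\ref{cor:correlation_formula_trigo} and Corollary~\ref{cor:ds_correlation_short}. Since $\sum_k\langle\psi_k\mid e^{t\tilde X}\psi_k\rangle$, taken as a distribution in $t$ after pairing with $\chi$, is the flat trace on $S^{1}$ (respectively on the disk boundary), the sum of matrix coefficients over $K$-types equals the model flat trace.

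The main obstacle is the interchange of the diagonal integral with the infinite sum over $(\mu,l)$. Neither the sum over $K$-types nor the sum over $\mu$ converges absolutely for a fixed $t$, so the interchange must be justified distributionally. I would first try Sobolev control via wave-front sets. The wavefront set of the kernel of $P_\chi$ avoids the conormal of the diagonal. Hence, after inserting a microlocal cutoff, the truncated sums over $|l|\le L$ and $\mu\le R$ converge in the Hörmander topology to the full kernel. Flat restriction to the diagonal is continuous in this topology, so the limits commute.

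A fallback is to test both sides against $\chi(t)=\hat h$-type functions and to check equality using the explicit formulas already computed: \eqref{eq:trace_flat_spherical} for the spherical blocks, \eqref{eq:trace_flat_discrete_one} for the discrete blocks, and the constant $1$ for the trivial block. Both sides would then be finite-order distributions, with convergence controlled by Weyl's law for $\Delta$ and by the Riemann--Roch growth $m_l\sim l(g-1)$.
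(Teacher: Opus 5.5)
There is a genuine gap, and it sits exactly where you place the main obstacle. Your mechanism for exchanging the diagonal restriction with the sum over $(\mu,l)$ is asserted, not proved, and it is not the routine statement it looks like. Sharp spectral truncations are not microlocally tame. The $K$-type cut-off $\sum_{|l|\le L}\Pi_l$ has kernel $\frac1\pi\int_0^\pi D_L(s)\,\delta\bigl(y-xe^{sJ}\bigr)\,ds$ with a Dirichlet kernel $D_L$. For $s\notin\pi\mathbb{Z}$, $D_L(s)$ has size $O(1)$ and oscillates at frequency $\sim L$. Hence these projectors are not bounded in $\mathcal{D}'_{\Gamma}$ for any cone $\Gamma$ lying over the diagonal. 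The cut-off in $\mu$ has the same defect: it is a sharp spectral projector, and once all $K$-types are kept it is a projector of the Casimir, which is not elliptic on $M$. The truncated kernels of $P_\chi$ therefore pick up singularities spread along the $K$-orbits. Showing that they still converge in a space on which flat restriction is continuous requires computing the limiting wavefront relation, and the proposal does not do this. The suggested microlocal cutoff does not rescue the argument, because it no longer commutes with the $\Pi_\alpha$ and so destroys the block structure you rely on. Your fallback is circular: checking the identity against the explicit block formulas and the closed-orbit formula for the left side \emph{is} the Selberg trace formula, which the paper derives from this lemma in Corollary~\ref{cor:Selberg_trace_flow_form}. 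Step two has the same defect in miniature. The equality of $\sum_k\langle\psi_k\mid P_\chi\psi_k\rangle$ with the model flat trace cannot be obtained by summing the correlation formula of Corollary~\ref{cor:correlation_formula_trigo} over $k'=k$. The coefficients $\langle\mathcal{S}_{n,\sigma}\mid\psi_k\rangle$ and $\langle\psi_k\mid\mathcal{U}_{n,\sigma}\rangle$ do not decay in $k$: Lemma~\ref{lem:estimate_Tprime_pm_absk} only gives $O(|k|^n)$, and already $|\langle\mathcal{S}_{0,+}\mid\psi_k\rangle|=\pi^{-1/2}$. So the exchange of $\sum_k$ and $\sum_n$ is not legitimate.

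The missing idea is to regularize with an operator that is both a genuine mollifier and commutes with the decomposition, so that at fixed regularization everything is an identity between ordinary traces. The paper's proof follows this pattern. It uses smoothing operators $\chi_\varepsilon\to\mathrm{Id}$ with $\chi_\varepsilon e^{tX}$ trace class and splits the ordinary trace into diagonal blocks; off-diagonal blocks have zero trace by cyclicity, and the block sum converges absolutely. Only then does it let $\varepsilon\to0$. To make that limit produce the model flat traces component by component, take $\chi_\varepsilon=R(\rho_\varepsilon)=\int\rho_\varepsilon(g)R_g\,dg$, with $\rho_\varepsilon$ an approximate identity on $SL_2(\mathbb{R})$. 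This operator has three useful properties.
\begin{enumerate}
\item It commutes with every $\Pi_\alpha$.
\item Its kernel converges to the diagonal delta in $\mathcal{D}'_{N^*\mathrm{diag}}$, so the left side tends to Guillemin's flat trace.
\item On each irreducible component, the regularized trace is the Harish-Chandra character $\Theta_\pi(f_\varepsilon)$ of $f_\varepsilon=\rho_\varepsilon*\int\chi(t)\delta_{e^{tX}}\,dt\in C_c^\infty$, which is supported near regular hyperbolic elements.
\end{enumerate}
On that region $\Theta_\pi$ is smooth and at $e^{tX}$ equals $2\cos(\lambda t)e^{-t/2}/(1-e^{-t})$ on the spherical series and $e^{-tl/2}/(1-e^{-t})$ on the discrete series. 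These are exactly the model traces of Theorems~\ref{thm:flat_trace_equals_Hilbert_trace} and~\ref{thm:trace_one_discrete_series}. The following bounds hold uniformly in $\varepsilon$:
\begin{itemize}
\item on the discrete series, $|\Theta_\pi(f_\varepsilon)|\le Ce^{-cl}$;
\item on the spherical series, $|\Theta_\pi(f_\varepsilon)|=O(\langle\lambda\rangle^{-N})$, by integrating by parts in $t$. Here one uses the translation length $\ell(g)$, defined by $|\operatorname{tr}g|=2\cosh(\ell(g)/2)$; the map $t\mapsto\ell(he^{tX})$ has derivative close to $1$ for $h$ near the identity.
\end{itemize}
Combined with Weyl's law and $m_l=O(l)$, these bounds justify passing to the limit inside the sum over components.
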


\end{cBoxB}

\begin{proof}
We define the decomposition of the flat trace by smoothing regularization.
Let 
\[
\chi_{\varepsilon}:L^{2}(M)\to C^{\infty}(M)
\]
be a family of smoothing operators such that $\chi_{\varepsilon}\to\mathrm{Id}$
strongly on $L^{2}(M)$, and such that $\chi_{\varepsilon}e^{tX}$
is trace class for every $\varepsilon>0$. The ordinary trace of this
regularized operator will be used only as a device to define the corresponding
flat trace in the limit $\varepsilon\to0$.

Let 
\[
\Pi_{\mathrm{triv}},\qquad\Pi_{\mu}^{\mathrm{sph}},\qquad\Pi_{l}^{+},\qquad\Pi_{l}^{-}
\]
be the orthogonal projectors onto 
\[
\mathcal{H}_{\mathrm{triv}},\qquad\mathcal{H}_{\mu}^{\mathrm{sph}},\qquad\mathcal{H}_{l}^{\mathrm{d.s.},+},\qquad\mathcal{H}_{-l}^{\mathrm{d.s.},-}.
\]
They give the orthogonal decomposition 
\[
\mathrm{Id}_{L^{2}(M)}=\Pi_{\mathrm{triv}}+\sum_{\mu\in\operatorname{Spec}(\Delta_{|L_{0}^{2}(\mathcal{N})})}\Pi_{\mu}^{\mathrm{sph}}+\sum_{l\in2\mathbb{N}^{*}}\left(\Pi_{l}^{+}+\Pi_{l}^{-}\right),
\]
with convergence in the strong operator topology.

For fixed $\varepsilon>0$, set 
\[
T_{\varepsilon}(t):=\chi_{\varepsilon}e^{tX}.
\]
Since $T_{\varepsilon}(t)$ is trace class, its trace is the sum of
the traces of its diagonal blocks with respect to the above orthogonal
decomposition. Indeed, the off-diagonal blocks have zero trace: if
$\Pi_{\alpha}\Pi_{\beta}=0$, then by cyclicity, 
\[
\operatorname{Tr}(\Pi_{\alpha}T_{\varepsilon}(t)\Pi_{\beta})=\operatorname{Tr}(\Pi_{\beta}\Pi_{\alpha}T_{\varepsilon}(t))=0.
\]
Therefore 
\begin{align*}
\operatorname{Tr}_{L^{2}(M)}(T_{\varepsilon}(t)) & =\operatorname{Tr}_{\mathcal{H}_{\mathrm{triv}}}\left(\Pi_{\mathrm{triv}}T_{\varepsilon}(t)\Pi_{\mathrm{triv}}\right)\\
 & \quad+\sum_{\mu}\operatorname{Tr}_{\mathcal{H}_{\mu}^{\mathrm{sph}}}\left(\Pi_{\mu}^{\mathrm{sph}}T_{\varepsilon}(t)\Pi_{\mu}^{\mathrm{sph}}\right)\\
 & \quad+\sum_{l\in2\mathbb{N}^{*}}\operatorname{Tr}_{\mathcal{H}_{l}^{\mathrm{d.s.},+}}\left(\Pi_{l}^{+}T_{\varepsilon}(t)\Pi_{l}^{+}\right)\\
 & \quad+\sum_{l\in2\mathbb{N}^{*}}\operatorname{Tr}_{\mathcal{H}_{-l}^{\mathrm{d.s.},-}}\left(\Pi_{l}^{-}T_{\varepsilon}(t)\Pi_{l}^{-}\right),
\end{align*}
where the sums are absolutely convergent for fixed $\varepsilon>0$.
For fixed $\varepsilon>0$, this is an identity of ordinary traces.
Passing to the limit $\varepsilon\to0$ in the distributional sense
in the time variable defines the corresponding block flat traces and
gives \ref{eq:flat_trace_decomposition}. Namely, for $\rho\in C_{c}^{\infty}((0,\infty))$,
\[
\left\langle \operatorname{Tr}_{L^{2}(M)}^{\flat}(e^{tX}),\rho(t)\right\rangle :=\lim_{\varepsilon\to0}\int_{0}^{\infty}\rho(t)\,\operatorname{Tr}_{L^{2}(M)}\left(\chi_{\varepsilon}e^{tX}\right)\,dt.
\]
Passing to the limit in the preceding block decomposition defines
the corresponding block flat traces and gives \eqref{eq:flat_trace_decomposition}.
The possible threshold contribution $\mu=1/4$ is included in the
spherical sum. 
\end{proof}

\subsection{Comparison with the spectral trace}

We now compare the geometric flat trace \eqref{eq:ABG_flat_trace_geodesic}
with the representation-theoretic decomposition \eqref{eq:flat_trace_decomposition}.
The traces of the irreducible components were computed in the Hilbert
model and summarized in Corollary~\ref{cor:spectral_trace_after_RR}.
Hence, in the sense of distributions in the time variable, 
\[
\operatorname{Tr}_{L^{2}(M)}^{\flat}\left(e^{tX}\right)=\operatorname{Tr}_{\mathcal{H}_{r}}^{\mathrm{spec}}\left(e^{tX}\right).
\]
Equating this spectral expression with the Atiyah--Bott--Guillemin
flat trace gives the Selberg trace formula in flow form.

\begin{cBoxB}{}

\begin{cor}[Selberg trace formula in flow form]
\label{cor:Selberg_trace_flow_form} As distributions on $(0,\infty)$,
one has 
\begin{align}
\sum_{\gamma\in\mathcal{P}_{X}}\sum_{m\ge1}\frac{\ell_{\gamma}}{4\sinh^{2}(m\ell_{\gamma}/2)}\,\delta(t-m\ell_{\gamma}) & =1+\frac{2e^{-t/2}}{1-e^{-t}}\,\operatorname{Tr}_{L_{0}^{2}(\mathcal{N})}\left(\cos(t\Lambda)\right)\nonumber \\
 & \quad+\frac{2e^{-t}}{1-e^{-t}}+|\chi(\mathcal{N})|\frac{e^{-t}(1+e^{-t})}{(1-e^{-t})^{3}}.\label{eq:Selberg_trace_flow_form}
\end{align}
Here $\Lambda=(\Delta-\frac{1}{4})^{1/2}$, with the convention used
in \eqref{eq:def_global_Lambda_spherical}, extended by $0$ on the
possible eigenspace $E_{1/4}$. 
\end{cor}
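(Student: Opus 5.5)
The plan is to obtain the identity by equating two independent evaluations of the single distribution $\operatorname{Tr}^{\flat}_{L^{2}(M)}(e^{tX})$ on $(0,\infty)$. The left-hand side is exactly the geometric expression \eqref{eq:ABG_flat_trace_geodesic} from Theorem~\ref{thm:ABG_flat_trace_geodesic_flow}, so nothing more is needed there. For the right-hand side, I start from the block decomposition of Lemma~\ref{lem:flat_trace_decomposition}. I then identify each block flat trace with the corresponding Hilbert-model trace, and finally sum the blocks using Proposition~\ref{prop:spectral_trace_sum_irreps} and Corollary~\ref{cor:spectral_trace_after_RR}.

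\textbf{Blockwise identification.}
\begin{itemize}
\item On $\mathcal{H}_{\mathrm{triv}}$ the operator $e^{tX}$ is the identity on $\mathbb{C}\boldsymbol{1}$, so its flat trace is $1$.
\item On each $\mathcal{H}^{\mathrm{sph}}_{\mu}$ with $\mu\neq 1/4$ and $\lambda>0$, Theorem~\ref{thm:flat_trace_equals_Hilbert_trace} gives $2\cos(t\lambda)e^{-t/2}/(1-e^{-t})$ per irreducible copy. Multiplying by $\dim E_\mu$ yields $\frac{2e^{-t/2}}{1-e^{-t}}\operatorname{Tr}_{E_\mu}\cos(t\Lambda)$.
\item For the complementary series ($\lambda=i\nu$) I use the analytic continuation in $\lambda$ stated after that theorem. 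The flat-trace fixed-point computation is algebraic in $b$, and so is the spectral sum, so the identity persists.
\item At the threshold $\mu=1/4$, the nilpotent part $N_2$ has zero trace. The limit $\lambda\to0$ of both sides gives the same formula with $\Lambda|_{E_{1/4}}=0$.
\item For the discrete series, Theorem~\ref{thm:trace_one_discrete_series} gives $e^{-tl/2}/(1-e^{-t})$ per holomorphic copy. Complex conjugation commutes with $e^{tX}$ because $X$ is real, so each anti-holomorphic copy gives the same value. The total multiplicity is $2m_l$.
\end{itemize}

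\textbf{Summation.} Summing these block contributions reproduces \eqref{eq:spectral_trace_model_before_RR}. Inserting the Riemann--Roch multiplicities of Proposition~\ref{prop:RR_discrete_multiplicities} then gives \eqref{eq:spectral_trace_model_after_RR}, which is the right-hand side of the claim.

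\textbf{Main obstacle.} The delicate step is the interchange of the infinite sum over $\mu$ with the limit $\varepsilon\to0$ that defines the block flat traces. Lemma~\ref{lem:flat_trace_decomposition} gives the decomposition only for fixed $\varepsilon$. I also still need that the block flat trace defined through $\chi_\varepsilon$ coincides with the $S^1$/$\partial\mathbb{D}$ flat traces computed in the model, and this requires choosing $\chi_\varepsilon$ to commute with the $SL_2(\mathbb{R})$-action.

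I would take $\chi_\varepsilon=e^{-\varepsilon(\Omega+\Theta^{2})}$, a $K$-type and Casimir smoothing; strictly, this is not equivariant, since $\Theta$ generates only $K$ and does not commute with $X$. It nonetheless preserves each block and acts by the scalar $e^{-\varepsilon\mu}$ times a Fourier damping on $S^1$. The regularized block traces are then the $S^1$ flat traces of $e^{-\varepsilon\Theta^2}e^{t\tilde X}$, and these converge to $\frac{2e^{-t/2}}{1-e^{-t}}\cos(t\lambda)$ after pairing with a test function $\rho(t)$.

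It remains to control the $\mu$-sum uniformly in $\varepsilon$. After pairing with $\rho$, the term $\cos(t\sqrt{\mu-1/4})$ yields $\hat\rho$-type decay in $\sqrt\mu$, and by Weyl's law this beats the polynomial growth of $\dim E_\mu$. Dominated convergence then gives the exchange.

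The discrete-series sum converges for $t>0$ because of the factor $e^{-tl/2}$. The equality of the full regularized trace with the Atiyah--Bott--Guillemin expression is part of Theorem~\ref{thm:ABG_flat_trace_geodesic_flow}, which I take as given. The uniform-in-$\varepsilon$ control of the spherical sum is the point I expect to require the most care.
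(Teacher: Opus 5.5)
Your proposal follows the paper's own argument: the left side is the Atiyah--Bott--Guillemin flat trace of Theorem~\ref{thm:ABG_flat_trace_geodesic_flow}, which Lemma~\ref{lem:flat_trace_decomposition} splits into block flat traces; each block is identified with its Hilbert-model trace (Theorems~\ref{thm:flat_trace_equals_Hilbert_trace} and~\ref{thm:trace_one_discrete_series}, analytic continuation for the complementary series, traceless nilpotent part at the threshold), and the blocks are summed via Proposition~\ref{prop:spectral_trace_sum_irreps} and Corollary~\ref{cor:spectral_trace_after_RR}. The paper does not treat the interchange of $\varepsilon\to0$ with the sums over $\mu$ and $l$ that you single out; for dominated convergence you need the regularized block traces themselves, not just their limits, to decay in $\lambda$ (resp.\ in $l$) uniformly in $\varepsilon$, which for the spherical blocks follows by integrating by parts in $t$ near the fixed points $\theta=\pm\pi/2$, where the phase is non-stationary in $t$.
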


\end{cBoxB}

\begin{proof}
The left-hand side is the Atiyah--Bott--Guillemin flat trace of
$e^{tX}$, given by \eqref{eq:ABG_flat_trace_geodesic}. By Lemma~\ref{lem:flat_trace_decomposition},
this flat trace decomposes as the sum of the flat traces on the irreducible
components. By the componentwise trace computations, this equals the
spectral trace in the Hilbert model. Corollary~\ref{cor:spectral_trace_after_RR}
gives the explicit spectral expression. Comparing the two sides gives
\eqref{eq:Selberg_trace_flow_form}. 
\end{proof}

\subsection{From the flow trace to the Selberg wave trace}

We now rewrite the flow trace formula in the usual wave-trace form
of Selberg's trace formula. Set 
\[
J(t):=4\sinh^{2}\left(\frac{t}{2}\right),\qquad J(t)^{1/2}=2\sinh\left(\frac{t}{2}\right)=e^{t/2}(1-e^{-t}).
\]
Thus 
\[
J(t)^{-1/2}=\frac{1}{2\sinh(t/2)}=\frac{e^{-t/2}}{1-e^{-t}}.
\]

\begin{cBoxB}{}

\begin{lem}
\label{lem:tanh_Fourier_identity} For $t>0$, in the distributional
sense, 
\begin{equation}
\int_{\mathbb{R}}re^{itr}\tanh(\pi r)\,dr=-\frac{e^{-t/2}(1+e^{-t})}{(1-e^{-t})^{2}}=-\frac{1}{2}\frac{\cosh(t/2)}{\sinh^{2}(t/2)}.\label{eq:tanh_Fourier_identity}
\end{equation}
\end{lem}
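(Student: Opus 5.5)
The plan is to compute the Fourier transform of $r\tanh(\pi r)$ by splitting off its non-decaying part. I write
\[
\tanh(\pi r)=\operatorname{sgn}(r)-\operatorname{sgn}(r)\,\frac{2e^{-2\pi|r|}}{1+e^{-2\pi|r|}}=\operatorname{sgn}(r)\Bigl(1-2\sum_{k\ge1}(-1)^{k-1}e^{-2\pi k|r|}\Bigr).
\]
The term $r\operatorname{sgn}(r)=|r|$ has Fourier transform $\int|r|e^{itr}dr=-2/t^{2}$ as a distribution, since $|r|''=2\delta$. The remainder $|r|\,e^{-2\pi k|r|}$ is integrable, and
\[
\int_{\mathbb{R}}|r|e^{-2\pi k|r|}e^{itr}\,dr=\frac{1}{(2\pi k-it)^{2}}+\frac{1}{(2\pi k+it)^{2}} .
\]
This gives
\[
\int re^{itr}\tanh(\pi r)\,dr=-\frac{2}{t^{2}}-2\sum_{k\ge1}(-1)^{k-1}\Bigl[\frac{1}{(2\pi k-it)^{2}}+\frac{1}{(2\pi k+it)^{2}}\Bigr].
\]
For $t>0$ this series converges absolutely, so there are no distributional subtleties away from $t=0$.

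Next I identify the sum through a partial-fraction expansion. I start from
\[
\frac{1}{\sinh(t/2)}=\frac{2}{t}+\sum_{k\ge1}(-1)^{k}\Bigl[\frac{1}{t/2-i\pi k}+\frac{1}{t/2+i\pi k}\Bigr],
\]
which, after rescaling, is the standard expansion $\pi/\sin(\pi z)=\sum_{k}(-1)^{k}/(z-k)$. Differentiating in $t$ gives
\[
-\frac{1}{2}\frac{\cosh(t/2)}{\sinh^{2}(t/2)}=-\frac{2}{t^{2}}-2\sum_{k\ge1}(-1)^{k}\Bigl[\frac{1}{(t-2i\pi k)^{2}}+\frac{1}{(t+2i\pi k)^{2}}\Bigr].
\]
Since $(2\pi k\mp it)^{2}=-(t\pm 2i\pi k)^{2}$, the sign $(-1)^{k-1}$ combines with the extra minus into $(-1)^{k}$. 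The two expressions therefore match term by term, which yields the second form of \eqref{eq:tanh_Fourier_identity}.

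Finally, the first form follows from the elementary identity
\[
\frac{e^{-t/2}(1+e^{-t})}{(1-e^{-t})^{2}}=\frac{2\cosh(t/2)}{4\sinh^{2}(t/2)},
\]
which holds because $e^{-t/2}(1+e^{-t})=2e^{-t}\cosh(t/2)$ and $(1-e^{-t})^{2}=4e^{-t}\sinh^{2}(t/2)$.

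The main obstacle I expect is bookkeeping of signs and factors of $2\pi$, together with the convention for the Fourier transform of $|r|$. To keep control of these, I will check the final identity against the leading small-$t$ behaviour $-2/t^{2}$ on both sides.
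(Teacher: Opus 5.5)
Your proof is correct, but it takes a different route from the paper.

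\textbf{The paper's route.} The paper closes the contour in the upper half-plane. The poles of $\tanh(\pi r)$ at $r=i(k+\frac12)$, each with residue $1/\pi$, give directly the exponential series $-2\sum_{k\ge0}(k+\frac12)e^{-t(k+\frac12)}$. This is then summed by differentiating $\sum_{k\ge0}e^{-t(k+\frac12)}=e^{-t/2}/(1-e^{-t})$.

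\textbf{Your route.} You split $r\tanh(\pi r)=|r|-2|r|\sum_{k\ge1}(-1)^{k-1}e^{-2\pi k|r|}$. The only genuinely distributional input is then $\widehat{|r|}=-2/t^{2}$ on $t\neq0$; every other term is an $L^{1}$ Fourier transform. The price is that you must recognize the resulting series as the differentiated Mittag-Leffler expansion of $1/\sinh(t/2)$. Your signs, factors of $2\pi$, and the identity $(2\pi k\mp it)^{2}=-(t\pm2i\pi k)^{2}$ all check out.

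\textbf{Comparison.} The two series are dual expansions of the same function. One runs over the poles of $\tanh(\pi r)$ in the $r$-plane, the other over the poles $t=\pm2i\pi k$ of $1/\sinh(t/2)$. The paper's argument is shorter. However, its step ``closing the contour with the usual distributional regularization'' is formal: $r\tanh(\pi r)$ grows linearly on $\mathbb{R}$, and the arcs have to be threaded between the poles. Your argument is easier to make fully rigorous.

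\textbf{One step to make explicit.} You should justify interchanging $\sum_k$ with the Fourier integral. For $0\le x\le1$, the partial sums of $\sum_{k\ge1}(-1)^{k-1}x^{k}$ lie in $[0,x]$. Hence the partial sums of your remainder are dominated by $|r|e^{-2\pi|r|}\in L^{1}(\mathbb{R})$, and dominated convergence applies. Incidentally, the resulting series in $t$ converges absolutely for every real $t$, not only for $t>0$. The restriction $t>0$ is needed only for the $\widehat{|r|}$ term.
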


\end{cBoxB}

\begin{proof}
The poles of $\tanh(\pi r)$ in the upper half-plane are 
\[
r=i\left(k+\frac{1}{2}\right),\qquad k\in\mathbb{N},
\]
and each has residue $1/\pi$. For $t>0$, closing the contour in
the upper half-plane gives, with the usual distributional regularization,
\[
\int_{\mathbb{R}}re^{itr}\tanh(\pi r)\,dr=2\pi i\sum_{k\ge0}\frac{1}{\pi}i\left(k+\frac{1}{2}\right)e^{-t(k+1/2)}.
\]
Hence 
\[
\int_{\mathbb{R}}re^{itr}\tanh(\pi r)\,dr=-2\sum_{k\ge0}\left(k+\frac{1}{2}\right)e^{-t(k+1/2)}.
\]
Since 
\[
\sum_{k\ge0}e^{-t(k+1/2)}=\frac{e^{-t/2}}{1-e^{-t}},
\]
differentiating with respect to $t$ gives 
\[
\sum_{k\ge0}\left(k+\frac{1}{2}\right)e^{-t(k+1/2)}=\frac{e^{-t/2}(1+e^{-t})}{2(1-e^{-t})^{2}}.
\]
This proves \eqref{eq:tanh_Fourier_identity}. 
\end{proof}
We start from the flow form obtained above: 
\begin{align}
\operatorname{Tr}_{L^{2}(M)}^{\flat}\left(e^{tX}\right) & =1+\frac{2e^{-t/2}}{1-e^{-t}}\,\operatorname{Tr}_{L_{0}^{2}(\mathcal{N})}\left(\cos(t\Lambda)\right)\nonumber \\
 & \quad+\frac{2e^{-t}}{1-e^{-t}}+|\chi(\mathcal{N})|\frac{e^{-t}(1+e^{-t})}{(1-e^{-t})^{3}}.\label{eq:flow_trace_spectral_before_wave}
\end{align}
Multiplying by $J(t)^{1/2}$, and using 
\[
J(t)^{1/2}\frac{2e^{-t/2}}{1-e^{-t}}=2,
\]
we obtain 
\begin{align}
J(t)^{1/2}\operatorname{Tr}_{L^{2}(M)}^{\flat}\left(e^{tX}\right) & =J(t)^{1/2}+2\,\operatorname{Tr}_{L_{0}^{2}(\mathcal{N})}\left(\cos(t\Lambda)\right)\nonumber \\
 & \quad+2e^{-t/2}+|\chi(\mathcal{N})|\frac{e^{-t/2}(1+e^{-t})}{(1-e^{-t})^{2}}.\label{eq:Dhalf_spectral_trace}
\end{align}
By \eqref{eq:tanh_Fourier_identity}, 
\[
|\chi(\mathcal{N})|\frac{e^{-t/2}(1+e^{-t})}{(1-e^{-t})^{2}}=-|\chi(\mathcal{N})|\int_{\mathbb{R}}re^{itr}\tanh(\pi r)\,dr.
\]
Therefore 
\begin{align}
2\,\operatorname{Tr}_{L_{0}^{2}(\mathcal{N})}\left(\cos(t\Lambda)\right) & =J(t)^{1/2}\operatorname{Tr}_{L^{2}(M)}^{\flat}\left(e^{tX}\right)\nonumber \\
 & \quad-J(t)^{1/2}-2e^{-t/2}+|\chi(\mathcal{N})|\int_{\mathbb{R}}re^{itr}\tanh(\pi r)\,dr.\label{eq:L0_wave_from_flow}
\end{align}
Since 
\[
J(t)^{1/2}=e^{t/2}-e^{-t/2},
\]
we have 
\[
J(t)^{1/2}+2e^{-t/2}=e^{t/2}+e^{-t/2}.
\]
Thus 
\begin{align}
2\,\operatorname{Tr}_{L_{0}^{2}(\mathcal{N})}\left(\cos(t\Lambda)\right) & =J(t)^{1/2}\operatorname{Tr}_{L^{2}(M)}^{\flat}\left(e^{tX}\right)\nonumber \\
 & \quad-e^{t/2}-e^{-t/2}+|\chi(\mathcal{N})|\int_{\mathbb{R}}re^{itr}\tanh(\pi r)\,dr.\label{eq:L0_wave_from_flow_simplified}
\end{align}

The constant eigenfunction of $\Delta$ has eigenvalue $0$, and therefore
contributes 
\[
e^{it\sqrt{-1/4}}+e^{-it\sqrt{-1/4}}=e^{-t/2}+e^{t/2}.
\]
Adding this constant contribution to \eqref{eq:L0_wave_from_flow_simplified},
the elementary exponential terms cancel. Hence 
\begin{equation}
\operatorname{Tr}_{L^{2}(\mathcal{N})}^{\mathrm{dist}}\left(e^{it\sqrt{\Delta-\frac{1}{4}}}+e^{-it\sqrt{\Delta-\frac{1}{4}}}\right)=J(t)^{1/2}\operatorname{Tr}_{L^{2}(M)}^{\flat}\left(e^{tX}\right)+|\chi(\mathcal{N})|\int_{\mathbb{R}}re^{itr}\tanh(\pi r)\,dr.\label{eq:wave_trace_from_flow_trace}
\end{equation}

Finally, by the Atiyah--Bott--Guillemin formula \eqref{eq:ABG_flat_trace_geodesic},
\[
\operatorname{Tr}_{L^{2}(M)}^{\flat}\left(e^{tX}\right)=\sum_{\gamma\in\mathcal{P}_{X}}\sum_{m\ge1}\frac{\ell_{\gamma}}{4\sinh^{2}(m\ell_{\gamma}/2)}\,\delta(t-m\ell_{\gamma}).
\]
Multiplying by $J(t)^{1/2}$, and using $t=m\ell_{\gamma}$ under
the delta distribution, gives 
\[
J(t)^{1/2}\operatorname{Tr}_{L^{2}(M)}^{\flat}\left(e^{tX}\right)=\sum_{\gamma\in\mathcal{P}_{X}}\sum_{m\ge1}\frac{\ell_{\gamma}}{2\sinh(m\ell_{\gamma}/2)}\,\delta(t-m\ell_{\gamma}).
\]
Substituting into \eqref{eq:wave_trace_from_flow_trace}, we obtain,
as distributions on $(0,\infty)$, 
\begin{equation}
\boxed{\begin{aligned} & \operatorname{Tr}_{L^{2}(\mathcal{N})}^{\mathrm{dist}}\left(e^{it\sqrt{\Delta-\frac{1}{4}}}+e^{-it\sqrt{\Delta-\frac{1}{4}}}\right)\\
 & \qquad=|\chi(\mathcal{N})|\int_{\mathbb{R}}re^{itr}\tanh(\pi r)\,dr+\sum_{\gamma\in\mathcal{P}_{X}}\sum_{m\ge1}\frac{\ell_{\gamma}}{2\sinh(m\ell_{\gamma}/2)}\,\delta(t-m\ell_{\gamma}).
\end{aligned}
}\label{eq:Selberg_wave_trace_final}
\end{equation}
Here $\mathcal{P}_{X}$ denotes primitive oriented closed geodesics,
as in the flow trace formula above. This is the Selberg wave trace
formula in the usual form; see also \cite{duistermaat_guillemin_1975,mckean1972selberg,lax_phillips_1976,borthwick2016spectral}.

\section{\protect\label{sec:Spherical-averages}Spherical averages and emergence
of the wave propagator }

In this section we discuss another manifestation of the main relation
between the geodesic flow operator $e^{tX}$ and the wave dynamics
in the Hilbert model constructed above. Averaging over the circle
fibers of the bundle map 
\[
S^{*}\mathcal{N}\longrightarrow\mathcal{N}
\]
gives the spherical mean operator. We deduce an asymptotic expansion
for this operator, with control in operator norm on $L^{2}(\mathcal{N})$.

\subsection{The scalar spherical function}

We first recall the spherical matrix coefficient in one spherical
principal-series representation. Let $\lambda>0$, and set 
\[
b=-\frac{1}{2}+i\lambda.
\]
We keep the normalization used above, 
\[
\psi_{0}(\theta)=\frac{1}{\sqrt{2\pi}},\qquad\langle f\mid g\rangle_{L^{2}(S^{1})}=\int_{0}^{2\pi}\overline{f(\theta)}g(\theta)\,d\theta.
\]
With this convention, the scalar spherical coefficient is normalized
by the factor $1/(2\pi)$.

\begin{cBoxB}{}

\begin{prop}[Spherical function]
\label{prop:spherical_average} For every $t>0$, 
\begin{equation}
\left\langle \psi_{0}\mid e^{t\widetilde{X}}\psi_{0}\right\rangle _{L^{2}(S^{1})}=\frac{1}{2\pi}\int_{0}^{2\pi}\left(\cosh t+\sinh t\cos\theta\right)^{-\frac{1}{2}+i\lambda}\,d\theta.\label{eq:spherical_average_integral}
\end{equation}
Equivalently, 
\begin{equation}
\left\langle \psi_{0}\mid e^{t\widetilde{X}}\psi_{0}\right\rangle _{L^{2}(S^{1})}=P_{-\frac{1}{2}+i\lambda}(\cosh t),\label{eq:spherical_average_legendre}
\end{equation}
where $P_{\nu}$ is the Legendre function of the first kind. 
\end{prop}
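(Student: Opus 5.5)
The plan is to compute $e^{t\widetilde X}\psi_0$ explicitly in the compact picture of Lemma~\ref{lem:operators_compact_picture}, and then integrate. There $\widetilde X=\cos\theta\,\frac{d}{d\theta}+b\sin\theta$ is a first-order operator, so $e^{t\widetilde X}$ is the weighted transport
\[
(e^{t\widetilde X}u)(\theta)=\exp\Bigl(b\int_0^t\sin(\phi^r(\theta))\,dr\Bigr)\,u(\phi^t(\theta)),
\]
as already used in the proof of Theorem~\ref{thm:flat_trace_equals_Hilbert_trace}. Here $\phi^t$ is the flow of $\cos\theta\,\partial_\theta$. Since $\psi_0$ is constant, only the weight matters, and we get
\[
\langle\psi_0\mid e^{t\widetilde X}\psi_0\rangle=\frac{1}{2\pi}\int_0^{2\pi}\exp\Bigl(b\int_0^t\sin\phi^r(\theta)\,dr\Bigr)d\theta.
\]

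To evaluate the weight, I would pass to the chart $h_+$ of Lemma~\ref{lem:For-each-sign}. There the flow is linear: $x\mapsto e^{-t}x$. The conjugated operator $\mathcal U_+\widetilde X\mathcal U_+^{-1}=-(x\partial_x-b)$ generates $f\mapsto e^{tb}f(e^{-t}x)$. The constant $\psi_0$ becomes $\pi^{-1/2}(1+x^2)^b$ by \eqref{eq:Upsi_pm_common}, up to the unimodular factor at $k=0$. So
\[
\mathcal U_+e^{t\widetilde X}\psi_0=\pi^{-1/2}e^{tb}(1+e^{-2t}x^2)^b,
\]
and therefore
\[
(e^{t\widetilde X}\psi_0)(\theta)=\psi_0\,\Bigl(\frac{e^{t}+e^{-t}x^2}{1+x^2}\Bigr)^{b}, \qquad x=h_+(\theta).
\]
Using \eqref{eq:half_angle_pm}, $x^2/(1+x^2)=(1-\sin\theta)/2$ and $1/(1+x^2)=(1+\sin\theta)/2$. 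The base is then
\[
\tfrac12\bigl(e^{t}(1+\sin\theta)+e^{-t}(1-\sin\theta)\bigr)=\cosh t+\sinh t\,\sin\theta.
\]

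Integrating against $\overline{\psi_0}=1/\sqrt{2\pi}$ gives $\frac{1}{2\pi}\int_0^{2\pi}(\cosh t+\sinh t\sin\theta)^b\,d\theta$. The shift $\theta\mapsto\theta+\pi/2$ turns $\sin\theta$ into $\cos\theta$ without changing the integral over a full period, which yields \eqref{eq:spherical_average_integral}. The base is $\ge e^{-t}>0$, so the power $(\cdot)^b$ is unambiguous with the principal branch. The single excluded point $\theta=-\pi/2$ of the chart has measure zero. The Legendre form \eqref{eq:spherical_average_legendre} then follows from Laplace's first integral $P_\nu(z)=\frac1\pi\int_0^\pi(z+\sqrt{z^2-1}\cos\theta)^\nu d\theta$ with $z=\cosh t$, $\sqrt{z^2-1}=\sinh t$ and $\nu=b$, using the symmetry $\theta\mapsto 2\pi-\theta$.

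The main obstacle is bookkeeping rather than analysis. One has to check that the weight from $\mathcal U_+$, namely $\sqrt2(1+x^2)^b$ and the Jacobian, combines correctly into exactly $(\cosh t+\sinh t\sin\theta)^b$, with no stray factor. An independent check is to differentiate the claimed formula in $t$ at $t=0$: this gives $b\sin\theta$, which matches $\widetilde X\psi_0/\psi_0$.
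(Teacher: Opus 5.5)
Your proof is correct. It shares the paper's overall structure: an explicit formula for $e^{t\widetilde X}\psi_0$, integration against $\overline{\psi_0}$, the shift $\theta\mapsto\theta+\pi/2$, and then Laplace's first integral. The only difference is how you obtain the closed form
\[
(e^{t\widetilde X}\psi_0)(\theta)=\tfrac{1}{\sqrt{2\pi}}\,(\cosh t+\sinh t\,\sin\theta)^{b}.
\]

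The paper simply posits this function $F(t,\theta)$ and checks directly that $\partial_tF=\widetilde XF$ and $F(0,\cdot)=\psi_0$. Uniqueness for the unitary group generated by $\widetilde X$ then identifies $F(t,\cdot)$ with $e^{t\widetilde X}\psi_0$.

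You instead derive the formula. Conjugating by the unitary $\mathcal U_+$ of Lemma~\ref{lem:For-each-sign} turns $\widetilde X$ into $-(x\partial_x-b)$, whose group is the explicit weighted dilation $f\mapsto e^{tb}f(e^{-t}x)$. The half-angle identities then convert $\bigl(e^{t}+e^{-t}x^2\bigr)/(1+x^2)$ back into $\cosh t+\sinh t\sin\theta$. This bookkeeping works out with no stray factor: the $\sqrt2\,(1+x^2)^{b}$ from $\mathcal U_+$ cancels against $\mathcal U_+^{-1}$, and all powers combine on positive reals.

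The paper's route is shorter and avoids any change of variables, but it needs the answer in advance. Yours explains where the formula comes from: the chart in which the flow of $\cos\theta\,\partial_\theta$ becomes linear, which is the same mechanism behind the whole resonance construction. It also makes your $t=0$ derivative check unnecessary.

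One point deserves an explicit sentence. You need $\mathcal U_+e^{t\widetilde X}\mathcal U_+^{-1}=e^{t(-x\partial_x+b)}$ as unitary groups on $L^2(\mathbb R)$, not merely equality of generators. This holds because both sides are explicit weighted transports along conjugate flows, or because the two skew-adjoint generators agree on a common core.
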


\end{cBoxB}

\begin{proof}
In the compact picture used above, one has 
\[
\widetilde{X}=\cos\theta\,\partial_{\theta}+b\sin\theta,\qquad b=-\frac{1}{2}+i\lambda.
\]
Let 
\[
F(t,\theta):=\frac{1}{\sqrt{2\pi}}(\cosh t+\sinh t\sin\theta)^{b}.
\]
Then 
\[
F(0,\theta)=\psi_{0}(\theta).
\]
Set 
\[
D(t,\theta):=\cosh t+\sinh t\sin\theta.
\]
We have 
\[
\partial_{t}F=bD^{b-1}\left(\sinh t+\cosh t\sin\theta\right),
\]
and 
\[
\partial_{\theta}F=bD^{b-1}\sinh t\cos\theta.
\]
Therefore 
\begin{align*}
\widetilde{X}F & =\cos\theta\,\partial_{\theta}F+b\sin\theta\,F\\
 & =bD^{b-1}\left(\sinh t\cos^{2}\theta+\sin\theta(\cosh t+\sinh t\sin\theta)\right)\\
 & =bD^{b-1}\left(\sinh t+\cosh t\sin\theta\right)\\
 & =\partial_{t}F.
\end{align*}
Thus 
\[
\partial_{t}F=\widetilde{X}F,\qquad F(0,\theta)=\psi_{0}(\theta),
\]
and hence 
\[
(e^{t\widetilde{X}}\psi_{0})(\theta)=\frac{1}{\sqrt{2\pi}}(\cosh t+\sinh t\sin\theta)^{b}.
\]
Taking the scalar product with $\psi_{0}(\theta)=1/\sqrt{2\pi}$,
we get
\[
\left\langle \psi_{0}\mid e^{t\widetilde{X}}\psi_{0}\right\rangle _{L^{2}(S^{1})}=\frac{1}{2\pi}\int_{0}^{2\pi}\left(\cosh t+\sinh t\sin\theta\right)^{b}\,d\theta.
\]
By the change of variable $\theta\mapsto\theta-\pi/2$, this is equivalent
to 
\[
\frac{1}{2\pi}\int_{0}^{2\pi}\left(\cosh t+\sinh t\cos\theta\right)^{b}\,d\theta,
\]
which proves \eqref{eq:spherical_average_integral}. The identification
with 
\[
P_{-\frac{1}{2}+i\lambda}(\cosh t)
\]
is the standard integral representation of the spherical function
on the hyperbolic plane. 
\end{proof}

\subsection{Harish--Chandra expansion}

The same matrix coefficient admits the following large-time expansion.
In the resonant model, it is obtained by summing the resonant states;
equivalently, it is the classical Harish--Chandra expansion of the
spherical function. We shall use this expansion together with its
uniform estimates in the spectral parameter. For the general theory
of spherical functions, see Helgason~\cite{Helgason1984}; for uniform
estimates of the Harish--Chandra series, see Gangolli--Varadarajan~\cite[Chapter~IV, Sections~4.4--4.6]{GangolliVaradarajan1988}.

\begin{cBoxB}{}

\begin{prop}[Resonance expansion of the spherical average]
\label{prop:resonance_expansion_spherical_average} For every $\tau>0$,
\begin{align}
\left\langle \psi_{0}\mid e^{\tau\widetilde{X}}\psi_{0}\right\rangle _{L^{2}(S^{1})} & \eq{\eqref{eq:spherical_average_legendre}}P_{-\frac{1}{2}+i\lambda}(\cosh\tau)\nonumber \\
 & =e^{-\tau(\frac{1}{2}-i\lambda)}\sum_{m\ge0}e^{-2m\tau}W_{2m,\lambda}+e^{-\tau(\frac{1}{2}+i\lambda)}\sum_{m\ge0}e^{-2m\tau}W_{2m,-\lambda},\label{eq:resonance_expansion_spherical_average}
\end{align}
where 
\begin{equation}
W_{2m,\lambda}:=\frac{1}{\pi}\frac{\Gamma\left(m+\frac{1}{2}\right)\Gamma(i\lambda-m)}{m!\,\Gamma\left(\frac{1}{2}+i\lambda-m\right)},\qquad W_{2m+1,\lambda}:=0.\label{eq:def_W_2m_lambda}
\end{equation}
For real $\lambda$, 
\[
W_{2m,-\lambda}=\overline{W_{2m,\lambda}}.
\]
The leading coefficient is 
\begin{equation}
W_{0,\lambda}=\frac{1}{\sqrt{\pi}}\frac{\Gamma(i\lambda)}{\Gamma\left(\frac{1}{2}+i\lambda\right)}.\label{eq:leading_HC_coefficient}
\end{equation}
Moreover, for fixed $m\in\mathbb{N}$, as $\lambda\to+\infty$, 
\begin{equation}
W_{2m,\lambda}=\frac{1}{\sqrt{\pi}}\frac{\binom{2m}{m}}{4^{m}}e^{-i\pi/4}\lambda^{-1/2}\left(1-\frac{i(4m+1)}{8\lambda}+\mathcal{O}_{m}(\lambda^{-2})\right).\label{eq:W_2m_lambda_asymptotic}
\end{equation}
\end{prop}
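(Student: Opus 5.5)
The identity \eqref{eq:resonance_expansion_spherical_average} will follow by inserting the correlation formula of Corollary~\ref{cor:correlation_formula_trigo} with $k=k'=0$:
\[
\langle\psi_0\mid e^{\tau\widetilde X}\psi_0\rangle=\sum_{n\ge0}\sum_{\sigma=\pm}e^{\tau z_{n,\sigma}}\langle\psi_0\mid\mathcal U_{n,\sigma}\rangle\langle\mathcal S_{n,\sigma}\mid\psi_0\rangle,\qquad z_{n,\pm}=-n-\tfrac12\pm i\lambda .
\]
The expansion is absolutely convergent for every $\tau>0$. It therefore suffices to compute the coefficients
\[
c_{n,\sigma}:=\langle\psi_0\mid\mathcal U_{n,\sigma}\rangle\langle\mathcal S_{n,\sigma}\mid\psi_0\rangle
\]
and to show that $c_{2m+1,\sigma}=0$, $c_{2m,+}=W_{2m,\lambda}$ and $c_{2m,-}=W_{2m,-\lambda}$. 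The gauge factors $t_n^{\pm}$ cancel in each product. Using \eqref{eq:def_Sk+_Uk+}--\eqref{eq:def_S_k-_Uk-} and \eqref{eq:Upsi_pm_common}, the $+$ coefficient becomes the product of two quantities. The first is $\frac1{n!}$ times the $n$-th derivative at $0$ of $\pi^{-1/2}(1+x^2)^{b}$. The second is the conjugated regularized moment $\int_{\mathbb R}x^n\,\overline{\mathcal U_+\psi_0}\cdots$, which is again a beta integral $\pi^{-1/2}\int x^n(1+x^2)^{\bar b}\,dx$ up to the factor coming from $(\mathcal U_+)^\dagger$. Both vanish for odd $n$ by parity, which gives $W_{2m+1}=0$.

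For $n=2m$ the Taylor coefficient is $[x^{2m}](1+x^2)^{b}=\binom{b}{m}$. The moment is $B(m+\frac12,-b'-m-\frac12)$, with the exponent $b'$ fixed by \eqref{eq:U+_dagger}. Multiplying these and simplifying with the reflection formula should give $\frac1\pi\frac{\Gamma(m+\frac12)\Gamma(i\lambda-m)}{m!\,\Gamma(\frac12+i\lambda-m)}$. The $-$ branch is obtained by exchanging the roles of $\delta^{(n)}$ and $x^n$, equivalently $\lambda\mapsto-\lambda$.

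The main obstacle is the bookkeeping of exponents and phases in this product: which of $b,\bar b$ and $1+\bar b$ appears, the normalizations $\sqrt2$, $e^{\pm ik\pi/2}$ and $(-1)^n$, and the meaning of the analytically continued moment. I would guard against errors independently as follows. Starting from the Legendre representation \eqref{eq:spherical_average_legendre}, I would apply the classical connection formula expressing $P_{-1/2+i\lambda}(\cosh\tau)$ as $c(\lambda)$ times a hypergeometric function in $e^{-2\tau}$, plus the term with $-\lambda$:
\[
P_{\nu}(\cosh\tau)=\frac{\Gamma(\nu+\frac12)}{\sqrt\pi\,\Gamma(\nu+1)}e^{\nu\tau}\,{}_2F_1\!\left(\tfrac12,-\nu;\tfrac12-\nu;e^{-2\tau}\right)+(\nu\mapsto-\nu-1).
\]
With $\nu=-\frac12+i\lambda$, expanding ${}_2F_1$ gives $e^{-\tau(\frac12-i\lambda)}\sum_m e^{-2m\tau}\frac{\Gamma(i\lambda)}{\sqrt\pi\,\Gamma(\frac12+i\lambda)}\frac{(\frac12)_m(\frac12-i\lambda)_m}{m!\,(1-i\lambda)_m}$. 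Rewriting the Pochhammer symbols as Gamma ratios and using reflection on $\Gamma(1-i\lambda+m)$ and $\Gamma(\frac12-i\lambda+m)$ yields exactly \eqref{eq:def_W_2m_lambda}. Setting $m=0$ gives \eqref{eq:leading_HC_coefficient}. This secondary route also confirms the resonance computation by uniqueness of Dirichlet-type expansions in $e^{-\tau}$ with exponents $-\frac12\pm i\lambda-2m$, which are distinct for $\lambda>0$.

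The symmetry $W_{2m,-\lambda}=\overline{W_{2m,\lambda}}$ is immediate from $\overline{\Gamma(z)}=\Gamma(\bar z)$. For the asymptotics \eqref{eq:W_2m_lambda_asymptotic}, I would write
\[
\frac{\Gamma(i\lambda-m)}{\Gamma(\frac12+i\lambda-m)}=\frac{\Gamma(i\lambda)}{\Gamma(\frac12+i\lambda)}\prod_{j=1}^{m}\frac{i\lambda-j+\frac12}{i\lambda-j}.
\]
The product equals $1+\frac{m}{2i\lambda}+O(\lambda^{-2})$. Stirling gives $\Gamma(i\lambda)/\Gamma(\frac12+i\lambda)=(i\lambda)^{-1/2}\bigl(1+\frac{1}{8i\lambda}+O(\lambda^{-2})\bigr)$, and $(i\lambda)^{-1/2}=e^{-i\pi/4}\lambda^{-1/2}$. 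Finally $\Gamma(m+\frac12)/(\sqrt\pi\,m!)=\binom{2m}{m}4^{-m}$. Combining these, the first-order correction is $-\frac{i(4m+1)}{8\lambda}$, as claimed.
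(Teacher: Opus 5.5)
Your proposal is correct and follows the paper's own route. You apply the correlation formula of Corollary~\ref{cor:correlation_formula_trigo} at $k=k'=0$, use parity to kill odd $n$, compute the even coefficients directly, and use a Gamma-ratio Stirling expansion for \eqref{eq:W_2m_lambda_asymptotic}. One simplification: the moment exponent is $\bar b$ once the Jacobian $d\theta=2(1+x^2)^{-1}dx$ is included, so $\frac{1}{\pi}\binom{b}{m}B(m+\tfrac12,i\lambda-m)$ equals $W_{2m,\lambda}$ immediately and no reflection formula is needed. Your cross-check via the classical hypergeometric connection formula for $P_\nu(\cosh\tau)$ is a sound independent confirmation of what the paper only alludes to as the Harish--Chandra expansion.
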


\end{cBoxB}

\begin{proof}
We apply the correlation formula in the resonant model to $\psi_{0}$.
The positive branch gives 
\[
e^{-\tau(\frac{1}{2}-i\lambda)}\sum_{n\ge0}e^{-n\tau}\left\langle (\mathcal{T}_{+}^{-1})^{\dagger}\psi_{0}\mid e_{n}^{+}\right\rangle \left\langle e_{n}^{+}\mid\mathcal{T}_{+}\psi_{0}\right\rangle .
\]
By parity, only even indices contribute. A direct computation of the
two coefficients gives 
\[
\left\langle (\mathcal{T}_{+}^{-1})^{\dagger}\psi_{0}\mid e_{2m}^{+}\right\rangle \left\langle e_{2m}^{+}\mid\mathcal{T}_{+}\psi_{0}\right\rangle =W_{2m,\lambda},
\]
with $W_{2m,\lambda}$ given by \eqref{eq:def_W_2m_lambda}. The negative
branch gives the same expression with $\lambda$ replaced by $-\lambda$.
This proves \eqref{eq:resonance_expansion_spherical_average}.

The formula \eqref{eq:leading_HC_coefficient} is the case $m=0$.

It remains to record the large-$\lambda$ behavior. For fixed $m$,
write 
\[
z=i\lambda-m.
\]
Using 
\[
\frac{\Gamma(z)}{\Gamma(z+\frac{1}{2})}=z^{-1/2}\left(1+\frac{1}{8z}+\mathcal{O}(z^{-2})\right),\qquad|z|\to+\infty,
\]
with the principal branch, we get 
\[
\frac{\Gamma(i\lambda-m)}{\Gamma(\frac{1}{2}+i\lambda-m)}=(i\lambda)^{-1/2}\left(1-\frac{i(4m+1)}{8\lambda}+\mathcal{O}_{m}(\lambda^{-2})\right).
\]
Since 
\[
(i\lambda)^{-1/2}=e^{-i\pi/4}\lambda^{-1/2},
\]
we obtain 
\[
W_{2m,\lambda}=\frac{1}{\pi}\frac{\Gamma(m+\frac{1}{2})}{m!}e^{-i\pi/4}\lambda^{-1/2}\left(1-\frac{i(4m+1)}{8\lambda}+\mathcal{O}_{m}(\lambda^{-2})\right).
\]
Finally, 
\[
\frac{1}{\pi}\frac{\Gamma(m+\frac{1}{2})}{m!}=\frac{1}{\sqrt{\pi}}\frac{\binom{2m}{m}}{4^{m}},
\]
which gives \eqref{eq:W_2m_lambda_asymptotic}. 
\end{proof}

\subsection{Global operator formula up to finite rank}

Let 
\[
\iota_{0}:L^{2}(\mathcal{N})\longrightarrow L^{2}(M)
\]
be the natural embedding as $K$-invariant functions, and let 
\[
\pi_{0}:L^{2}(M)\longrightarrow L^{2}(\mathcal{N})
\]
be the corresponding orthogonal projection. We define the spherical
mean operator 
\begin{equation}
\mathcal{L}_{t}:=\pi_{0}e^{tX}\iota_{0}.\label{eq:def_L_t_spherical_mean}
\end{equation}

We isolate the finite-dimensional exceptional part. Fix $\eta>0$,
and let 
\[
\Pi_{\mathrm{exc}}
\]
be the spectral projector of $\Delta$ onto the finite-dimensional
subspace generated by the constant functions, the complementary spectrum,
the possible threshold space $E_{1/4}$, and the principal eigenvalues
satisfying 
\[
0<\sqrt{\mu-\frac{1}{4}}<\eta.
\]
Equivalently, $\Pi_{\mathrm{exc}}$ contains the spectrum with 
\[
\mu\le\frac{1}{4},
\]
together with a fixed finite window just above the threshold.

We set 
\[
\mathcal{H}_{\mathrm{reg}}:=(1-\Pi_{\mathrm{exc}})L_{0}^{2}(\mathcal{N}).
\]
On $\mathcal{H}_{\mathrm{reg}}$, the operator 
\[
\Lambda:=\left(\Delta-\frac{1}{4}\right)^{1/2}
\]
is self-adjoint and satisfies 
\[
\Lambda\ge\eta.
\]

Define 
\[
w_{+}(\lambda):=\frac{1}{\sqrt{\pi}}\frac{\Gamma(i\lambda)}{\Gamma\left(\frac{1}{2}+i\lambda\right)},\qquad w_{-}(\lambda):=w_{+}(-\lambda),
\]
and the corresponding spectral multipliers on $\mathcal{H}_{\mathrm{reg}}$:
\[
W_{+}:=w_{+}(\Lambda),\qquad W_{-}:=w_{-}(\Lambda).
\]
Since $\Lambda$ is self-adjoint on $\mathcal{H}_{\mathrm{reg}}$,
one has 
\[
W_{-}=W_{+}^{\dagger}.
\]

Finally, define the finite-rank contribution 
\begin{equation}
F_{t}:=\mathcal{L}_{t}\Pi_{\mathrm{exc}}.\label{eq:def_finite_rank_exceptional_part}
\end{equation}
Its rank is uniformly bounded: 
\[
\operatorname{rank}F_{t}\le\operatorname{rank}\Pi_{\mathrm{exc}}.
\]

\begin{cBoxB}{}

\begin{prop}[Large-time spherical average up to finite rank]
\label{prop:global_spherical_average_asymptotic} For $t>0$, one
has 
\begin{equation}
\mathcal{L}_{t}=F_{t}+e^{-t/2}\left(W_{+}e^{it\Lambda}+W_{-}e^{-it\Lambda}+e^{-2t}\mathcal{R}_{t}\right)(1-\Pi_{\mathrm{exc}}),\label{eq:global_spherical_average_asymptotic}
\end{equation}
where $\mathcal{R}_{t}$ is uniformly bounded for $t\ge t_{0}>0$.
More precisely, for every $s\in\mathbb{R}$, 
\[
\sup_{t\ge t_{0}}\|\mathcal{R}_{t}\|_{H^{s}(\mathcal{N})\to H^{s+1/2}(\mathcal{N})}<\infty.
\]

On the regular principal spectrum, the leading part is therefore 
\begin{equation}
e^{-t/2}\left(W_{+}e^{it\sqrt{\Delta-\frac{1}{4}}}+W_{+}^{\dagger}e^{-it\sqrt{\Delta-\frac{1}{4}}}\right).\label{eq:principal_leading_spherical_average}
\end{equation}

Moreover, $W_{+}$ is a pseudodifferential operator of order $-1/2$,
with principal term 
\begin{equation}
W_{+}=\frac{1}{\sqrt{\pi}}e^{-i\pi/4}\left(\Delta-\frac{1}{4}\right)^{-1/4}+R_{+},\label{eq:W_plus_principal}
\end{equation}
where, for every $s\in\mathbb{R}$, 
\[
R_{+}:H^{s}(\mathcal{N})\longrightarrow H^{s+3/2}(\mathcal{N}).
\]
Equivalently, in a slightly less sharp but simpler form, 
\begin{equation}
W_{+}=\frac{1}{\sqrt{\pi}}e^{-i\pi/4}\Delta^{-1/4}+O\left(H^{s}(\mathcal{N})\to H^{s+1}(\mathcal{N})\right).\label{eq:W_plus_Delta_asymptotic}
\end{equation}
\end{prop}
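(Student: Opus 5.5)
The plan is to diagonalize $\mathcal{L}_t$ in the eigenbasis of $\Delta$ and then use the scalar Harish--Chandra expansion of Proposition~\ref{prop:resonance_expansion_spherical_average} fiberwise.

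\textbf{Step 1 (reduction to a scalar multiplier).} For $u\in E_\mu$ with $\|u\|=1$, $\iota_0 u$ is a unit vector in $\mathcal{H}_{\mu,0}$, hence the normalized $K$-invariant vector $\varphi_0$ of the spherical representation $\mathcal{H}^{\mathrm{sph}}_\mu(u)$. The projection $\pi_0$ onto $K$-invariant vectors maps this representation to $\mathbb{C}\varphi_0$. Via $\mathcal{U}_1$ we have $\varphi_0\mapsto\psi_0$, so $\mathcal{L}_t u=\langle\psi_0\mid e^{t\widetilde X}\psi_0\rangle u$. Since distinct eigenspaces give orthogonal representations, $\mathcal{L}_t=\Phi_t(\Lambda)$ on $L_0^2(\mathcal{N})$, where $\Phi_t(\lambda)=P_{-\frac12+i\lambda}(\cosh t)$ by Proposition~\ref{prop:spherical_average}. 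In particular $\mathcal{L}_t$ commutes with $\Pi_{\mathrm{exc}}$, so $\mathcal{L}_t=F_t+\mathcal{L}_t(1-\Pi_{\mathrm{exc}})$. It therefore suffices to treat $\lambda\ge\eta$.

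\textbf{Step 2 (splitting off the leading terms).} For $\lambda\ge\eta$, Proposition~\ref{prop:resonance_expansion_spherical_average} gives
\[
\Phi_t(\lambda)=e^{-t/2}\bigl(w_+(\lambda)e^{it\lambda}+w_-(\lambda)e^{-it\lambda}\bigr)+e^{-t/2}e^{-2t}r_t(\lambda),
\]
with
\[
r_t(\lambda)=\sum_{m\ge1}e^{-2(m-1)t}\bigl(W_{2m,\lambda}e^{it\lambda}+W_{2m,-\lambda}e^{-it\lambda}\bigr).
\]
Setting $\mathcal{R}_t:=r_t(\Lambda)$, the claimed identity holds by spectral calculus, with $W_-=W_+^\dagger$ because $\overline{w_+(\lambda)}=w_-(\lambda)$ for real $\lambda$.

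\textbf{Step 3 (uniform bound on the remainder; the main obstacle).} The smoothing statement $H^s\to H^{s+1/2}$ amounts to
\[
\sup_{t\ge t_0,\ \lambda\ge\eta}\langle\lambda\rangle^{1/2}|r_t(\lambda)|<\infty,
\]
since $\langle\lambda\rangle\asymp\langle\mu\rangle^{1/2}$ and $\Lambda$ commutes with $\Delta$. The asymptotic \eqref{eq:W_2m_lambda_asymptotic} is only valid for fixed $m$, so a bound uniform in $m$ is needed. First try the exact formula
\[
|W_{2m,\lambda}|=\frac{\Gamma(m+\tfrac12)}{\pi\, m!}\,\frac{|\Gamma(i\lambda-m)|}{|\Gamma(\tfrac12+i\lambda-m)|}.
\]
Apply the reflection formula to both Gamma factors. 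The ratio becomes
\[
\frac{|\Gamma(\tfrac12-i\lambda+m)|\,|\cos\pi(i\lambda-m)|}{|\Gamma(1-i\lambda+m)|\,|\sin\pi(i\lambda-m)|},
\]
which equals $|\coth\pi\lambda|\,|\Gamma(\tfrac12+m-i\lambda)/\Gamma(1+m-i\lambda)|\le C_\eta\,|m+1-i\lambda|^{-1/2}$. This uses the uniform Gamma-ratio bound on the half-plane $\operatorname{Re}z\ge1/2$. Together with $\Gamma(m+\frac12)/m!\lesssim m^{-1/2}$, this gives
\[
|W_{2m,\lambda}|\le C_\eta\langle\lambda\rangle^{-1/2},
\]
uniformly in $m$. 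Summing the geometric series $\sum_{m\ge1}e^{-2(m-1)t}$ for $t\ge t_0$ then gives the bound. The absolute convergence of the expansion for $\tau>0$, established in Corollary~\ref{cor:correlation_formula_trigo}, justifies rearranging the series. As a cross-check one can cite the uniform Harish--Chandra estimates of Gangolli--Varadarajan.

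\textbf{Step 4 (symbol of $W_+$).} Stirling's expansion of $\Gamma(z)/\Gamma(z+\frac12)$ gives
\[
w_+(\lambda)=\pi^{-1/2}e^{-i\pi/4}\lambda^{-1/2}\bigl(1+O(\lambda^{-1})\bigr)
\]
for $\lambda\ge\eta$, and the remainder is a classical symbol of order $-3/2$ in $\lambda$. Since $\Lambda=(\Delta-\frac14)^{1/2}$ is a first-order elliptic pseudodifferential operator on $\mathcal{R}_{\mathrm{reg}}$, functional calculus yields \eqref{eq:W_plus_principal}. For \eqref{eq:W_plus_Delta_asymptotic}, note that $(\Delta-\frac14)^{-1/4}-\Delta^{-1/4}$ has multiplier $O(\mu^{-5/4})$, which is even smoother than required.
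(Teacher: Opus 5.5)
Your proof is correct, and its architecture matches the paper's proof:
\begin{itemize}
\item reduce $\mathcal{L}_t$ fiberwise to the scalar spherical function $P_{-\frac12+i\lambda}(\cosh t)$;
\item split off the leading Harish--Chandra terms $w_\pm(\lambda)e^{\pm it\lambda}$;
\item apply spectral calculus in $\Lambda$;
\item read off the symbol of $W_+$ from Stirling's formula for $\Gamma(z)/\Gamma(z+\tfrac12)$.
\end{itemize}

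The substantive difference is the only nontrivial analytic step, the uniform bound on the remainder. The paper invokes the uniform Harish--Chandra estimates of Gangolli--Varadarajan, including symbol estimates in $\lambda$. You instead prove the needed bound $|W_{2m,\lambda}|\le C_\eta\langle\lambda\rangle^{-1/2}$, uniform in $m$, directly from the explicit coefficients via the reflection formula. This makes the argument self-contained. It also shows that, for the stated $H^s\to H^{s+1/2}$ estimate, a sup bound on the multiplier $r_t(\lambda)$ suffices; symbol estimates would only be needed to claim that $\mathcal{R}_t$ is itself pseudodifferential, which is not asserted. Your Step 1 likewise justifies something the paper only asserts: the $K$-fixed vectors of each spherical irreducible form the line $\mathbb{C}\varphi_0$, and $\mathcal{U}_1\varphi_0=\psi_0$.

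Two minor remarks. First, your reflection-formula computation amounts to the identity $W_{2m,\lambda}=w_+(\lambda)\,\frac{(1/2)_m}{m!}\,\frac{(1/2-i\lambda)_m}{(1-i\lambda)_m}$. Since $(1/2)_m/m!\le 1$ and each factor satisfies $|j+\tfrac12-i\lambda|/|j+1-i\lambda|<1$, this gives directly $|W_{2m,\lambda}|\le|w_+(\lambda)|=(\pi\lambda\tanh\pi\lambda)^{-1/2}$, with no half-plane Gamma-ratio bound needed. Second, the appeal to Corollary~\ref{cor:correlation_formula_trigo} to justify rearranging is superfluous. Your uniform coefficient bound already gives absolute convergence of $r_t(\lambda)$, uniformly for $t\ge t_0$ and $\lambda\ge\eta$, and you only need the expansion on $\mathcal{H}_{\mathrm{reg}}$. (Also, $\mathcal{R}_{\mathrm{reg}}$ in Step 4 should read $\mathcal{H}_{\mathrm{reg}}$.)
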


\end{cBoxB}

\begin{proof}
The operator $\mathcal{L}_{t}$ is the spherical mean operator. On
a spherical irreducible component with spectral parameter $\lambda$,
it acts by multiplication by the spherical function 
\[
\varphi_{\lambda}(t)=P_{-\frac{1}{2}+i\lambda}(\cosh t).
\]
On the regular spectral subspace, $|\lambda|\ge\eta$. We use the
standard uniform form of the Harish--Chandra expansion, namely 
\[
\varphi_{\lambda}(t)=e^{-t/2}\left(w_{+}(\lambda)e^{it\lambda}+w_{-}(\lambda)e^{-it\lambda}+e^{-2t}r_{t}(\lambda)\right),
\]
where $r_{t}(\lambda)$ is uniformly bounded for $t\ge t_{0}>0$ and
$|\lambda|\ge\eta$, together with the corresponding symbol estimates
in $\lambda$.

Applying this scalar identity by spectral calculus with 
\[
\lambda=\Lambda
\]
on $\mathcal{H}_{\mathrm{reg}}$, we obtain 
\[
\mathcal{L}_{t}(1-\Pi_{\mathrm{exc}})=e^{-t/2}\left(W_{+}e^{it\Lambda}+W_{-}e^{-it\Lambda}+e^{-2t}\mathcal{R}_{t}\right)(1-\Pi_{\mathrm{exc}}),
\]
where 
\[
\mathcal{R}_{t}:=r_{t}(\Lambda).
\]
The spectral theorem and the uniform symbol estimates imply 
\[
\sup_{t\ge t_{0}}\|\mathcal{R}_{t}\|_{H^{s}\to H^{s+1/2}}<\infty.
\]
Adding the finite-rank part 
\[
F_{t}=\mathcal{L}_{t}\Pi_{\mathrm{exc}}
\]
gives \eqref{eq:global_spherical_average_asymptotic}.

It remains to identify the leading pseudodifferential behavior of
$W_{+}$. From the quotient asymptotic of the Gamma function, 
\[
\frac{\Gamma(i\lambda)}{\Gamma\left(\frac{1}{2}+i\lambda\right)}=e^{-i\pi/4}\lambda^{-1/2}\left(1-\frac{i}{8\lambda}+O(\lambda^{-2})\right),\qquad\lambda\to+\infty.
\]
Therefore 
\[
w_{+}(\lambda)=\frac{1}{\sqrt{\pi}}e^{-i\pi/4}\lambda^{-1/2}+O(\lambda^{-3/2}).
\]
Since 
\[
\Lambda=\left(\Delta-\frac{1}{4}\right)^{1/2},
\]
the multiplier $\lambda^{-1/2}$ corresponds to 
\[
\left(\Delta-\frac{1}{4}\right)^{-1/4}.
\]
Thus 
\[
W_{+}=\frac{1}{\sqrt{\pi}}e^{-i\pi/4}\left(\Delta-\frac{1}{4}\right)^{-1/4}+R_{+},
\]
where $R_{+}$ is of order $-3/2$. Hence 
\[
R_{+}:H^{s}(\mathcal{N})\to H^{s+3/2}(\mathcal{N}).
\]
Replacing 
\[
\left(\Delta-\frac{1}{4}\right)^{-1/4}
\]
by 
\[
\Delta^{-1/4}
\]
changes only lower-order terms on the regular subspace, which gives
\eqref{eq:W_plus_Delta_asymptotic}. 
\end{proof}

\subsection{Emergence of the wave equation}

The previous formula shows that, after removing a uniformly finite-rank
contribution, the renormalized spherical mean is governed at large
time by the shifted wave propagator.

Define 
\begin{equation}
E_{t}:=e^{t/2}\left(\mathcal{L}_{t}-F_{t}\right)\eq{\ref{eq:def_finite_rank_exceptional_part}}e^{t/2}\mathcal{L}_{t}(1-\Pi_{\mathrm{exc}}).\label{eq:def_E_t_renormalized_spherical_mean}
\end{equation}

\begin{cBoxB}{}

\begin{prop}[Wave asymptotics of spherical means]
\label{prop:wave_asymptotics_spherical_means} For $t>0$, on the
regular spectral subspace, one has 
\begin{equation}
E_{t}=W_{+}e^{it\Lambda}+W_{+}^{\dagger}e^{-it\Lambda}+e^{-2t}Q_{t},\label{eq:E_t_wave_asymptotic}
\end{equation}
where, for every $s\in\mathbb{R}$, 
\[
\sup_{t\ge t_{0}}\|Q_{t}\|_{H^{s}(\mathcal{N})\to H^{s+1/2}(\mathcal{N})}<\infty.
\]
Moreover, 
\begin{equation}
W_{+}=\frac{1}{\sqrt{\pi}}e^{-i\pi/4}\left(\Delta-\frac{1}{4}\right)^{-1/4}+\Psi^{-3/2}.\label{eq:W_plus_principal_symbol}
\end{equation}
Consequently, $E_{t}$ satisfies the shifted wave equation modulo
an exponentially small remainder: 
\begin{equation}
\left(\frac{\partial^{2}}{\partial t^{2}}+\Delta-\frac{1}{4}\right)E_{t}=e^{-2t}P_{t},\label{eq:E_t_wave_equation_remainder}
\end{equation}
where 
\[
\sup_{t\ge t_{0}}\|P_{t}\|_{H^{s}(\mathcal{N})\to H^{s-1/2}(\mathcal{N})}<\infty.
\]
Equivalently, 
\begin{equation}
(1+\Delta)^{-1/4}\left(\frac{\partial^{2}}{\partial t^{2}}+\Delta-\frac{1}{4}\right)E_{t}=O_{L^{2}\to L^{2}}(e^{-2t}).\label{eq:E_t_wave_equation_L2_remainder}
\end{equation}
\end{prop}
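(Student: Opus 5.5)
The plan is to deduce everything from Proposition~\ref{prop:global_spherical_average_asymptotic}. First I would multiply \eqref{eq:global_spherical_average_asymptotic} by $e^{t/2}$. By the definition \eqref{eq:def_E_t_renormalized_spherical_mean}, the finite-rank term $F_{t}$ drops out, since $E_{t}=e^{t/2}\mathcal{L}_{t}(1-\Pi_{\mathrm{exc}})$ and $\Pi_{\mathrm{exc}}$ commutes with $\mathcal{L}_{t}$. This holds because $\mathcal{L}_{t}$ acts on each $E_{\mu}$ by the scalar $\varphi_{\lambda}(t)$.

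With $Q_{t}:=\mathcal{R}_{t}$ and $W_{-}=W_{+}^{\dagger}$, we get \eqref{eq:E_t_wave_asymptotic} at once, with the stated $H^{s}\to H^{s+1/2}$ bound. The principal-symbol statement \eqref{eq:W_plus_principal_symbol} is \eqref{eq:W_plus_principal} restated. Here $R_{+}$ is the spectral multiplier $r(\Lambda)$ with $r(\lambda)=w_{+}(\lambda)-\pi^{-1/2}e^{-i\pi/4}\lambda^{-1/2}$. By the Gamma-ratio expansion, $r$ is a classical symbol of order $-3/2$ in $\lambda$, with the derivative estimates inherited from Stirling. Functional calculus for $\Lambda=(\Delta-\frac14)^{1/2}$ (Helffer--Sjöstrand or Seeley) then places it in $\Psi^{-3/2}$.

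For the wave equation \eqref{eq:E_t_wave_equation_remainder}, I would work spectrally on each $E_{\mu}\subset\mathcal{H}_{\mathrm{reg}}$. There $E_{t}$ is multiplication by $e^{t/2}\varphi_{\lambda}(t)$. The leading terms $w_{\pm}(\lambda)e^{\pm it\lambda}$ are exactly annihilated by $\partial_{t}^{2}+\lambda^{2}$, so it suffices to control
\[
P_{t}(\lambda):=e^{2t}(\partial_{t}^{2}+\lambda^{2})\bigl(e^{-2t}r_{t}(\lambda)\bigr)=r_{t}''-4r_{t}'+4r_{t}+\lambda^{2}r_{t}.
\]
The cleanest route uses the Harish--Chandra series \eqref{eq:resonance_expansion_spherical_average}. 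There $e^{-2t}r_{t}(\lambda)=\sum_{m\ge1}e^{-2mt}(W_{2m,\lambda}e^{it\lambda}+W_{2m,-\lambda}e^{-it\lambda})$, so each term is applied the factor $(\pm i\lambda-2m)^{2}+\lambda^{2}=4m^{2}\mp4im\lambda$. This costs one power of $\lambda$ (times $m$) instead of two.

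Combined with $|W_{2m,\pm\lambda}|\lesssim C^{m}\lambda^{-1/2}$ uniformly for $\lambda\ge\eta$, which is the Gangolli--Varadarajan-type uniform estimate quoted before Proposition~\ref{prop:resonance_expansion_spherical_average}, this gives $|P_{t}(\lambda)|\lesssim\lambda^{1/2}$ uniformly for $t\ge t_{0}$. The geometric series $\sum_{m} m^{2}e^{-2(m-1)t}C^{m}$ converges once $t_{0}$ is large, or for any $t_{0}>0$ if the uniform bound is polynomial in $m$. By the spectral theorem, $\lambda^{1/2}\sim\langle\Delta\rangle^{1/4}$ gives $\sup_{t\ge t_{0}}\|P_{t}\|_{H^{s}\to H^{s-1/2}}<\infty$. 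Finally, \eqref{eq:E_t_wave_equation_L2_remainder} follows by composing with $(1+\Delta)^{-1/4}$, taking $s=0$.

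The main obstacle is the uniformity of the Harish--Chandra coefficients and their $t$-derivatives in $\lambda$ and $m$. The naive bound, differentiating $r_{t}$ twice in $t$, loses $\lambda^{2}$ and would only give $H^{s}\to H^{s-3/2}$. The half-derivative loss in the statement depends on the cancellation $\lambda^{2}-\lambda^{2}$ at the leading frequencies, which is visible only termwise in the series. So I would first make precise the bound $|W_{2m,\lambda}|\le C\,m^{-1/2}\lambda^{-1/2}$ for $\lambda\ge\eta$ and all $m\ge1$, directly from \eqref{eq:def_W_2m_lambda}. This follows from the reflection formula $\Gamma(i\lambda-m)/\Gamma(\frac12+i\lambda-m)=\Gamma(\frac12-i\lambda+m)\sin(\pi(\frac12+i\lambda))/(\Gamma(1-i\lambda+m)\sin(\pi i\lambda))$ and Stirling, whose ratio of sines is bounded for $\lambda\ge\eta$. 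That bound makes the series converge for every $t_{0}>0$.
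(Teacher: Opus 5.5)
Your proposal is correct and follows the paper's own argument. You read off the decomposition of $E_t$ and the principal term of $W_+$ directly from Proposition~\ref{prop:global_spherical_average_asymptotic}, with $Q_t=\mathcal{R}_t$ and $W_-=W_+^{\dagger}$, and you obtain the wave equation by applying $\partial_t^2+\Lambda^2$ termwise to the Harish--Chandra series, where the cancellation leaves the factor $4m^2\mp 4im\Lambda$ and hence only a half-derivative loss. Your explicit bound $|W_{2m,\pm\lambda}|\le C\,m^{-1/2}\lambda^{-1/2}$ for $\lambda\ge\eta$, obtained from the reflection formula and Stirling, sharpens the paper's brief remark that the symbol seminorms grow at most polynomially in $m$, and it cleanly justifies convergence for every $t_0>0$.
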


\end{cBoxB}

\begin{proof}
By Proposition~\ref{prop:global_spherical_average_asymptotic}, 
\[
E_{t}=W_{+}e^{it\Lambda}+W_{-}e^{-it\Lambda}+e^{-2t}\mathcal{R}_{t}.
\]
On the regular principal spectrum, 
\[
W_{-}=W_{+}^{\dagger}.
\]
This gives \eqref{eq:E_t_wave_asymptotic}, with 
\[
Q_{t}=\mathcal{R}_{t}.
\]

The pseudodifferential expansion \eqref{eq:W_plus_principal_symbol}
is exactly the leading term identified in \eqref{eq:W_plus_principal}.

The two leading terms solve the shifted wave equation exactly. Indeed,
on the regular spectral subspace, 
\[
\Lambda^{2}=\Delta-\frac{1}{4},
\]
and therefore 
\[
\left(\partial_{t}^{2}+\Lambda^{2}\right)W_{+}e^{it\Lambda}=0,\qquad\left(\partial_{t}^{2}+\Lambda^{2}\right)W_{+}^{\dagger}e^{-it\Lambda}=0.
\]
Thus only the Harish--Chandra remainder contributes.

Using the full Harish--Chandra expansion of the remainder, one may
write 
\[
e^{-2t}Q_{t}=\sum_{m\ge1}e^{-2mt}\left(W_{2m,\Lambda}e^{it\Lambda}+W_{2m,-\Lambda}e^{-it\Lambda}\right).
\]
Applying $\partial_{t}^{2}+\Lambda^{2}$ term by term gives, for the
positive branch, 
\[
\left(\partial_{t}^{2}+\Lambda^{2}\right)\left(e^{-2mt}W_{2m,\Lambda}e^{it\Lambda}\right)=\left(4m^{2}-4im\Lambda\right)e^{-2mt}W_{2m,\Lambda}e^{it\Lambda},
\]
and similarly for the negative branch. Since $W_{2m,\Lambda}$ is
a spectral multiplier of order $-1/2$, the factor $\Lambda$ gives
an operator of order $1/2$. More precisely, by the explicit formula
\eqref{eq:def_W_2m_lambda} and Stirling estimates, the symbol seminorms
of 
\[
(4m^{2}\mp4im\Lambda)W_{2m,\pm\Lambda}
\]
grow at most polynomially in $m$. Therefore, for $t\ge t_{0}>0$,
the series 
\[
\sum_{m\ge1}e^{-2(m-1)t}(4m^{2}\mp4im\Lambda)W_{2m,\pm\Lambda}e^{\pm it\Lambda}
\]
is uniformly bounded as an operator 
\[
H^{s}(\mathcal{N})\longrightarrow H^{s-\frac{1}{2}}(\mathcal{N}).
\]
Factoring out the first exponential $e^{-2t}$, we obtain 
\[
\left(\partial_{t}^{2}+\Delta-\frac{1}{4}\right)E_{t}=e^{-2t}P_{t},
\]
with 
\[
\sup_{t\ge t_{0}}\|P_{t}\|_{H^{s}(\mathcal{N})\to H^{s-\frac{1}{2}}(\mathcal{N})}<\infty.
\]
 This proves \eqref{eq:E_t_wave_equation_remainder}. Composing with
$(1+\Delta)^{-1/4}$ gives \eqref{eq:E_t_wave_equation_L2_remainder}. 
\end{proof}
\begin{rem}
The wave equation obtained here is a large-time statement. The finite-rank
contribution $F_{t}\eq{\ref{eq:def_finite_rank_exceptional_part}}\mathcal{L}_{t}\Pi_{\mathrm{exc}}$,
coming from the trivial representation, the complementary series,
the possible threshold eigenspace, and a fixed small window near the
threshold, has been removed. On the remaining principal spectrum,
the renormalized spherical mean 
\[
E_{t}=e^{t/2}\left(\mathcal{L}_{t}-F_{t}\right)
\]
is governed, up to exponentially small corrections, by the shifted
wave operator 
\[
\partial_{t}^{2}+\Delta-\frac{1}{4}.
\]
\end{rem}

\bibliographystyle{plain}
\bibliography{/home/faure/articles/articles}

\end{document}